\newcommand{\be}{\begin{equation}}
\newcommand{\ee}{\end{equation}}
\newcommand\bea{\begin{eqnarray}}
\newcommand\eea{\end{eqnarray}}
\newcommand{\bean}{\begin{eqnarray*}}
\newcommand{\eean}{\end{eqnarray*}}
\newcommand\bcase{\begin{numcases}{}}
\newcommand\ecase{\end{numcases}}
\journalname{}
\begin{document}

\title{Error estimates of invariant-preserving difference schemes for the rotation-two-component Camassa--Holm system with small energy
\thanks{Qifeng Zhang was supported by
Zhejiang Provincial Natural Science Foundation of China (Grant No. LZ23A010007).
Zhimin Zhang was supported by the National Natural Science Foundation of China (Grant Nos. 12131005 and 11871092).
}}


\author{Qifeng Zhang \and Jiyuan Zhang \and Zhimin Zhang
}


\institute{Qifeng Zhang \at Department of Mathematics, Zhejiang Sci-Tech University, Hangzhou, 310018, China \\ \email{zhangqifeng0504@gmail.com}
\and Jiyuan Zhang \at Department of Mathematics, Zhejiang Sci-Tech University, Hangzhou, 310018, China\\
               \email{z018283@126.com}
\and Zhimin Zhang \at Department of Mathematics, Wayne State University, Detroit, Michigan 48202, USA\\
               \email{ag7761@wayne.edu}
}

\date{Received: date / Accepted: date}

\maketitle

\begin{abstract}
A rotation-two-component Camassa-Holm (R2CH) system was proposed recently to describe the motion of shallow water waves under the influence of gravity. This is a highly nonlinear and strongly coupled system of partial differential equations. A crucial issue in designing numerical schemes is to preserve invariants as many as possible at the discrete level. In this paper, we present a provable implicit nonlinear difference scheme which preserves at least three discrete conservation invariants: energy, mass, and momentum, and prove the existence of the difference solution via the Browder theorem. The error analysis is based on novel and refined estimates of the bilinear operator in the difference scheme. By skillfully using the energy method, we prove that the difference scheme not only converges unconditionally when the rotational parameter diminishes, but also converges without any step-ratio restriction for the small energy case when the rotational parameter is nonzero. The convergence orders in both settings (zero or nonzero rotation parameter) are $O(\tau^2 + h^2)$ for the velocity in the $L^\infty$-norm and the surface elevation in the $L^2$-norm, where $\tau$ denotes the temporal stepsize and $h$ the spatial stepsize, respectively. The theoretical predictions are confirmed by a properly designed two-level iteration scheme. Comparing with existing numerical methods in the literature, the proposed method demonstrates its effectiveness for long-time simulation over larger domains and superior resolution for both smooth and non-smooth initial values.

\keywords{R2CH system; Invariants; Error estimate; Small energy; Long time simulation}
 \subclass{65M06 \and 65M12 \and 26A33 \and 35R11}
\end{abstract}

\section{Introduction}\label{Sec:1}
In this paper we propose, analyze and test a two-level invariant-preserving nonlinear difference
scheme for solving a rotation-two-component Camassa-Holm (R2CH) system of the form
\begin{subequations}
\label{equa1.1}
\begin{numcases}{}
u_t-u_{xxt}-\kappa u_x+3uu_x=\sigma(2u_x u_{xx}+u u_{xxx})-\mu u_{xxx}-(1-2\Omega \kappa)\rho\rho_x+2\Omega \rho (\rho u)_x, \nonumber \\
 \qquad\qquad\qquad\qquad\qquad\qquad\qquad  x\in \mathbb{R},\;  t\in[0, T], \label{eq1.1}\\
\rho_t + (\rho u)_x =0, \quad x\in \mathbb{R},\; t\in[0, T], \label{eq1.2}
\end{numcases}
\end{subequations}
subject to the initial value conditions
\begin{align}
u(x, 0) = u^{0}(x),\quad \rho(x, 0) = \rho^{0}(x), \quad x\in \mathbb{R},\label{eq1.3}
\end{align}
and periodic boundary value conditions
\begin{align}
u(x,t)=u(x+L,t), \quad \rho(x,t)=\rho(x+L,t),\quad x\in \mathbb{R},\; t\in[0, T].   \label{eq1.4}
\end{align}
The system \eqref{equa1.1}--\eqref{eq1.4} was introduced by Fan, Gao, and Liu \cite{FGL2016} to depict the motion of shallow water waves at a free surface involving the Coriolis force caused by the Earth's rotation in the equatorial ocean regions. The variable $u(x,t)$ in \eqref{equa1.1} denotes the horizontal fluid velocity along with the $x$-direction, and the variable $\rho(x,t)$ is the altitude from the free surface elevation to equilibrium. The coefficient $\kappa$ denotes a underlying linear shear flow, and the parameter $\sigma>0$ provides the competition/balance index. $\mu$ is a real dimensionless constant, and $\Omega$ defines the average rotational angular velocity of the Earth. Throughout the paper, we always assume $\Omega\in [0,1/4)$, and $1-2\Omega \kappa>0$, see e.g., \cite{CFGL2017}.

It can be easily shown that at least three invariants for the R2CH system \eqref{equa1.1} can be expressed: (see e.g., \cite{FGL2016})
\begin{itemize}
  \item Energy:
  \begin{align}
    E(u,\rho) = \frac{1}{2} \int_{\mathbb{R}}\big[u^2+u_x^2+(1-2\Omega \kappa)(\rho-1)^2\big]{\rm d}x. \label{eq1.2a}
  \end{align}
  \item Momentum:
  \begin{align}
    H(u,\rho) = \int_{\mathbb{R}} \big[u+\Omega(\rho-1)^2\big]{\rm d}x.\label{eq1.2b}
  \end{align}
  \item Mass:
  \begin{align}
    I(u,\rho) = \int_{\mathbb{R}} (\rho-1){\rm d}x. \label{eq1.2c}
  \end{align}
\end{itemize}

As of now, there have been intense theoretical studies on the R2CH system \eqref{equa1.1} such as the existence of global solutions \cite{Zhang2017,Moon2017}, the wave-breaking of solitary waves \cite{CFGL2017,LPZ2019}, the local well-posedness \cite{ZL2018}, the peakon-delta weak solutions \cite{FY2019}, the persistence properties \cite{YLQ2020}, the traveling waves \cite{ZW2021}, and so forth.
Once the Earth's rotation parameter equals zero, the system \eqref{equa1.1} degenerates to a two-component Camassa--Holm (2CH) system, which was introduced in \cite{OR1996} for the first time, and was later re-derived in \cite{CI2008}. The extensive analyses with respect to the theoretical solutions are referred to \cite{CL2011,GL2011,GY2011,Henry2009}.
Furthermore, when the variable $\rho(x,t)$ disappears, \eqref{equa1.1} reduces to the classical CH equation, which was first proposed as a bi-Hamiltonian system by Fuchssteiner et al. in \cite{FF1981}. Subsequently, it
was reformulated as a physical model to simulate the unidirectional propagation
of shallow water waves by Camassa and Holm in \cite{CH1993}. Its theoretical studies have been very extensive, see e.g., \cite{BC2007,Dan2003,HX2008}. In addition, some researchers considered more general three-component, four-component, and multi-component CH systems, see e.g., \cite{GX2011,KLQ2021,LLP2014,HI2010}. Several significant research timelines are summarized in Figure \ref{CH_Evolution}.
\begin{figure}[htbp]
  \centering
  \includegraphics[width=9cm]{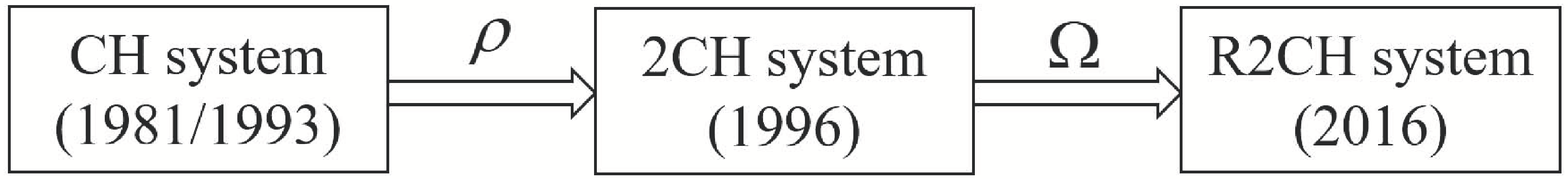}
  \caption{Milestones of the research timeline}\label{CH_Evolution}
   \vspace{-3mm}
\end{figure}

In contrast with extensive and in-depth theoretical studies, most of the present numerical studies are limited to the special cases of the R2CH system. For example, with regard to the classical CH equation, there are plentiful papers devoted
to investigating the numerical behavior of weak solutions and peakons, see e.g., \cite{HR2006a,HR2006b,CKR2008,COR2008,CR2012,XS2008,LX2016}.
Other important advances include the particle method \cite{CL2008}, multi-symplectic/multi-symplectic wavelet collocation schemes \cite{DBX2008,ZST2011}, the spectral/spectral element/spectral projection methods \cite{WX2015,YYW2021,KR2006}, the Galerkin finite element method \cite{ADM2019}, and the multiple scalar auxiliary variables/the invariant energy quadratization methods \cite{JGCW2020,JWG2021}. In the meantime, for the 2CH system, Cohen et al. investigated a multi-symplectic structure-preserving scheme \cite{CMR2014}, Liu et al. developed an invariant-preserving difference scheme \cite{LP2016}, Li et al. proposed a multi-symplectic compact method \cite{LQZS2017}, Yu et al. presented a three-step iterative algorithm \cite{YFS2018}, Chertock et al. applied a finite volume particle method \cite{CKL2020}, and Galtung et al. studied a variational discretization in Lagrangian variables \cite{GG2021}. Almost for sure, there is in no way an exhaustive list, but these results indicate the trend and efforts in the development of invariant-preserving numerical methods for the CH equation and 2CH system.

So far as is known to us, the numerical methods for the R2CH system are still lacking. The first numerical attempt for the R2CH system is attributed to Zhang et al. \cite{ZLZ2022}, who designed a conditionally convergent, and energy/mass-preserving finite difference scheme. However, there are several drawbacks for the scheme including: (i) It is difficult to determine whether the difference scheme preserves the momentum; (ii) When $\Omega=0$, a convergent condition $\tau\lesssim h$ in the error estimate is indispensable;
(iii) When $\Omega\neq0$, the error estimate is missing;
(iv) Numerical simulation will distort over a long-time simulation on large domain unless utilizing a more refined grid.

The original motivation in the present paper is to construct numerical schemes, which are able to fully preserve invariants \eqref{eq1.2a}--\eqref{eq1.2c} in the R2CH system \eqref{equa1.1}. Other motivations include overcoming the difficulties listed above and remedying deficiencies during the numerical analysis. Nevertheless, the R2CH system is very complicated, and developing robust and accurate numerical schemes is a highly nontrivial task.

To achieve these goals, we have already learned that the invariant-preserving property is an important index to judge the merits of an algorithm, see e.g., \cite{FQ1991,HPR2022,LV1995}. Therefore, we first ensure that the constructed difference scheme preserves as many invariants as possible. In addition, the fully implicit nonlinear numerical schemes usually have a better invariant-preserving ability and long-time numerical simulation capability. As a consequence, we start to derive the implicit time discretization, and then establish a class of fully implicit nonlinear numerical schemes for the R2CH system. Unfortunately, nonlinear difference schemes are usually computationally expensive in solving nonlinear system. As a possible compromise to balance computational effort and to preserve all invariants, we propose a well-designed two-level linearized iterative method to solve the nonlinear difference scheme of the R2CH system, see \eqref{Ite} in Section \ref{section5}.

Additionally, numerical analysis for most of the shallow water waves is often intractable due to complex high-order nonlinear derivative terms and coupled nonlinear terms. As a result, much previous research work focuses on the numerical schemes and numerical simulation, but ignore rigorous convergence analysis, see e.g., \cite{COR2008,CKL2020,GG2021,LP2016,LX2016}. Moreover, convergence analysis for nonlinear difference schemes is usually tougher compared with linear ones since their error systems would encounter more unknown information. To overcome these difficulties appeared in the nonlinear scheme for the R2CH system, by means of a more refined observation on the bilinear operators, and technical energy analysis, we find that some troublesome terms in the error system can be coincidentally cancelled out by taking a new inner product. At the same time, we find a new recursion relation, which could skillfully deal with the discrete time difference quotient. These, together with the nice properties of the bilinear operators and several key techniques, make the desired convergence possible.

The main contribution of the present work is to analyze a coupled nonlinear difference scheme for solving the R2CH system, and  dedicated to establishing a complete convergence theory. In particular:
\begin{itemize}
  \item The nonlinear difference scheme not only preserves the energy and the mass of the original problem naturally, but also preserves the momentum explicitly;
  \item When $\Omega=0$, the nonlinear difference scheme converges unconditionally (without time step restriction), which allows large time steps in calculation;
  \item When $\Omega\neq0$, it is worth noting that the nonlinear difference scheme converges unconditionally provided the initial energy is suitably small (or the initial values are small). However, it does not need a time step restriction and any boundedness hypothesis of numerical solutions;
  \item The difference scheme has better resolution in long-time numerical simulation on a large area even with a coarse grid comparing with that in \cite{CKL2020,CMR2014,LP2016,YFS2018,ZLZ2022} under the smooth/nonsmooth initial values.
  \item {The analyzing techniques based on the framework of the bilinear operator open the door of the difference methods to solve other types of the shallow water problems.}
\end{itemize}

We begin in Section \ref{section2} by introducing several useful notations and lemmas. In Section \ref{section3}, we derive a fully implicit nonlinear difference scheme for the R2CH system followed by detailed analyses for the invariant-preserving properties and a rigorous proof for the existence of a numerical solution. Section \ref{section4} is the main body of the paper, which focuses on the convergence analysis involving two scenarios: $\Omega=0$ and $\Omega\neq 0$. In Section \ref{section5}, we carry out two specific numerical examples including several different cases to test the theoretical results by designing an iterated scheme.
We end the paper by comparing the pros and cons of the present difference scheme with that in the literature in Section \ref{section6}. Some conclusions and outlooks are summarized in the last Section.

\section{Notations and lemmas}\label{section2}
\setcounter {equation}{0}
To define our finite difference method, we introduce some notations.
Once the positive integers $M$ and $N$ are selected, we let $h=L/M$ and $\tau=T/N$. Denote
 $x_i=ih,\;  i\in \mathbb{Z}$,
 $t_n=n\tau,\; 0\leqslant n\leqslant N$,
 $\Omega_{h}=\{x_i\,|\, x_i=ih,\;i \in \mathbb{Z}\}$,
 $\Omega_{\tau}=\{t_n\,|\,t_n=n\tau,\; 0\leqslant n\leqslant N\}$, $\Omega_{h\tau}=\Omega_{h}\times\Omega_{\tau}$.
For arbitrary grid functions $v=\{v_{i}^n\,|\,i\in \mathbb{Z},\; 0\leqslant n\leqslant N\}$, and $u=\{u_{i}^n\,|\,i\in \mathbb{Z},\; 0\leqslant n\leqslant N\}$ defined on $\Omega_{h\tau}$, we bring in the notations below:
\begin{align*}
& v_{i}^{n+\frac12}=\frac12(v_{i}^{n}+v_{i}^{n+1}),\quad
  \delta_tv_{i}^{n+\frac12}=\frac1\tau(v_{i}^{n+1}-v_{i}^n),\quad
   \delta_xv_{i-\frac12}^{n}=\frac1{h}(v_{i}^{n}-v_{i-1}^{n}),\\
& \delta_x^2v_{i}^n=\frac1{h}(\delta_xv_{i+\frac12}-\delta_xv_{i-\frac12}),\quad
  \Delta_xv_{i}^{n}=\frac1{2h}(v_{i+1}^{n}-v_{i-1}^{n}),\quad
  (uv)_{i}^{n+\frac12}=\frac{u_i^{n}v_i^{n}+u_i^{n+1}v_i^{n+1}}{2}.
\end{align*}
The grid function spaces in space and time are, respectively, denoted by
$$\mathcal{V}_h=\big\{v\,|\,v=\{v_{i}\},v_{i+M}=v_{i},\; i\in \mathbb{Z}\big\},$$
and
$$\mathcal{S}_{\tau}=\big\{w\,|\,w=(w^0,w^1,\cdots,w^N)~{\rm is~the~grid~function~defined~on}~\Omega_{\tau} \big\}.$$
For the integers $i$ and $n$, define the index sets $\mathbb{I}_M=\{i\,|\,1\leqslant i\leqslant M\}$, $\mathbb{K}_N=\{n\,|\,0\leqslant n\leqslant N\}$ and $\mathbb{K}_{N}^0=\{n\,|\,0\leqslant n\leqslant N-1\}$.
For arbitrary grid functions $u$, $v\in \mathcal{V}_h$, we define the discrete $L^2$-inner products by
\begin{align*}
  (u,v)=h\sum_{i\in \mathbb{I}_M}u_{i}v_{i},\quad
\langle \delta_xu,\delta_xv\rangle =h\sum_{i\in \mathbb{I}_M}(\delta_xu_{{i-\frac{1}{2}}})(\delta_x v_{{i-\frac{1}{2}}}),
\end{align*}
and the corresponding norms (seminorm) by
\begin{align*}
 \|v\|=\sqrt{(v,v)}, \quad |v|_1=\sqrt{\langle \delta_xv,\delta_xv\rangle}, \quad \|v\|_{\infty}=\max_{i\in \mathbb{I}_M}\limits|v_i|.
\end{align*}
Furthermore, denote $(uv)_{i}=u_{i}v_{i}$ and define a bilinear operator (see e.g., \cite{Guo1974,Guo1981}) to facilitate numerical analysis later as follows.
\begin{align}
\psi(u,v)_{i}=\frac13[u_{i}\Delta_xv_{i}+\Delta_x(uv)_{i}],\quad i\in \mathbb{I}_M. \label{psi_1}
\end{align}
\begin{lemma}{\rm \cite{ZL2020}}{\rm \label{lemma1}}
For two arbitrary spatial grid functions $u,\,v\in \mathcal{V}_h$, we have
\begin{align*}
&(\psi(u,v),v)=0, \quad (\Delta_xu,u)=0,\quad (\Delta_xu,v)=-(u,\Delta_xv),\quad(\delta_x^2u,v)=-\langle\delta_xu,\delta_xv\rangle.
\end{align*}
\end{lemma}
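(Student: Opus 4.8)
The plan is to establish all four identities by discrete summation by parts, the guiding principle being that periodicity $v_{i+M}=v_i$ renders every full-period sum invariant under an index shift, which is the discrete counterpart of the vanishing of boundary terms in continuous integration by parts. I would treat the identities not in the order stated, but in the order in which each follows most cheaply from its predecessor.

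First I would prove the third identity $(\Delta_xu,v)=-(u,\Delta_xv)$, i.e.\ the skew-symmetry of the central difference. Writing $(\Delta_xu,v)=\tfrac12\sum_{i\in\mathbb I_M}(u_{i+1}-u_{i-1})v_i$, I split into two sums and reindex each by one step; periodicity lets me keep the summation window as $\mathbb I_M$, turning $\sum_i u_{i+1}v_i$ into $\sum_i u_iv_{i-1}$ and $\sum_i u_{i-1}v_i$ into $\sum_i u_iv_{i+1}$. Recombining gives $-\tfrac h2\sum_i u_i\tfrac1h(v_{i+1}-v_{i-1})=-(u,\Delta_xv)$. The second identity $(\Delta_xu,u)=0$ then drops out for free: setting $v=u$ in the third and using symmetry of the inner product yields $(\Delta_xu,u)=-(\Delta_xu,u)$, hence zero.

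Next I would prove the fourth identity $(\delta_x^2u,v)=-\langle\delta_xu,\delta_xv\rangle$ by Abel summation on the second difference. Starting from $(\delta_x^2u,v)=\sum_{i\in\mathbb I_M}(\delta_xu_{i+\frac12}-\delta_xu_{i-\frac12})v_i$, I shift the first sum by one index (again legitimate by periodicity) to obtain $\sum_i(\delta_xu_{i-\frac12})(v_{i-1}-v_i)$, which is exactly $-h\sum_i(\delta_xu_{i-\frac12})(\delta_xv_{i-\frac12})=-\langle\delta_xu,\delta_xv\rangle$.

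The real work is the first identity $(\psi(u,v),v)=0$, the skew-symmetry that makes the nonlinear convection term conservative. Expanding $\psi(u,v)_i=\tfrac13[u_i\Delta_xv_i+\Delta_x(uv)_i]$ and pairing with $v$ produces, up to the factor $\tfrac16$, a trilinear sum made of four families of terms: $u_iv_iv_{i+1}$, $-u_iv_iv_{i-1}$, $u_{i+1}v_{i+1}v_i$, and $-u_{i-1}v_{i-1}v_i$. The crux is to reindex the last two families by the one-step shift permitted by periodicity, so that $\sum_i u_{i+1}v_{i+1}v_i=\sum_i u_iv_iv_{i-1}$ and $\sum_i u_{i-1}v_{i-1}v_i=\sum_i u_iv_iv_{i+1}$; the four families then cancel in pairs and the sum vanishes. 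I expect this last identity to be the only nontrivial step: the particular symmetric combination in the definition of $\psi$ and the $\tfrac13$ weighting are precisely engineered so that the cubic terms annihilate, and the bookkeeping of which shift sends which term to which must be carried out carefully, whereas the other three identities are routine one-line summation-by-parts computations.
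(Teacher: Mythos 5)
Your proof is correct in all four parts: each index-shift is legitimate under the periodicity $v_{i+M}=v_i$, and the pairwise cancellation of the four trilinear families in $(\psi(u,v),v)=0$ — made possible precisely by the symmetric $\tfrac13[u_i\Delta_xv_i+\Delta_x(uv)_i]$ structure — is exactly right, as is deducing $(\Delta_xu,u)=0$ from the skew-symmetry identity with $v=u$. Note that the paper itself states this lemma without proof, citing \cite{ZL2020}; your summation-by-parts argument is the standard one for such discrete identities, so your write-up simply (and correctly) supplies the details the paper omits.
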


The discrete form of the classical embedded inequality is introduced as follows, see also \cite{ZL2020}. The continuous edition
is similar to the discrete one, which is omitted here for brevity.
\begin{lemma}\label{lemma2}
	For an arbitrary spatial grid function $v\in \mathcal{{V}}_h$ and $\varepsilon>0$, we have
	\begin{equation*}
		\|v\|_{\infty}^2 \leqslant \varepsilon|v|_1^2 + \Big(\frac{1}{\varepsilon}+\frac{1}{L}\Big)\|v\|^2.
	\end{equation*}
\end{lemma}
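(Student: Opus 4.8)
The plan is to mirror the classical proof of the one-dimensional Agmon (Sobolev) embedding $H^1\hookrightarrow L^\infty$ at the discrete level, exploiting the periodicity $v_{i+M}=v_i$ together with the identity $L=Mh$. First I would locate a ``good'' index. Since $\|v\|^2=h\sum_{i\in\mathbb{I}_M}v_i^2$ and $L=Mh$, the arithmetic mean of the $M$ values $v_i^2$ is $\frac1M\sum_{i\in\mathbb{I}_M}v_i^2=\frac1L\|v\|^2$; hence there exists an index $i_0\in\mathbb{I}_M$ with $v_{i_0}^2\leqslant\frac1L\|v\|^2$, playing the role of the point where a periodic function meets its mean.

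Next, for an arbitrary index $j\in\mathbb{I}_M$ I would run a discrete fundamental theorem of calculus from $i_0$ to $j$ via the discrete product rule
\[
v_i^2-v_{i-1}^2=(v_i-v_{i-1})(v_i+v_{i-1})=h\,(\delta_x v_{i-\frac12})(v_i+v_{i-1}).
\]
Telescoping along the lattice path joining $i_0$ and $j$, and dominating that path's signed contribution by the full periodic sum over $\mathbb{I}_M$, yields
\[
v_j^2\leqslant v_{i_0}^2+\sum_{i\in\mathbb{I}_M}h\,\big|\delta_x v_{i-\frac12}\big|\,\big|v_i+v_{i-1}\big|.
\]
Applying the Cauchy--Schwarz inequality to the last sum extracts the factor $|v|_1$ from the first slot, while periodicity gives $h\sum_{i\in\mathbb{I}_M}|v_i+v_{i-1}|^2\leqslant 2h\sum_{i\in\mathbb{I}_M}(v_i^2+v_{i-1}^2)=4\|v\|^2$, so the sum is bounded by $2|v|_1\|v\|$. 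Combining with the first step and taking the maximum over $j$ produces $\|v\|_\infty^2\leqslant\frac1L\|v\|^2+2|v|_1\|v\|$.

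Finally, a single application of Young's inequality $2|v|_1\|v\|\leqslant\varepsilon|v|_1^2+\frac1\varepsilon\|v\|^2$ delivers the stated bound. The only genuinely delicate point is the periodic telescoping in the second step: I must ensure the lattice path from $i_0$ to $j$ traverses each interval at most once, so that its signed sum is dominated by the full sum $\sum_{i\in\mathbb{I}_M}$, and that the index bookkeeping stays consistent with the convention $v_{i+M}=v_i$ when the path wraps around the period. Everything else reduces to routine summation-by-parts-type manipulation combined with Cauchy--Schwarz and Young.
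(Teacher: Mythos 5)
Your proof is correct: the mean-value selection of $i_0$ with $v_{i_0}^2\leqslant\frac{1}{L}\|v\|^2$, the one-directional periodic telescoping (each of the $M$ edges used at most once), the Cauchy--Schwarz bound $h\sum_{i\in\mathbb{I}_M}|\delta_xv_{i-\frac12}||v_i+v_{i-1}|\leqslant 2|v|_1\|v\|$, and the final Young step all check out and yield exactly the stated constants. The paper itself omits the proof and cites \cite{ZL2020}, and your argument is precisely the standard proof of this discrete embedding inequality found in that reference, so there is nothing to add.
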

In order to simplify the numerical analysis of the temporal direction, we give the following lemma.
\begin{lemma}\label{lemma3}
  For two arbitrary temporal grid functions $u,\;v\in \mathcal{{S}}_{\tau}$, we have
\begin{align*}
&\;  (\delta_t u^{n+\frac12},u^{n+\frac12}v^{n+\frac12})\nonumber\\
=&\;  \frac{1}{2\tau}[(u^{n+1},u^{n+1}v^{n+1})-(u^{n},u^{n}v^{n})]\notag\\
&\; -\frac{1}{4}(u^{n+1}-u^n,(u^{n+1}-u^n)\delta_tv^{n+\frac12})-\frac12(u^{n+1}u^n,\delta_t v^{n+\frac12}).
\end{align*}
\end{lemma}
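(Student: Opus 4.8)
The plan is to treat this entirely as a pointwise algebraic identity. The discrete inner product $(\cdot,\cdot)=h\sum_{i\in\mathbb{I}_M}$ is a symmetric bilinear form, and every quantity appearing on both sides is a pointwise algebraic combination of the four spatial grid functions $u^n,u^{n+1},v^n,v^{n+1}$. Hence, after pulling the scalar factors $\tau^{-1}$ and the averaging weights out of the inner product by bilinearity, it suffices to verify that the summands on the two sides coincide index by index; the outer $h\sum_i$ then yields the stated identity at once.

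First I would substitute the definitions $\delta_t u^{n+\frac12}=\tau^{-1}(u^{n+1}-u^n)$, $u^{n+\frac12}=\frac12(u^n+u^{n+1})$, $v^{n+\frac12}=\frac12(v^n+v^{n+1})$, and $\delta_t v^{n+\frac12}=\tau^{-1}(v^{n+1}-v^n)$ into the left-hand side, obtaining
\begin{align*}
(\delta_t u^{n+\frac12},u^{n+\frac12}v^{n+\frac12})=\frac{1}{4\tau}\big(u^{n+1}-u^n,\,(u^n+u^{n+1})(v^n+v^{n+1})\big).
\end{align*}
The key observation is the elementary factorization $(u^{n+1}-u^n)(u^{n+1}+u^n)=(u^{n+1})^2-(u^n)^2$, valid pointwise; applying it converts the left-hand side into $\frac{1}{4\tau}\,h\sum_i\big[(u_i^{n+1})^2-(u_i^n)^2\big](v_i^n+v_i^{n+1})$, a clean difference of squares weighted by $v^n+v^{n+1}$.

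Next I would expand the three terms on the right-hand side in the same four variables. The first term contributes $\frac{1}{2\tau}\,h\sum_i\big[(u_i^{n+1})^2v_i^{n+1}-(u_i^n)^2v_i^n\big]$; the second, after using $\delta_t v^{n+\frac12}=\tau^{-1}(v^{n+1}-v^n)$ and expanding $(u^{n+1}-u^n)^2=(u^{n+1})^2-2u^nu^{n+1}+(u^n)^2$, contributes $-\frac{1}{4\tau}\,h\sum_i(u_i^{n+1}-u_i^n)^2(v_i^{n+1}-v_i^n)$; and the third contributes $-\frac{1}{2\tau}\,h\sum_i u_i^{n+1}u_i^n(v_i^{n+1}-v_i^n)$. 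I would then collect, separately, the coefficients of $v_i^n$ and of $v_i^{n+1}$ in the total right-hand summand. The point of the construction is that the last two terms are engineered precisely so that the unwanted cross contributions $u^nu^{n+1}$ cancel, whence both coefficients reduce to the common value $\frac14\big[(u_i^{n+1})^2-(u_i^n)^2\big]$; this matches the left-hand summand derived above, completing the proof.

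There is no genuine conceptual obstacle here; the only thing to watch is the bookkeeping in this cancellation. The main task is to confirm carefully that the $u^nu^{n+1}$ terms produced by expanding the square in $-\frac14(u^{n+1}-u^n,(u^{n+1}-u^n)\delta_t v^{n+\frac12})$ are exactly annihilated by $-\frac12(u^{n+1}u^n,\delta_t v^{n+\frac12})$, and that the residual coefficients of $v^n$ and $v^{n+1}$ symmetrize to the same expression. This identity is the discrete, symmetric (Crank--Nicolson) analogue of the product rule $\partial_t(u^2v)=2uu_tv+u^2v_t$, and it is what will later permit the troublesome time-difference quotient of the form $(\delta_t u,uv)$ to be rewritten as a genuine telescoping term plus controllable remainders in the energy analysis.
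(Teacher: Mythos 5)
Your proof is correct. The identity is a pointwise polynomial identity in the four values $u_i^n,u_i^{n+1},v_i^n,v_i^{n+1}$, and your bookkeeping checks out: collecting the coefficients of $v_i^n$ and of $v_i^{n+1}$ on the right-hand side, the cross terms $u_i^nu_i^{n+1}$ produced by expanding $(u_i^{n+1}-u_i^n)^2$ in the second term are exactly annihilated by the third term, and both coefficients reduce to $\frac{1}{4\tau}\big[(u_i^{n+1})^2-(u_i^n)^2\big]$, matching the left-hand summand $\frac{1}{4\tau}\big[(u_i^{n+1})^2-(u_i^n)^2\big](v_i^n+v_i^{n+1})$. (One typographical slip: you state the common coefficient as $\frac14\big[(u_i^{n+1})^2-(u_i^n)^2\big]$, omitting the factor $\tau^{-1}$; since it is present in every term it factors out harmlessly.) The paper's proof reaches the same identity by a different organization: it inserts $\pm(uv)^{n+\frac12}$ — in the paper's notation $\frac12(u^nv^n+u^{n+1}v^{n+1})$, not the product of averages — into the second argument and splits the left side as $A^n+B^n$, where $A^n=(\delta_tu^{n+\frac12},\,u^{n+\frac12}v^{n+\frac12}-(uv)^{n+\frac12})$ collapses to the quadratic remainder $-\frac14(u^{n+1}-u^n,(u^{n+1}-u^n)\delta_tv^{n+\frac12})$ and $B^n=(\delta_tu^{n+\frac12},(uv)^{n+\frac12})$ yields the telescoping term plus $-\frac12(u^nu^{n+1},\delta_tv^{n+\frac12})$. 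The paper's add-and-subtract decomposition is a derivation: it shows how the two remainder terms arise naturally, and the same device is reused verbatim later when the lemma is applied (the treatment of $-2\Omega(\delta_tf^{n+\frac12},f^{n+\frac12}U^{n+\frac12})$ in the proof of Theorem \ref{theorem3 2}). Your expansion-and-coefficient-matching argument is a shorter verification once the right-hand side is already known, exploiting that both sides are affine in $(v^n,v^{n+1})$ with polynomial coefficients in $u^n,u^{n+1}$; both routes are fully rigorous.
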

\begin{proof}
We easily have
\begin{align}
&\;\quad\;  (\delta_tu^{n+\frac12},u^{n+\frac12}v^{n+\frac12})\nonumber\\
&\;= (\delta_tu^{n+\frac12},u^{n+\frac12}v^{n+\frac12}-(uv)^{n+\frac12}+(uv)^{n+\frac12})\nonumber\\
&\;= (\delta_tu^{n+\frac12},u^{n+\frac12}v^{n+\frac12}-(uv)^{n+\frac12})+(\delta_tu^{n+\frac12}, (uv)^{n+\frac12})\nonumber\\
&\;=: A^n+B^n.\label{eq3.41cc}
\end{align}
We calculate each term in \eqref{eq3.41cc}.
\begin{align}
A^n = &\;  (\delta_tu^{n+\frac12},u^{n+\frac12}v^{n+\frac12}-(uv)^{n+\frac12})\nonumber\\
=&\; \frac14(\delta_tu^{n+\frac12},(u^n+u^{n+1})(v^{n+1}+v^n)-2(u^{n+1}v^{n+1}+u^{n}v^{n}))\nonumber\\
=&\; \frac14(\delta_tu^{n+\frac12},u^nv^{n+1}-u^nv^n+u^{n+1}v^n-u^{n+1}v^{n+1})\nonumber\\
=&\; \frac1{4\tau}(u^{n+1}-u^n,u^nv^{n+1}-u^nv^n+u^{n+1}v^n-u^{n+1}v^{n+1})\nonumber\\
=&\; \frac{1}{4}(u^{n+1}-u^n,u^n\delta_tv^{n+\frac12}-u^{n+1}\delta_tv^{n+\frac12})\nonumber\\
=&\; -\frac{1}{4}(u^{n+1}-u^n,(u^{n+1}-u^n)\delta_tv^{n+\frac12}),\label{eq3.41bb}
\end{align}
and
\begin{align}
B^n=&\;  (\delta_tu^{n+\frac12},(uv)^{n+\frac12})\nonumber\\
=&\; \frac{1}{2\tau}(u^{n+1}-u^n,u^{n+1}v^{n+1}+u^{n}v^{n})\nonumber\\
=&\; \frac{1}{2\tau}[(u^{n+1},u^{n+1}v^{n+1})-(u^{n},u^{n}v^{n})]+\frac{1}{2\tau}[(u^{n+1},u^{n}v^{n})-(u^{n},u^{n+1}v^{n+1})]\nonumber\\
=&\; \frac{1}{2\tau}[(u^{n+1},u^{n+1}v^{n+1})-(u^{n},u^{n}v^{n})]-\frac12(u^nu^{n+1},\delta_tv^{n+\frac12}).\label{eq3.41aa}
\end{align}
Plugging \eqref{eq3.41bb} and \eqref{eq3.41aa} into \eqref{eq3.41cc}, it completes the proof.
\end{proof}

\section{Numerical analysis}\label{section3}
\setcounter {equation}{0}
Assume the exact solutions to the problem \eqref{equa1.1}--\eqref{eq1.4} satisfy
$$u(x,t)\in \mathbb{C}^{5,3}(\mathbb{R}\times[0,T]), \quad \rho(x,t)\in \mathbb{C}^{3,3}(\mathbb{R}\times[0,T])$$
and denote
\begin{align}
& c_0 = \max_{0\leqslant x\leqslant L, 0\leqslant t\leqslant T} \{|u_x(x,t)|,\,|u_{xx}(x,t)|,\,|u_t(x,t)|,\,|\rho(x,t)|,\,|\rho_x(x,t)|\},\label{eq3.0a}\\
& c_{\rm max} = \max_{0\leqslant x\leqslant L, 0\leqslant t\leqslant T}|u(x,t)|.\label{eq3.0c}
\end{align}
\subsection{The derivation of the difference scheme}
Denote the exact solutions at the grid functions in short by
\begin{align*}
U_{i}^n=u(x_i,t_n),\; \Pi_{i}^n=\rho(x_i,t_n),
 \quad i\in\mathbb{I}_M,\; n\in \mathbb{K}_N.
\end{align*}
Considering \eqref{equa1.1} directly at the virtual point $(x_i,t_{n+\frac12})$, and combining Taylor expansion, we have
\begin{subequations}
\label{equa3.1}
\begin{numcases}{}
\delta_tU_i^{n+\frac12}-\delta_t\delta_x^2U_i^{n+\frac12}-\kappa \Delta_xU_i^{n+\frac12}+3\psi(U^{n+\frac12},U^{n+\frac12})_i\nonumber\\
   =3\sigma\psi(\delta_x^2U^{n+\frac12},U^{n+\frac12})_i-\mu\Delta_x\delta_x^2U_i^{n+\frac12}-(1-2\Omega \kappa)\Pi_i^{n+\frac12}\Delta_x\Pi_i^{n+\frac12}\nonumber\\
 \quad+2\Omega\Pi_i^{n+\frac12}\Delta_x(\Pi^{n+\frac12}U^{n+\frac12})_i+Q_i^{n+\frac12}, \quad i\in\mathbb{I}_M,\; n\in \mathbb{K}_N^0,\label{eq3.1}\\
  \delta_t\Pi_i^{n+\frac12}+\Delta_x(\Pi^{n+\frac12}U^{n+\frac12})_i=R_i^{n+\frac12}, \quad i\in\mathbb{I}_M,\; n\in \mathbb{K}_N^0,\label{eq3.2}
\end{numcases}
\end{subequations}
there is a constant $c_1$ independent of $\tau$ and $h$ such that
\begin{subequations}
\label{equa3.3}
\begin{numcases}{}
|Q_{i}^{n+\frac12}|\leqslant c_1(\tau^2+h^2), \quad i\in\mathbb{I}_M,\; n\in \mathbb{K}_N^0,\label{eq3.3}\\
|R_{i}^{n+\frac12}|\leqslant c_1(\tau^2+h^2), \quad i\in\mathbb{I}_M,\; n\in \mathbb{K}_N^0,\label{eq3.4}
\end{numcases}
\end{subequations}
Taking the initial and boundary value conditions \eqref{eq1.3}--\eqref{eq1.4} into account, we have
\begin{subequations}
\label{equa3.5}
\begin{numcases}{}
 U_{i}^0=u^0(x_i),\; \Pi_i^0=\rho^0(x_i),\quad i\in\mathbb{I}_M, \label{eq3.5}\\
 U_{i}^n=U_{i+M}^n,\; \Pi_{i}^n=\Pi_{i+M}^n, \quad i\in\mathbb{I}_M,\; n\in \mathbb{K}_N.\label{eq3.6}
\end{numcases}
\end{subequations}
Negelcting the local truncation errors \eqref{equa3.3}, replacing the functions $U_{i}^n$ and $\Pi_{i}^n$  with their numerical
approximations $u_{i}^n$ and $\rho_{i}^n$  in \eqref{equa3.1} and \eqref{equa3.5}, respectively,
a two-level coupled nonlinear difference scheme reads
\begin{subequations}
\label{equa3.7}
\begin{numcases}{}
 \delta_t u_i^{n+\frac12}-\delta_t\delta_x^2u_i^{n+\frac12}-\kappa \Delta_xu_i^{n+\frac12}+3\psi(u^{n+\frac12},u^{n+\frac12})_i\nonumber\\
   =3\sigma\psi(\delta_x^2u^{n+\frac12},u^{n+\frac12})_i-\mu\Delta_x\delta_x^2u_i^{n+\frac12} -(1-2\Omega \kappa )\rho_i^{n+\frac12}\Delta_x\rho_i^{n+\frac12}\nonumber\\
\quad+2\Omega\rho_i^{n+\frac12}\Delta_x(\rho^{n+\frac12}u^{n+\frac12})_i, \quad i\in\mathbb{I}_M,\; n\in \mathbb{K}_N^0,\label{eq3.7}\\
  \delta_t\rho_i^{n+\frac12}+\Delta_x(\rho^{n+\frac12}u^{n+\frac12})_i=0, \quad i\in\mathbb{I}_M,\; n\in \mathbb{K}_N^0.\label{eq3.8}\\
  u_{i}^0=u^0(x_i),\; \rho_i^0=\rho^0(x_i),\quad i\in\mathbb{I}_M, \label{eq3.9}\\
  u_{i}^n=u_{i+M}^n,\; \rho_{i}^n=\rho_{i+M}^n, \quad i\in\mathbb{I}_M,\; n\in \mathbb{K}_N.\label{eq3.10}
\end{numcases}
\end{subequations}
\begin{remark}
  Note that the scheme \eqref{equa3.7} is exactly the standard Crank-Nicolson scheme once we replace $3\psi(u^{n+\frac12},u^{n+\frac12})_i$ by $u_i^{n+\frac{1}{2}}\Delta_x u_i^{n+\frac12}$ and $3\sigma\psi(\delta_x^2u^{n+\frac12},u^{n+\frac12})_i$ by $2\Delta_x u_i^{n+\frac12}\delta_x^2 u_i^{n+\frac12}+ u_i^{n+\frac12}\Delta_x \delta_x^2 u_i^{n+\frac12}$, respectively. In this case, the numerical analysis is challengeable and deserves further study.
\end{remark}
\subsection{Conservative invariants and boundedness}
\begin{theorem} \rm\label{theorem1_1}
 Suppose $\{u_i^n,\,\rho_i^n\,|\,i\in\mathbb{I}_M,\; n\in \mathbb{K}_N\}$ is the numerical solution of the scheme \eqref{equa3.7}.
 Then for any $n\in \mathbb{K}_N$, we have the discrete invariants:
 \begin{itemize}
   \item \textbf{Energy}:
   \begin{align}
 E^n=E^0,\quad {\rm where}\quad  E^n=\frac12\big[\|u^n\|^2+|u^n|_1^2+(1-2\Omega \kappa)\|\rho^n\|^2\big];\label{D_energy}
  \end{align}
   \item \textbf{Momentum}:
   \begin{align}
 H^n=H^0,\quad {\rm where}\quad  H^n=(u^n,1)+\Omega \|\rho^{n}\|^2;\label{D_momentum}
  \end{align}
   \item \textbf{Mass}:
   \begin{align}
 I^n=I^0,\quad {\rm where}\quad  I^n=(\rho^n,1).\label{D_mass}
  \end{align}
 \end{itemize}
 \end{theorem}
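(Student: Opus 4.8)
The overall strategy is to prove each conservation law one time level at a time, i.e. to establish $E^{n+1}=E^n$, $H^{n+1}=H^n$ and $I^{n+1}=I^n$ for every $n\in\mathbb{K}_N^0$, after which a one-line induction gives the identities for all $n$. Each step amounts to pairing the scheme \eqref{eq3.7}--\eqref{eq3.8} with a well-chosen test grid function in the discrete inner product $(\cdot,\cdot)$ and collapsing the result with Lemma \ref{lemma1}. Beyond the identities recorded there, I would first isolate the few algebraic facts that do the real work: (i) $(\Delta_x w,w)=0$ for every $w\in\mathcal{V}_h$, to be applied to $w=\rho^{n+\frac12}u^{n+\frac12}$; (ii) the cross-antisymmetry $(\Delta_x(\rho u),\rho)=-(\rho u,\Delta_x\rho)=-(\rho\Delta_x\rho,u)$, which follows from $(\Delta_x a,b)=-(a,\Delta_x b)$ together with the commutativity of the pointwise product; and (iii) $(\Delta_x\delta_x^2 u,u)=0$ and $(\delta_x^2 u,\Delta_x u)=0$, valid because $\delta_x^2$ is symmetric, $\Delta_x$ is antisymmetric, and these two circulant operators commute, so their product is antisymmetric.

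Mass is immediate: testing \eqref{eq3.8} against the constant grid function $1$ gives $\frac1\tau[(\rho^{n+1},1)-(\rho^n,1)]+(\Delta_x(\rho^{n+\frac12}u^{n+\frac12}),1)=0$, and the second term vanishes since $(\Delta_x w,1)=-(w,\Delta_x 1)=0$. For the energy I would test \eqref{eq3.7} with $u^{n+\frac12}$ and \eqref{eq3.8} with $(1-2\Omega\kappa)\rho^{n+\frac12}$ and add. By Lemma \ref{lemma1} the $\kappa$-term, both $\psi$-terms, and (via fact (iii)) the $\mu$-term drop out, while $(\delta_t u^{n+\frac12},u^{n+\frac12})$ and $-(\delta_t\delta_x^2 u^{n+\frac12},u^{n+\frac12})$ telescope into $\frac1{2\tau}(\|u^{n+1}\|^2-\|u^n\|^2)$ and $\frac1{2\tau}(|u^{n+1}|_1^2-|u^n|_1^2)$, and the $\rho$-equation contributes $\frac{1-2\Omega\kappa}{2\tau}(\|\rho^{n+1}\|^2-\|\rho^n\|^2)$. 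Hence $\frac1\tau(E^{n+1}-E^n)$ equals the surviving coupling terms $-(1-2\Omega\kappa)(\rho\Delta_x\rho,u)+2\Omega(\rho\Delta_x(\rho u),u)-(1-2\Omega\kappa)(\Delta_x(\rho u),\rho)$, where $\rho=\rho^{n+\frac12},\,u=u^{n+\frac12}$. The first and third cancel by fact (ii), and the middle one equals $2\Omega(\Delta_x(\rho u),\rho u)=0$ by fact (i); thus $E^{n+1}=E^n$.

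The momentum is the delicate part, and I expect it to be the main obstacle, since its conservation is not a cancellation inside one equation but a balance between the two. Testing \eqref{eq3.7} with the constant $1$, every linear term and both $\psi$-terms die: the $\kappa$-, $\mu$- and $\delta_x^2$-in-time terms vanish because $(\Delta_x(\cdot),1)=0$ and $(\delta_x^2(\cdot),1)=0$; for $\psi$ one uses $(\psi(a,b),1)=\frac13(a,\Delta_x b)$ with $(u,\Delta_x u)=0$ and, crucially, $(\delta_x^2 u,\Delta_x u)=0$ from fact (iii); and $(\rho\Delta_x\rho,1)=(\rho,\Delta_x\rho)=0$. What survives is $\frac1\tau[(u^{n+1},1)-(u^n,1)]=2\Omega(\rho\Delta_x(\rho u),1)$. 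Separately, testing \eqref{eq3.8} with $\rho^{n+\frac12}$ yields $\|\rho^{n+1}\|^2-\|\rho^n\|^2=-2\tau(\Delta_x(\rho u),\rho)$. The key observation, the step I would single out as the crux, is the identity $(\rho\Delta_x(\rho u),1)=(\rho,\Delta_x(\rho u))=(\Delta_x(\rho u),\rho)$, which trades the coupling term produced by the $u$-equation for exactly the quantity governing the growth of $\|\rho\|^2$. Combining, $H^{n+1}-H^n=2\tau\Omega(\rho\Delta_x(\rho u),1)+\Omega(\|\rho^{n+1}\|^2-\|\rho^n\|^2)=2\tau\Omega(\Delta_x(\rho u),\rho)-2\tau\Omega(\Delta_x(\rho u),\rho)=0$. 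Throughout, the real care lies in bookkeeping the signs of the $\Omega$-coupling terms and in recognizing that the two identities in fact (iii) are genuinely needed, being the discrete surrogates of the continuous integration-by-parts that renders the high-order dispersive and $\sigma$-nonlinearity terms conservative.
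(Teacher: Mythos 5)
Your proposal is correct and follows essentially the same route as the paper's proof: the same test functions ($u^{n+\frac12}$ together with $(1-2\Omega\kappa)\rho^{n+\frac12}$ for the energy, the constant $1$ together with $2\Omega\rho^{n+\frac12}$ for the momentum, and $1$ for the mass), with the same crux identity $(\rho^{n+\frac12}\Delta_x(\rho^{n+\frac12}u^{n+\frac12}),1)=(\Delta_x(\rho^{n+\frac12}u^{n+\frac12}),\rho^{n+\frac12})$ balancing the two equations. The only cosmetic difference is that you make explicit the auxiliary antisymmetry facts $(\Delta_x\delta_x^2u,u)=0$ and $(\delta_x^2u,\Delta_xu)=0$, which the paper subsumes under its appeal to Lemma \ref{lemma1}.
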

\begin{proof}
 (\textbf{Energy}): Taking the $L^2$-inner product of \eqref{eq3.7} with $u^{n+\frac12}$, we readily have
 \begin{align*}
 &  (\delta_t u^{n+\frac12},u^{n+\frac12})-(\delta_t\delta_x^2u^{n+\frac12},u^{n+\frac12})-\kappa (\Delta_xu^{n+\frac12},u^{n+\frac12})
    +3(\psi(u^{n+\frac12},u^{n+\frac12}),u^{n+\frac12})\\
 =\; &
    3\sigma(\psi(\delta_x^2u^{n+\frac12},u^{n+\frac12}),u^{n+\frac12})-\mu(\Delta_x\delta_x^2u^{n+\frac12},u^{n+\frac12})
    -(1-2\Omega \kappa)\cdot(\rho^{n+\frac12}\Delta_x\rho^{n+\frac12},u^{n+\frac12})\\
&  +2\Omega(\rho^{n+\frac12}\Delta_x(\rho^{n+\frac12}u^{n+\frac12}),u^{n+\frac12}), \quad n\in \mathbb{K}_N^0.
 \end{align*}
 Using Lemma \ref{lemma1}, for $n\in \mathbb{K}_N^0$, the above equality deduces to
 \begin{align}
 \frac1{2\tau}(\|u^{n+1}\|^2-\|u^n\|^2)+\frac1{2\tau}(|u^{n+1}|_1^2-|u^n|_1^2)=(1-2\Omega \kappa)\cdot(\rho^{n+\frac12},\Delta_x(\rho^{n+\frac12}u^{n+\frac12})).\label{eq3.11}
 \end{align}
 Again taking the $L^2$-inner product of \eqref{eq3.8} with $\rho^{n+\frac12}$, we have
 \begin{align}
 (\delta_t\rho^{n+\frac12},\rho^{n+\frac12})+(\Delta_x(\rho^{n+\frac12}u^{n+\frac12}),\rho^{n+\frac12})=0, \quad  n\in \mathbb{K}_N^0.\label{eq3.12}
 \end{align}
 Multiplying \eqref{eq3.12} by $1-2\Omega \kappa$ and summing up with \eqref{eq3.11}, we have
 \begin{align*}
 \frac{1}{2\tau}\big[(\|u^{n+1}\|^2-\|u^n\|^2)+(|u^{n+1}|_1^2-|u^n|_1^2)+(1-2\Omega \kappa)\cdot(\|\rho^{n+1}\|^2-\|\rho^n\|^2)\big]=0,\quad n\in \mathbb{K}_N^0.
 \end{align*}
 Rearranging the equality above, we have
 \begin{align*}
 \frac12\big[\|u^{n+1}\|^2+|u^{n+1}|_1^2+(1-2\Omega \kappa)\cdot\|\rho^{n+1}\|^2\big]=\frac12\big[\|u^{n}\|^2+|u^{n}|_1^2+(1-2\Omega \kappa)\cdot\|\rho^{n}\|^2\big], \quad n\in \mathbb{K}_N^0,
 \end{align*}
 which implies \eqref{D_energy}.\\
 (\textbf{Momentum}): Taking the $L^2$-inner product of \eqref{eq3.7} with $1$, we have
 \begin{align*}
 &     (\delta_t u^{n+\frac12},1)-(\delta_t\delta_x^2u^{n+\frac12},1)-\kappa(\Delta_xu^{n+\frac12},1)+3(\psi(u^{n+\frac12},u^{n+\frac12}),1) \\
 =\;&
        3\sigma(\psi(\delta_x^2u^{n+\frac12},u^{n+\frac12}),1)-\mu(\Delta_x\delta_x^2u^{n+\frac12},1)
        -(1-2\Omega \kappa)\cdot(\rho^{n+\frac12}\Delta_x\rho^{n+\frac12},1)\\
&      +2\Omega(\rho^{n+\frac12}\Delta_x(\rho^{n+\frac12}u^{n+\frac12}),1), \quad n\in \mathbb{K}_N^0.
 \end{align*}
 Noticing Lemma \ref{lemma1}, the equality above becomes
 \begin{align}
 \frac{1}{\tau}\left[(u^{n+1},1)-(u^n,1)\right]-2\Omega(\rho^{n+\frac12}\Delta_x(\rho^{n+\frac12}u^{n+\frac12}),1)=0,\quad n\in \mathbb{K}_N^0.\label{eq3.15}
 \end{align}
 Taking an inner product of \eqref{eq3.8} with $2\Omega\rho^{n+\frac12}$, we have
 \begin{align}
 2\Omega(\delta_t\rho^{n+\frac12},\rho^{n+\frac12})+2\Omega(\Delta_x(\rho^{n+\frac12}u^{n+\frac12}),\rho^{n+\frac12})=0, \quad  n\in \mathbb{K}_N^0.\label{eq3.16}
 \end{align}
 Adding \eqref{eq3.15} and \eqref{eq3.16} together, we have
 \begin{align*}
  \frac{1}{\tau}\left[(u^{n+1},1)-(u^n,1)+\Omega (\|\rho^{n+1}\|^2-\|\rho^n\|^2)\right]=0,\quad  n\in \mathbb{K}_N^0.
 \end{align*}
 Rearranging the above equality, we have
 \begin{align*}
 (u^{n+1},1)+\Omega \|\rho^{n+1}\|^2=(u^n,1)+\Omega \|\rho^{n}\|^2, \quad  n\in \mathbb{K}_N^0.
 \end{align*}
 By reduction, it consequently yields \eqref{D_momentum}.\\
 (\textbf{Mass}): Taking an $L^2$-inner product of \eqref{eq3.8} directly with $1$, we have
 \begin{align}
 (\delta_t\rho^{n+\frac12},1)+(\Delta_x(\rho^{n+\frac12}u^{n+\frac12}),1)=0,\quad  n\in \mathbb{K}_N^0. \label{eq3.17}
 \end{align}
 By the summation by parts, the second term vanishes in \eqref{eq3.17}. Thus we have \eqref{D_mass},
which ends the proof.
\end{proof}
\begin{remark}
By Theorem \ref{theorem1_1}, we have
\begin{subequations}
\label{equa3.13}
\begin{numcases}{}
\|u^n\|^2+|u^n|_1^2\leqslant \|u^0\|^2+|u^0|_1^2+(1-2\Omega \kappa)\|\rho^0\|^2, \quad n\in \mathbb{K}_N,  \label{equa3.13a}\\
\|\rho^n\|^2\leqslant \frac{1}{1-2\Omega \kappa}\left(\|u^0\|^2+|u^0|_1^2+(1-2\Omega \kappa)\|\rho^0\|^2\right), \quad n\in \mathbb{K}_N. \label{equa3.13b}
\end{numcases}
\end{subequations}
In combination of Lemma \ref{lemma2} with \eqref{equa3.13a}, we further deduce that $\|u^n\|_\infty$ is bounded.
\end{remark}
\begin{remark}
   According to the discrete energy \eqref{D_energy}, we easily have
 $(\rho^n-1,1)=(\rho^0-1,1),\; n\in \mathbb{K}_N$,
 which can be viewed as a discrete edition of the continuous conservation of mass in \eqref{eq1.2c}.
 Furthermore, combining with \eqref{D_energy} and \eqref{D_mass},
 we know that the discrete energy satisfies
$\frac12\big[\|u^n\|^2+|u^n|_1^2+(1-2\Omega \kappa)\|\rho^n-1\|^2\big] =  \frac12\big[\|u^0\|^2+|u^0|_1^2+(1-2\Omega \kappa)\|\rho^n-1\|^2\big],\; n\in \mathbb{K}_N$,
 which recovers the energy conservation of the analytical form \eqref{eq1.2a}.
 Analogously, with the help of \eqref{D_momentum} and \eqref{D_mass}, the discrete momentum conservation becomes
 $(u^n,1)+\Omega \|\rho^{n}-1\|^2 = (u^n,1)+\Omega \|\rho^{n}-1\|^2,\; n\in \mathbb{K}_N$.
 \end{remark}

\subsection{Existence}
The following Browder theorem is very useful for the proof of the existence.
\begin{theorem}[Browder theorem {\rm \cite{AK1993}}]\label{Lemm3}
	Suppose $(H,(\cdot,\cdot))$ is an inner product space of finite dimension.
    $\|\cdot\|$ is the corresponding norm, $\Theta : H \rightarrow H$ is a continuous map. Further assume that
	$$\exists \,\alpha>0,\quad \forall \, z\in H,\quad \|z\|=\alpha,\quad (\Theta(z),z)\geqslant 0.$$
	Then there is an element $z^*\in H$ such that $\Theta(z^*)=0$ when $\|z^*\|\leqslant \alpha$.
\end{theorem}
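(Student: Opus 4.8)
The plan is to argue by contradiction and reduce the claim to Brouwer's fixed point theorem, which is the natural finite-dimensional tool here. Write $\bar{B}_\alpha=\{z\in H:\|z\|\leqslant\alpha\}$ for the closed ball and $S_\alpha=\{z\in H:\|z\|=\alpha\}$ for its bounding sphere. Suppose, contrary to the assertion, that $\Theta(z)\neq 0$ for every $z\in\bar{B}_\alpha$. Then $\|\Theta(z)\|>0$ throughout $\bar{B}_\alpha$, so the map
\[
g(z)=-\frac{\alpha\,\Theta(z)}{\|\Theta(z)\|}
\]
is well defined and continuous on $\bar{B}_\alpha$, and clearly $\|g(z)\|=\alpha$ for all $z$; in particular $g$ carries the compact convex set $\bar{B}_\alpha$ into itself.

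First I would invoke Brouwer's fixed point theorem (valid since $H$ is finite dimensional, so $\bar{B}_\alpha$ is homeomorphic to a closed Euclidean ball): there exists $z_0\in\bar{B}_\alpha$ with $g(z_0)=z_0$. Because $g$ takes its values in $S_\alpha$, the fixed point necessarily satisfies $\|z_0\|=\alpha$, and in particular $z_0\neq 0$.

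Next I would exploit the sign hypothesis. Taking the inner product of the fixed-point identity $z_0=g(z_0)$ with $z_0$ gives
\[
\alpha^2=\|z_0\|^2=(z_0,z_0)=(g(z_0),z_0)=-\frac{\alpha}{\|\Theta(z_0)\|}\,(\Theta(z_0),z_0).
\]
The left-hand side is strictly positive, while the scalar $\alpha/\|\Theta(z_0)\|$ is positive and, since $\|z_0\|=\alpha$, the coercivity assumption yields $(\Theta(z_0),z_0)\geqslant 0$; hence the right-hand side is nonpositive. This contradiction shows that the standing assumption $\Theta\neq 0$ on $\bar{B}_\alpha$ is untenable, so $\Theta$ must vanish at some $z^*$ with $\|z^*\|\leqslant\alpha$, which is the desired conclusion.

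The only genuinely nontrivial ingredient is Brouwer's fixed point theorem; everything else is the routine construction of the normalized map $g$ together with a one-line sign computation. Accordingly, the step deserving care is the legitimacy of $g$: one must verify that it is well defined and continuous on the whole closed ball, which is precisely where the contradiction hypothesis $\Theta(z)\neq 0$ on $\bar{B}_\alpha$ enters, and where the finite-dimensionality of $H$ is indispensable so that Brouwer's theorem is applicable.
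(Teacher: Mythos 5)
Your proof is correct: the contradiction argument via the normalized map $g(z)=-\alpha\,\Theta(z)/\|\Theta(z)\|$ and Brouwer's fixed point theorem on the closed ball, followed by the sign computation $\alpha^2=(g(z_0),z_0)=-\frac{\alpha}{\|\Theta(z_0)\|}(\Theta(z_0),z_0)\leqslant 0$, is exactly the standard proof of this lemma, and each step (well-definedness and continuity of $g$ under the contradiction hypothesis, the fixed point necessarily lying on the sphere, and the use of the coercivity condition there) checks out. Note that the paper itself states this result without proof, citing \cite{AK1993}, so there is no in-paper argument to compare against; your proof coincides with the classical one given in that reference.
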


\begin{theorem}[Existence]\label{theorem2.1}
	The difference scheme \eqref{equa3.7} has a solution.
\end{theorem}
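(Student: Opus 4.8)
The plan is to apply the Browder theorem (Theorem \ref{Lemm3}) one time step at a time, so the whole existence claim follows by induction on $n$ starting from the prescribed data \eqref{eq3.9}. Fixing $n\in\mathbb{K}_N^0$ and treating $u^n,\rho^n\in\mathcal{V}_h$ as known, I would take the half-step values $v:=u^{n+\frac12}$ and $\xi:=\rho^{n+\frac12}$ as the unknowns, so that $u^{n+1}=2v-u^n$, $\rho^{n+1}=2\xi-\rho^n$, and $\delta_t u^{n+\frac12}=\frac{2}{\tau}(v-u^n)$, $\delta_t\rho^{n+\frac12}=\frac{2}{\tau}(\xi-\rho^n)$. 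On the finite-dimensional space $H=\mathcal{V}_h\times\mathcal{V}_h$ I equip the weighted inner product $\langle(v_1,\xi_1),(v_2,\xi_2)\rangle_H:=(v_1,v_2)+(1-2\Omega\kappa)(\xi_1,\xi_2)$, which is genuinely positive definite since $1-2\Omega\kappa>0$, and I define $\Theta(v,\xi)=(\Theta_1,\Theta_2)$ by moving every term of \eqref{eq3.7}--\eqref{eq3.8} onto one side, so that a zero of $\Theta$ is exactly a solution at level $n+1$. Continuity of $\Theta$ is immediate because each component is a polynomial in the entries of $(v,\xi)$.

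The heart of the argument is verifying the coercivity hypothesis $\langle\Theta(z),z\rangle_H=(\Theta_1,v)+(1-2\Omega\kappa)(\Theta_2,\xi)\geqslant0$ for $\|z\|_H$ large, and here I would reuse the cancellations already established in the energy identity of Theorem \ref{theorem1_1}. By Lemma \ref{lemma1} the terms $-\kappa(\Delta_x v,v)$, $3(\psi(v,v),v)$ and $3\sigma(\psi(\delta_x^2 v,v),v)$ all vanish; the dispersive contribution satisfies $\mu(\Delta_x\delta_x^2 v,v)=0$ because $\Delta_x$ and $\delta_x^2$ commute and $\Delta_x$ is skew-symmetric; the term $-2\Omega(\xi\Delta_x(\xi v),v)=-2\Omega(\Delta_x(\xi v),\xi v)=0$ by $(\Delta_x w,w)=0$; and the remaining coupling term $(1-2\Omega\kappa)(\xi\Delta_x\xi,v)$ cancels exactly against the convective part of $(1-2\Omega\kappa)(\Theta_2,\xi)$, precisely as in the passage \eqref{eq3.11}--\eqref{eq3.12}. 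What survives is the quadratic form
\begin{align*}
\langle\Theta(z),z\rangle_H=\frac{2}{\tau}\big[\|v\|^2+|v|_1^2+(1-2\Omega\kappa)\|\xi\|^2\big]-\frac{2}{\tau}\big[(u^n,v)+\langle\delta_x u^n,\delta_x v\rangle+(1-2\Omega\kappa)(\rho^n,\xi)\big].
\end{align*}

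Applying Young's inequality to the second bracket and absorbing half of the positive part, I obtain $\langle\Theta(z),z\rangle_H\geqslant\frac{1}{\tau}(P-P_n)$, where $P=\|v\|^2+|v|_1^2+(1-2\Omega\kappa)\|\xi\|^2\geqslant\|z\|_H^2$ and $P_n=\|u^n\|^2+|u^n|_1^2+(1-2\Omega\kappa)\|\rho^n\|^2$ is a known constant. Choosing $\alpha=\sqrt{P_n+1}$ then forces $\langle\Theta(z),z\rangle_H\geqslant\frac{1}{\tau}(\alpha^2-P_n)>0$ whenever $\|z\|_H=\alpha$, so Theorem \ref{Lemm3} yields a zero $z^{\star}=(v^{\star},\xi^{\star})$ with $\|z^{\star}\|_H\leqslant\alpha$, hence the solution $(u^{n+1},\rho^{n+1})=(2v^{\star}-u^n,\,2\xi^{\star}-\rho^n)$. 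I expect the only genuine obstacle to be the bookkeeping that identifies the correct weighted pairing $(\cdot,v)+(1-2\Omega\kappa)(\cdot,\xi)$: this weight is exactly what makes the convective, dispersive, and coupling terms telescope away through Lemma \ref{lemma1}, leaving a coercive quadratic. The subtle points are the $\mu$-term cancellation and the fact that $\|z\|_H$ omits the seminorm $|\cdot|_1$ while the operator side retains it, so one must use $P\geqslant\|z\|_H^2$ rather than equality; everything else is routine.
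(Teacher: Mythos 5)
Your proposal is correct, and every cancellation you claim checks out: the $\mu$-term vanishes since $\Delta_x$ is skew-adjoint and commutes with the self-adjoint $\delta_x^2$, the term $-2\Omega(\xi\Delta_x(\xi v),v)=-2\Omega(\Delta_x(\xi v),\xi v)=0$, and the weighted pairing makes $(1-2\Omega\kappa)(\xi\Delta_x\xi,v)$ cancel $(1-2\Omega\kappa)(\Delta_x(\xi v),\xi)$ exactly as in the energy identity \eqref{eq3.11}--\eqref{eq3.12}; your use of $P\geqslant\|z\|_H^2$ with $\alpha=\sqrt{P_n+1}$ then satisfies the hypothesis of Theorem \ref{Lemm3}. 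However, your route differs genuinely from the paper's. The paper does not work on the product space: it eliminates $\rho$ through the discrete continuity equation \eqref{Exeq1.2}, defines $\Theta$ on the $u$-space alone (with $\rho=\rho(u)$ determined implicitly), and handles the coupling term by substituting $\Delta_x(\rho u)=-\frac{2}{\tau}(\rho-\rho^n)$ into $(\rho\Delta_x\rho,u)=-(\rho,\Delta_x(\rho u))$, which yields the lower bound $\frac{2}{\tau}\bigl(\|\rho\|^2-(\rho,\rho^n)\bigr)\geqslant-\frac{1}{2\tau}\|\rho^n\|^2$ rather than an exact cancellation; Browder is then applied in the plain $L^2$ norm on $u$ with $\alpha^2=\|u^n\|^2+\frac12\|\delta_x u^n\|^2+\frac{1-2\Omega\kappa}{2}\|\rho^n\|^2$. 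The trade-off: the paper's reduction uses a smaller space and delivers an explicit radius for $\|u^\ast\|$, but it requires the map $u\mapsto\rho(u)$ to be well defined and continuous for $\Theta$ to make sense, and the paper secures unique solvability of the $\rho$-equation only under the condition $\frac{\tau}{2h}\|u\|_\infty<1$ in a subsequent remark, leaving the continuity of $\Theta$ somewhat informal. Your product-space formulation with the weighted inner product $(v_1,v_2)+(1-2\Omega\kappa)(\xi_1,\xi_2)$ (positive definite since $1-2\Omega\kappa>0$) sidesteps this entirely --- your $\Theta$ is a polynomial map, manifestly continuous, and one application of Browder produces both $u^{n+1}$ and $\rho^{n+1}$ simultaneously without any stepsize restriction, which is arguably a cleaner and more robust argument than the paper's own.
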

\begin{proof}
Suppose $\{u^n,\,\rho^n\}$ has been determined.
Denote $u_i=u_i^{n+\frac12}$ and $\rho_i=\rho_i^{n+\frac12}$. The difference scheme \eqref{eq3.7}--\eqref{eq3.8} could be rewritten as
\begin{subequations}
\label{Exeq1}
\begin{numcases}{}
 \frac{2}{\tau}(u_i-u_i^n)-\frac{2}{\tau}(\delta_x^2 u_i - \delta_x^2 u_i^n)-\kappa\Delta_xu_i+3\psi(u,u)_i
   -3\sigma\psi(\delta_x^2u,u)_i\nonumber\\
  \quad +\mu\Delta_x\delta_x^2u_i+(1-2\Omega \kappa)\rho_i\Delta_x\rho_i-2\Omega\rho_i \Delta_x(\rho u)_i=0, \quad i\in\mathbb{I}_M,\label{Exeq1.1}\\
  \frac{2}{\tau}(\rho_i-\rho_i^n)+\Delta_x(\rho u)_i=0, \quad i\in\mathbb{I}_M.\label{Exeq1.2}
\end{numcases}
\end{subequations}

Suppose $u\in \mathcal{U}_h$, we define an operator $\Theta: \mathcal{U}_h \rightarrow \mathcal{U}_h$ by
\begin{align}
&  \Theta(u)_i = \frac{2}{\tau}(u_i-u_i^n) - \frac{2}{\tau}(\delta_x^2 u_i-\delta_x^2 u_i^n) -\kappa \Delta_xu_i +3\psi(u,u)_i
   -3\sigma\psi(\delta_x^2u,u)_i\nonumber\\
&  \qquad \qquad +\mu\Delta_x\delta_x^2u_i+(1-2\Omega \kappa)\rho_i\Delta_x\rho_i-2\Omega\rho_i \Delta_x(\rho u)_i, \quad i\in\mathbb{I}_M,\label{Exeq1.3}
\end{align}
where $\rho$ is determined by \eqref{Exeq1.2} when $u$ is known.

In what follows, we will show the existence of $u_i$. Taking a standard $L^2$-inner product of $\Theta(u)$ in \eqref{Exeq1.3} with $u$, it follows
\begin{align}
& (\Theta(u),u) = \frac{2}{\tau}(\|u\|^2-(u^n,u))+\frac{2}{\tau}(\|\delta_x u\|^2-(\delta_x u^n,\delta_x u))-\kappa (\delta_x u, u)+ 3(\psi(u,u),u) \notag\\ &\qquad \qquad \quad -3\sigma (\psi(\delta_x^2u,u),u) + \mu(\Delta_x \delta_x^2 u,u) + (1-2\Omega \kappa)(\rho\Delta_x\rho,u) -2\Omega(\rho(\rho u),u).\label{Exeq1.4}
\end{align}
With the aid of Lemma \ref{lemma1}, it yields
\begin{align*}
  (\delta_x u, u) = 0,\quad
  (\psi(u,u),u) = 0, \quad
  (\psi(\delta_x^2u,u),u) = 0,\quad
  (\Delta_x \delta_x^2 u,u) = 0.
\end{align*}
Using the summation by parts, we have
\begin{align*}
  (\rho \Delta_x \rho, u) & = (\Delta_x \rho, \rho u)\\
                          & = -(\rho,\Delta_x(\rho u))  \overset{\eqref{Exeq1.2}}{=} \Big(\rho, \frac{2}{\tau}(\rho-\rho^n)\Big)\\
                          & =\frac{2}{\tau}(\|\rho\|^2-(\rho,\rho^n))\\
                          &\geqslant \frac{2}{\tau} (-\frac{1}{4}\|\rho^n\|^2)
\end{align*}
and
\begin{align*}
  (\rho \Delta_x(\rho u),u) = (\Delta_x(\rho u),\rho u) = 0.
\end{align*}
In addition,
\begin{align*}
&  \|u\|^2-(u,u^n) + \|\delta_x u\|^2 - (\delta_x u, \delta_x u^n)\\
\geqslant & \|u\|^2 -\|u\|\cdot\|u^n\| + \|\delta_x u\|^2 - \|\delta_x u\|\cdot\|\delta_x u^n\|\\
\geqslant & \|u\|^2 -(\frac{1}{2}\|u\|^2 + \frac12\|u^n\|^2) + \|\delta_x u\|^2 - (\|\delta_x u\|^2+\frac14\|\delta_x u^n\|^2)\\
= & \frac12 \|u\|^2 -\frac12\|u^n\|^2 -\frac14\|\delta_x u^n\|^2,
\end{align*}
in which the Cauchy Schwarz inequality and $ab\leqslant \varepsilon a^2+\frac{1}{4\varepsilon}b^2$ are utilized.

Substituting the above formulas into \eqref{Exeq1.4}, we have
\begin{align*}
  (\Theta(u),u) &\geqslant \frac{2}{\tau}\Big(\frac12 \|u\|^2 -\frac12\|u^n\|^2 -\frac14\|\delta_x u^n\|^2\Big) + (1-2\Omega \kappa)\cdot\frac{2}{\tau} (-\frac{1}{4}\|\rho^n\|^2)\\
                & = \frac{1}{\tau}\Big(\|u\|^2 -\|u^n\|^2 -\frac12\|\delta_x u^n\|^2 -\frac{1-2\Omega \kappa}{2}\|\rho^n\|^2 \Big).
\end{align*}
When $\|u\| = \Big(\|u^n\|^2 +\frac12\|\delta_x u^n\|^2 +\frac{1-2\Omega \kappa}{2}\|\rho^n\|^2 \Big)^{1/2}$, we have $(\Theta(u),u)\geqslant 0$.
By the Browder theorem \ref{Lemm3}, there is a $u^\ast\in \mathcal{U}_h$ satisfying
$$\|u^\ast\| \leqslant \Big(\|u^n\|^2 + \frac12\|\delta_x u^n\|^2 +\frac{1-2\Omega \kappa}{2}\|\rho^n\|^2\Big)^{1/2}$$
such that $\Theta(u^\ast) = 0$.

This finishes the proof.

\end{proof}
\begin{remark}
  In fact, once $u$ has been determined, we can calculate $\rho$ by the homogeneous system
\begin{align*}
  \frac{2}{\tau}\rho_i+\Delta_x(\rho u)_i = 0,\quad i\in\mathbb{I}_M.
\end{align*}
Taking an inner product of the above equality with $\rho$, we have
\begin{align*}
  \|\rho\|^2 + \frac{\tau}{2}(\rho,\Delta_x(\rho u)) = 0.
\end{align*}
Rearranging the above inequality and by means of the Cauchy-Schwarz inequality, it follows
\begin{align*}
  \|\rho\|^2 = -\frac{\tau}{2}(\rho,\Delta_x(\rho u)) = \frac{\tau}{2}(\rho u, \Delta_x \rho)\leqslant \frac{\tau}{2}\|u\|_\infty\cdot\|\rho\|\cdot\|\Delta_x \rho\| \leqslant \frac{\tau}{2h} \|u\|_\infty \cdot\|\rho\|^2.
\end{align*}
Therefore, there is a unique solution $\rho$ to the difference scheme \eqref{Exeq1.2} when $\frac{\tau}{2h}\|u\|_\infty<1$.
\end{remark}

\section{Convergence} \label{section4}
\subsection{Convergence for the difference scheme with zero rotation parameter ($\Omega = 0$)}
In this case, the problem \eqref{equa1.1} deduces to the familiar generalized two-component Dullin-Gottwald-Holm system, see e.g., \cite{HGG2013}.
 Denote
 \begin{align*}
& e_i^n=U_i^n-u_i^n,\quad f_i^n=\Pi_i^n-\rho_i^n, \quad i\in\mathbb{I}_M,\; n\in \mathbb{K}_N,\\
& c_2=\frac12\max\{1+c_{\rm max}+3c_0+\sigma c_0,\; c_{\rm max}+4\sigma c_0\},\quad c_3=c_1\exp\Big(\frac32 c_2T\Big)\sqrt{\frac{L}{c_2}},
  \end{align*}
then the convergence with $\Omega = 0$ is followed.
\begin{theorem} \rm\label{theorem3 1}
 Suppose $\{U_i^n,\,\Pi_i^n\,|\,i\in\mathbb{I}_M,\; n\in \mathbb{K}_N\}$ is the solution of the problem \eqref{equa1.1}--\eqref{eq1.4}, $\{u_i^n,\,\rho_i^n\,|\,i\in\mathbb{I}_M,\; n\in \mathbb{K}_N\}$ is the approximation solution of the difference scheme \eqref{equa3.7}. When $\Omega=0$ and $3c_2\tau \leq 1$,
we have
 \begin{align*}
 \|e^n\| \leqslant c_3(\tau^2+h^2), \quad |e^n|_1 \leqslant c_3(\tau^2+h^2),\quad \|f^n\| \leqslant c_3(\tau^2+h^2),  \quad n\in \mathbb{K}_N.
 \end{align*}
 \end{theorem}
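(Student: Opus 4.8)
The plan is to run a discrete energy estimate on the error system obtained by subtracting the scheme \eqref{eq3.7}--\eqref{eq3.8} (with $\Omega=0$) from the consistency identities \eqref{eq3.1}--\eqref{eq3.2}, so that $e^{n+\frac12}$ and $f^{n+\frac12}$ solve equations with the same difference operators while carrying the truncation terms $Q^{n+\frac12},R^{n+\frac12}$ on the right. First I would linearize every nonlinearity by bilinearity of $\psi$ and by splitting products, e.g. $\psi(U,U)-\psi(u,u)=\psi(e,U)+\psi(u,e)$, $\psi(\delta_x^2U,U)-\psi(\delta_x^2u,u)=\psi(\delta_x^2e,U)+\psi(\delta_x^2u,e)$, $\Pi\Delta_x\Pi-\rho\Delta_x\rho=f\Delta_x\Pi+\rho\Delta_xf$, and $\Pi U-\rho u=fU+\rho e$. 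The guiding principle is to keep the smooth exact factor in the slot that survives Lemma \ref{lemma1}; in particular, since $(\psi(\delta_x^2e,e),e)=0$, I may swap the numerical factor in the surviving term for the exact one for free, i.e. $(\psi(\delta_x^2e,u),e)=(\psi(\delta_x^2e,U),e)$, so that every term needing a bound is measured against $U,\Pi$, whose derivatives are controlled by $c_0,c_{\mathrm{max}}$. This is the reason no a priori bound on the numerical solution will be required.

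Next I would take the inner product of the $e$-equation with $e^{n+\frac12}$ and of the $f$-equation with $f^{n+\frac12}$. By Lemma \ref{lemma1} the terms $(\Delta_xe,e)$, $(\Delta_x\delta_x^2e,e)$, $(\psi(u,e),e)$ and $(\psi(\delta_x^2u,e),e)$ all vanish, while the time terms telescope as $(\delta_te^{n+\frac12},e^{n+\frac12})=\frac{1}{2\tau}(\|e^{n+1}\|^2-\|e^n\|^2)$ and $-(\delta_t\delta_x^2e^{n+\frac12},e^{n+\frac12})=\frac{1}{2\tau}(|e^{n+1}|_1^2-|e^n|_1^2)$, and similarly for $f$. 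The crucial step is the coincidental cancellation of the coupling: the term $-(\rho\Delta_xf,e)$ from the $e$-equation is annihilated by the contribution $+(\rho\Delta_xf,e)$ arising from $(\Delta_x(\rho e),f)=-(\rho\Delta_xf,e)$ in the $f$-equation, exactly mirroring the mechanism that produced energy conservation in Theorem \ref{theorem1_1}. Summing the two identities then yields a clean relation for $G^n:=\|e^n\|^2+|e^n|_1^2+\|f^n\|^2$ whose right-hand side contains only $-3(\psi(e,U),e)$, $3\sigma(\psi(\delta_x^2e,U),e)$, $-(f\Delta_x\Pi,e)$, $-(\Delta_x(fU),f)$ and the consistency contribution $(Q^{n+\frac12},e^{n+\frac12})+(R^{n+\frac12},f^{n+\frac12})$.

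The remaining work is to bound each surviving term by $C(\|e^{n+\frac12}\|^2+|e^{n+\frac12}|_1^2+\|f^{n+\frac12}\|^2)$. The harmless ones follow from summation by parts and a discrete product rule; for example $(\Delta_x(fU),f)=\frac{h}{2}\sum_i f_if_{i+1}\,\delta_xU_{i+\frac12}$ is $O(\|f\|^2)$ because the product rule leaves a factor $\delta_xU$ bounded by $c_0$, and $(f\Delta_x\Pi,e)\le c_0\|f\|\|e\|$. The genuinely delicate term is $3\sigma(\psi(\delta_x^2e,U),e)$, which naively exposes the second difference $\delta_x^2e$ that $G^n$ does not control; I expect this to be the main obstacle. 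The resolution is the refined bilinear estimate: with the telescoping identity $\delta_x^2e_i\,\Delta_xe_i=\frac{1}{2h}\big((\delta_xe_{i+\frac12})^2-(\delta_xe_{i-\frac12})^2\big)$, together with $(\delta_x^2v,w)=-\langle\delta_xv,\delta_xw\rangle$ from Lemma \ref{lemma1} and a discrete Abel summation against the smooth $U$, every occurrence of $\delta_x^2e$ is converted into $|e|_1^2$ and $\|e\|^2$ weighted by $\delta_xU,\delta_x^2U$ (hence by $c_0,c_{\mathrm{max}}$), giving $|(\psi(\delta_x^2e,U),e)|\le C(|e|_1^2+\|e\|^2)$; the lower-order $(\psi(e,U),e)$ is estimated the same way.

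Collecting the bounds, absorbing each half-step quantity via $\|e^{n+\frac12}\|^2\le\frac12(\|e^n\|^2+\|e^{n+1}\|^2)$ (and likewise for the other norms), and using $\|Q^{n+\frac12}\|,\|R^{n+\frac12}\|\le c_1\sqrt{L}(\tau^2+h^2)$ from \eqref{equa3.3}, I would arrive at a recursion of the form $G^{n+1}-G^n\le C\tau(G^{n+1}+G^n)+C\tau(\tau^2+h^2)^2$ with $C$ proportional to $c_2$. The step condition $3c_2\tau\le1$ keeps the coefficient of $G^{n+1}$ strictly below one, so it can be moved to the left and the discrete Gronwall inequality applied; since the data are exact we have $G^0=0$, and the Gronwall factor produces precisely the $\exp(\tfrac32c_2T)$ and $\sqrt{L/c_2}$ appearing in $c_3$. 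This yields $G^n\le c_3^2(\tau^2+h^2)^2$, whence $\|e^n\|,|e^n|_1,\|f^n\|\le c_3(\tau^2+h^2)$. Apart from the second-difference term, the only real subtlety is to verify that no boundedness of the numerical solution is ever invoked, which is exactly what the slot-swapping and the coupling cancellation secure, and is what makes the estimate hold with no step-ratio restriction beyond $3c_2\tau\le1$.
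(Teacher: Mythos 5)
Your proposal is correct and is essentially the paper's own proof: the same error system, the same use of Lemma \ref{lemma1} to annihilate the skew terms, the same coupling cancellation leaving exactly $-(f^{n+\frac12}\Delta_x\Pi^{n+\frac12},e^{n+\frac12})$ and $-(\Delta_x(f^{n+\frac12}U^{n+\frac12}),f^{n+\frac12})$, the same telescoping identity $\delta_x^2e_i\,\Delta_xe_i=\frac{1}{2h}\big[(\delta_xe_{i+\frac12})^2-(\delta_xe_{i-\frac12})^2\big]$ and summation-by-parts product rule for the $\sigma$-terms, and the same Gronwall recursion under $3c_2\tau\leqslant 1$ with $F^0=0$. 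The only (cosmetic) difference is your bilinear splitting $\psi(U,U)-\psi(u,u)=\psi(e,U)+\psi(u,e)$ and $\Pi\Delta_x\Pi-\rho\Delta_x\rho=f\Delta_x\Pi+\rho\Delta_xf$, which keeps the numerical factor in the slot killed by $(\psi(\cdot,v),v)=0$ rather than expanding $u=U-e$, $\rho=\Pi-f$ and cancelling the quadratic error terms pairwise as the paper does; both yield the identical list of surviving terms and constants.
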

\begin{proof}
When $\Omega=0$, subtracting \eqref{equa3.7} from \eqref{equa3.1} and \eqref{equa3.5}, we get the following error system
\begin{subequations}
\label{equa3.19}
\begin{numcases}{}
  \delta_t e_i^{n+\frac12}-\delta_t\delta_x^2e_i^{n+\frac12}-\kappa\Delta_xe_i^{n+\frac12}+3[\psi(U^{n+\frac12},U^{n+\frac12})_i-\psi(u^{n+\frac12},u^{n+\frac12})_i]
   \nonumber\\
= 3\sigma[\psi(\delta_x^2U^{n+\frac12},U^{n+\frac12})_i-\psi(\delta_x^2u^{n+\frac12},u^{n+\frac12})_i]-\mu\Delta_x\delta_x^2e_i^{n+\frac12}\nonumber\\
 \quad -[\Pi_i^{n+\frac12}\Delta_x\Pi_i^{n+\frac12}-\rho_i^{n+\frac12}\Delta_x\rho_i^{n+\frac12}]+Q_i^{n+\frac12}, \quad i\in\mathbb{I}_M,\; n\in \mathbb{K}_N^0,\label{eq3.19}\\
 \delta_t f_i^{n+\frac12}+[\Delta_x(\Pi^{n+\frac12}U^{n+\frac12})_i-\Delta_x(\rho^{n+\frac12}u^{n+\frac12})_i]=R_i^{n+\frac12},\quad i\in\mathbb{I}_M,\; n\in \mathbb{K}_N^0,\label{eq3.20}\\
 e_i^0=0,\; f_i^0=0, \quad i\in\mathbb{I}_M,   \label{eq3.21}\\
 e_i^n=e_{i+M}^{n},\; f_i^n=f_{i+M}^n,\quad i\in\mathbb{I}_M,\; n\in \mathbb{K}_N.\label{eq3.22}
\end{numcases}
\end{subequations}
An inner product of \eqref{eq3.19} with $e^{n+\frac12}$ is carried out, we have
\begin{align*}
&\;  (\delta_t e^{n+\frac12},e^{n+\frac12})-(\delta_t\delta_x^2e^{n+\frac12},e^{n+\frac12})
   -\kappa(\Delta_xe^{n+\frac12},e^{n+\frac12})\nonumber\\
&\; +3(\psi(U^{n+\frac12},U^{n+\frac12})-\psi(u^{n+\frac12},u^{n+\frac12}),e^{n+\frac12})  \\
= &\; 3\sigma(\psi(\delta_x^2U^{n+\frac12},U^{n+\frac12})-\psi(\delta_x^2u^{n+\frac12},u^{n+\frac12}),e^{n+\frac12})
   -\mu(\Delta_x\delta_x^2e^{n+\frac12},e^{n+\frac12})\nonumber\\
& -(\Pi^{n+\frac12}\Delta_x\Pi^{n+\frac12}-\rho^{n+\frac12}\Delta_x\rho^{n+\frac12},e^{n+\frac12})+(Q^{n+\frac12},e^{n+\frac12}), \quad i\in\mathbb{I}_M,\; n\in \mathbb{K}_N^0.
\end{align*}
Employing Lemma \ref{lemma1}, the equality above becomes
\begin{align}
 & \frac{1}{2\tau}\left[(\|e^{n+1}\|^2-\|e^n\|^2)+(|e^{n+\frac12}|_1^2-|e^n|_1^2)\right]
  +3(\psi(U^{n+\frac12},U^{n+\frac12})-\psi(u^{n+\frac12},u^{n+\frac12}),e^{n+\frac12}) \nonumber\\
=&\; 3\sigma(\psi(\delta_x^2U^{n+\frac12},U^{n+\frac12})-\psi(\delta_x^2u^{n+\frac12},u^{n+\frac12}),e^{n+\frac12})
  -(\Pi^{n+\frac12}\Delta_x\Pi^{n+\frac12}-\rho^{n+\frac12}\Delta_x\rho^{n+\frac12},e^{n+\frac12})\nonumber\\
 & +(Q^{n+\frac12},e^{n+\frac12}),\quad n\in \mathbb{K}_N^0.\label{eq3.23}
\end{align}
With the application of Lemma \ref{lemma1}, we have
\begin{align}
 &-(\psi(U^{n+\frac12},U^{n+\frac12})-\psi(u^{n+\frac12},u^{n+\frac12}),e^{n+\frac12})\nonumber\\
= &-(\psi(U^{n+\frac12},U^{n+\frac12})-\psi(U^{n+\frac12}-e^{n+\frac12},U^{n+\frac{1}{2}}-e^{n+\frac12}),e^{n+\frac12})\nonumber\\
= &-(\psi(U^{n+\frac12},e^{n+\frac12})+\psi(e^{n+\frac12},U^{n+\frac12})-\psi(e^{n+\frac12},e^{n+\frac12}), e^{n+\frac12})\notag\\
= &-(\psi(e^{n+\frac12},U^{n+\frac12}), e^{n+\frac12})\notag\\
=& -\frac13(e^{n+\frac12}\Delta_xU^{n+\frac12},e^{n+\frac12})+\frac13(e^{n+\frac12}U^{n+\frac12},\Delta_xe^{n+\frac12}). \label{eq3.24}
\end{align}
Analogously, the first term and second term on the right-hand side of \eqref{eq3.23} becomes
\begin{align}
  &(\psi(\delta_x^2U^{n+\frac12},U^{n+\frac12})-\psi(\delta_x^2u^{n+\frac12},u^{n+\frac12}),e^{n+\frac12})\nonumber\\
= &( \psi(\delta_x^2U^{n+\frac12},e^{n+\frac12})+\psi(\delta_x^2e^{n+\frac12},U^{n+\frac12})-\psi(\delta_x^2e^{n+\frac12},e^{n+\frac12}),
e^{n+\frac12})\notag\\
= &(\psi(\delta_x^2e^{n+\frac12},U^{n+\frac12}),e^{n+\frac12})\notag\\
= &\frac13(\delta_x^2e^{n+\frac12},\Delta_xU^{n+\frac12}\cdot e^{n+\frac12})-\frac13(\delta_x^2e^{n+\frac12}\cdot
   U^{n+\frac12},\Delta_xe^{n+\frac12}),\label{eq3.25}
\end{align}
and
\begin{align}
  &-(\Pi^{n+\frac12}\Delta_x\Pi^{n+\frac12}-\rho^{n+\frac12}\Delta_x\rho^{n+\frac12},e^{n+\frac12})\nonumber\\
=&-(\Pi^{n+\frac12}\Delta_x\Pi^{n+\frac12}-(\Pi^{n+\frac12}-f^{n+\frac12})\Delta_x(\Pi^{n+\frac12}-f^{n+\frac12}),e^{n+\frac12})\nonumber\\
=& -(\Pi^{n+\frac12}\Delta_xf^{n+\frac12}+f^{n+\frac12}\Delta_x\Pi^{n+\frac12}-f^{n+\frac12}\Delta_xf^{n+\frac12},e^{n+\frac12})\notag\\
=&-(\Pi^{n+\frac12}\Delta_xf^{n+\frac12},e^{n+\frac12})-(f^{n+\frac12}\Delta_x\Pi^{n+\frac12},e^{n+\frac12})
 +(f^{n+\frac12}\Delta_xf^{n+\frac12},e^{n+\frac12})\nonumber\\
=&(f^{n+\frac12},\Delta_x(\Pi^{n+\frac12}e^{n+\frac12}))-(f^{n+\frac12}\Delta_x\Pi^{n+\frac12},e^{n+\frac12})
 -(f^{n+\frac12},\Delta_x(f^{n+\frac12}e^{n+\frac12})).\label{eq3.26}
\end{align}
Substituting \eqref{eq3.24}--\eqref{eq3.26} into \eqref{eq3.23}, we have
\begin{align}
&    \frac{1}{2\tau}\left[(\|e^{n+1}\|^2-\|e^n\|^2)+(|e^{n+\frac12}|_1^2-|e^n|_1^2)\right]\nonumber\\
= & \; -(e^{n+\frac12}\Delta_xU^{n+\frac12},e^{n+\frac12})
     +(e^{n+\frac12}U^{n+\frac12},\Delta_xe^{n+\frac12})+\sigma(\delta_x^2e^{n+\frac12},\Delta_xU^{n+\frac12}\cdot e^{n+\frac12})\nonumber\\
&   -\sigma(\delta_x^2e^{n+\frac12}\cdot U^{n+\frac12},\Delta_xe^{n+\frac12})
   +(f^{n+\frac12},\Delta_x(\Pi^{n+\frac12}e^{n+\frac12}))-(f^{n+\frac12}\Delta_x\Pi^{n+\frac12},e^{n+\frac12})\nonumber\\
&  -(f^{n+\frac12},\Delta_x(f^{n+\frac12}e^{n+\frac12}))+(Q^{n+\frac12},e^{n+\frac12}),\quad  n\in \mathbb{K}_N^0. \label{eq3.26b}
\end{align}
A similar inner product of \eqref{eq3.20} with $f^{n+\frac12}$ is made, we have
\begin{align}
\frac{1}{2\tau}(\|f^{n+1}\|^2-\|f^n\|^2)+(\Delta_x(\Pi^{n+\frac12}U^{n+\frac12})-\Delta_x(\rho^{n+\frac12}u^{n+\frac12}),f^{n+\frac12})
=(R^{n+\frac12},f^{n+\frac12}).\label{eq3.27}
\end{align}
Similar to the derivation in \eqref{eq3.24}, we have
\begin{align}
&     -(\Delta_x(\Pi^{n+\frac12}U^{n+\frac12})-\Delta_x(\rho^{n+\frac12}u^{n+\frac12}),f^{n+\frac12})\nonumber\\
=&\;  -(\Delta_x(\Pi^{n+\frac12}U^{n+\frac12})-\Delta_x(\Pi^{n+\frac12}-f^{n+\frac{1}{2}})(U^{n+\frac12}-e^{n+\frac12}),f^{n+\frac12})\nonumber\\
=&\;- (\Delta_x(\Pi^{n+\frac12}e^{n+\frac12})+\Delta_x(f^{n+\frac12}U^{n+\frac12})-\Delta_x(f^{n+\frac12}e^{n+\frac12}),f^{n+\frac12})\notag\\
=&\;  -(\Delta_x(\Pi^{n+\frac12}e^{n+\frac12}),f^{n+\frac12})-(\Delta_x(f^{n+\frac12}U^{n+\frac12}),f^{n+\frac12})
      +(\Delta_x(f^{n+\frac12}e^{n+\frac12}),f^{n+\frac12}).\label{eq3.28}
\end{align}
Plugging \eqref{eq3.28} into \eqref{eq3.27}, and simple calculation yields
\begin{align}
  &\; \frac{1}{2\tau}(\|f^{n+1}\|^2-\|f^n\|^2)\notag\\
=&\; -(\Delta_x(\Pi^{n+\frac12}e^{n+\frac12}),f^{n+\frac12})-(\Delta_x(f^{n+\frac12}U^{n+\frac12}),f^{n+\frac12})\nonumber\\
&\;      +(\Delta_x(f^{n+\frac12}e^{n+\frac12}),f^{n+\frac12})+(R^{n+\frac12},f^{n+\frac12}).\label{eq3.29}
\end{align}
Summing up \eqref{eq3.26b} with \eqref{eq3.29}, we have
\begin{align}
 &\;  \frac{1}{2\tau}\left[\|e^{n+1}\|^2-\|e^n\|^2)+(|e^{n+\frac12}|_1^2-|e^n|_1^2)+(\|f^{n+1}\|^2-\|f^n\|^2)\right] \nonumber\\
=&\; -(e^{n+\frac12}\Delta_xU^{n+\frac12},e^{n+\frac12})
   +(e^{n+\frac12}U^{n+\frac12},\Delta_xe^{n+\frac12})+\sigma(\delta_x^2e^{n+\frac12},\Delta_xU^{n+\frac12}\cdot e^{n+\frac12})\nonumber\\
&\;  -\sigma(\delta_x^2e^{n+\frac12}\cdot U^{n+\frac12},\Delta_xe^{n+\frac12})
   -(f^{n+\frac12}\Delta_x\Pi^{n+\frac12},e^{n+\frac12})-(\Delta_x(f^{n+\frac12}U^{n+\frac12}),f^{n+\frac12})\nonumber\\
&\;  +(Q^{n+\frac12},e^{n+\frac12})+(R^{n+\frac12},f^{n+\frac12}),\nonumber\\
=&\; \sum_{i=1}^8 J_i, \quad  n\in \mathbb{K}_N^0.\label{eq3.30}
\end{align}
With the help of the Cauchy-Schwarz inequality and combining assumption conditions \eqref{eq3.0a} with  \eqref{eq3.0c}, we easily have the estimates as
\begin{align*}
&J_1=-(e^{n+\frac12}\Delta_xU^{n+\frac12},e^{n+\frac12})\leqslant c_0\|e^{n+\frac12}\|^2,\\
&J_2=(e^{n+\frac12}U^{n+\frac12},\Delta_xe^{n+\frac12})\leqslant c_{\rm max}\|e^{n+\frac12}\|\cdot|e^{n+\frac12}|_1,\\
&J_5=-(f^{n+\frac12}\Delta_x\Pi^{n+\frac12},e^{n+\frac12})\leqslant c_0\|f^{n+\frac12}\|\cdot \|e^{n+\frac12}\|,\\
&J_7=(Q^{n+\frac12},e^{n+\frac12})\leqslant \|Q^{n+\frac12}\|\cdot\|e^{n+\frac12}\|,\\
&J_8=(R^{n+\frac12},f^{n+\frac12})\leqslant \|R^{n+\frac12}\|\cdot\|f^{n+\frac12}\|.
\end{align*}
The remaining terms ($J_3$, $J_4$ and $J_6$) will be estimated respectively. Firstly, we estimate $J_3$ as
\begin{align*}
J_3 =&\; \sigma(\delta_x^2e^{n+\frac12},\Delta_xU^{n+\frac12}\cdot e^{n+\frac12})\\
    =&\; -\sigma \langle \delta_xe^{n+\frac12},\delta_x(\Delta_xU^{n+\frac12}\cdot e^{n+\frac12})\rangle \\
    =&\; -\sigma\sum_{i\in\mathbb{I}_M}\delta_xe_i^{n+\frac12}(\Delta_xU_{i+1}^{n+\frac12}\cdot e_{i+1}^{n+\frac12}-\Delta_xU_{i}^{n+\frac12}\cdot e_{i}^{n+\frac12})\\
    =&\; -\sigma\sum_{i\in\mathbb{I}_M}\delta_xe_i^{n+\frac12}\Big[\Delta_xU_{i+1}^{n+\frac12}\cdot (e_{i+1}^{n+\frac12}-e_i^{n+\frac12})
         +(\Delta_xU_{i+1}^{n+\frac12}-\Delta_xU_{i}^{n+\frac12})\cdot e_{i}^{n+\frac12}\Big]\\
    =&\; -\sigma h\sum_{i\in\mathbb{I}_M}\delta_xe_i^{n+\frac12} \Big[\Delta_xU_{i+1}^{n+\frac12}\cdot\delta_xe_{i+\frac12}^{n+\frac12}
         +\delta_x(\Delta_xU_{i+\frac12}^{n+\frac12})\cdot e_{i}^{n+\frac12}\Big]\\
   \leqslant &\;
          \sigma |e^{n+1}|_1^2\cdot\|\Delta_xU^{n+\frac12}\|_{\infty}+\sigma
            |e^{n+\frac12}|_1\cdot\|\delta_x(\Delta_xU^{n+\frac12})\|_{\infty}\cdot\|e^{n+\frac12}\|\\
    \leqslant &\;
         \sigma c_0 |e^{n+1}|_1^2+\sigma  c_0|e^{n+\frac12}|_1 \cdot\|e^{n+\frac12}\|.
\end{align*}
Then we directly estimate $J_4$ as
\begin{align*}
J_4 =&\; -\sigma(\delta_x^2e^{n+\frac12}\cdot U^{n+\frac12} ,\Delta_xe^{n+\frac12})\\
    =&\; -\sigma h\sum_{i\in\mathbb{I}_M}(\delta_x^2e_i^{n+\frac12})U_i^{n+\frac12}\cdot\Delta_xe_i^{n+\frac12}\\
    =&\; \frac{\sigma}{2}\sum_{i\in\mathbb{I}_M}\Big[(\delta_xe_{i-\frac12}^{n+\frac12})^2-(\delta_xe_{i+\frac12}^{n+\frac12})^2\Big]U_i^{n+\frac12}\\
    =&\; \frac{\sigma h}{2}\sum_{i\in\mathbb{I}_M}(\delta_xe_{i+\frac12}^{n+\frac12})^2\delta_xU_{i+\frac12}^{n+\frac12}\\
    \leqslant &\;
         \frac{\sigma c_0}{2}|e^{n+\frac12}|_1^2.
\end{align*}
In light of the summation by parts, $J_6$ is calculated as
\begin{align*}
J_6 = &\; -(\Delta_x(f^{n+\frac12}U^{n+\frac12}),f^{n+\frac12})\\
    = &\;  (f^{n+\frac12}U^{n+\frac12},\Delta_xf^{n+\frac12})\\
    = &\;  h\sum_{i\in\mathbb{I}_M} f_i^{n+\frac12}U_i^{n+\frac12}\Delta_xf_i^{n+\frac12} \\
    = &\;  \frac12 \sum_{i\in\mathbb{I}_M}f_i^{n+\frac12}U_i^{n+\frac12}(f_{i+1}^{n+\frac12}-f_{i-1}^{n+\frac12})\\
    = &\;  \frac12 \sum_{i\in\mathbb{I}_M}(f_i^{n+\frac12}f_{i+1}^{n+\frac12}U_i^{n+\frac12}-f_i^{n+\frac12}f_{i-1}^{n+\frac12}U_i^{n+\frac12})\\
    = &\;  \frac{h}{2} \sum_{i\in\mathbb{I}_M}f_i^{n+\frac12}f_{i+1}^{n+\frac12}\cdot\frac{U_i^{n+\frac12}-U_{i+1}^{n+\frac12}}{h}\\
    \leqslant &\;
           \frac{c_0}2\|f^{n+\frac12}\|^2.
\end{align*}
Substituting $J_i\;(1\leqslant i \leqslant 8)$ into \eqref{eq3.30}, we have
\begin{align}
 &\;  \frac{1}{2\tau}\left[\|e^{n+1}\|^2-\|e^n\|^2)+(|e^{n+\frac12}|_1^2-|e^n|_1^2)+(\|f^{n+1}\|^2-\|f^n\|^2)\right] \nonumber\\
 \leqslant &\;
      c_0\|e^{n+\frac12}\|^2+c_{\rm max}\|e^{n+\frac12}\|\cdot|e^{n+\frac12}|_1+\sigma c_0 |e^{n+1}|_1^2+\sigma  c_0|e^{n+\frac12}|_1 \cdot\|e^{n+\frac12}\|+\frac{1}{2}\sigma c_0|e^{n+\frac12}|_1^2\nonumber\\
&\;   +c_0\|f^{n+\frac12}\|\cdot \|e^{n+\frac12}\|+\frac{c_0}2\|f^{n+\frac12}\|^2+\|Q^{n+\frac12}\|\cdot\|e^{n+\frac12}\|
      +\|R^{n+\frac12}\|\cdot\|f^{n+\frac12}\|\nonumber\\
\leqslant &\;
       c_0\|e^{n+\frac12}\|^2+\frac{1}2c_{\rm max}\|e^{n+\frac12}\|^2+\frac{1}2c_{\rm max}|e^{n+\frac12}|_1^2+\sigma c_0 |e^{n+\frac12}|_1^2+\frac{1}2\sigma c_0|e^{n+\frac12}|_1^2 +\frac{1}2\sigma c_0\|e^{n+\frac12}\|^2\nonumber\\
&\;   +\frac{1}{2}\sigma c_0|e^{n+\frac12}|_1^2+\frac{c_0}2\|f^{n+\frac12}\|^2+\frac{c_0}2\|e^{n+\frac12}\|^2
      +\frac{c_0}2\|f^{n+\frac12}\|^2+\frac12\|Q^{n+\frac12}\|^2\nonumber\\
&\;   +\frac12\|e^{n+\frac12}\|^2+\frac12\|R^{n+\frac12}\|^2+\frac12\|f^{n+\frac12}\|^2\nonumber\\
=&\;  (\frac12 +\frac{c_{\rm max}}2+\frac32c_0+\frac{\sigma}{2}c_0)\|e^{n+\frac12}\|^2+(\frac{c_{\rm max}}{2}+2\sigma c_0)|e^{n+\frac12}|_1^2
      +(c_0+\frac12)\|f^{n+\frac12}\|^2\nonumber\\
&\;   +\frac12(\|Q^{n+\frac12}\|^2+\|R^{n+\frac12}\|^2)\nonumber\\
\leqslant &\;
      c_2(\|e^{n+\frac12}\|^2+|e^{n+\frac12}|_1^2+\|f^{n+\frac12}\|^2)+\frac12(\|Q^{n+\frac12}\|^2+\|R^{n+\frac12}\|^2)\nonumber\\
\leqslant &\;
      \frac{c_2}{2}(\|e^{n+1}\|^2+\|e^{n}\|^2+|e^{n+1}|_1^2+|e^{n}|_1^2+\|f^{n+1}\|^2+\|f^{n}\|^2)\nonumber\\
&\;   +\frac12(\|Q^{n+\frac12}\|^2+\|R^{n+\frac12}\|^2), \quad n\in \mathbb{K}_N^0.    \label{eq3.31}
\end{align}
Denote
\begin{align*}
F^{n}= \|e^{n}\|^2+|e^{n}|_1^2+\|f^{n}\|^2, \quad n\in \mathbb{K}_N.
\end{align*}
Therefore, \eqref{eq3.31} becomes
\begin{align*}
\frac{1}{2\tau}(F^{n+1}-F^n)\leqslant \frac{c_2}{2}(F^{n+1}+F^n)+\frac12(\|Q^{n+\frac12}\|^2+\|R^{n+\frac12}\|^2), \quad n\in \mathbb{K}_N^0.
\end{align*}
Multiplying the inequality above by $2\tau$ and then noticing \eqref{equa3.3}, we have
\begin{align*}
(1-c_2\tau)F^{n+1}\leqslant (1+c_2\tau)F^n+2Lc_1^2\tau(\tau^2+h^2)^2,  \quad n\in \mathbb{K}_N^0.
\end{align*}
When $c_2\tau\leqslant \frac13$, we have
\begin{align*}
F^{n+1}\leqslant(1+3c_2\tau)F^n+3Lc_1^2\tau(\tau^2+h^2)^2,  \quad n\in \mathbb{K}_N^0.
\end{align*}
Employing the discrete Gronwall inequality, we have
\begin{align*}
F^{n}\leqslant \exp(3c_2T)\cdot\frac{Lc_1^2}{c_2}(\tau^2+h^2)^2 =c_3^2(\tau^2+h^2)^2,  \quad n\in \mathbb{K}_N.
\end{align*}
According to the definition of $F^n$, Theorem \ref{theorem3 1} holds, which completes the proof.
\end{proof}

\subsection{Convergence for the difference scheme with nonzero rotation parameter ($\Omega\neq 0$)}
Next we further analyze the convergence of the difference scheme \eqref{equa3.7} when $\Omega\neq0$
by taking an inner product of \eqref{eq3.8} twice but with different quantities, and based on the technical energy analysis.
For this purpose, denote
\begin{align*}
&c_4=\frac12(3c_0+\sigma c_0+1), \qquad c_5=\frac{c_0}{2}(4\sigma+1-2\Omega \kappa+c_0),\\
&c_6=\frac12\big[(2c_0+1)(1-2\Omega \kappa)+2(c_0+c_{\rm max})\Omega\big], \qquad c_7=\frac12\big(2-\Omega \kappa+2\Omega c_0\big),\\
&c_8=\frac12\max\left\{c_4,\,c_5,\,\frac{c_6+3c_0\Omega}{1-2\Omega( \kappa+ c_0)}\right\},\qquad  c_9=c_1\exp(3c_8T)\sqrt{\frac{L}{2}\cdot\frac{c_7}{c_8}},\\
&E(0)=E(u^0(x),\rho^0(x)),\qquad  c_{10} = \sqrt{\frac{1+\sqrt{1+4L^2}}{L}\cdot E(0)}.
\end{align*}
Then the convergence under the restriction of small initial energy is listed as follows.
\begin{theorem} \rm\label{theorem3 2}
 Suppose $\{U_i^n,\,\Pi_i^n\,|\,i\in\mathbb{I}_M,\; n\in \mathbb{K}_N\}$ is the solution of the problem  \eqref{equa1.1}--\eqref{eq1.4}, $\{u_i^n,\,\rho_i^n\,|\,i\in\mathbb{I}_M,\; n\in \mathbb{K}_N\}$ is the solution of the difference scheme \eqref{equa3.7}. When $c_{10}\leqslant \frac{1}{2\Omega}-\kappa$ and $6c_8\tau<1$, we have the error estimates
 \begin{align*}
 \|e^n\| \leqslant c_9(\tau^2+h^2), \quad |e^n|_1 \leqslant c_9(\tau^2+h^2),\quad \|f^n\| \leqslant \frac{c_9}{\sqrt{1-2\Omega( \kappa+ c_{10})}}(\tau^2+h^2),  \quad n\in \mathbb{K}_N.
 \end{align*}
 \end{theorem}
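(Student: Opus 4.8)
The plan is to run the same energy argument used for the $\Omega=0$ case in Theorem \ref{theorem3 1}, but with two extra ingredients forced by the rotational coupling $2\Omega\rho\Delta_x(\rho u)$. First I would extract an a priori $L^\infty$ bound on the numerical velocity directly from the conserved energy: Theorem \ref{theorem1_1} bounds $\|u^n\|^2+|u^n|_1^2$ uniformly by (twice) the initial energy, so feeding this into the discrete embedding inequality of Lemma \ref{lemma2} and optimizing the free parameter $\varepsilon$ yields exactly $\|u^n\|_\infty \leq c_{10}$. This is the role of the small-energy hypothesis: it replaces both the step-ratio restriction and any a posteriori boundedness assumption on the numerical solution, and it is precisely why $c_{10}$ (a bound on the numerical velocity) rather than a bound on the exact solution surfaces in the final $\|f^n\|$ estimate.

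Next I would form the error system by subtracting \eqref{equa3.7} from \eqref{equa3.1}, obtaining momentum and mass error equations for $e=U-u$ and $f=\Pi-\rho$ that differ from \eqref{equa3.19} only through the factor $1-2\Omega\kappa$ on the $\rho\Delta_x\rho$ term and the new rotational term $2\Omega[\Pi\Delta_x(\Pi U)-\rho\Delta_x(\rho u)]$. Pairing the momentum error equation with $e^{n+\frac12}$ and using Lemma \ref{lemma1} annihilates the diagonal $\delta_x^2$, $\Delta_x$, and $\psi$-self terms exactly as before; the $\psi$-difference and $\rho\Delta_x\rho$-difference terms are then expanded via $\rho=\Pi-f$, $u=U-e$ and the bilinearity of $\psi$. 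The genuinely new work is the rotational term: writing it out and repeatedly using $(\Delta_x w, w)=0$ kills its leading piece, since $(\Pi\Delta_x(\Pi e),e)=(\Delta_x(\Pi e),\Pi e)=0$, leaving linear cross terms $2\Omega(\Delta_x(fU),\Pi e)$, $2\Omega(\Delta_x(fe),\Pi e)$ and a quadratic remainder $2\Omega(\Delta_x(\rho u),fe)$.

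The crux is the instruction to test the mass equation twice. I would first test the mass error equation against $(1-2\Omega\kappa)f^{n+\frac12}$; this reproduces the coincidental cancellation already seen for $\Omega=0$, whereby the $(f,\Delta_x(\Pi e))$ and $(f,\Delta_x(fe))$ contributions from the momentum equation are exactly annihilated. I would then test it a second time against a rotational weight of order $2\Omega$ built from $\Pi^{n+\frac12}e^{n+\frac12}$, chosen so that the surviving linear cross terms $2\Omega(\Delta_x(fU),\Pi e)$ and $2\Omega(\Delta_x(fe),\Pi e)$ also cancel. What is left are $\delta_t f$-weighted terms: substituting $\Delta_x(\rho u)=\delta_t f+\Delta_x(\Pi U)-R$ from the mass equation converts the quadratic remainder into $2\Omega(\delta_t f^{n+\frac12},f^{n+\frac12}e^{n+\frac12})$, which is exactly the shape handled by Lemma \ref{lemma3}. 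Applying that lemma, the promised new recursion relation, telescopes this into a discrete-energy increment plus correction terms of strictly lower order in the errors.

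Collecting all telescoping contributions produces a modified discrete energy $\tilde F^n$ whose $\rho$-part carries the weight $1-2\Omega\kappa$ from the scaled test function, reduced by a $2\Omega\|u^n\|_\infty \leq 2\Omega c_{10}$ correction coming from the coupling; the net weight on $\|f^{n+1}\|^2$ is therefore $1-2\Omega(\kappa+c_{10})$, positive exactly under $c_{10}\leq \frac{1}{2\Omega}-\kappa$. After bounding every non-telescoping term by Cauchy--Schwarz with $c_0$, $c_{\max}$ (exact solution) and $c_{10}$ (numerical velocity), absorbing them into $c_4,\dots,c_8$, and using $\|Q^{n+\frac12}\|^2,\|R^{n+\frac12}\|^2 \leq Lc_1^2(\tau^2+h^2)^2$, I would reach $(1-3c_8\tau)\tilde F^{n+1}\leq (1+3c_8\tau)\tilde F^n+C\tau(\tau^2+h^2)^2$, close it by discrete Gronwall under $6c_8\tau<1$, and recover the stated $\rho$-bound by dividing by $\sqrt{1-2\Omega(\kappa+c_{10})}$. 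I expect the main obstacle to be the bookkeeping around the second inner product together with Lemma \ref{lemma3}: one must pick the $2\Omega$-weight so that the linear cross terms cancel while the leftover $\delta_t f$ terms assemble into precisely the $(\delta_t f, fe)$ form, and then verify that the resulting corrections (which involve $\delta_t e^{n+\frac12}$ and products of errors) are genuinely lower order and do not reintroduce uncontrolled time differences. Equally delicate is tracking signs so that the effective $\rho$-energy weight lands on $1-2\Omega(\kappa+c_{10})$; this positivity is the structural heart of the theorem and is exactly where the small-energy assumption is consumed.
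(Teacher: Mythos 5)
Your overall architecture matches the paper's --- the error system \eqref{equa3.32}, testing the momentum error equation with $e^{n+\frac12}$, testing the mass error equation twice, Lemma \ref{lemma3} to telescope a $\delta_t f$-weighted term, a modified energy whose $f$-weight is $1-2\Omega(\kappa+c_{10})$, and a Gronwall closure --- but your choice of the second test function breaks the proof at its most delicate point. The paper tests \eqref{eq3.20} with $-2\Omega f^{n+\frac12}U^{n+\frac12}$, i.e., with the \emph{exact} velocity as weight. Lemma \ref{lemma3} then telescopes $-2\Omega(\delta_t f^{n+\frac12},f^{n+\frac12}U^{n+\frac12})$ into the increment of $-2\Omega(f^n,f^nU^n)$ plus the corrections $P_{11}$, $P_{12}$, which involve only $\delta_t U^{n+\frac12}$ and are bounded by $c_0$ since $|u_t|\leqslant c_0$; simultaneously, by the antisymmetry $(\Delta_x u,v)=-(u,\Delta_x v)$, this single test cancels both $2\Omega(\Pi^{n+\frac12}\Delta_x(f^{n+\frac12}U^{n+\frac12}),e^{n+\frac12})$ and the genuinely dangerous cubic term $-2\Omega(f^{n+\frac12}\Delta_x(f^{n+\frac12}U^{n+\frac12}),e^{n+\frac12})$, which is uncontrollable directly because the energy contains no $|f|_1$ seminorm. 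Your weight $2\Omega\,\Pi^{n+\frac12}e^{n+\frac12}$ does neither job: (i) it introduces $(\delta_t f^{n+\frac12},\Pi^{n+\frac12}e^{n+\frac12})$, which lacks the quadratic structure $(\delta_t u,uv)$ that Lemma \ref{lemma3} requires, so it cannot telescope; (ii) your substitution route, converting $(f^{n+\frac12}\Delta_x(\rho^{n+\frac12} u^{n+\frac12}),e^{n+\frac12})$ into $2\Omega(\delta_t f^{n+\frac12},f^{n+\frac12}e^{n+\frac12})$ and applying Lemma \ref{lemma3} with $v=e$, produces correction terms $\frac14(f^{n+1}-f^n,(f^{n+1}-f^n)\delta_t e^{n+\frac12})$ and $\frac12(f^nf^{n+1},\delta_t e^{n+\frac12})$ in which $\delta_t e^{n+\frac12}$ --- the time difference quotient of the unknown error --- is controlled by nothing in your energy, together with a modified-energy term $-2\Omega(f^n,f^ne^n)$ that is cubic in the errors and whose nonnegativity would require an a priori $\|e^n\|_\infty$ bound, i.e., a circular bootstrap. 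You flagged this risk yourself, but it is not bookkeeping: it is precisely why the paper takes $U$, and not an error quantity, as the weight. Incidentally, part of the cancellation you engineer is moot: in \eqref{eq3.37} the combinations $(\Pi^{n+\frac12}\Delta_x(f^{n+\frac12}e^{n+\frac12}),e^{n+\frac12})+(f^{n+\frac12}\Delta_x(\Pi^{n+\frac12}e^{n+\frac12}),e^{n+\frac12})$ and $(f^{n+\frac12}\Delta_x(f^{n+\frac12}e^{n+\frac12}),e^{n+\frac12})$ vanish identically by antisymmetry, with no second test needed.

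Your reading of $c_{10}$ is also off. In the paper, $c_{10}$ bounds $c_{\rm max}$, the maximum of the \emph{exact} velocity, obtained from the \emph{continuous} energy conservation \eqref{eq1.2a} together with the continuous counterpart of Lemma \ref{lemma2} (with $\varepsilon=\frac{1+\sqrt{1+4L^2}}{2L}$); this is what guarantees $G^n\geqslant \|e^n\|^2+|e^n|_1^2+\big(1-2\Omega(\kappa+c_{10})\big)\|f^n\|^2\geqslant 0$, because $G^n$ carries the weight $U^n$. No $L^\infty$ bound on the numerical velocity is used anywhere in the convergence proof --- the substitutions $u=U-e$, $\rho=\Pi-f$ eliminate the numerical solution from every estimate, which is exactly the advertised ``no boundedness hypothesis of numerical solutions.'' Moreover, your discrete-energy route would not yield exactly $c_{10}$ in any case, since the conserved discrete energy \eqref{D_energy} involves $\|\rho^n\|^2$ rather than $\|\rho^n-1\|^2$, so its initial value differs from $E(0)=E(u^0(x),\rho^0(x))$. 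To repair your argument you would have to replace your second test function by the paper's $-2\Omega f^{n+\frac12}U^{n+\frac12}$, after which the rest of your outline (estimates via $c_0$ and $c_{\rm max}$, absorption into $c_4,\dots,c_8$, the truncation bound $\|Q^{n+\frac12}\|^2,\|R^{n+\frac12}\|^2\leqslant Lc_1^2(\tau^2+h^2)^2$, and Gronwall under $6c_8\tau<1$) goes through as you describe.
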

\begin{proof}
When $\Omega\neq0$, subtracting \eqref{equa3.1} and \eqref{equa3.5} from \eqref{equa3.7} directly, we get the following error system of equations
\begin{subequations}
\label{equa3.32}
\begin{numcases}{}
  \delta_t e_i^{n+\frac12}-\delta_t\delta_x^2e_i^{n+\frac12}-\kappa\Delta_xe_i^{n+\frac12}+3[\psi(U^{n+\frac12},U^{n+\frac12})_i-\psi(u^{n+\frac12},u^{n+\frac12})_i]
   \nonumber\\
= 3\sigma[\psi(\delta_x^2U^{n+\frac12},U^{n+\frac12})_i-\psi(\delta_x^2u^{n+\frac12},u^{n+\frac12})_i]-\mu\Delta_x\delta_x^2e_i^{n+\frac12}\nonumber\\
 \quad  -(1-2\Omega \kappa)[\Pi_i^{n+\frac12}\Delta_x\Pi_i^{n+\frac12}-\rho_i^{n+\frac12}\Delta_x\rho_i^{n+\frac12}]\nonumber\\
  \quad +2\Omega[\Pi_i^{n+\frac12}\Delta_x(\Pi^{n+\frac12}U^{n+\frac12})_i-\rho_i^{n+\frac12}\Delta_x(\rho^{n+\frac12}u^{n+\frac12})_i]+Q_i^{n+\frac12}, \notag\\
  \qquad \qquad \qquad \qquad \qquad  i\in\mathbb{I}_M,\; n\in \mathbb{K}_N^0,\label{eq3.32}\\
 \delta_t f_i^{n+\frac12}+[\Delta_x(\Pi^{n+\frac12}U^{n+\frac12})_i-\Delta_x(\rho^{n+\frac12}u^{n+\frac12})_i]=R_i^{n+\frac12},\quad i\in\mathbb{I}_M,\; n\in \mathbb{K}_N^0,\label{eq3.33}\\
 e_i^0=0,\; f_i^0=0, \quad i\in\mathbb{I}_M,   \label{eq3.34}\\
 e_i^n=e_{i+M}^{n},\; f_i^n=f_{i+M}^n,\quad i\in\mathbb{I}_M,\; n\in \mathbb{K}_N.\label{eq3.35}
\end{numcases}
\end{subequations}
Denote
\begin{align}
G^n=\|e^n\|^2+|e^n|_1^2+(1-2\Omega \kappa)\|f^n\|^2-2\Omega(f^n,f^nU^n).\label{eq3.35b}
\end{align}
By employing Lemma \ref{lemma2} in continuous counterpart and the energy conservation \eqref{eq1.2a}, and then taking $\varepsilon = \frac{1+\sqrt{1+4L^2}}{2L}$, we have
\begin{align*}
  c^2_{\rm max}=\|u(\cdot,t)\|_\infty^2 & \leqslant \varepsilon |u(\cdot,t)|_1^2 + \Big(\frac{1}{L}+\frac{1}{\varepsilon}\Big)\|u(\cdot,t)\|^2\\
  & =\varepsilon ( |u(\cdot,t)|_1^2 + \|u(\cdot,t)\|^2)\\
  & \leqslant 2\varepsilon E(u,\rho) = 2\varepsilon E(0) = c_{10}^2.
\end{align*}
When $c_{10}=\sqrt{2\varepsilon E(0)} \leqslant \frac{1}{2\Omega}-\kappa$, we have $1-2\Omega \kappa-2\Omega c_{10}\geqslant 0$. The nonnegativity of $G^n$  at the moment is guaranteed because of
\begin{align}
G^n \geqslant & \|e^n\|^2+|e^n|_1^2+(1-2\Omega \kappa-2\Omega c_{\rm max})\|f^n\|^2\notag\\
        \geqslant & \|e^n\|^2+|e^n|_1^2+(1-2\Omega \kappa-2\Omega c_{10})\|f^n\|^2\geqslant 0.
\end{align}
Taking a $L^2$-inner product of \eqref{eq3.32} with $e^{n+\frac12}$ and noticing Lemma \ref{lemma1}, we have
\begin{align}
 & \frac{1}{2\tau}\left[(\|e^{n+1}\|^2-\|e^n\|^2)+(|e^{n+\frac12}|_1^2-|e^n|_1^2)\right]
  +3(\psi(U^{n+\frac12},U^{n+\frac12})-\psi(u^{n+\frac12},u^{n+\frac12}),e^{n+\frac12}) \nonumber\\
=&\; 3\sigma(\psi(\delta_x^2U^{n+\frac12},U^{n+\frac12})-\psi(\delta_x^2u^{n+\frac12},u^{n+\frac12}),e^{n+\frac12})
  \nonumber\\
 & -(1-2\Omega \kappa)(\Pi^{n+\frac12}\Delta_x\Pi^{n+\frac12}-\rho^{n+\frac12}\Delta_x\rho^{n+\frac12},e^{n+\frac12})+(Q^{n+\frac12},e^{n+\frac12})\nonumber\\
 &  +2\Omega(\Pi^{n+\frac12}\Delta_x(\Pi^{n+\frac12}U^{n+\frac12})-\rho^{n+\frac12}\Delta_x(\rho^{n+\frac12}u^{n+\frac12}),e^{n+\frac12})
  ,\quad i\in\mathbb{I}_M,\; n\in \mathbb{K}_N^0.\label{eq3.36}
\end{align}
Noticing that expression in the second term on the right-hand side of the equation above can be written as
\begin{align*}
&\; \Pi_i^{n+\frac12}\Delta_x(\Pi^{n+\frac12}U^{n+\frac12})_i-\rho^{n+\frac12}\Delta_x(\rho^{n+\frac12}u^{n+\frac12})_i\\
= &\;
   \Pi_i^{n+\frac12}\Delta_x(\Pi^{n+\frac12}U^{n+\frac12}-\rho^{n+\frac12}u^{n+\frac12})_i+ f_i^{n+\frac12}\Delta_x(\rho^{n+\frac12}u^{n+\frac12})_i \\
= &\;
   \Pi_i^{n+\frac12}\Delta_x(\Pi^{n+\frac12}e^{n+\frac12}+f^{n+\frac12}u^{n+\frac12})_i+ f_i^{n+\frac12}\Delta_x(\rho^{n+\frac12}u^{n+\frac12})_i,
\end{align*}
and using Lemma \ref{lemma1} again, the second term in the right-hand side of \eqref{eq3.36} becomes
\begin{align}
&\;  (\Pi^{n+\frac12}\Delta_x(\Pi^{n+\frac12}U^{n+\frac12})-\rho^{n+\frac12}\Delta_x(\rho^{n+\frac12}u^{n+\frac12}),e^{n+\frac12})\nonumber\\
= &\;
     (\Pi^{n+\frac12}\Delta_x(f^{n+\frac12}u^{n+\frac12}),e^{n+\frac12})+(f^{n+\frac12}\Delta_x(\rho^{n+\frac12}u^{n+\frac12}),e^{n+\frac12})\nonumber\\
= &\;
     (\Pi^{n+\frac12}\Delta_x(f^{n+\frac12}(U^{n+\frac12}-e^{n+\frac12})),e^{n+\frac12})\nonumber\\
&\;  +(f^{n+\frac12}\Delta_x[(\Pi^{n+\frac12}-f^{n+\frac12})(U^{n+\frac12}-e^{n+\frac12})],e^{n+\frac12})\nonumber\\
= &\;
     (\Pi^{n+\frac12}\Delta_x(f^{n+\frac12}U^{n+\frac12}),e^{n+\frac12})-(\Pi^{n+\frac12}\Delta_x(f^{n+\frac12}e^{n+\frac12}),e^{n+\frac12})\nonumber\\
&\; +(f^{n+\frac12}\Delta_x(\Pi^{n+\frac12}U^{n+\frac12}),e^{n+\frac12})-(f^{n+\frac12}\Delta_x(\Pi^{n+\frac12}e^{n+\frac12}),e^{n+\frac12})\nonumber\\
&\; -(f^{n+\frac12}\Delta_x(f^{n+\frac12}U^{n+\frac12}),e^{n+\frac12})+(f^{n+\frac12}\Delta_x(f^{n+\frac12}e^{n+\frac12}),e^{n+\frac12})\nonumber\\
= &\;
     (\Pi^{n+\frac12}\Delta_x(f^{n+\frac12}U^{n+\frac12}),e^{n+\frac12})+(f^{n+\frac12}\Delta_x(\Pi^{n+\frac12}U^{n+\frac12}),e^{n+\frac12})\nonumber\\
&\;  -(f^{n+\frac12}\Delta_x(f^{n+\frac12}U^{n+\frac12}),e^{n+\frac12}).  \label{eq3.37}
\end{align}
Substituting \eqref{eq3.24}--\eqref{eq3.26} and \eqref{eq3.37} into \eqref{eq3.36}, for any $n\in \mathbb{K}_N^0$, we have
\begin{align}
&\;   \frac{1}{2\tau}\left[(\|e^{n+1}\|^2-\|e^n\|^2)+(|e^{n+\frac12}|_1^2-|e^n|_1^2)\right] \nonumber\\
= & \; -(e^{n+\frac12}\Delta_xU^{n+\frac12},e^{n+\frac12})
     +(e^{n+\frac12}U^{n+\frac12},\Delta_xe^{n+\frac12})+\sigma(\delta_x^2e^{n+\frac12},\Delta_xU^{n+\frac12}\cdot e^{n+\frac12})\nonumber\\
&\;  -\sigma(\delta_x^2e^{n+\frac12}\cdot U^{n+\frac12},\Delta_xe^{n+\frac12})
    +(1-2\Omega \kappa)[(f^{n+\frac12},\Delta_x(\Pi^{n+\frac12}e^{n+\frac12}))\nonumber\\
&\; -(f^{n+\frac12}\Delta_x\Pi^{n+\frac12},e^{n+\frac12})-(f^{n+\frac12},\Delta_x(f^{n+\frac12}e^{n+\frac12}))]
    +2\Omega[(\Pi^{n+\frac12}\Delta_x(f^{n+\frac12}U^{n+\frac12}),e^{n+\frac12})\nonumber\\
&\; +(f^{n+\frac12}\Delta_x(\Pi^{n+\frac12}U^{n+\frac12}),e^{n+\frac12})
    -(f^{n+\frac12}\Delta_x(f^{n+\frac12}U^{n+\frac12}),e^{n+\frac12})]
  +(Q^{n+\frac12},e^{n+\frac12}). \label{eq3.38}
\end{align}
Taking an inner product of \eqref{eq3.20} with $(1-2\Omega \kappa)f^{n+\frac12}$ and noticing \eqref{eq3.29}, we can derive that
\begin{align}
&\;  \frac{(1-2\Omega \kappa)}{2\tau}(\|f^{n+1}\|^2-\|f^n\|^2)\nonumber\\
=&\; (1-2\Omega \kappa) [-(\Delta_x(\Pi^{n+\frac12}e^{n+\frac12}),f^{n+\frac12})-(\Delta_x(f^{n+\frac12}U^{n+\frac12}),f^{n+\frac12})\nonumber\\
&\;      +(\Delta_x(f^{n+\frac12}e^{n+\frac12}),f^{n+\frac12})]+(1-2\Omega \kappa)(R^{n+\frac12},f^{n+\frac12}).\label{eq3.39}
\end{align}
Summing up \eqref{eq3.38} and \eqref{eq3.39}, we have
\begin{align}
&\;   \frac{1}{2\tau}\left[(\|e^{n+1}\|^2-\|e^n\|^2)+(|e^{n+\frac12}|_1^2-|e^n|_1^2)+(1-2\Omega \kappa)(\|f^{n+1}\|^2-\|f^n\|^2)\right] \nonumber\\
=&\;   -(e^{n+\frac12}\Delta_xU^{n+\frac12},e^{n+\frac12})
     +(e^{n+\frac12}U^{n+\frac12},\Delta_xe^{n+\frac12})+\sigma(\delta_x^2e^{n+\frac12},\Delta_xU^{n+\frac12}\cdot e^{n+\frac12})\nonumber\\
&\;  -\sigma(\delta_x^2e^{n+\frac12}\cdot U^{n+\frac12},\Delta_xe^{n+\frac12})
     +(1-2\Omega \kappa)[-(f^{n+\frac12}\Delta_x\Pi^{n+\frac12},e^{n+\frac12})\nonumber\\
&\;  -(\Delta_x(f^{n+\frac12}U^{n+\frac12}),f^{n+\frac12})]+2\Omega[(\Pi^{n+\frac12}\Delta_x(f^{n+\frac12}U^{n+\frac12}),e^{n+\frac12})\nonumber\\
&\; +(f^{n+\frac12}\Delta_x(\Pi^{n+\frac12}U^{n+\frac12}),e^{n+\frac12})
    -(f^{n+\frac12}\Delta_x(f^{n+\frac12}U^{n+\frac12}),e^{n+\frac12})]\nonumber\\
&\; +(Q^{n+\frac12},e^{n+\frac12})+(1-2\Omega \kappa)(R^{n+\frac12},f^{n+\frac12}) .  \label{eq3.40}
\end{align}
In order to eliminate the difficulty of estimating the term
$-(f^{n+\frac12}\Delta_x(f^{n+\frac12}U^{n+\frac12}),e^{n+\frac12})$
in \eqref{eq3.40}, we further taking an inner product of \eqref{eq3.20} with $-2\Omega f^{n+\frac12}U^{n+\frac12}$. Similar to the derivation in \eqref{eq3.29},
we have
\begin{align}
&\;  -2\Omega(\delta_tf^{n+\frac12},f^{n+\frac12}U^{n+\frac12})\nonumber\\
=&\; 2\Omega [(\Delta_x(\Pi^{n+\frac12}e^{n+\frac12}),f^{n+\frac12}U^{n+\frac12})+(\Delta_x(f^{n+\frac12}U^{n+\frac12}),f^{n+\frac12}U^{n+\frac12})\nonumber\\
&\;  -(\Delta_x(f^{n+\frac12}e^{n+\frac12}),f^{n+\frac12}U^{n+\frac12})]-2\Omega (R^{n+\frac12},f^{n+\frac12}U^{n+\frac12}).\label{eq3.41}
\end{align}
Directly applying Lemma \ref{lemma3} to the first term in \eqref{eq3.41}, it becomes
\begin{align}
&\;  -2\Omega(\delta_tf^{n+\frac12},f^{n+\frac12}U^{n+\frac12})\nonumber\\
=&\;  -2\Omega(\delta_tf^{n+\frac12},f^{n+\frac12}U^{n+\frac12}-(fU)^{n+\frac12}+(fU)^{n+\frac12})\nonumber\\
=&\; -\frac{\Omega}{\tau}[(f^{n+1},f^{n+1}U^{n+1})-(f^{n},f^{n}U^{n})]+\Omega(f^nf^{n+1},\delta_tU^{n+\frac12})\nonumber\\
&\;   +\frac{\Omega}{2}(f^{n+1}-f^n,(f^{n+1}-f^n)\delta_tU^{n+\frac12}).\label{eq3.41b}
\end{align}
Substituting \eqref{eq3.41b} into \eqref{eq3.41} and adding with \eqref{eq3.40} together, we have
\begin{align*}
&\;   \frac{1}{2\tau}(G^{n+1}-G^n)\nonumber\\
=&\;  -(e^{n+\frac12}\Delta_xU^{n+\frac12},e^{n+\frac12})
     +(e^{n+\frac12}U^{n+\frac12},\Delta_xe^{n+\frac12})+\sigma(\delta_x^2e^{n+\frac12},\Delta_xU^{n+\frac12}\cdot e^{n+\frac12})\nonumber\\
&\;  -\sigma(\delta_x^2e^{n+\frac12}\cdot U^{n+\frac12},\Delta_xe^{n+\frac12})+(1-2\Omega \kappa)[-(f^{n+\frac12}\Delta_x\Pi^{n+\frac12},e^{n+\frac12})\nonumber\\
&\;   -(\Delta_x(f^{n+\frac12}U^{n+\frac12}),f^{n+\frac12})]
     + (Q^{n+\frac12},e^{n+\frac12}) +(1-2\Omega \kappa)(R^{n+\frac12},f^{n+\frac12})  \nonumber\\
&\; +2\Omega(f^{n+\frac12}\Delta_x(\Pi^{n+\frac12}U^{n+\frac12}),e^{n+\frac12})-2\Omega (R^{n+\frac12},f^{n+\frac12}U^{n+\frac12})\nonumber\\
&\; -\Omega(f^nf^{n+1},\delta_tU^{n+\frac12})-\frac{\Omega}{2}(f^{n+1}-f^n,(f^{n+1}-f^n)\delta_tU^{n+\frac12})\nonumber\\
=&\; \sum_{i=1}^{12}P_i, \quad  n\in \mathbb{K}_N^0,\label{eq3.42}
\end{align*}
in which four terms diminish.
The estimates of $P_i\;(1\leqslant i \leqslant 8)$ are similar to $J_i\;(1\leqslant i \leqslant 8)$. The remaining terms $P_i\;(9\leqslant i \leqslant 12)$ will be estimated respectively as
\begin{align*}
&P_{9}=2\Omega(f^{n+\frac12}\Delta_x(\Pi^{n+\frac12}U^{n+\frac12}),e^{n+\frac12})
\leqslant  2c_0^2\Omega  \|f^{n+\frac12}\|\cdot\|e^{n+\frac12}\|,\nonumber\\
&P_{10}= -2\Omega (R^{n+\frac12},f^{n+\frac12}U^{n+\frac12})
\leqslant 2c_{\rm max}\Omega \|R^{n+\frac12}\|\cdot\|f^{n+\frac12}\|,\nonumber\\
&P_{11}=-\Omega(f^nf^{n+1},\delta_tU^{n+\frac12})\leqslant c_0\Omega\|f^n\|\cdot\|f^{n+1}\| ,\nonumber\\
&P_{12}=-\frac{\Omega}{2}(f^{n+1}-f^n,(f^{n+1}-f^n)\delta_tU^{n+\frac12})\leqslant c_0\Omega(\|f^n\|^2+\|f^{n+1}\|^2) .
\end{align*}
Therefore,
\begin{align*}
&\; \frac{1}{2\tau}(G^{n+1}-G^n)\nonumber\\
\leqslant &\;
   c_0\|e^{n+\frac12}\|^2+c_0\|e^{n+\frac12}\|\cdot|e^{n+\frac12}|_1+c_0\sigma  |e^{n+1}|_1^2+c_0\sigma |e^{n+\frac12}|_1 \cdot\|e^{n+\frac12}\|\nonumber\\
&\;
   +\frac{1}{2}c_0\sigma|e^{n+\frac12}|_1^2 +c_0(1-2\Omega \kappa)\|f^{n+\frac12}\|\cdot \|e^{n+\frac12}\|+\frac{1}2c_0(1-2\Omega \kappa)\|f^{n+\frac12}\|^2\nonumber\\
&\;
   +\|Q^{n+\frac12}\|\cdot\|e^{n+\frac12}\|+(1-2\Omega \kappa)\|R^{n+\frac12}\|\cdot\|f^{n+\frac12}\|+2c_0^2\Omega\|e^{n+\frac12}\|\cdot\|f^{n+\frac12}\|\nonumber\\
&\;
   +2c_{\rm max} \Omega\|R^{n+\frac12}\|\cdot\|f^{n+\frac12}\|+c_0\Omega\|f^n\|\cdot\|f^{n+1}\|+c_0\Omega(\|f^n\|^2+\|f^{n+1}\|^2)  \nonumber\\
\leqslant &\;
   c_0\|e^{n+\frac12}\|^2+\frac{c_0^2}{2}\|e^{n+\frac12}\|^2+\frac{c_0^2}{2}|e^{n+\frac12}|_1^2+ c_0 \sigma|e^{n+1}|_1^2
   +\frac{1}{2}c_0\sigma|e^{n+1}|_1^2+\frac{1}{2}c_0 \sigma\|e^{n+\frac12}\|^2\nonumber\\
&\;
   +\frac{c_0\sigma}{2}|e^{n+\frac12}|_1^2 + \frac{1}2c_0(1-2\Omega \kappa)\|f^{n+\frac12}\|^2+ \frac{1}2c_0(1-2\Omega \kappa)|e^{n+\frac12}|_1^2\nonumber\\
&\;
  + \frac{1}2c_0(1-2\Omega \kappa)\|f^{n+\frac12}\|^2+\frac12\|Q^{n+\frac12}\|^2+\frac12\|e^{n+\frac12}\|^2+\frac{1}2(1-2\Omega \kappa)\|R^{n+\frac12}\|^2\nonumber\\
&\;
  +\frac{1}2(1-2\Omega \kappa)\|f^{n+\frac12}\|^2+c_0^2\Omega\|e^{n+\frac12}\|^2 +c_{\rm max}\Omega\|f^{n+\frac12}\|^2+c_{\rm max}\Omega \|R^{n+\frac12}\|^2\nonumber\\
&\;
  + c_0\Omega\|f^{n+\frac12}\|^2+\frac{1}2c_0\Omega\|f^n\|^2+\frac{1}2c_0\Omega\|f^{n+1}\|^2+c_0\Omega(\|f^n\|^2+\|f^{n+1}\|^2)\nonumber\\
=&\; c_4\|e^{n+\frac12}\|^2+c_5|e^{n+\frac12}|_1^2+c_6\|f^{n+\frac12}\|^2+\frac{1}23c_0\Omega\|f^n\|^2+\frac{1}23c_0\Omega\|f^{n+1}\|^2\nonumber\\
&\;
     +\frac12\|Q^{n+\frac12}\|^2+\frac12(1-2\Omega \kappa+2c_{\rm max}\Omega)\|R^{n+\frac12}\|^2\\
\leqslant &\;
    \frac{c_4}{2}\|e^{n+1}\|^2+\frac{c_4}{2}\|e^n\|^2+\frac{c_5}{2}|e^{n+1}|_1^2 +\frac{c_5}{2}|e^{n}|_1^2 +\frac12(c_6+3c_0\Omega)\|f^{n+1}\|^{2}\\
&\;  +\frac12(c_6+3c_0\Omega)\|f^{n}\|^{2}+c_7(\|Q^{n+\frac12}\|^2+\|R^{n+\frac12}\|^2)\\
\leqslant &\;
     c_8(G^n+G^{n+1})+c_7(\|Q^{n+\frac12}\|^2+\|R^{n+\frac12}\|^2).
\end{align*}
Multiplying the inequality above by $2\tau$ on both sides and noticing \eqref{equa3.3}, we have
\begin{align*}
(1-2c_8\tau)G^{n+1}\leqslant (1+2c_8\tau)G^n+4c_7c_1^2L\tau(\tau^2+h^2)^2, \quad n\in \mathbb{K}_N^0.
\end{align*}
When $2c_8\tau\leqslant \frac13$, we have
\begin{align*}
G^{n+1}\leqslant (1+6c_8\tau)G^n+6c_7c_1^2L\tau(\tau^2+h^2)^2, \quad n\in \mathbb{K}_N^0.
\end{align*}
which implies
\begin{align*}
G^n\leqslant \exp(6c_8T)\cdot \frac{c_7c_1^2L}{c_8}(\tau^2+h^2)^2 \equiv c_9^2(\tau^2+h^2)^2, \quad n\in \mathbb{K}_N
\end{align*}
by using the discrete Gronwall inequality.
According to the definition of $G^n$ in \eqref{eq3.35b}, Theorem \ref{theorem3 2} holds.
We finish the proof.
\end{proof}

\begin{remark}
The restriction condition $c_{10}<\frac{1}{2\Omega}-\kappa$ implies that the initial energy satisfies
\begin{equation}\label{Small_intial1}
  E(0) < \Big(\frac{1-2\Omega \kappa}{2\Omega}\Big)^2 \cdot \frac{\sqrt{1+4L^2}-1}{4L}.
\end{equation}
The inequality \eqref{Small_intial1} further indicates that Theorem \ref{theorem3 2} holds only for small initial value or small initial energy.
In addition, we know from the proof that
\begin{equation}\label{Small_intial2}
  c_{\rm max} < \frac{1+\sqrt{1+4L^2}}{L}\cdot E(0)=\frac{1}{2\Omega}-\kappa.
\end{equation}
In physics, the inequality \eqref{Small_intial2} implies that Theorem \ref{theorem3 2} holds when the maximum fluid velocity is no more than $\frac{1}{2\Omega}-\kappa$.

\end{remark}
\begin{remark}
  Theorem \ref{theorem3 2} states that there is no grid ratio restriction for the spatial and temporal stepsizes. It implies that the numerical scheme \eqref{equa3.7} is unconditionally convergent. Advantages of the difference scheme \eqref{equa3.7} compared with those in literature \cite{ZLZ2022} are shown in Table \ref{Table4}. We see that the current scheme performs much better.
  \begin{table}[tbh!]
 \vspace{-5mm}
\begin{center}
\renewcommand{\arraystretch}{1.25}
  \caption{The pros and cons of the scheme \eqref{equa3.7} at present with that in literature \cite{ZLZ2022}.}\label{Table4}
\def\temptablewidth{1.000\textwidth}
\rule{\temptablewidth}{1pt}
\renewcommand\tabcolsep{1pt}
{\footnotesize
\begin{tabular*}{\temptablewidth}{@{\extracolsep{\fill}}|c|c|c|c|c|}
&\multicolumn{2}{c|}{$\Omega = 0$}
&\multicolumn{2}{c|}{$\Omega \neq 0$}\\
\cline{2-3}\cline{4-5}
      &{\rm Scheme\;in\;\cite{ZLZ2022}}&{\rm Scheme\;\eqref{equa3.7}}&{\rm Scheme\;in\;\cite{ZLZ2022}}&{\rm Scheme\;\eqref{equa3.7}}  \\
\hline
{\rm \;Convergence\;condition\;} & $\tau \lesssim h$  & $\tau \lesssim 1$  & $\tau \lesssim h$? &  $\tau \lesssim 1$, small initial energy \\
{\rm \;Convergence\;order}     &$O(h^2+\tau^2)$             &$O(h^2+\tau^2)$             &$O(h^2+\tau^2)?$  &$O(h^2+\tau^2)\;$ \\
{\rm \;Invariants}           & {\rm Energy,\;Mass\;} &{\rm Energy,\;Momentum,\;Mass\;}     & {\rm Energy,\;Mass\;} &{\rm Energy,\;Momentum,\;Mass\;}\\
{\rm \;Solvability}            &$\checkmark$                &$\checkmark$    &$\checkmark$    &$\checkmark\;$\\
\end{tabular*}}
\rule{\temptablewidth}{1pt}
\end{center}
``$\checkmark$'' denotes that the result has been proved; ``$?$'' denotes that the results are numerically correct, but lack of a theoretical proof.
 \vspace{-5mm}
\end{table}
\end{remark}

\section{Numerical results}\label{section5}
We will present two benchmark problems (includes several cases in each one) to detect numerical theories including the unconditional convergence, invariant-preserving properties (energy/momentum/mass), and long-time simulation on large domain.

To facilitate calculation, we propose a subsequent two-level iteration method to solve the nonlinear numerical scheme \eqref{equa3.7}. Suppose we have known $\{u_i^n,\, \rho_i^n\,|\,i\in\mathbb{I}_M\}$,
the following linear system of equations will be used to approximate the solution of the difference scheme \eqref{equa3.7},
\begin{subequations}
\label{Ite}
\begin{numcases}{}
 \frac{2}{\tau}(u_i^{(l+1)}-u_i^n)-\frac{2}{\tau}(\delta_x^2 u_i^{(l+1)} - \delta_x^2 u_i^n)-\kappa\Delta_xu_i^{(l+1)}+3\psi(u^{(l)},u^{(l+1)})_i
   -3\sigma\psi(\delta_x^2u^{(l)},u^{(l+1)})_i\nonumber\\
  \quad +\mu\Delta_x\delta_x^2u_i^{(l+1)}+(1-2\Omega \kappa)\rho_i^{(l)}\Delta_x\rho_i^{(l+1)}-2\Omega\rho_i^{(l)} \Delta_x(\rho^{(l)} u^{(l+1)})_i=0, \quad i\in\mathbb{I}_M,\label{Ite1.1}\\
  \frac{2}{\tau}(\rho_i^{(l+1)}-\rho_i^n)+\Delta_x(\rho^{(l)} u^{(l+1)})_i=0, \quad i\in\mathbb{I}_M,\label{Ite1.2}\\
  u_{i}^{(0)}=u_i^{n},\; \rho_i^{(0)}=\rho_i^{n},\quad i\in\mathbb{I}_M, \label{Ite1.3}\\
  u_{i}^{(l)}=u_{i+M}^{(l)},\; \rho_{i}^{(l)}=\rho_{i+M}^{(l)}, \quad i\in\mathbb{I}_M,\label{Ite1.4}
\end{numcases}
\end{subequations}
until
$$\max\big\{\max_{i\in\mathbb{I}_M}|u_i^{(l+1)}-u_i^{(l)}|,\max_{i\in\mathbb{I}_M}|\rho_i^{(l+1)}-\rho_i^{(l)}|\big\}\leqslant \varepsilon,\quad l=0,\;1,\;\ldots.$$
Then we have
$$u_i^{n+1} = 2u_i^{(l+1)}-u_i^n,\quad \rho_i^{n+1} = 2\rho_i^{(l+1)}-\rho_i^n,\quad i\in\mathbb{I}_M.$$
In numerical implementation, we fix the tolerance error $\varepsilon=1e-12$ for each iteration unless otherwise specified.
The formulas
\begin{align*}
&\mathrm{Ord_\infty^h}=\mathbf{\log_2}\frac{\|\mathrm{F_u}(2h,\tau)\|_\infty}
{\|\mathrm{F_u}(h,\tau)\|_\infty},\qquad\quad
 \mathrm{Ord_{2}^h}=\mathbf{\log_2}\frac{\|\mathrm{F_\rho}(2h,\tau)\|}
{\|\mathrm{F_\rho}(h,\tau)\|},\\
&\mathrm{Ord_\infty^\tau}=\mathbf{\log_2}\frac{\|\mathrm{G_u}(h,2\tau)\|_\infty}
{\|\mathrm{G_u}(h,\tau)\|_\infty},\qquad \quad
\mathrm{Ord_{2}^\tau}=\mathbf{\log_2}\frac{\|\mathrm{G_\rho}(h,2\tau)\|}
{\|\mathrm{G_\rho}(h,\tau)\|}
\end{align*}
are borrowed to test the convergence orders in space and time, where
the $L^{\infty}$-norm
and the $L^{2}$-norm of numerical errors are defined by
\begin{align*}
&\|\mathrm{F_u}(h,\tau)\|_{\infty}=\max_{i\in \mathbb{I}_M,k\in \mathbb{K}_N}\limits\Big|u_{i}^{k}(h,\tau)-u_{2i}^{k}(h/2,\tau)\Big|, \\
&\|\mathrm{G_u}(h,\tau)\|_{\infty}=\max_{i\in \mathbb{I}_M,k\in \mathbb{K}_N}\limits\Big|u_{i}^{k}(h,\tau)-u_{i}^{2k}(h,\tau/2)\Big|, \\
&\|\mathrm{F_\rho}(h,\tau)\|=\max_{k\in \mathbb{K}_N}\limits\Big({h\sum_{i\in \mathbb{I}_M} |\rho_{i}^{k}(h,\tau)-\rho_{2i}^{k}(h/2,\tau)|^2}\Big)^{1/2},\\
&\|\mathrm{G_\rho}(h,\tau)\|=\max_{k\in \mathbb{K}_N}\limits\Big({h\sum_{i\in \mathbb{I}_M} |\rho_{i}^{k}(h,\tau)-\rho_{i}^{2k}(h,\tau/2)|^2}\Big)^{1/2},
\end{align*}
 respectively.

\begin{example}\label{exam1}
  \textbf{(Dam break problem)} 
  The initial fluid velocity and free surface elevation from equilibrium are taken, respectively, as
\begin{align*}
u^0(x)=0,\quad \rho^0(x)=1+\tanh(x+\emph{a})-\tanh(x-\emph{a}),
\end{align*}
with $\emph{a}$ being a dam-breaking parameter. Six groups of parameters are selected in Table \ref{tabparameter}.
\end{example}
\begin{table}[tbh!]
 \vspace{-5mm}
\begin{center}
\renewcommand{\arraystretch}{1.25}
\tabcolsep 0pt \caption{Selected parameters and the calculated domain in the numerical tests.}\label{tabparameter}
\def\temptablewidth{0.75\textwidth}
\rule{\temptablewidth}{1pt}
{\footnotesize
\begin{tabular*}{\temptablewidth}{@{\extracolsep{\fill}}l|l l}
 & ${\rm ~~~~~\qquad \qquad Parameters}$ &${\rm Domain}~{\rm of}~(x,t)$  \\
\hline
{\;\rm \textbf{Case A}\quad}  &$\emph{a}=0.1$, $\kappa=\mu=\Omega=0$, $\sigma=1$, \cite{CMR2014,HI2010,YFS2018} &$[-6,6]\times[0,20];\;$         \\
{\;\rm \textbf{Case B}\quad}  &$\emph{a}=4$, $\kappa=\mu=\Omega=0$, $\sigma=1$, \cite{ZLZ2022}  &$[-12\pi,12\pi]\times[0,2];\;$  \\
{\;\rm \textbf{Case C}\quad}  &$\emph{a}=0.2$, $\kappa=0,\, \mu=\sigma=1,\,\Omega=73\times 10^{-6}$, \cite{ZLZ2022}  &$[-8,8]\times[0,1];\;$  \\
{\;\rm \textbf{Case D}\quad}  &$\emph{a}=\kappa=\mu=\sigma=1,\, \Omega=73\times 10^{-6}$, \cite{ZLZ2022}  &$[-8,8]\times[0,1];\;$\\
{\;\rm \textbf{Case E}\quad}  &$\emph{a}=\mu=\sigma=1,\kappa=0,\, \Omega=73\times 10^{-6}$,\cite{ZLZ2022}  &$[-12\pi,12\pi]\times[0,50];\;$\\
{\;\rm \textbf{Case F}\quad}  &$\emph{a}=\kappa=\mu=\sigma=1,\, \Omega=73\times 10^{-6}$,  &$[-100,100]\times[0,1000].\;$\\
\end{tabular*}}
\rule{\temptablewidth}{1pt}
\end{center}
 \vspace{-5mm}
\end{table}
We report the numerical data in Tables \ref{Table1}--\ref{Table3} and display the numerical behavior in Figures \ref{fig:1}--\ref{fig:3}.
\begin{table}[tbh!]
\begin{center}
\renewcommand{\arraystretch}{1.25}
\tabcolsep 6pt \caption{Numerical errors against $h$-grid size reduction with the fixed temporal stepsizes.}\label{Table1}
\def\temptablewidth{1.0\textwidth}
\rule{\temptablewidth}{1pt}
{\footnotesize
\begin{tabular*}{\temptablewidth}{@{\extracolsep{\fill}}|c|cccc|cccc|}
&\multicolumn{4}{c|}{\textbf{Case A}, $\tau=1/50$}
&\multicolumn{4}{c|}{\textbf{Case B}, $\tau=1/50$}\\
\cline{2-5}\cline{6-9}
$h$& $\|\mathrm{F_u}(h,\tau)\|_\infty$ &$\mathrm{Ord_{\infty}^h}$ &$\|\mathrm{F_\rho}(h,\tau)\|$ &$\mathrm{Ord_{2}^h}$\;\;
&$\|\mathrm{F_u}(h,\tau)\|_\infty$&$\mathrm{Ord_{\infty}^h}$ &$\|\mathrm{F_\rho}(h,\tau)\|$ &$\mathrm{Ord_{2}^h}$ \\
\hline
$\;0.6\;$    &$3.1656{\rm e}-02$  &$*$        &$8.2588{\rm e}-02$  &$*$\;\;
        &$1.1779{\rm e}-01$  &$*$        &$3.1404{\rm e}-01$  &$*\;$         \\
$\;0.3\;$    &$8.0761{\rm e}-03$  &$1.9707$   &$3.1629{\rm e}-02$  &$1.3847$\;\;
        &$4.6146{\rm e}-02$  &$1.3519$   &$1.2761{\rm e}-01$  &$1.2992\;$    \\
$\;0.15\;$   &$2.2533{\rm e}-03$  &$1.8416$   &$7.2516{\rm e}-03$  &$2.1249$\;\;
        &$1.3660{\rm e}-02$  &$1.7562$   &$4.1642{\rm e}-02$  &$1.6156\;$    \\
$\;0.075\;$  &$5.7025{\rm e}-04$  &$1.9824$   &$1.8181{\rm e}-03$  &$1.9959$\;\;
        &$3.5951{\rm e}-03$  &$1.9259$   &$1.1627{\rm e}-02$  &$1.8406\;$    \\
$\;0.0375\;$ &$1.4320{\rm e}-04$  &$1.9936$   &$4.5442{\rm e}-04$  &$2.0003$\;\;
        &$9.0983{\rm e}-04$  &$1.9824$   &$3.0100{\rm e}-03$  &$1.9496\;$\\
\hline
&\multicolumn{4}{c|}{\textbf{Case C}, $\tau=1/1000$}
&\multicolumn{4}{c|}{\textbf{Case D}, $\tau=1/1000$}\\
\cline{2-5}\cline{6-9}
$h$& $\|\mathrm{F_u}(h,\tau)\|_\infty$ &$\mathrm{Ord_{\infty}^h}$ &$\|\mathrm{F_\rho}(h,\tau)\|$ &$\mathrm{Ord_{2}^h}$\;\;
&$\|\mathrm{F_u}(h,\tau)\|_\infty$&$\mathrm{Ord_{\infty}^h}$ &$\|\mathrm{F_\rho}(h,\tau)\|$ &$\mathrm{Ord_{2}^h}$ \\
\hline
$\;0.4\;$   &$3.7012{\rm e}-03$  &$*$        &$8.9008{\rm e}-03$  &$*$\;\;
        &$1.6694{\rm e}-02$  &$*$        &$3.8179{\rm e}-02$  &$*\;$         \\
$\;0.2\;$   &$1.0564{\rm e}-03$  &$1.8088$   &$2.1868{\rm e}-03$  &$2.0251$\;\;
        &$4.9151{\rm e}-03$  &$1.7640$   &$9.8260{\rm e}-03$  &$1.9581\;$    \\
$\;0.1\;$   &$2.7672{\rm e}-04$  &$1.9327$   &$5.4452{\rm e}-04$  &$2.0058$\;\;
        &$1.3122{\rm e}-03$  &$1.9053$   &$2.4850{\rm e}-03$  &$1.9834\;$    \\
$\;0.05\;$  &$7.0471{\rm e}-05$  &$1.9733$   &$1.3601{\rm e}-04$  &$2.0012$\;\;
        &$3.3245{\rm e}-04$  &$1.9808$   &$6.2326{\rm e}-04$  &$1.9953\;$    \\
$\;0.025\;$ &$1.7670{\rm e}-05$  &$1.9957$   &$3.3996{\rm e}-05$  &$2.0003$\;\;
        &$8.3398{\rm e}-05$  &$1.9951$   &$1.5595{\rm e}-04$  &$1.9988\;$
\end{tabular*}
}
\rule{\temptablewidth}{1pt}
\end{center}
\end{table}

\begin{table}[tbh!]
 \vspace{-5mm}
\begin{center}
\renewcommand{\arraystretch}{1.25}
  \caption{Numerical errors against $\tau$-grid size reduction with the fixed spatial stepsizes.}\label{Table2}
\def\temptablewidth{1.000\textwidth}
\rule{\temptablewidth}{1pt}
\renewcommand\tabcolsep{1pt}
{\footnotesize
\begin{tabular*}{\temptablewidth}{@{\extracolsep{\fill}}|c|cccc|cccc|}
&\multicolumn{4}{c|}{\textbf{Case A}, $h=6/25$}
&\multicolumn{4}{c|}{\textbf{Case B}, $h=6/25$}\\
\cline{2-5}\cline{6-9}
$\tau$& $\|\mathrm{F_u}(h,\tau)\|_\infty$ &$\mathrm{Ord_{\infty}^\tau}$ &$\|\mathrm{F_\rho}(h,\tau)\|$ &$\mathrm{Ord_{2}^\tau}$\;\;
&$\|\mathrm{F_u}(h,\tau)\|_\infty$&$\mathrm{Ord_{\infty}^\tau}$ &$\|\mathrm{F_\rho}(h,\tau)\|$ &$\mathrm{Ord_{2}^\tau}$ \\
\hline
$\;0.25\;$     &$1.2391{\rm e}-03$  &$*$        &$4.2968{\rm e}-03$  &$*$\;\;
        &$5.3429{\rm e}-02$  &$*$        &$1.2532{\rm e}-01$  &$*\;$         \\
$\;0.125\;$    &$3.1403{\rm e}-04$  &$1.9803$   &$1.0730{\rm e}-03$  &$2.0016$\;\;
        &$1.6057{\rm e}-02$  &$1.7345$   &$3.8919{\rm e}-02$  &$1.6871\;$    \\
$\;0.0625\;$   &$7.8767{\rm e}-05$  &$1.9952$   &$2.6815{\rm e}-04$  &$2.0006$\;\;
        &$4.1766{\rm e}-03$  &$1.9428$   &$1.0397{\rm e}-02$  &$1.9043\;$    \\
$\;0.03125\;$  &$1.9708{\rm e}-05$  &$1.9988$   &$6.7032{\rm e}-05$  &$2.0001$\;\;
        &$1.0538{\rm e}-03$  &$1.9867$   &$2.6455{\rm e}-03$  &$1.9746\;$    \\
$\;0.015625\;$ &$4.9280{\rm e}-06$  &$1.9997$   &$1.6757{\rm e}-05$  &$2.0001$\;\;
        &$2.6404{\rm e}-04$  &$1.9968$   &$6.6432{\rm e}-04$  &$1.9936\;$\\
        \hline
&\multicolumn{4}{c|}{\textbf{Case C},  $h=4/25$}
&\multicolumn{4}{c|}{\textbf{Case D},  $h=4/25$}\\
\cline{2-5}\cline{6-9}
$\tau$& $\|\mathrm{F_u}(h,\tau)\|_\infty$ &$\mathrm{Ord_{\infty}^\tau}$ &$\|\mathrm{F_\rho}(h,\tau)\|$ &$\mathrm{Ord_{2}^\tau}$\;\;
&$\|\mathrm{F_u}(h,\tau)\|_\infty$&$\mathrm{Ord_{\infty}^\tau}$ &$\|\mathrm{F_\rho}(h,\tau)\|$ &$\mathrm{Ord_{2}^\tau}$ \\
\hline
$\;0.0125\;$     &$2.7005{\rm e}-06$  &$*$        &$3.4715{\rm e}-06$  &$*$\;\;
        &$3.4991{\rm e}-05$  &$*$        &$4.1005{\rm e}-05$  &$*\;$         \\
$\;0.00625\;$    &$6.7526{\rm e}-07$  &$1.9997$   &$8.6798{\rm e}-07$  &$1.9998$\;\;
        &$8.7511{\rm e}-06$  &$1.9994$   &$1.0254{\rm e}-05$  &$1.9996\;$    \\
$\;0.003125\;$   &$1.6883{\rm e}-07$  &$1.9999$   &$2.1700{\rm e}-07$  &$2.0000$\;\;
        &$2.1878{\rm e}-06$  &$2.0000$   &$2.5638{\rm e}-06$  &$1.9999\;$    \\
$\;0.0015625\;$  &$4.2309{\rm e}-08$  &$1.9965$   &$5.4154{\rm e}-08$  &$2.0025$\;\;
        &$5.4701{\rm e}-07$  &$1.9998$   &$6.4092{\rm e}-07$  &$2.0000\;$    \\
$\;0.00078125\;$ &$1.0528{\rm e}-08$  &$2.0066$   &$1.3626{\rm e}-08$  &$1.9907$\;\;
        &$1.3677{\rm e}-07$  &$1.9998$   &$1.6025{\rm e}-07$  &$1.9998\;$
\end{tabular*}}
\rule{\temptablewidth}{1pt}
\end{center}
\end{table}

\begin{table}[tbh!]
\begin{center}
\renewcommand{\arraystretch}{1.12}
\tabcolsep 8pt \caption{Numerical conservative invariants of $E^n$, $H^n$, and $I^n$ at time $t_n$.}\label{Table3}
\def\temptablewidth{0.8\textwidth}
\rule{\temptablewidth}{1pt}
{\footnotesize
\begin{tabular*}{\temptablewidth}{@{\extracolsep{\fill}}|c|c|c|c|}
&\multicolumn{3}{c|}{\textbf{Case A}, $(h,\tau) = (1/5,1/256)$}  \\
\cline{2-4}
\quad$t_n\quad$  &$\qquad E^n\qquad$&$\qquad H^n\qquad$&$\qquad I^n\qquad$\\
\hline
 \quad$0\quad$   &$6.426590811396586\quad$   &$0                \quad$ &$12.39999498602724\quad$\\
 \quad$2\quad$   &$6.426590811396584\quad$   &$0.000000000000006\quad$ &$12.39999498602724\quad$\\
 \quad$4\quad$   &$6.426590811396586\quad$   &$0.000000000000024\quad$ &$12.39999498602725\quad$\\
 \quad$6\quad$   &$6.426590811396584\quad$   &$0.000000000000055\quad$ &$12.39999498602725\quad$\\
 \quad$8\quad$   &$6.426590811396582\quad$   &$0.000000000000073\quad$ &$12.39999498602724\quad$\\
 \quad$10\quad$  &$6.426590811396588\quad$   &$0.000000000000023\quad$ &$12.39999498602725\quad$\\
 \hline
&\multicolumn{3}{c|}{\textbf{Case B}, $(h,\tau) = (1/5,1/256)$}\\
\cline{2-4}
\quad$t_n\quad$  &$\qquad E^n\qquad$&$\qquad H^n\qquad$&$\qquad I^n\qquad$\\
\hline
 \quad$0\quad$   &$67.70071037403311\quad$   &$0                 \quad$ &$91.40037694550523\quad$\\
 \quad$2\quad$   &$67.70067103780045\quad$   &$-0.000000000000133\quad$ &$91.40037694550517\quad$\\
 \quad$4\quad$   &$67.70064579547363\quad$   &$-0.000000000000383\quad$ &$91.40037694550520\quad$\\
 \quad$6\quad$   &$67.70062829372516\quad$   &$-0.000000000000820\quad$ &$91.40037694550522\quad$\\
 \quad$8\quad$   &$67.70061909000481\quad$   &$-0.000000000000772\quad$ &$91.40037694550519\quad$\\
 \quad$10\quad$  &$67.70061445300891\quad$   &$-0.000000000000737\quad$ &$91.40037694550520\quad$\\
 \hline
 &\multicolumn{3}{c|}{\textbf{Case C}, $(h,\tau) = (1/10,1/256)$} \\
\cline{2-4}
\quad$t_n\quad$  &$\qquad E^n\qquad$&$\qquad H^n\qquad$&$\qquad I^n\qquad$\\
\hline
 \quad$0\quad$    &$8.905545767953516\quad$   &$0.001300209682121\quad$ &$16.79999981448777\quad$\\
 \quad$2\quad$    &$8.905545767953521\quad$   &$0.001300209690680\quad$ &$16.79999981448777\quad$\\
 \quad$4\quad$    &$8.905545767953528\quad$   &$0.001300209671917\quad$ &$16.79999981448778\quad$\\
 \quad$6\quad$    &$8.905545767953543\quad$   &$0.001300209652087\quad$ &$16.79999981448778\quad$\\
 \quad$8\quad$    &$8.905545767953550\quad$   &$0.001300209648438\quad$ &$16.79999981448778\quad$\\
 \quad$10\quad$   &$8.905545767953560\quad$   &$0.001300209621099\quad$ &$16.79999981448778\quad$\\
 \hline
&\multicolumn{3}{c|}{\textbf{Case D}, $(h,\tau) = (1/5,1/256)$}\\
\cline{2-4}
\quad$t_n\quad$  &$\qquad E^n\qquad$&$\qquad H^n\qquad$&$\qquad I^n\qquad$\\
\hline
 \quad$0\quad$    &$14.14719145331662\quad$   &$0.002065791557752\quad$ &$19.99999836196486\quad$\\
 \quad$2\quad$    &$14.14719145331671\quad$   &$0.002065792012461\quad$ &$19.99999836196486\quad$\\
 \quad$4\quad$    &$14.14719145331683\quad$   &$0.002065792010409\quad$ &$19.99999836196486\quad$\\
 \quad$6\quad$    &$14.14719145331692\quad$   &$0.002065792072498\quad$ &$19.99999836196486\quad$\\
 \quad$8\quad$    &$14.14719145331738\quad$   &$0.002065794300180\quad$ &$19.99999836196487\quad$\\
 \quad$10\quad$   &$14.14719145331774\quad$   &$0.002065796011144\quad$ &$19.99999836196487\quad$
\end{tabular*}}
\rule{\temptablewidth}{1pt}
\end{center}
\end{table}

\begin{figure}[htbp]
 \vspace{-5mm}
\subfigtopskip=2pt
\subfigbottomskip=2pt
\subfigcapskip=-3pt
\subfigure[Case A-$u$]{ \centering
\includegraphics[width=0.5\textwidth]{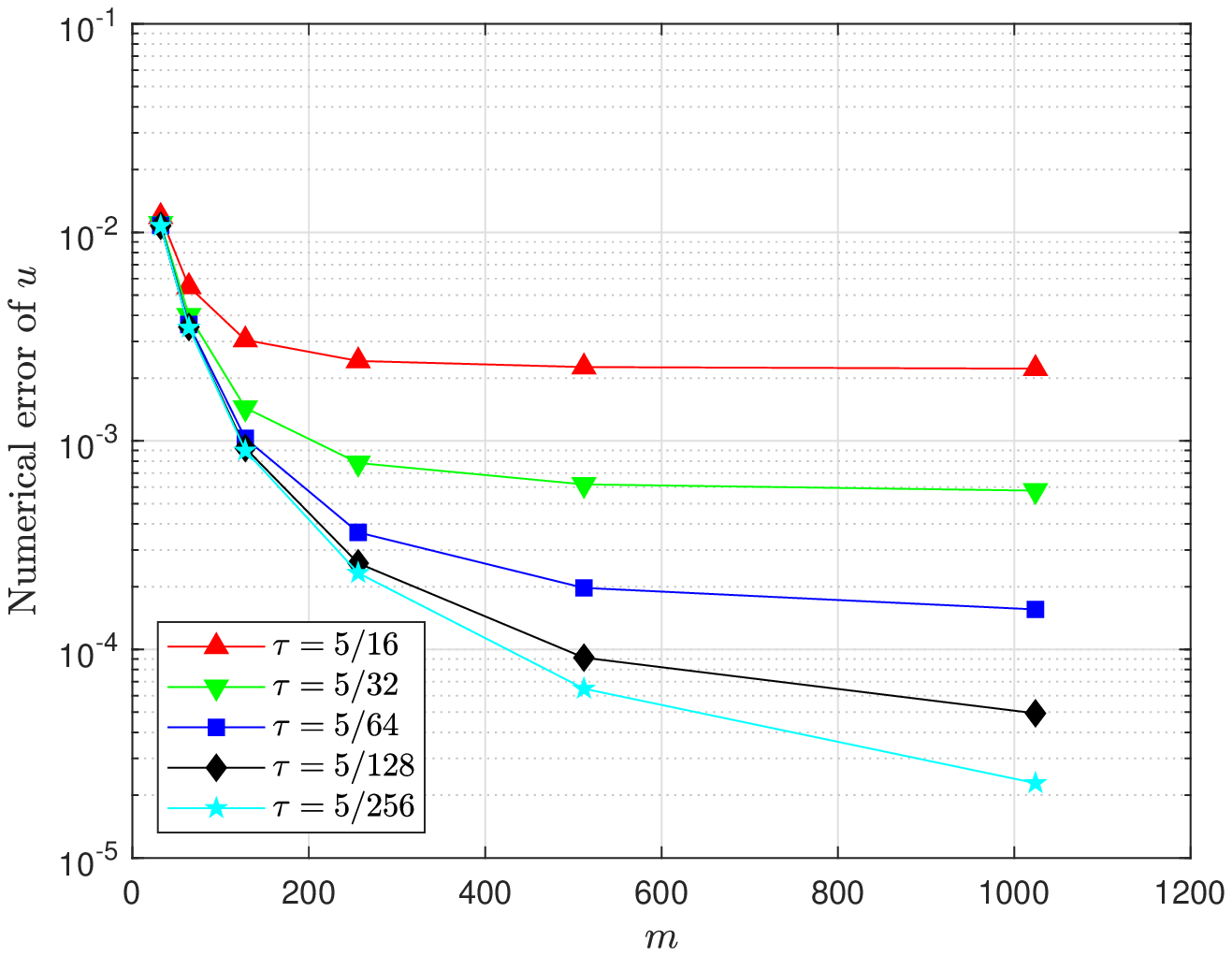}}
\hspace{-26pt}
\subfigure[Case A-$\rho$]{ \centering
\includegraphics[width=0.5\textwidth]{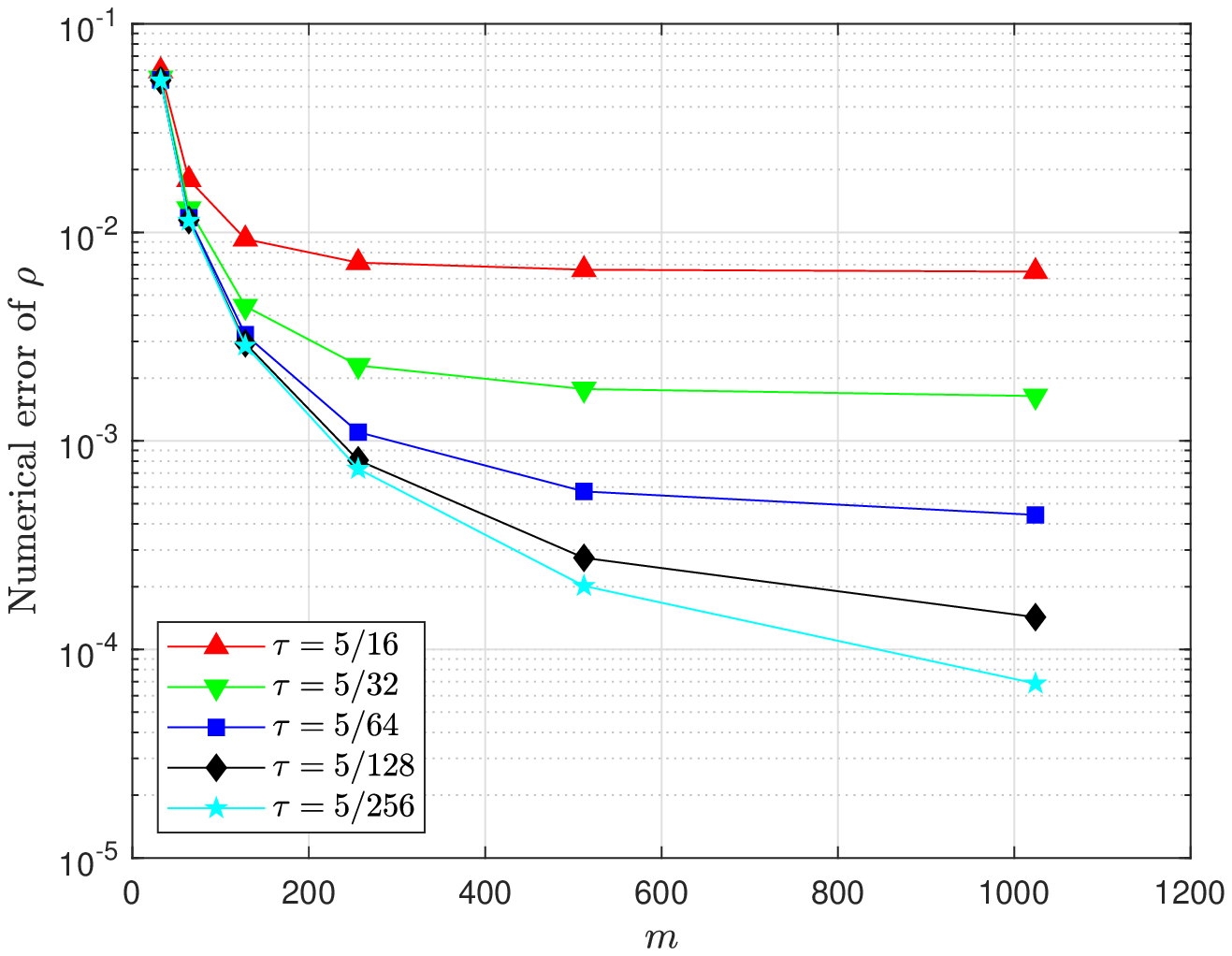}
}
\subfigure[Case D-$u$]{ \centering
\includegraphics[width=0.5\textwidth]{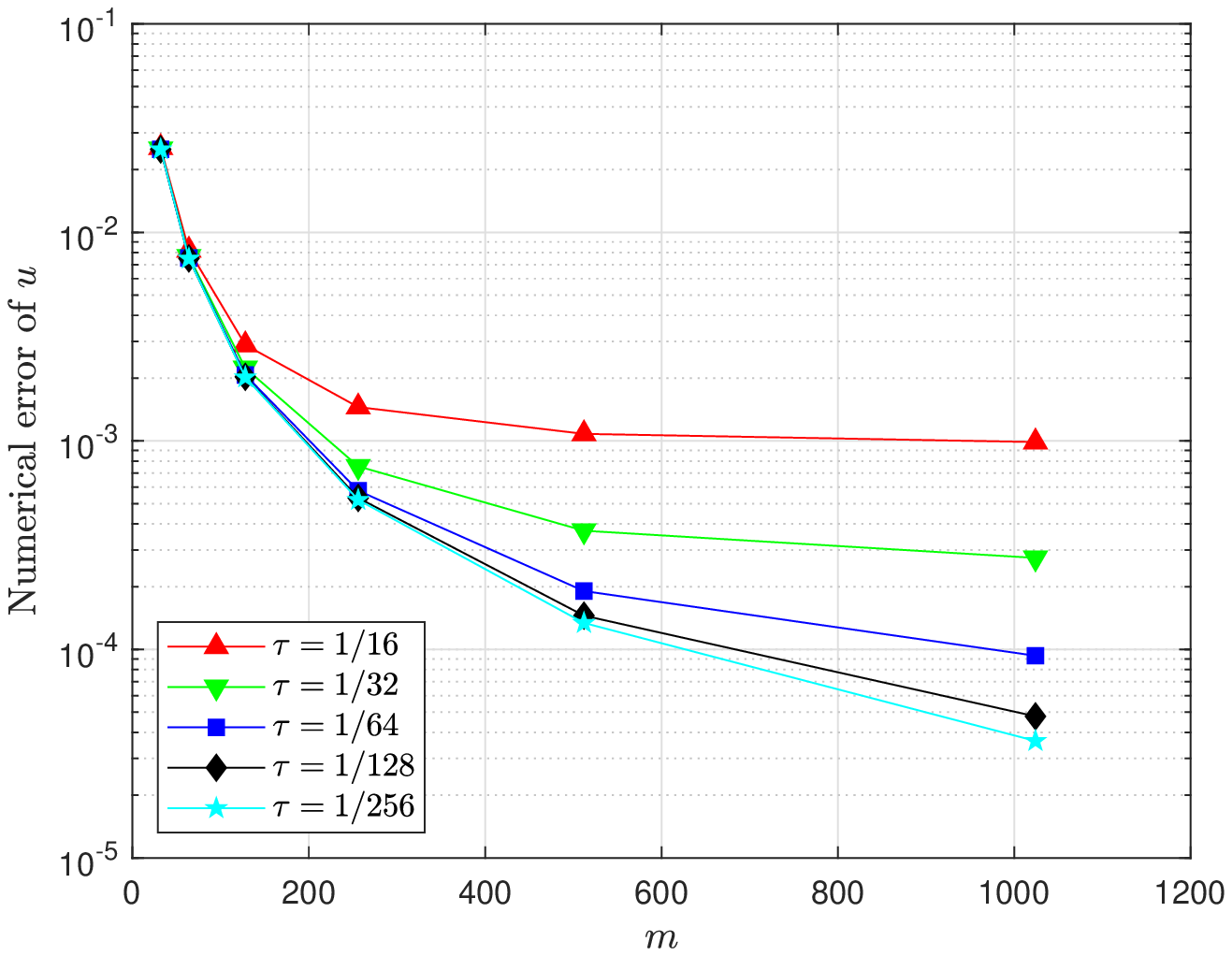}}
\hspace{-15pt}
\subfigure[Case D-$\rho$]{ \centering
\includegraphics[width=0.5\textwidth]{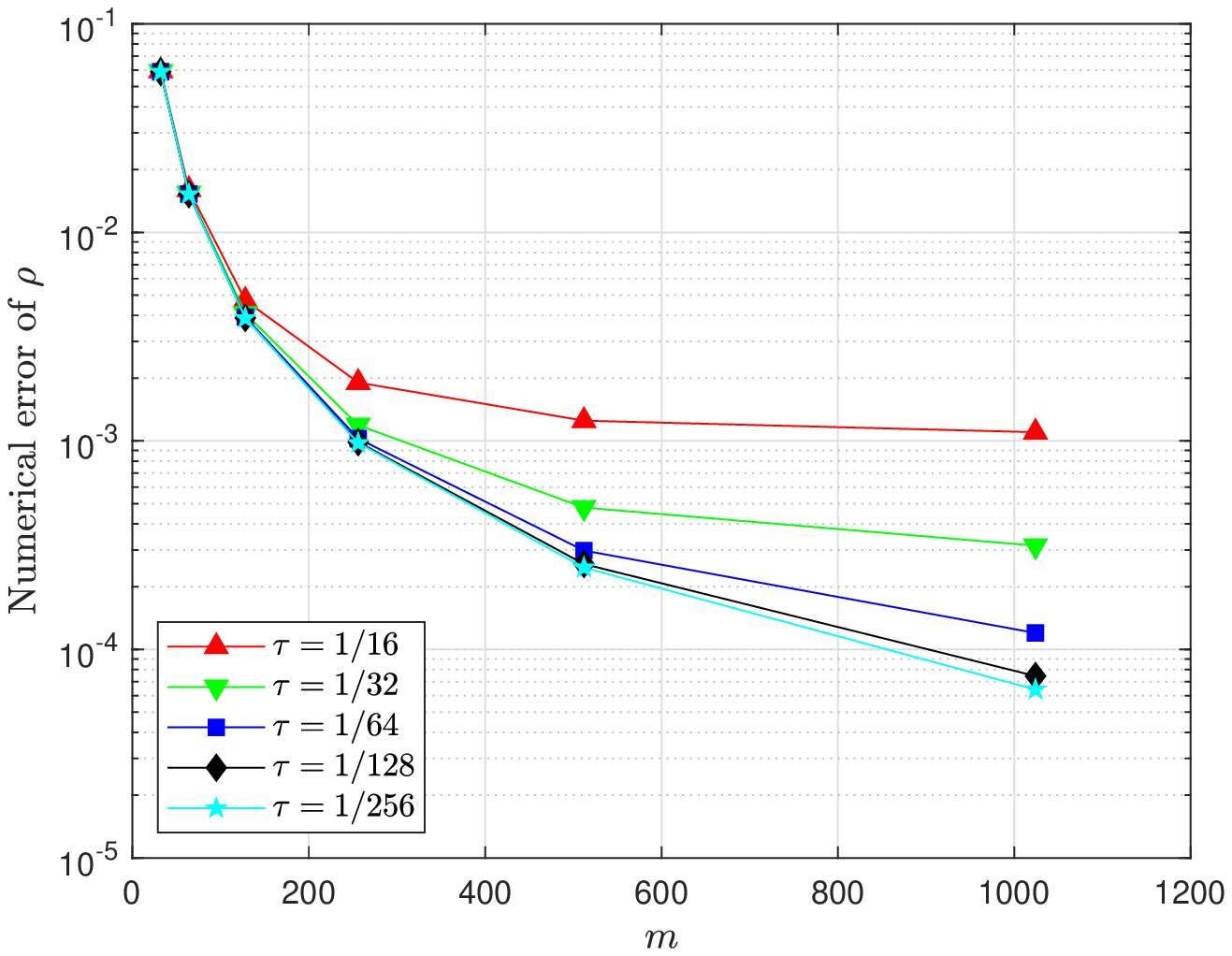}}
\caption{Unconditional convergence tests against spatial grid with the fixed temporal stepsize $\tau$.
\textbf{Case A}: (a) and (b); \textbf{Case D}: (c) and (d).} \label{fig:1}
 \vspace{-5mm}
\end{figure}

\begin{figure}[htbp]
\subfigtopskip=2pt
\subfigbottomskip=2pt
\subfigcapskip=-3pt
\subfigure[Case E-$u$, view(45,70)\,]{ \centering
\includegraphics[width=0.25\textwidth]{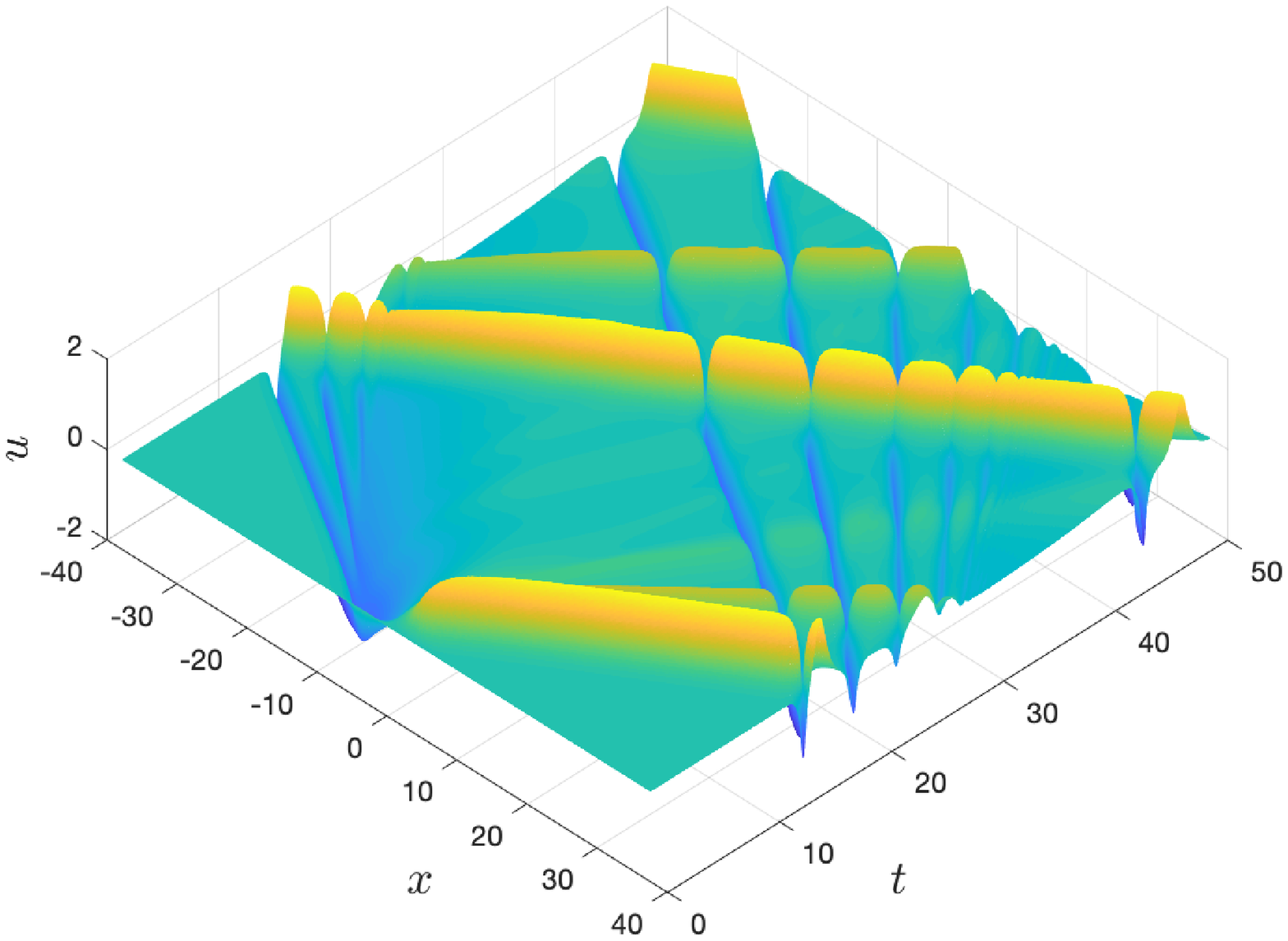}}
\hspace{-20pt}
\subfigure[Case E-$\rho$, view(45,70)\,]{ \centering
\includegraphics[width=0.25\textwidth]{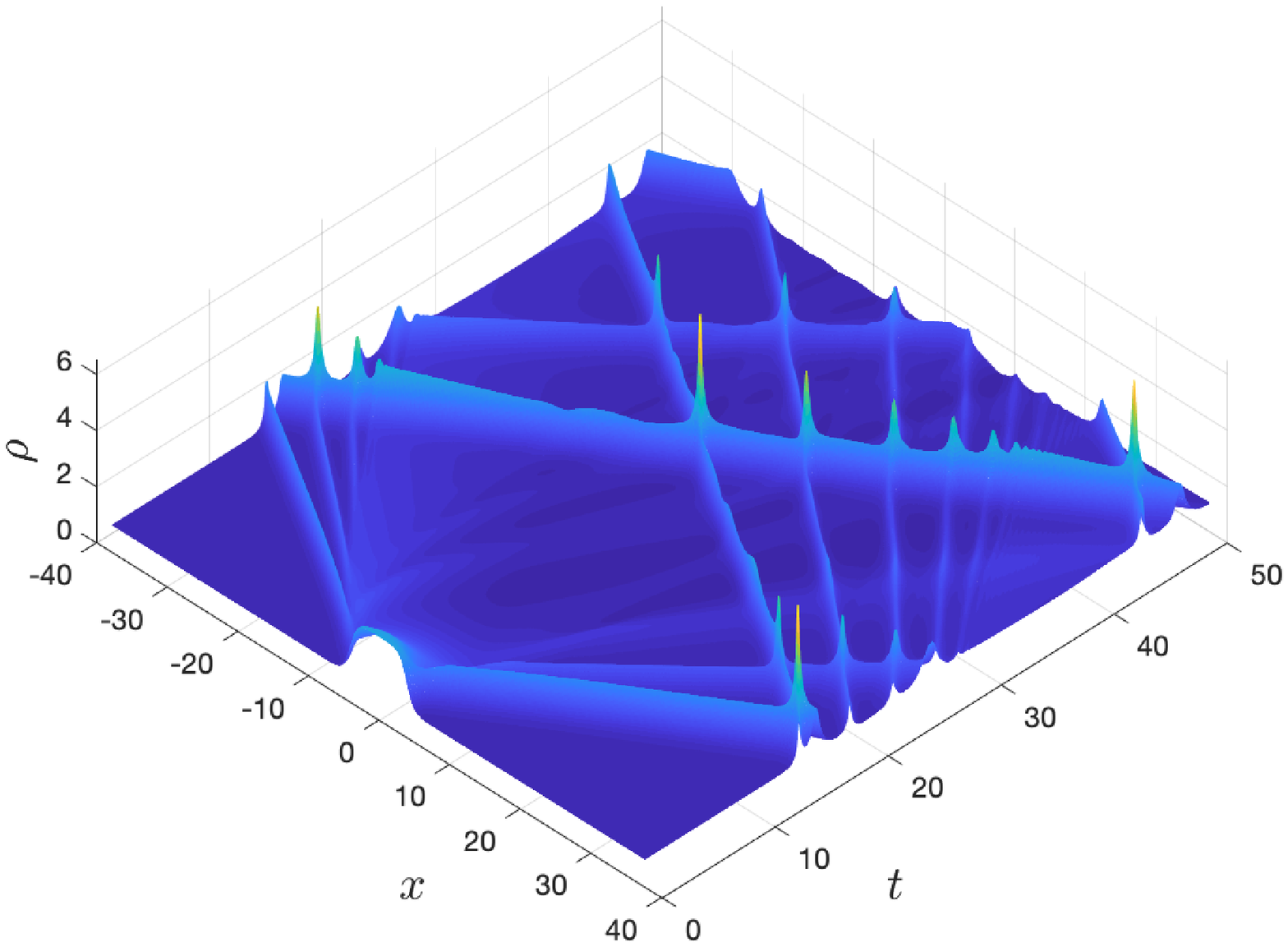}
}\hspace{-18pt}
\subfigure[Case E-$u$, view(45,70)\,]{ \centering
\includegraphics[width=0.25\textwidth]{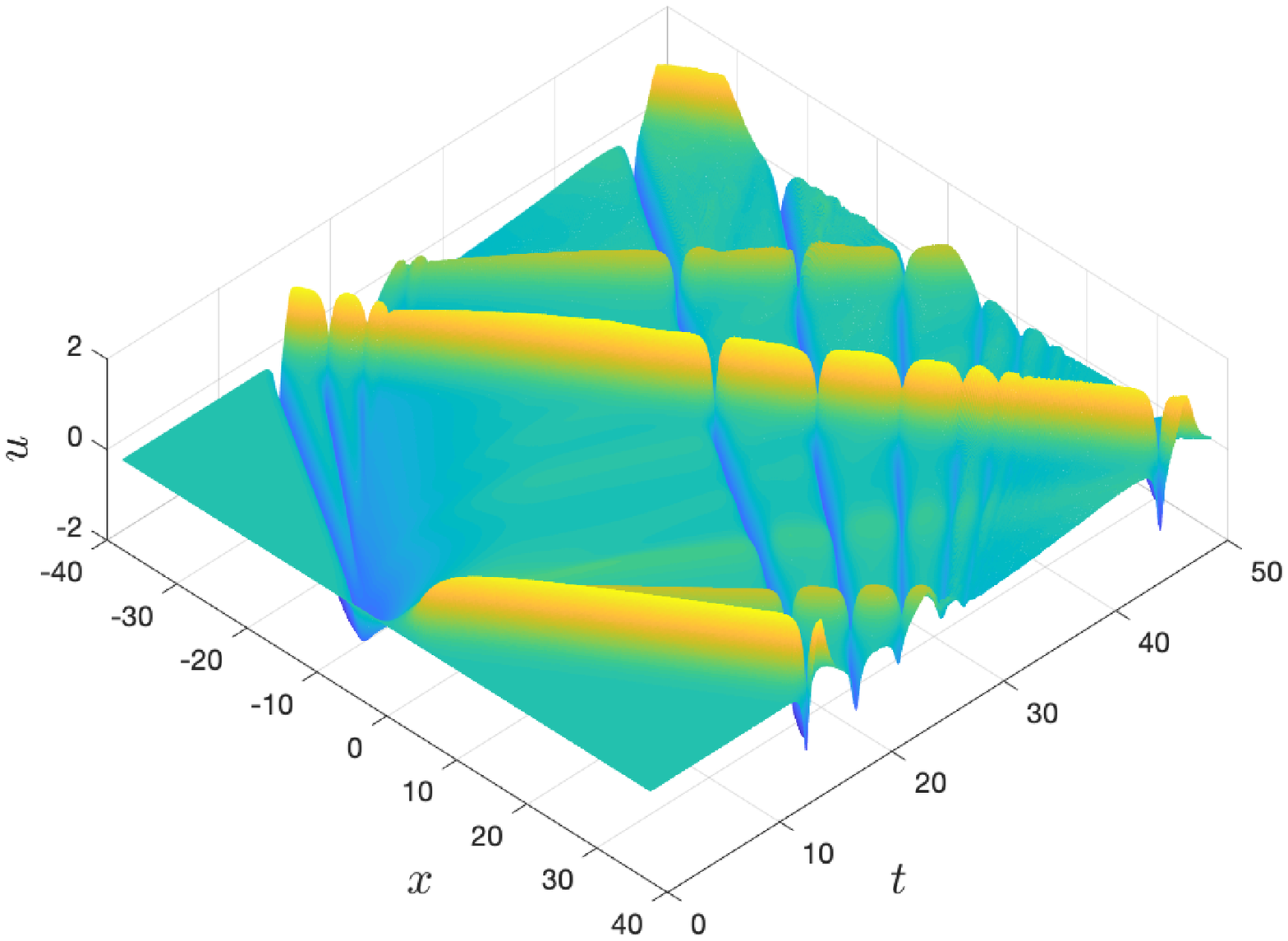}}
\hspace{-20pt}
\subfigure[Case E-$\rho$, view(45,70)\,]{ \centering
\includegraphics[width=0.25\textwidth]{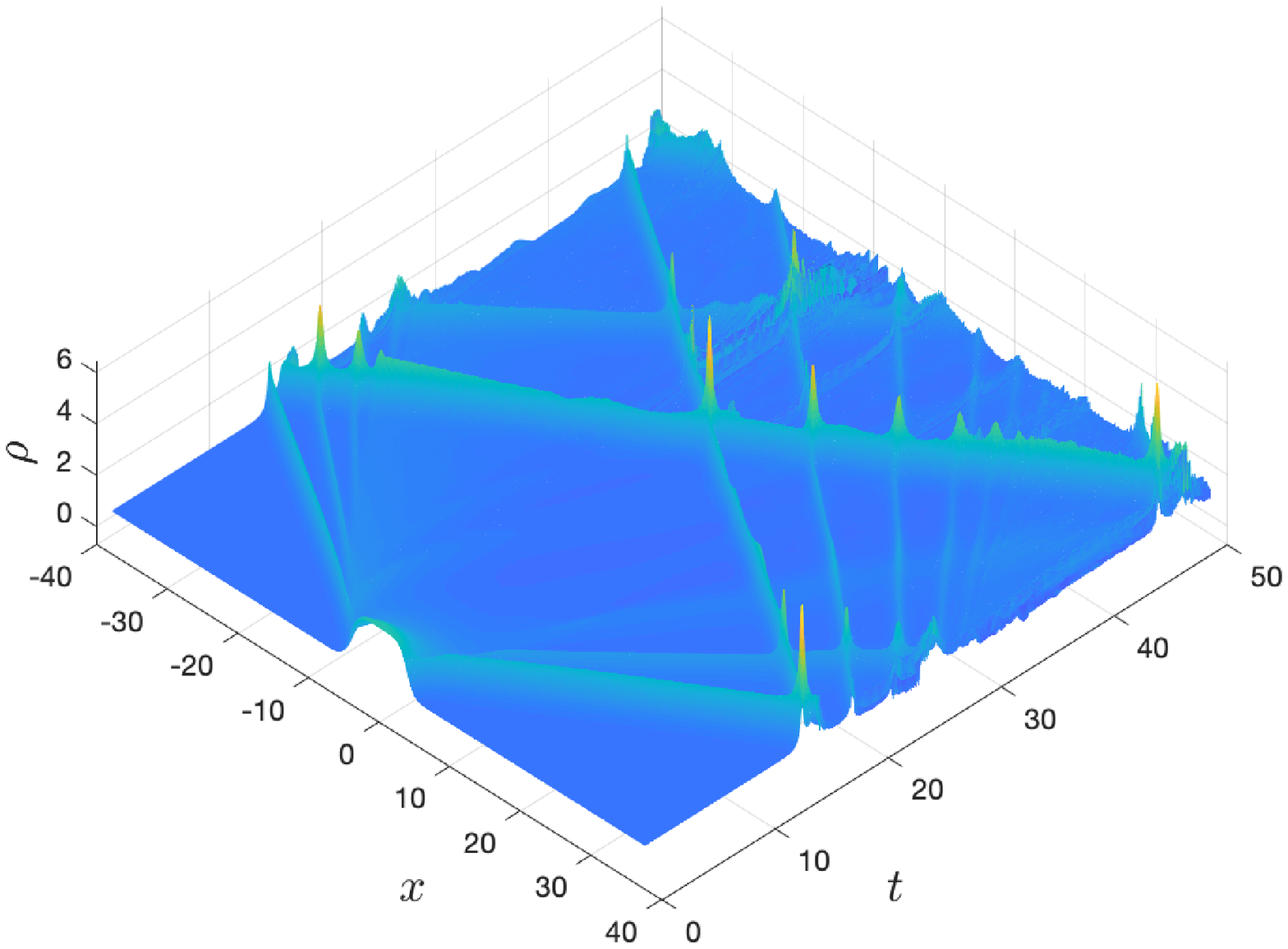}
}\\
\subfigure[Case E-$u$, contour\,]{ \centering
\includegraphics[width=0.25\textwidth]{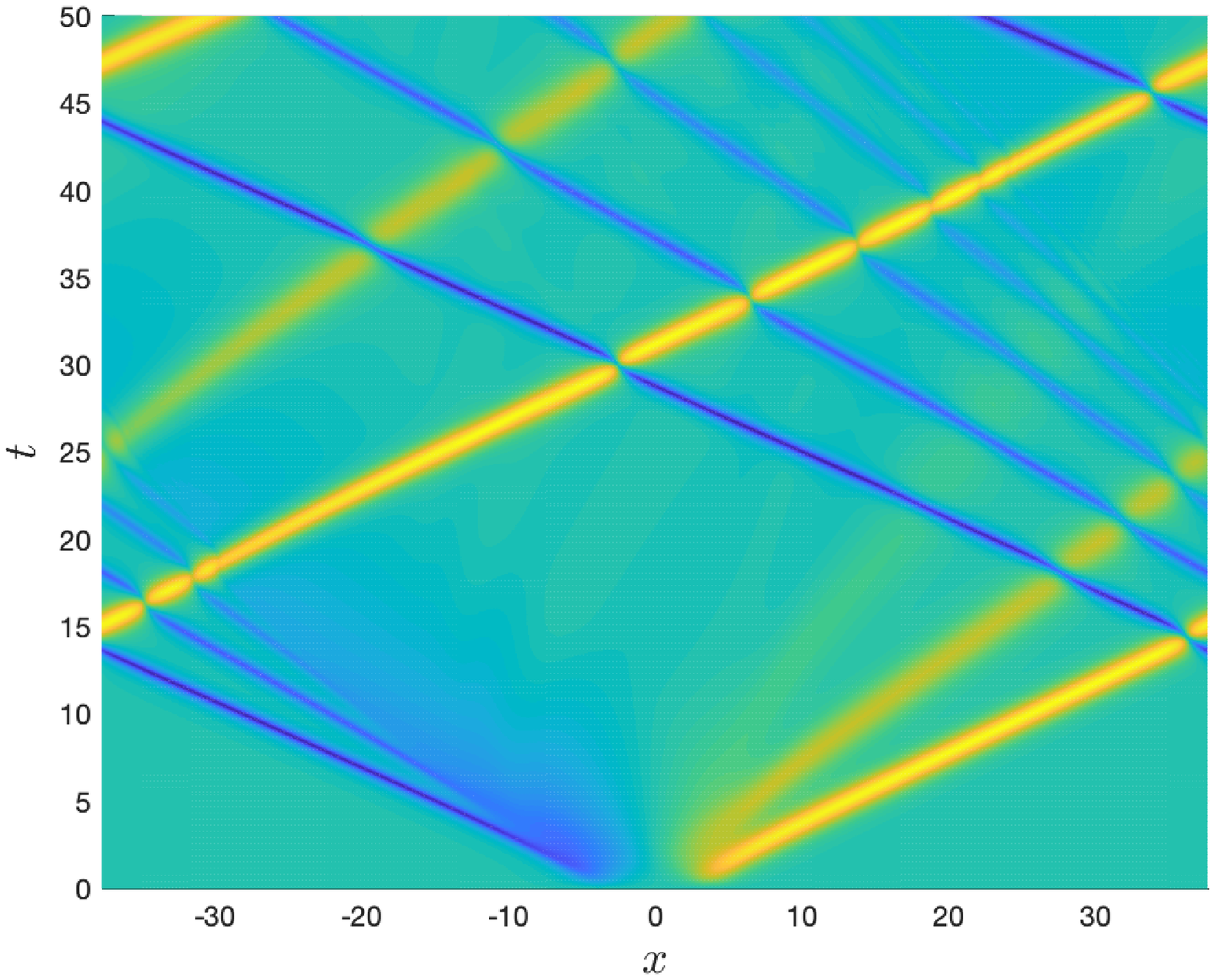}}
\hspace{-20pt}
\subfigure[Case E-$\rho$, contour\,]{ \centering
\includegraphics[width=0.25\textwidth]{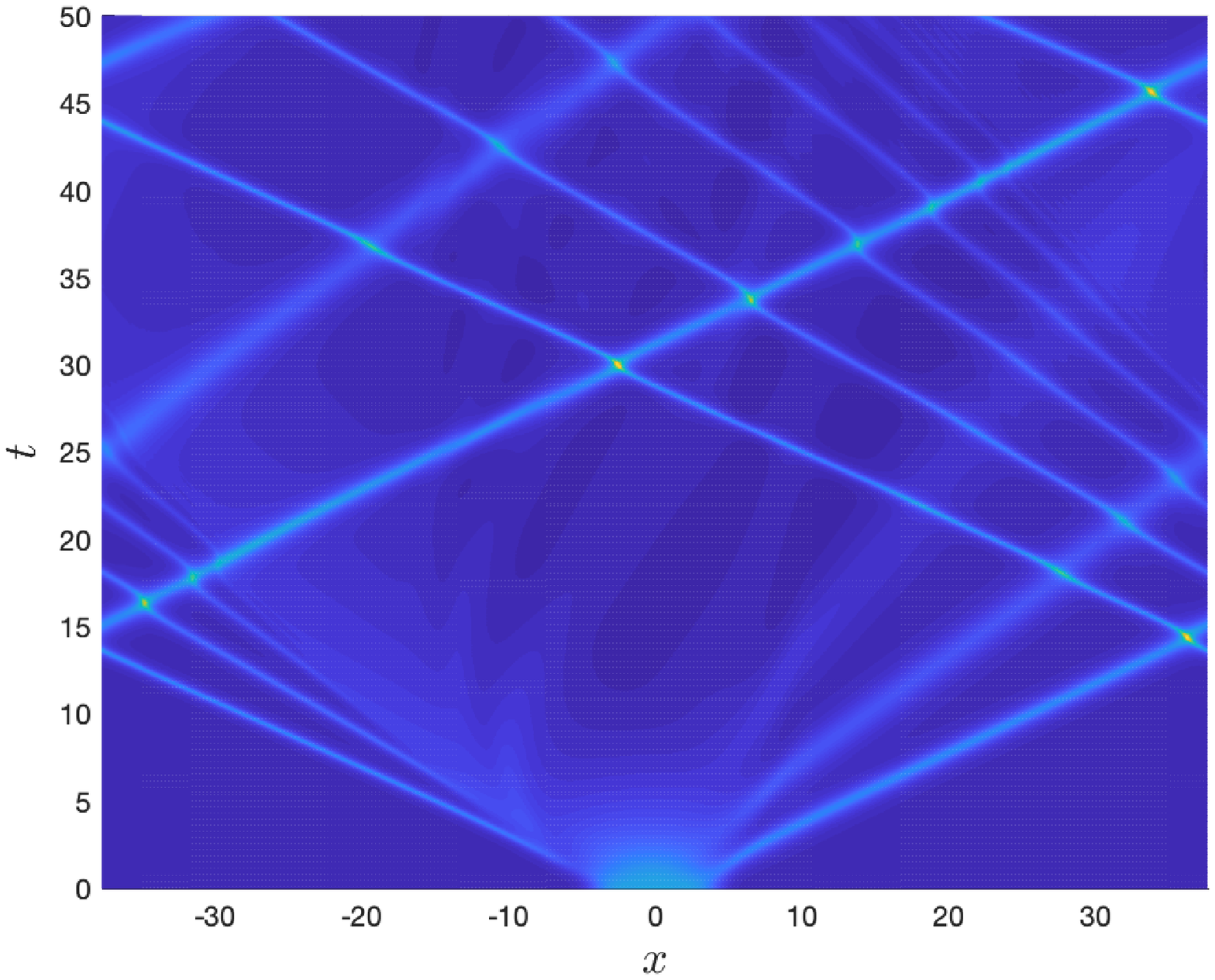}
}\hspace{-18pt}
\subfigure[Case E-$u$, contour\,]{ \centering
\includegraphics[width=0.25\textwidth]{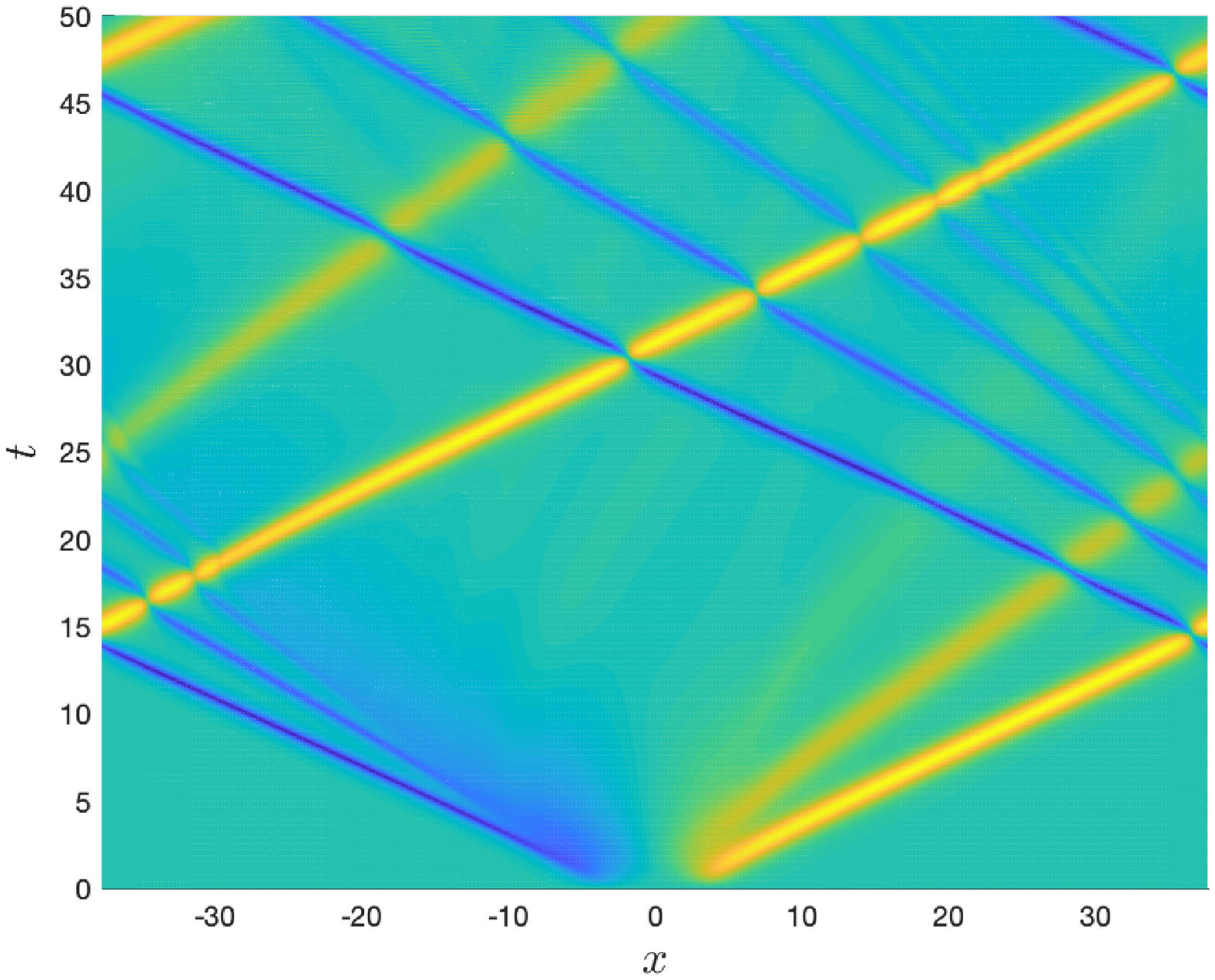}}
\hspace{-20pt}
\subfigure[Case E-$\rho$, contour\,]{ \centering
\includegraphics[width=0.25\textwidth]{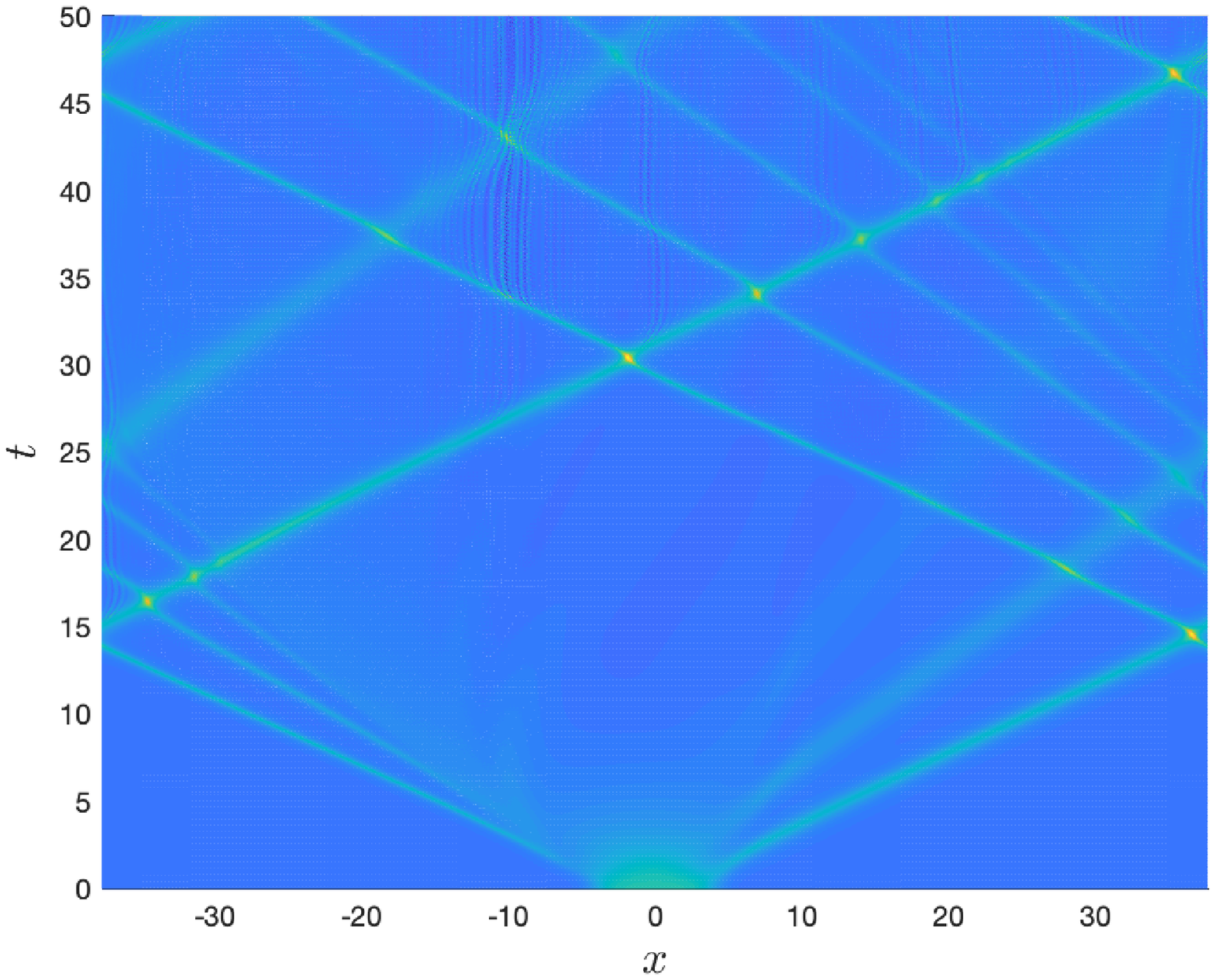}
}
\caption{The portraits of the velocity and altitude in long-time simulation with same stepsizes $h=1/16$ and $\tau=1/20$ in \textbf{Case E}. Upper: (a) and (b) calculated by the difference scheme \eqref{equa3.7}, (c) and (d) calculated by the difference scheme in \cite{ZLZ2022}; Lower: corresponding contours.} \label{fig:2}
\end{figure}

\begin{figure}[htbp]
\subfigtopskip=2pt
\subfigbottomskip=2pt
\subfigcapskip=-3pt
\subfigure[$u$, view(45,70)]{ \centering
\includegraphics[width=0.25\textwidth]{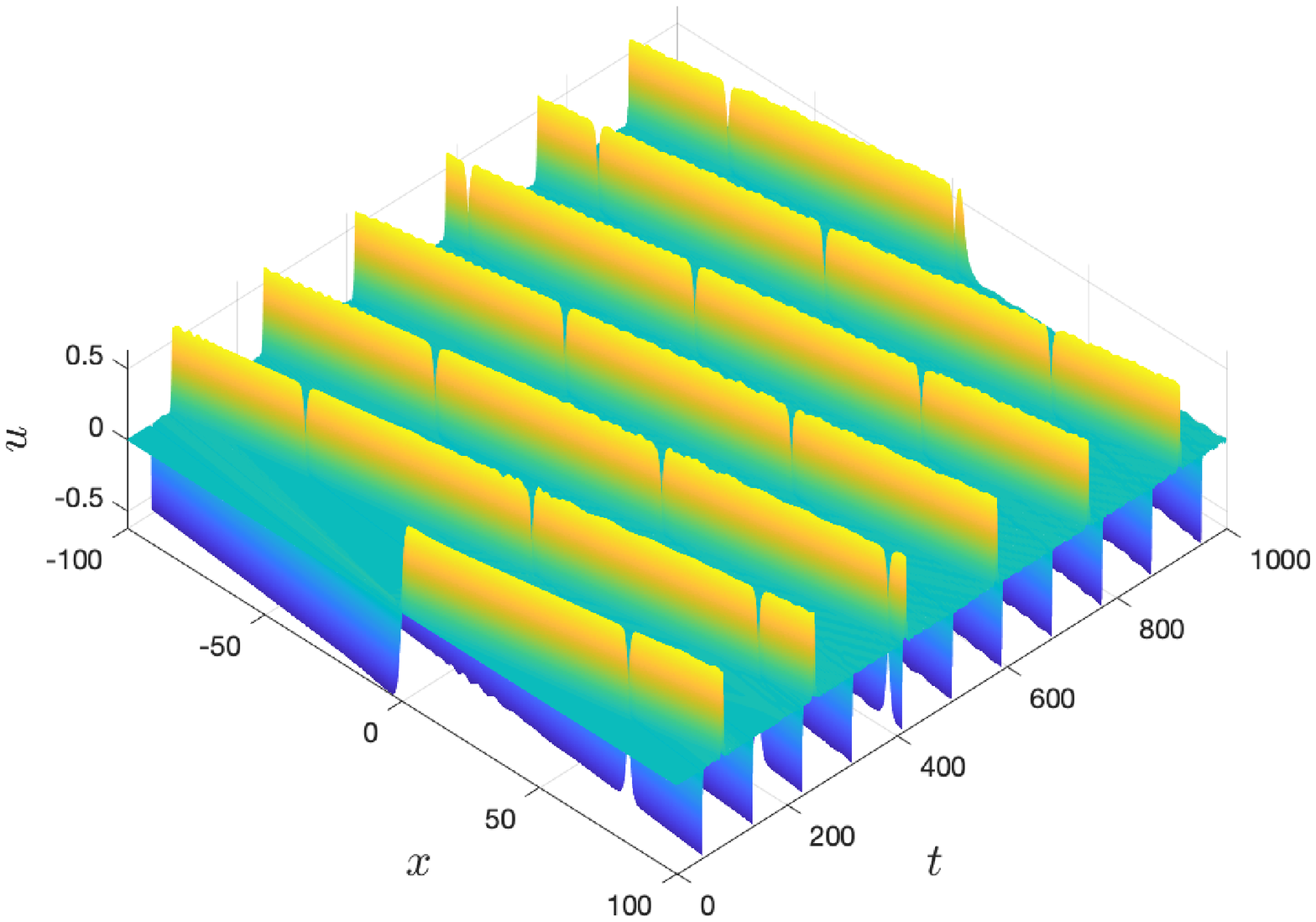}
}\hspace{-20pt}
\subfigure[$\rho$, view(45,70)]{ \centering
\includegraphics[width=0.25\textwidth]{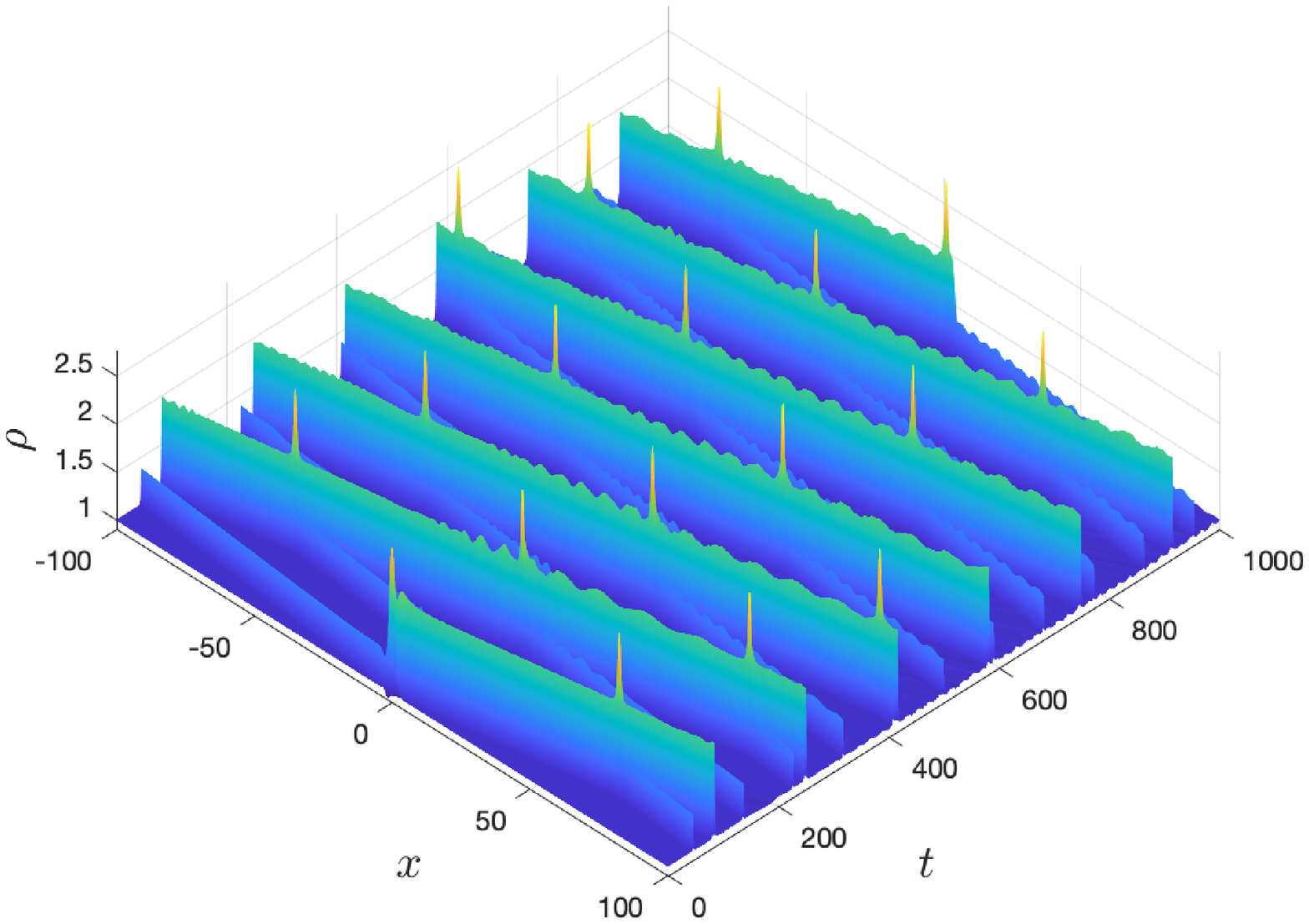}
}\hspace{-20pt}
\subfigure[$u$, view(45,70)]{ \centering
\includegraphics[width=0.25\textwidth]{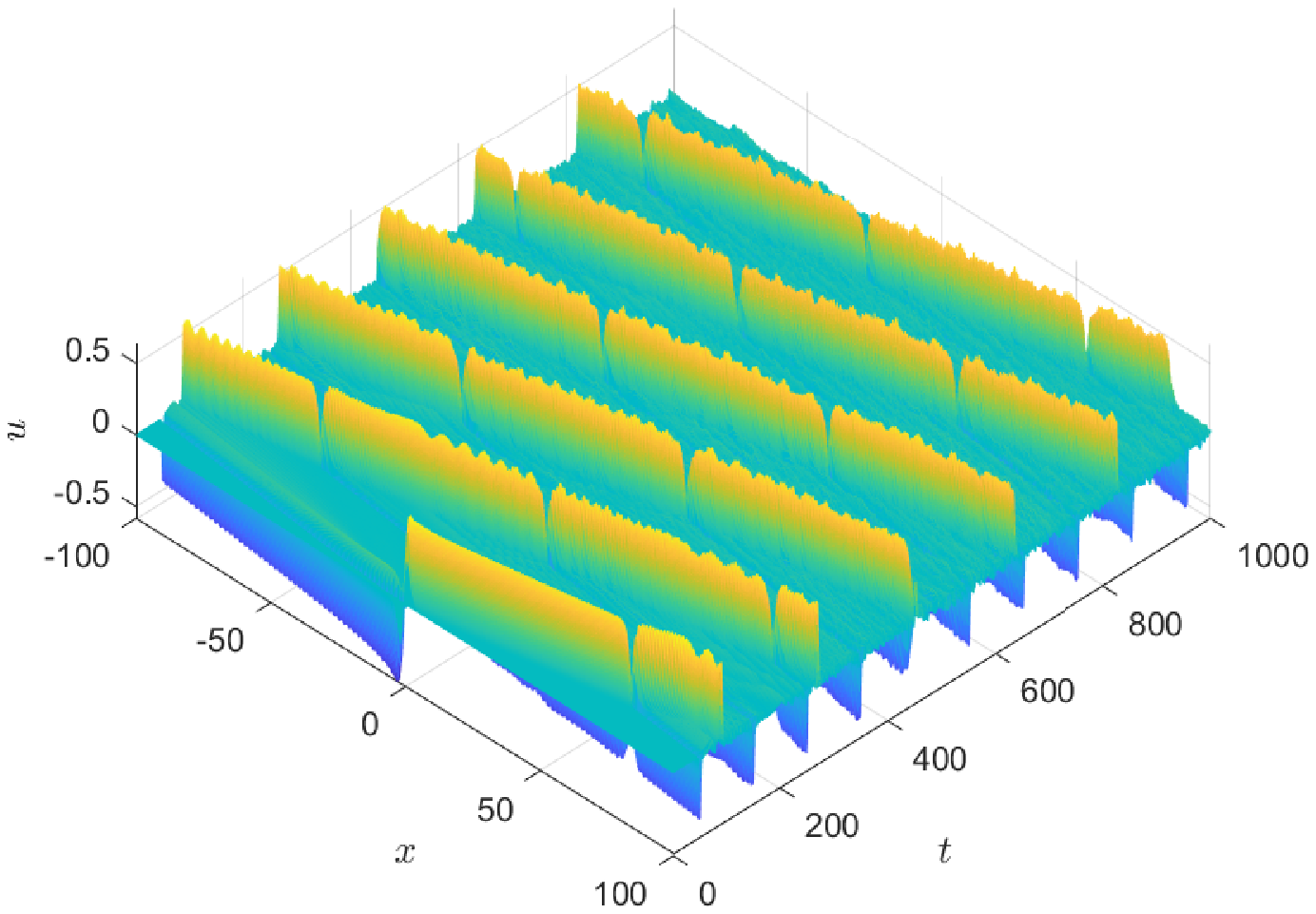}
}\hspace{-20pt}
\subfigure[$\rho$, view(45,70)]{ \centering
\includegraphics[width=0.25\textwidth]{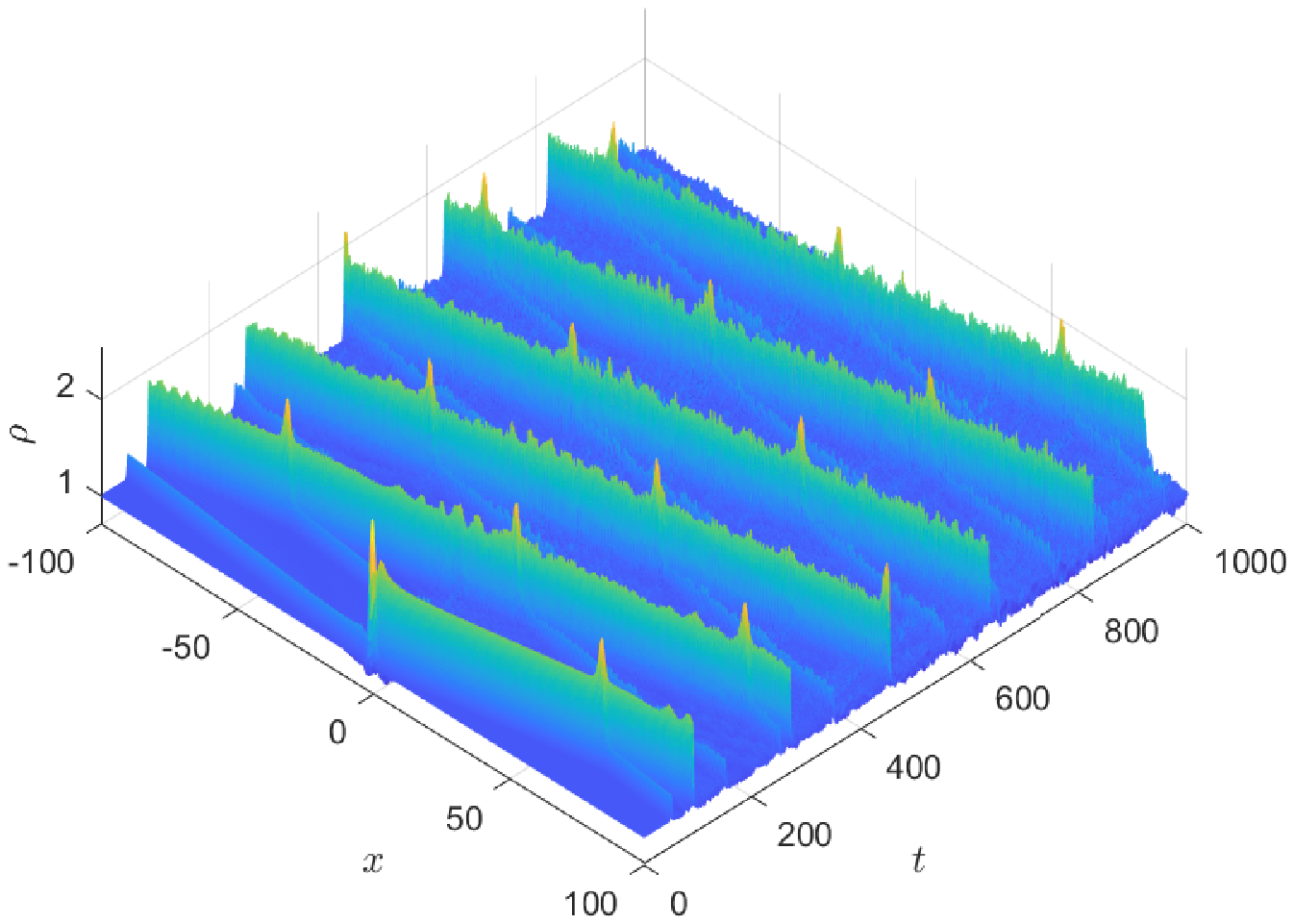}
}
\\
\subfigure[$u$-contour]{ \centering
\includegraphics[width=0.25\textwidth]{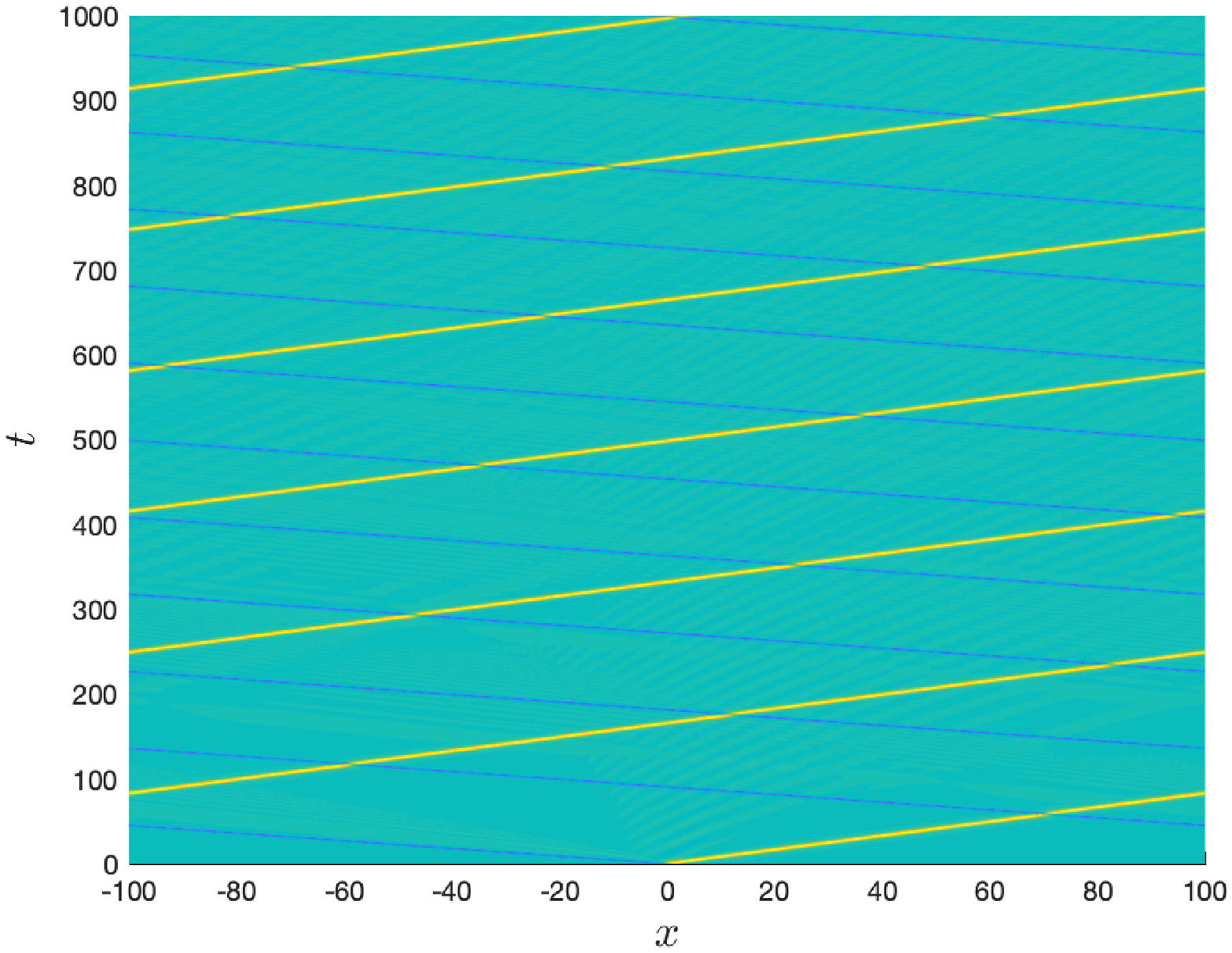}
} \hspace{-20pt}
\subfigure[$\rho$-contour]{ \centering
\includegraphics[width=0.25\textwidth]{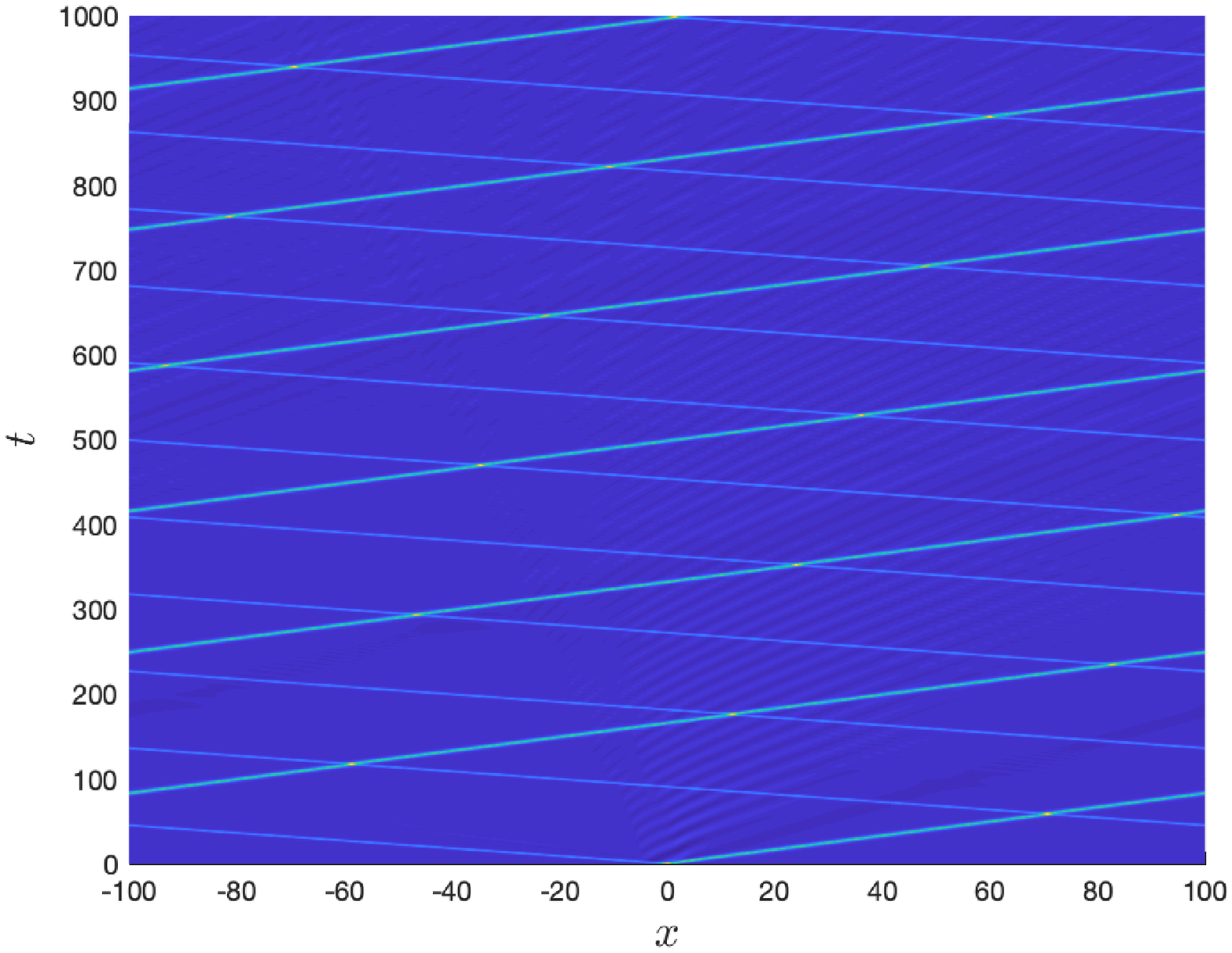}
}\hspace{-20pt}
\subfigure[$u$-contour]{ \centering
\includegraphics[width=0.25\textwidth]{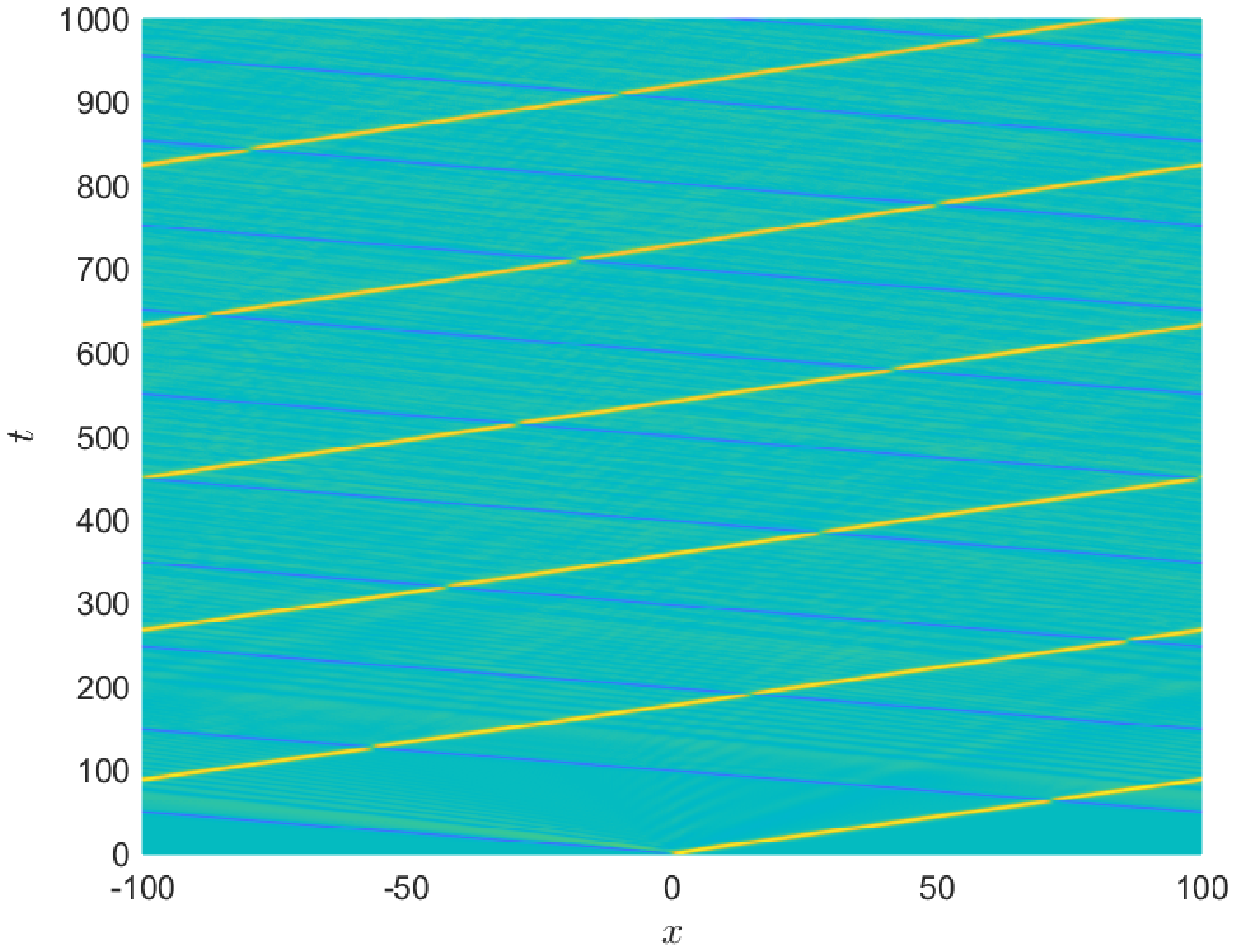}
} \hspace{-20pt}
\subfigure[$\rho$-contour]{ \centering
\includegraphics[width=0.25\textwidth]{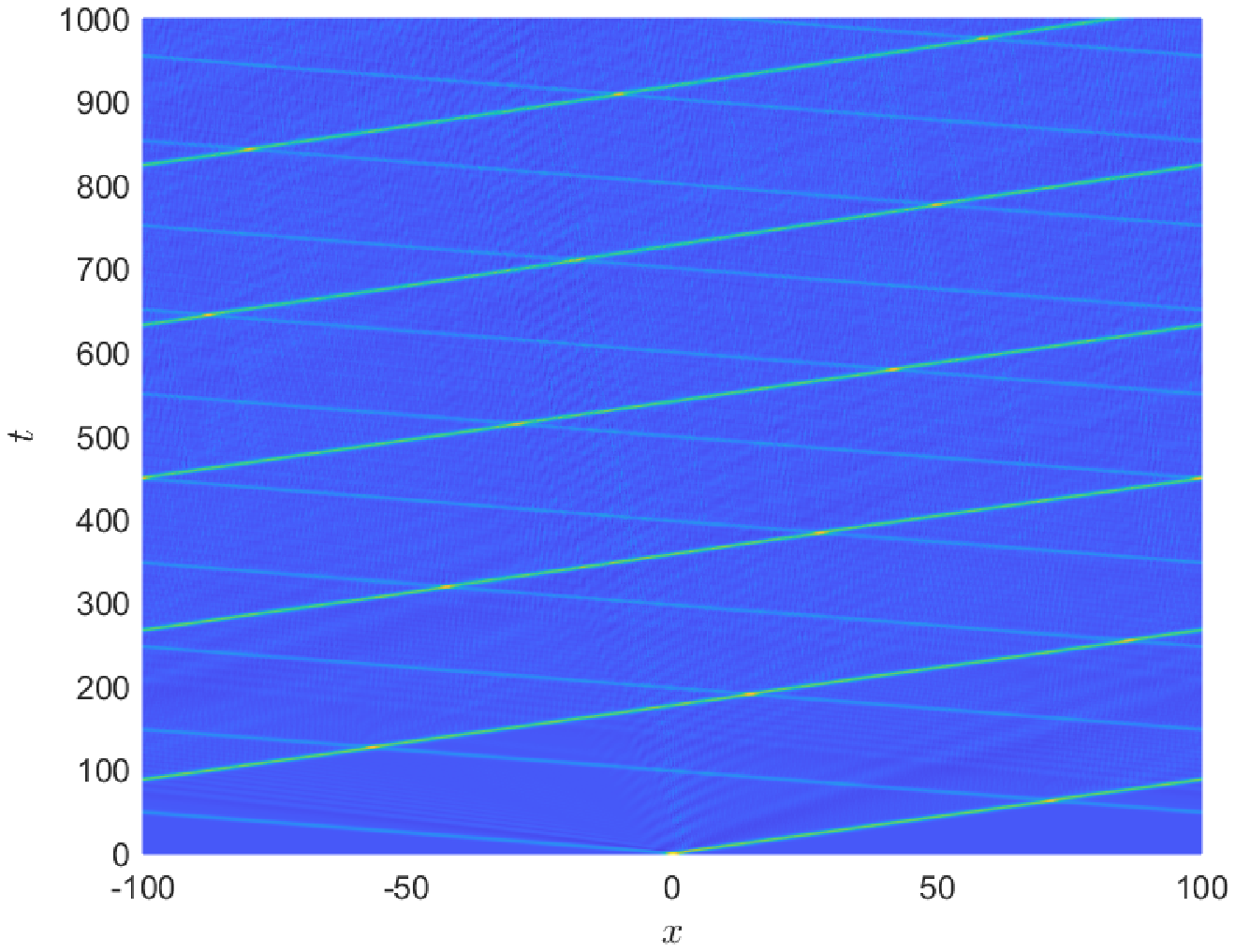}
}
\\
\subfigure[Numerical invariants]{ \centering
\includegraphics[width=0.48\textwidth]{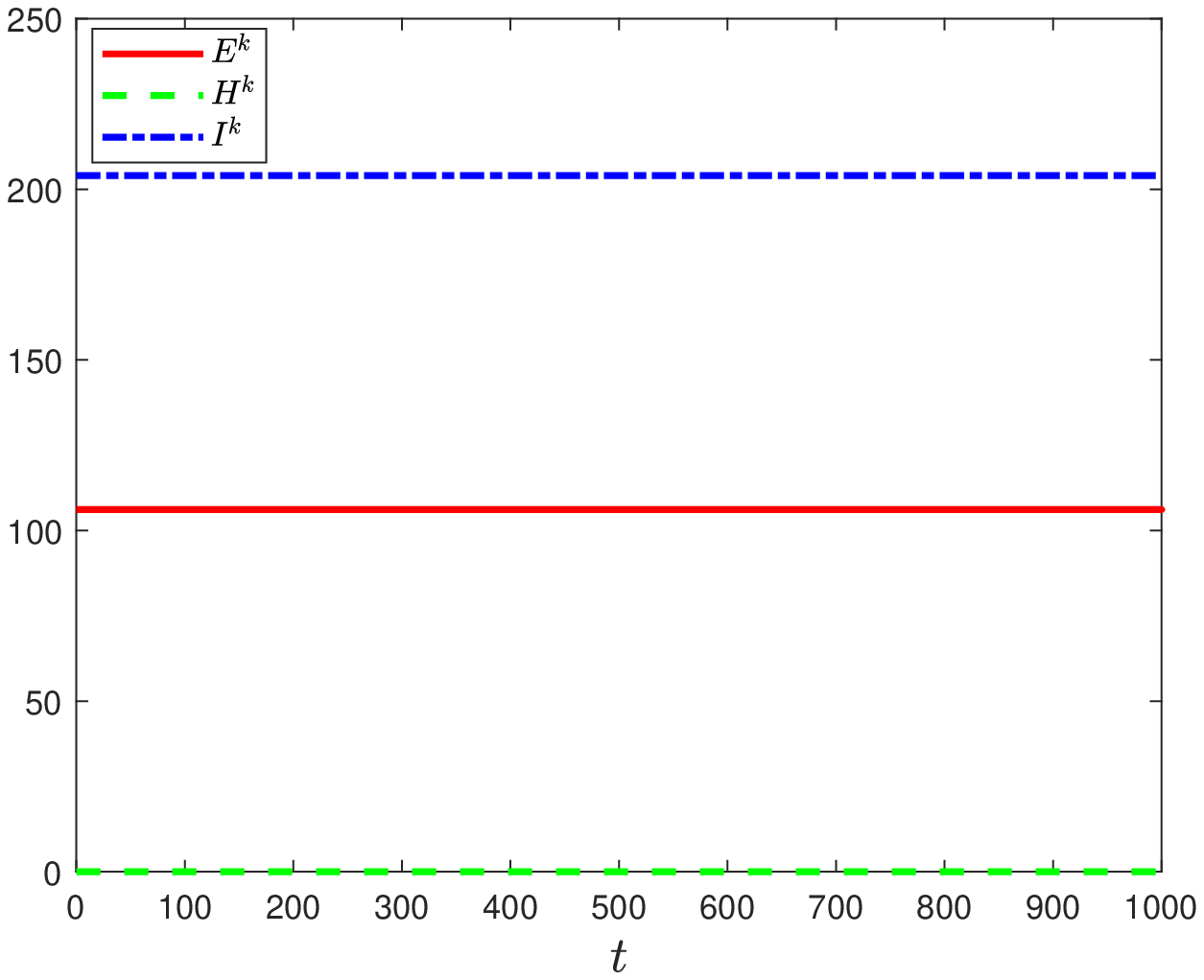}}
\hspace{-25pt}
\subfigure[Errors of the numerical invariants]{ \centering
\includegraphics[width=0.48\textwidth]{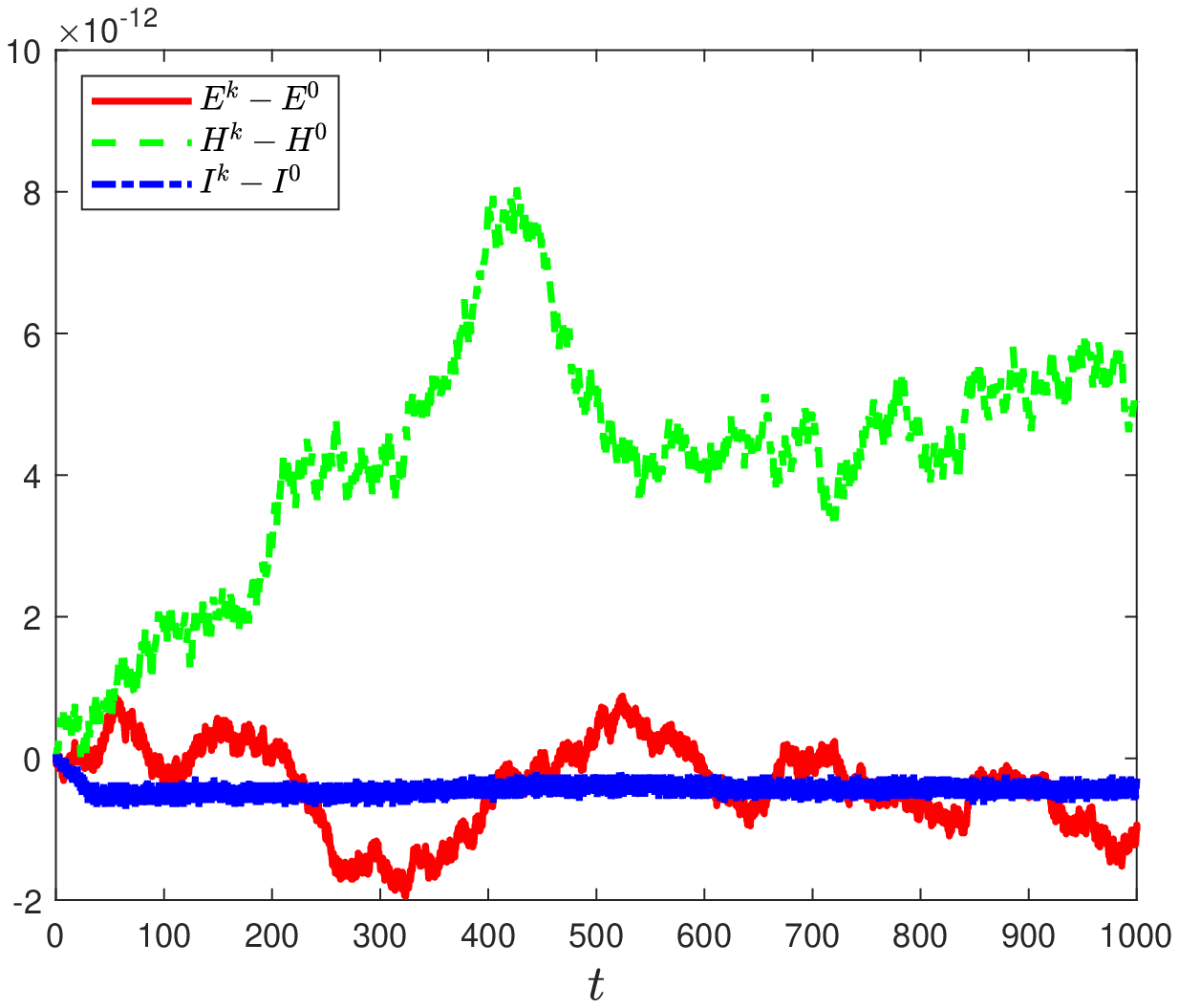}
}
\caption{The portraits of the velocity and altitude in long-time simulation calculated by the difference scheme \eqref{equa3.7} (a)--(b), and three-level linearized difference scheme (c)--(d) with same stepsizes $h=1/10$, $\tau=1/50$ in \textbf{Case F}; (e)--(h) denote the corresponding contours; (i)--(j): Numerical invariants, and relevant error curves with the same grid stepsizes for the difference scheme \eqref{equa3.7}.} \label{fig:3}
\end{figure}

 \textbf{Convergent accuracy test in Cases A--F}.
  Table \ref{Table1} displays the numerical errors and spatial convergence orders when the temporal stepsize $\tau=1/50$ is fixed for \textbf{Case A} and \textbf{Case B}. It is observed that the finite difference scheme is quadratically convergent in the temporal direction. These data are consistent with the theoretical result in Theorem \ref{theorem3 1} for case of $\Omega=0$. The second part of Table \ref{Table1} displays the numerical errors in space and convergence orders in \textbf{Cases C} and \textbf{D} with a fixed and tiny temporal stepsize $\tau=1/1000$ such that the temporal error may be negligible. Similar results are observed, which are consistent with Theorem \ref{theorem3 2} for case of $\Omega\neq 0$.

  Tables \ref{Table3} displays the temporal error behavior and convergence orders with the fixed spatial stepsize $h=6/25$ for \textbf{Cases A} and \textbf{B}, and $h=4/25$ for \textbf{Cases C} and \textbf{D}, respectively. Clearly, the temporal convergence orders approach to two whether $\Omega=0$ or $\Omega\neq 0$. These data are also consistent with Theorems \ref{theorem3 1} and \ref{theorem3 2}.

 \textbf{Invariants test in Cases A--D}. We next study the long-time behaviors of solutions for the difference scheme \eqref{equa3.7}. Table \ref{Table3} shows the discrete energy, momentum, and mass for case of $\Omega=0$, in which the computations are performed with $h=1/5$ and $\tau=1/256$. We observe that all the invariants are conserved very well, even for long-time dynamics. Compared with the results in \cite{ZLZ2022}, the new method preserves one more invariant: momentum. It should be noticed that the computed energy is approximately half of that in \cite{ZLZ2022} due to the different definition of the discrete energy.

 When $\Omega=73\times 10^{-6}$, Table \ref{Table3} also shows similar discrete invariants for \textbf{Cases C} and \textbf{D}, which are consistent with Theorems \ref{theorem1_1}. We also observe from Table \ref{Table3} that the discrete momentum is close to zero when $\Omega=0$, while it is non-zero when $\Omega=73\times 10^{-6}$.

\textbf{Unconditional convergence test in Cases A--D}. To explore the unconditional convergence of \eqref{equa3.7}, we take \textbf{Case A} and \textbf{Case D} as examples for illustration.  Figures \ref{fig:1}(a) and \ref{fig:1}(b) demonstrate the convergence behavior of the velocity and height in \textbf{Case A} with the spatial grid size reduction when the fixed temporal stepsizes $\tau = 5/16$, $5/32$, $5/64$, $5/128$, $5/256$ are used, respectively. We see that no matter how the spatial stepsize varies, the numerical errors always approach fixed values, which confirms that the difference scheme \eqref{equa3.7} converges unconditionally (namely, no grid ratio restriction. Otherwise, the numerical error would increase steeply with the refined spatial grid). The similar phenomenon except minor differences is observed in \textbf{Case D} with the fixed temporal stepsizes $\tau = 1/16$, $1/32$, $1/64$, $1/128$, $1/256$. \textbf{Cases B} and \textbf{C} also show a similar phenomenon, in which unconditionally convergent figures are omitted for brevity.

\textbf{Long time behavior test on large domain in Cases E--F}. We first compare the simulation effect of the difference scheme \eqref{equa3.7} with that in \cite{ZLZ2022} under the same coarser grids $h=1/16,\;\tau = 1/20$ (at this point, ${\tau}/{h} = 0.8<1$, see Figure \ref{fig:2}), and we see that the solution portraits in the present paper are better than that generated by the difference scheme in \cite{ZLZ2022}. To further study the performance of \eqref{equa3.7} and that in \cite{ZLZ2022}, we conduct numerical simulations for \textbf{Case F} on a larger area for a long-time (at the moment, $L=200$ and $T=1000$). As illustrated in Figure \ref{fig:3}, the numerical results show amazing periodicity for both schemes. However, the difference scheme \eqref{equa3.7} owns better resolution even using refined stepsizes $h=1/10,\;\tau = 1/50$ (${\tau}/{h} = 0.2$). Though the smooth initial data are utilized, we observe that the occurrence of the bounded peakon solutions. Figure \ref{fig:3}(i) and Figure \ref{fig:3}(j) denote numerical invariants of the difference scheme \eqref{equa3.7} and their numerical errors, which demonstrates superior simulation performance.

\begin{example}\label{exam2}
  \textbf{(Peakon anti-peakon interaction)} We consider a R2CH system with nonsmooth initial values defined by
  \begin{equation*}
    u^0(x) = {\rm e}^{-|x-5|}-{\rm e}^{-|x+5|},\quad \rho^0(x)= 0.5.
  \end{equation*}
   The problem depicts the interaction of peakons and anti-peakons. Special cases of the problem have been studied in the early literature on the  computational domain $[-20,20]$ with $\Omega = 0$, see e.g., \cite{CKL2020,CMR2014,LP2016,YFS2018}. In the circumstances, we have $L=40$. We will solve the underlying problem based on the difference scheme \eqref{equa3.7}
    for the following five cases, especially the case of $\Omega \neq 0$, see Table \ref{tabparameter} for the details. The numerical results are shown in Figure \ref{fig:4} and Figure \ref{fig:5}.
\begin{table}[tbh!]
\begin{center}
\renewcommand{\arraystretch}{1.25}
\tabcolsep 6pt \caption{The selected parameters in the numerical tests.}\label{tabparameter}
\def\temptablewidth{0.53\textwidth}
\rule{\temptablewidth}{1pt}
{\footnotesize
\begin{tabular*}{\temptablewidth}{l|l}
 & ${\rm  \qquad \qquad \qquad Parameters}$\\
\hline
{\;\rm \textbf{Case A}\quad}     &\qquad$\sigma=1$, $\kappa=\mu=\Omega=0;\qquad$\cite{CKL2020,CMR2014,LP2016,YFS2018}        \\
{\;\rm \textbf{Case B}\quad}    &\qquad$\sigma=1$, $\kappa=\mu=0$, $\Omega=0.2;\qquad$\\
{\;\rm \textbf{Case C}\quad}   &\qquad$\sigma=1$, $\kappa=\mu=0$, $\Omega=73\times 10^{-6};\qquad$ \\
{\;\rm \textbf{Case D}\quad}    &\qquad$\sigma=1$, $\kappa=1$, $\mu=0$, $\Omega=73\times 10^{-6};\qquad$\\
{\;\rm \textbf{Case E}\quad}     &\qquad$\sigma=1$, $\kappa=1$, $\mu=1$, $\Omega=73\times 10^{-6}.\qquad$\\
\end{tabular*}}
\rule{\temptablewidth}{1pt}
\end{center}
 \vspace{-5mm}
\end{table}
\end{example}

\begin{figure}[htbp]
\subfigtopskip=2pt
\subfigbottomskip=2pt
\subfigcapskip=-3pt
\subfigure[Case A, $t=1$]{ \centering
\includegraphics[width=0.26\textwidth]{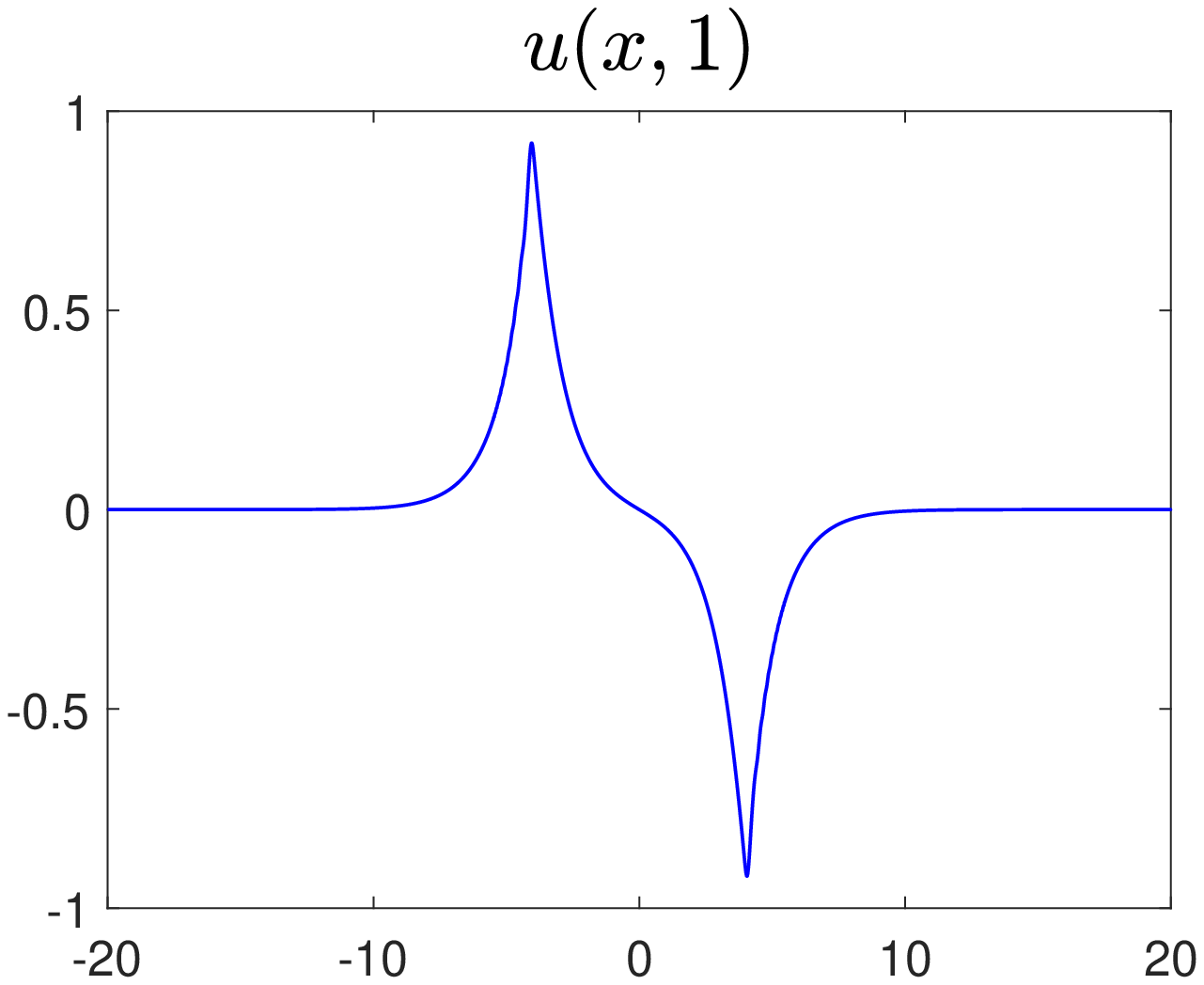}}
\hspace{-15pt}
\subfigure[Case A, $t=3$]{ \centering
\includegraphics[width=0.26\textwidth]{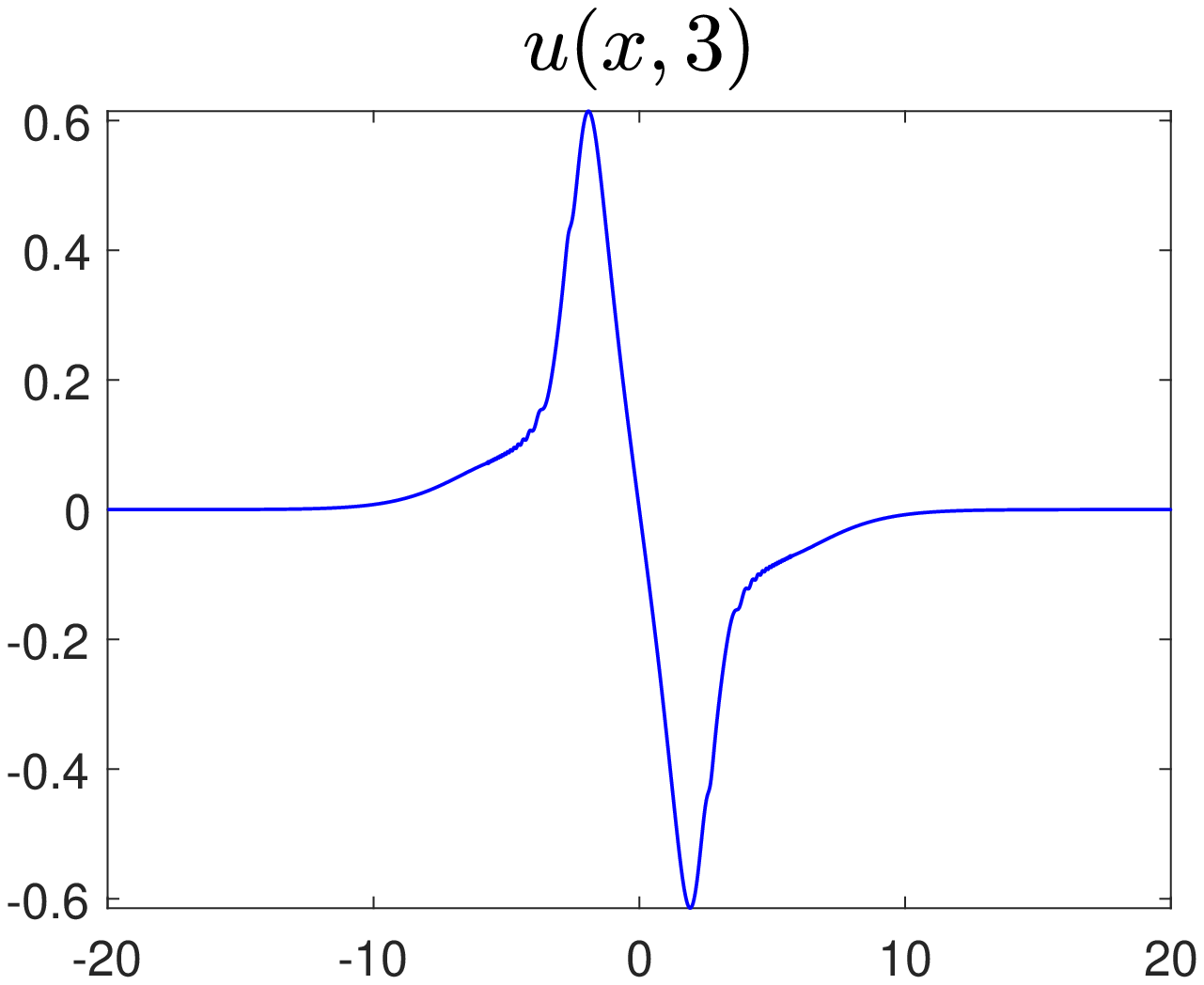}}
\hspace{-15pt}
\subfigure[Case A, $t=6$]{ \centering
\includegraphics[width=0.26\textwidth]{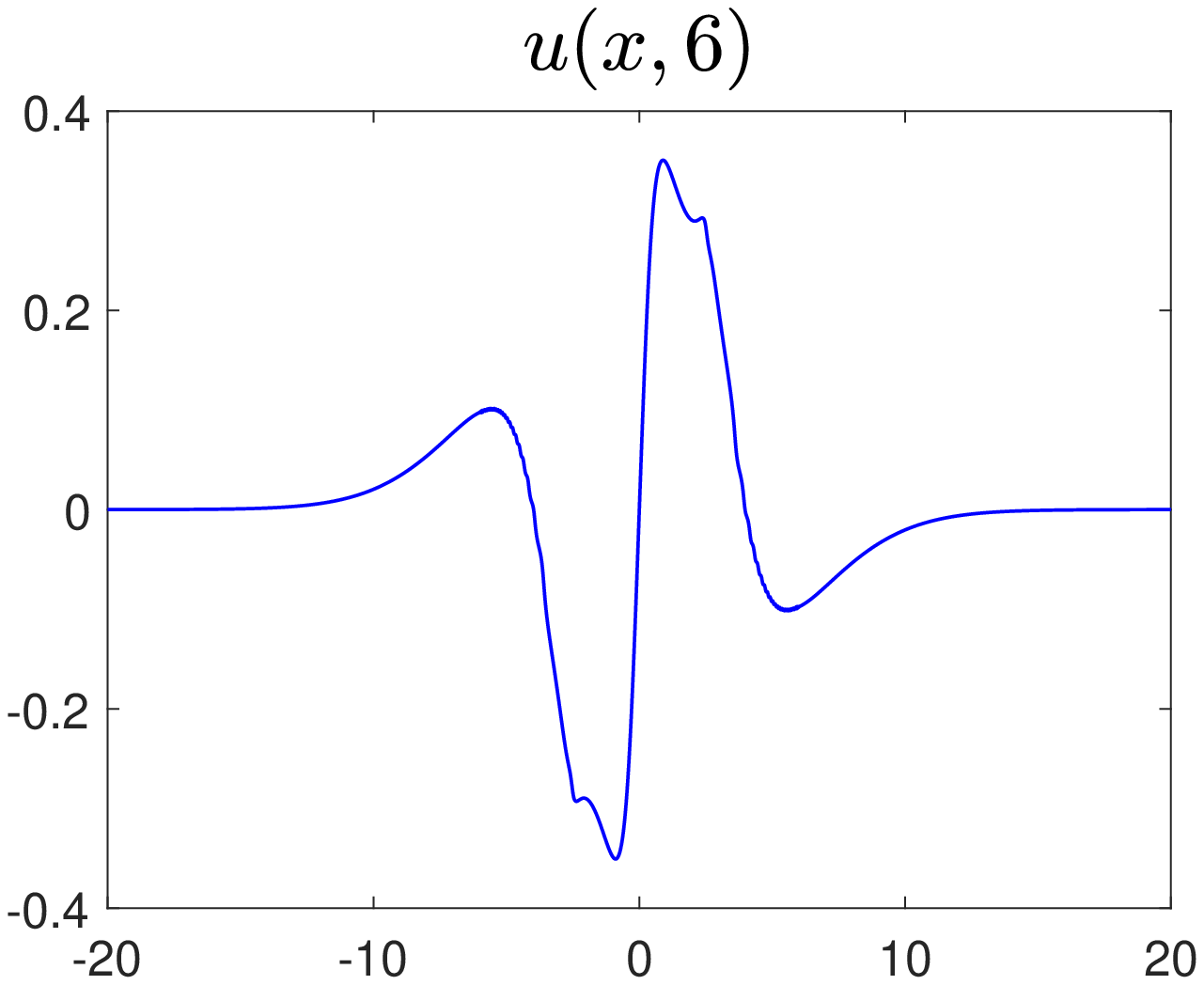}}
\hspace{-15pt}
\subfigure[Case A, $t=8$]{ \centering
\includegraphics[width=0.26\textwidth]{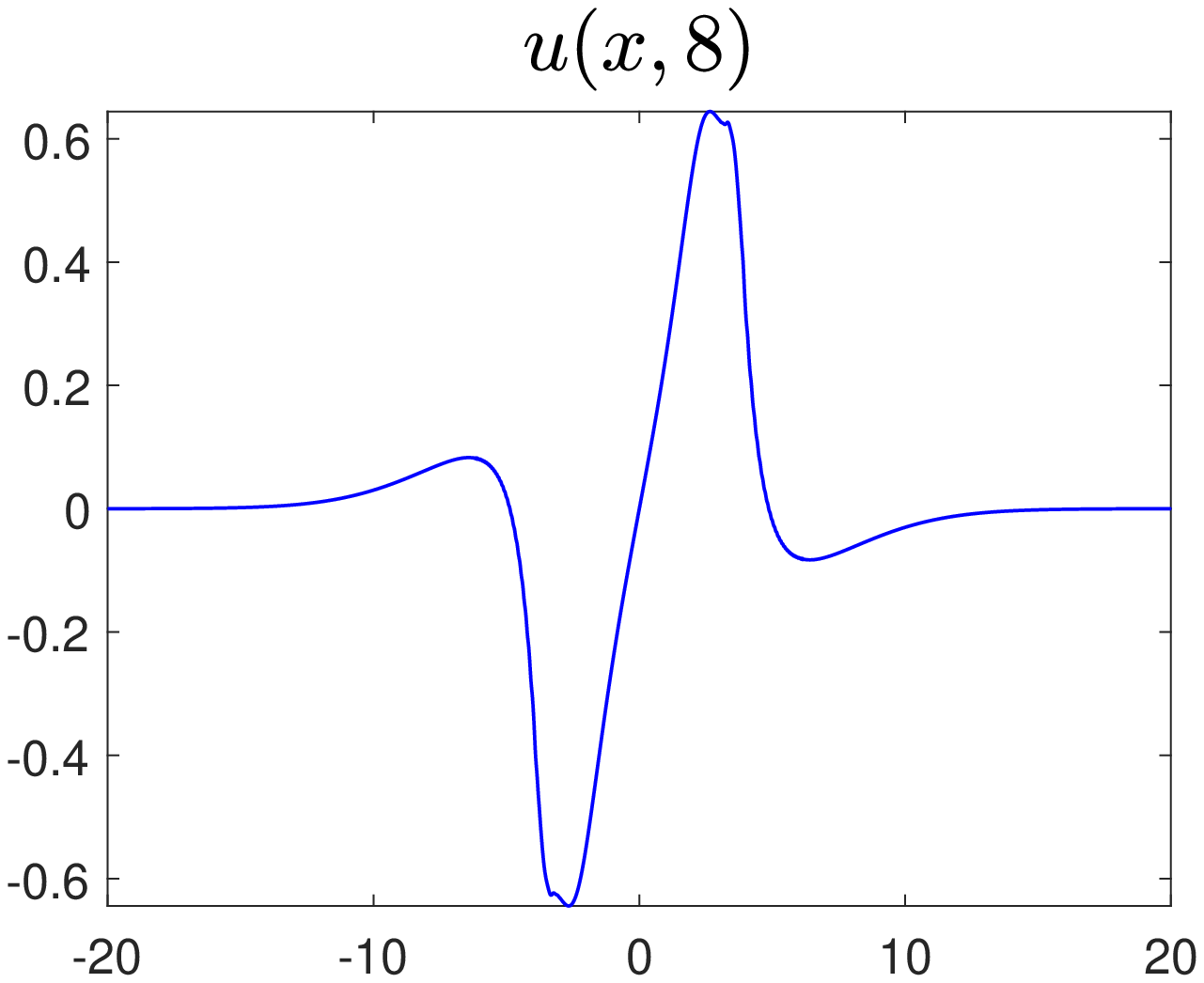}}
\\
\subfigure[Case B, $t=1$]{ \centering
\includegraphics[width=0.26\textwidth]{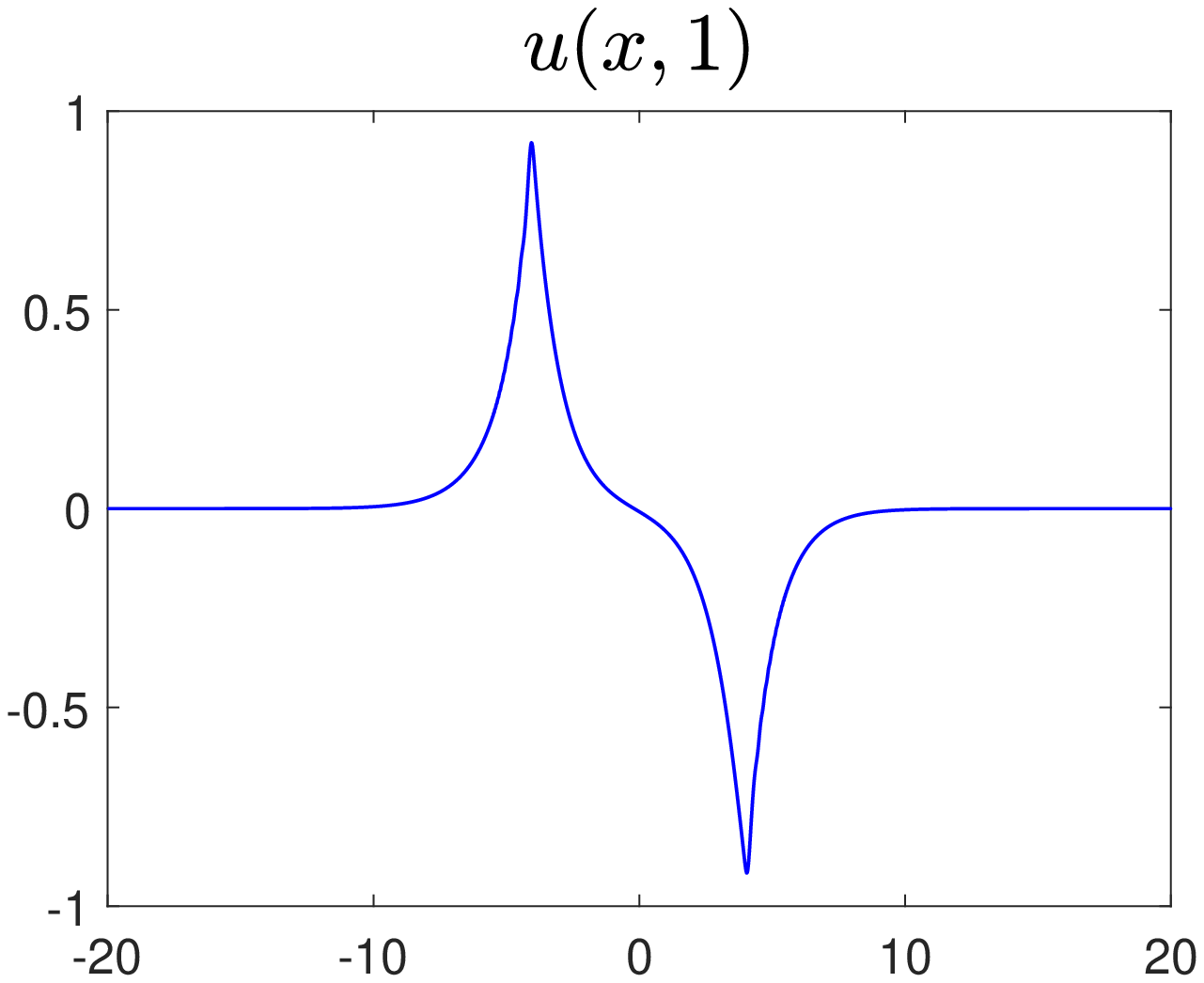}}
\hspace{-15pt}
\subfigure[Case B, $t=3$]{ \centering
\includegraphics[width=0.26\textwidth]{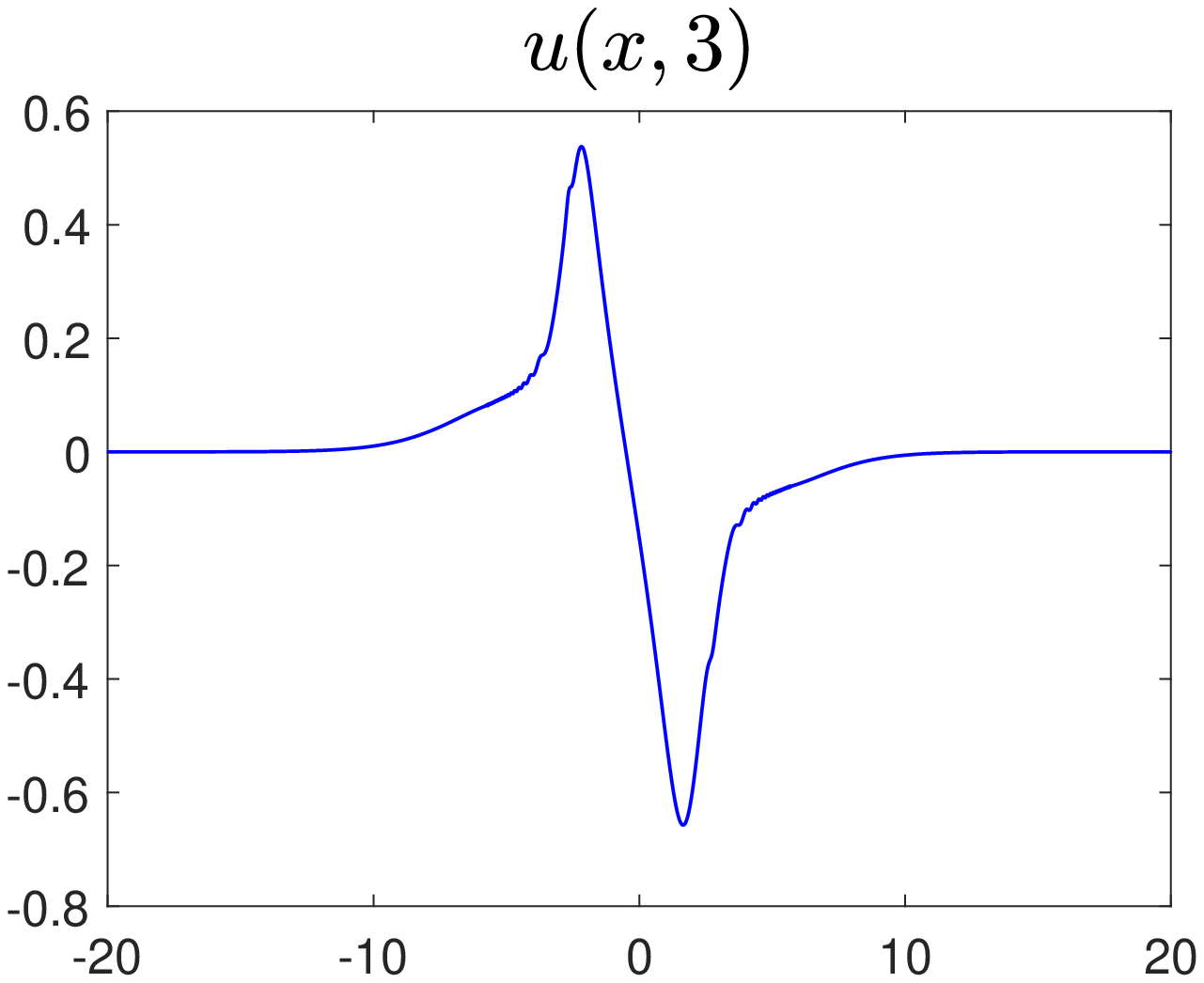}}
\hspace{-15pt}
\subfigure[Case B, $t=6$]{ \centering
\includegraphics[width=0.26\textwidth]{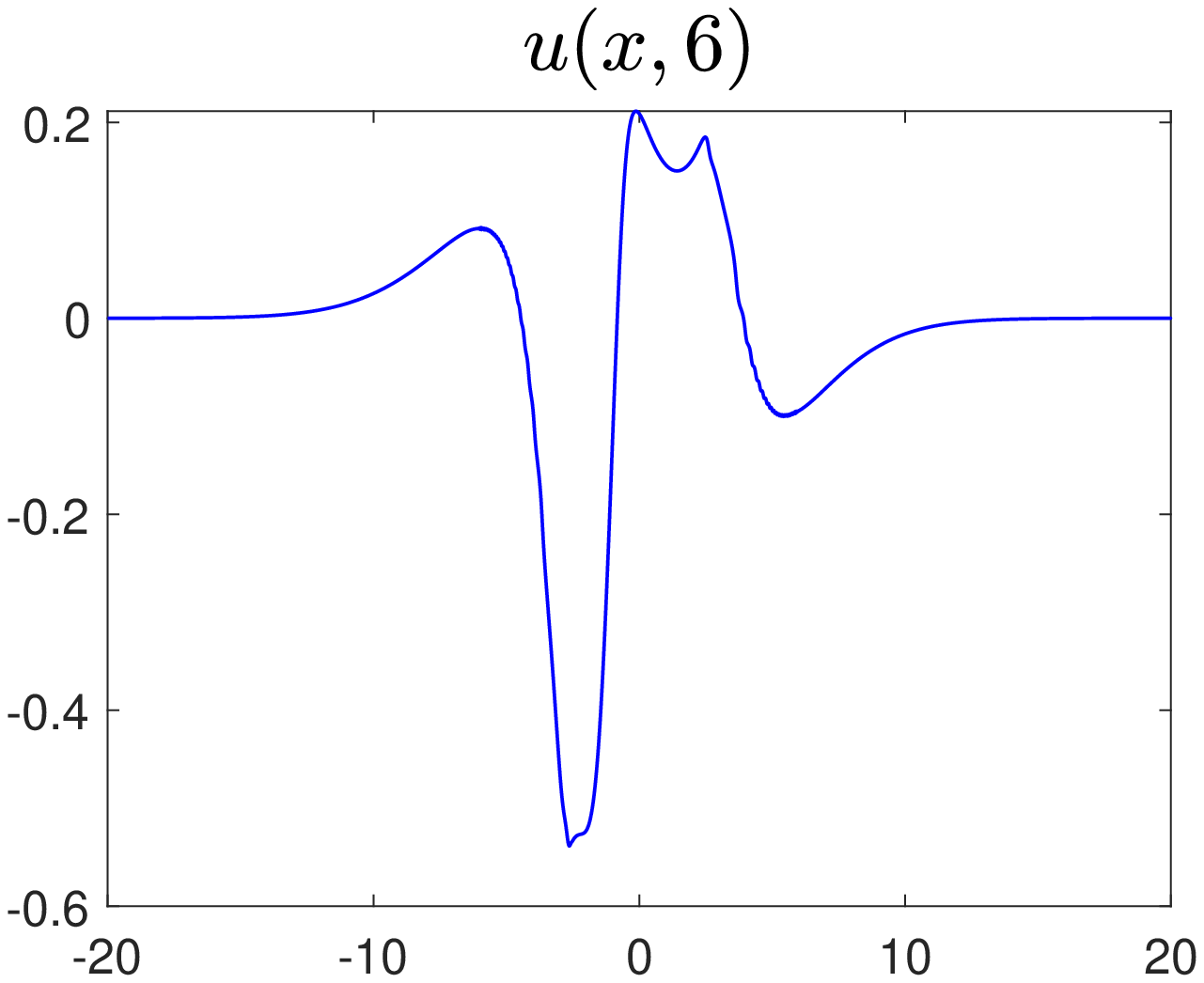}}
\hspace{-15pt}
\subfigure[Case B, $t=8$]{ \centering
\includegraphics[width=0.26\textwidth]{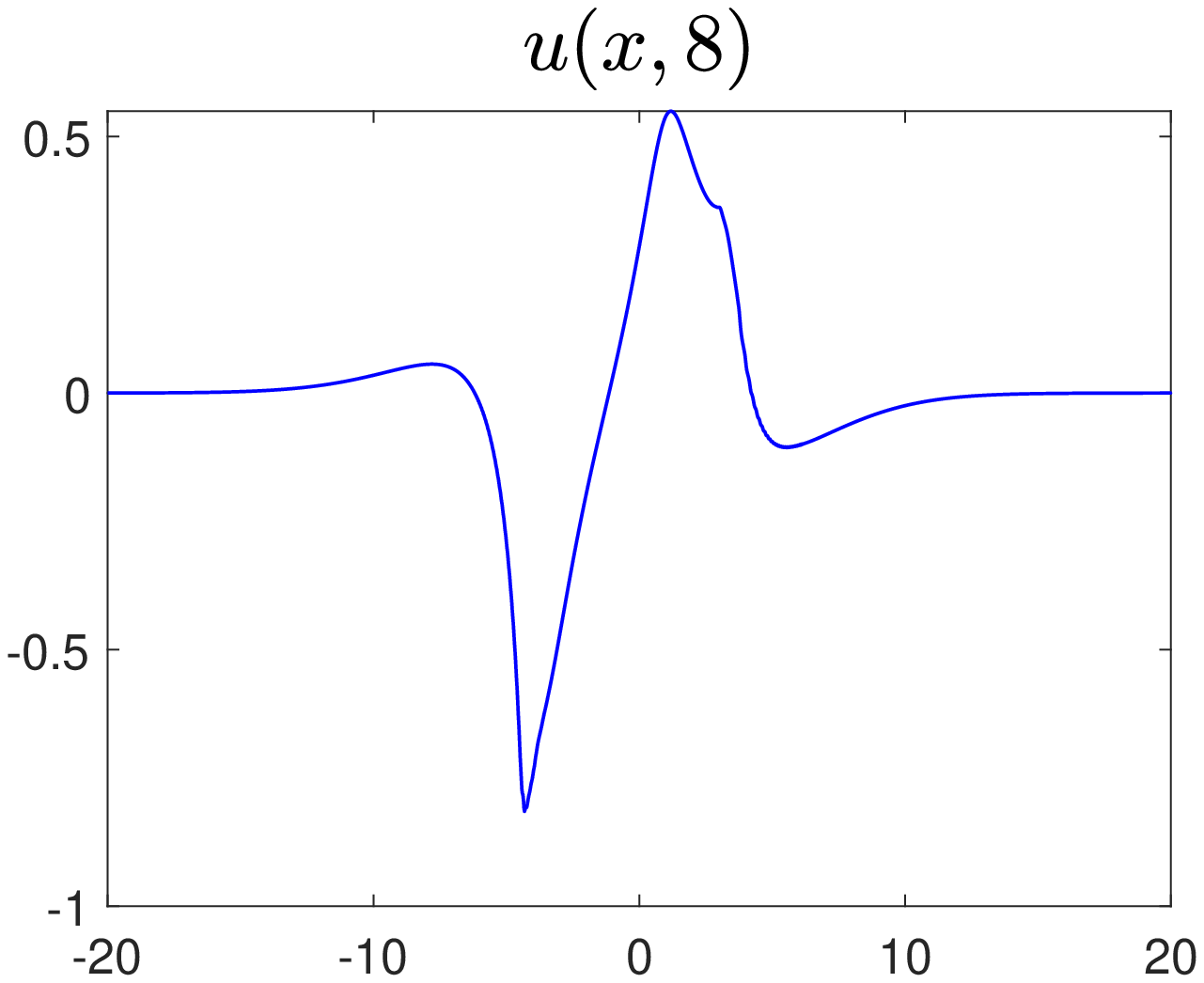}}
\\
\subfigure[Case C, $t=1$]{ \centering
\includegraphics[width=0.26\textwidth]{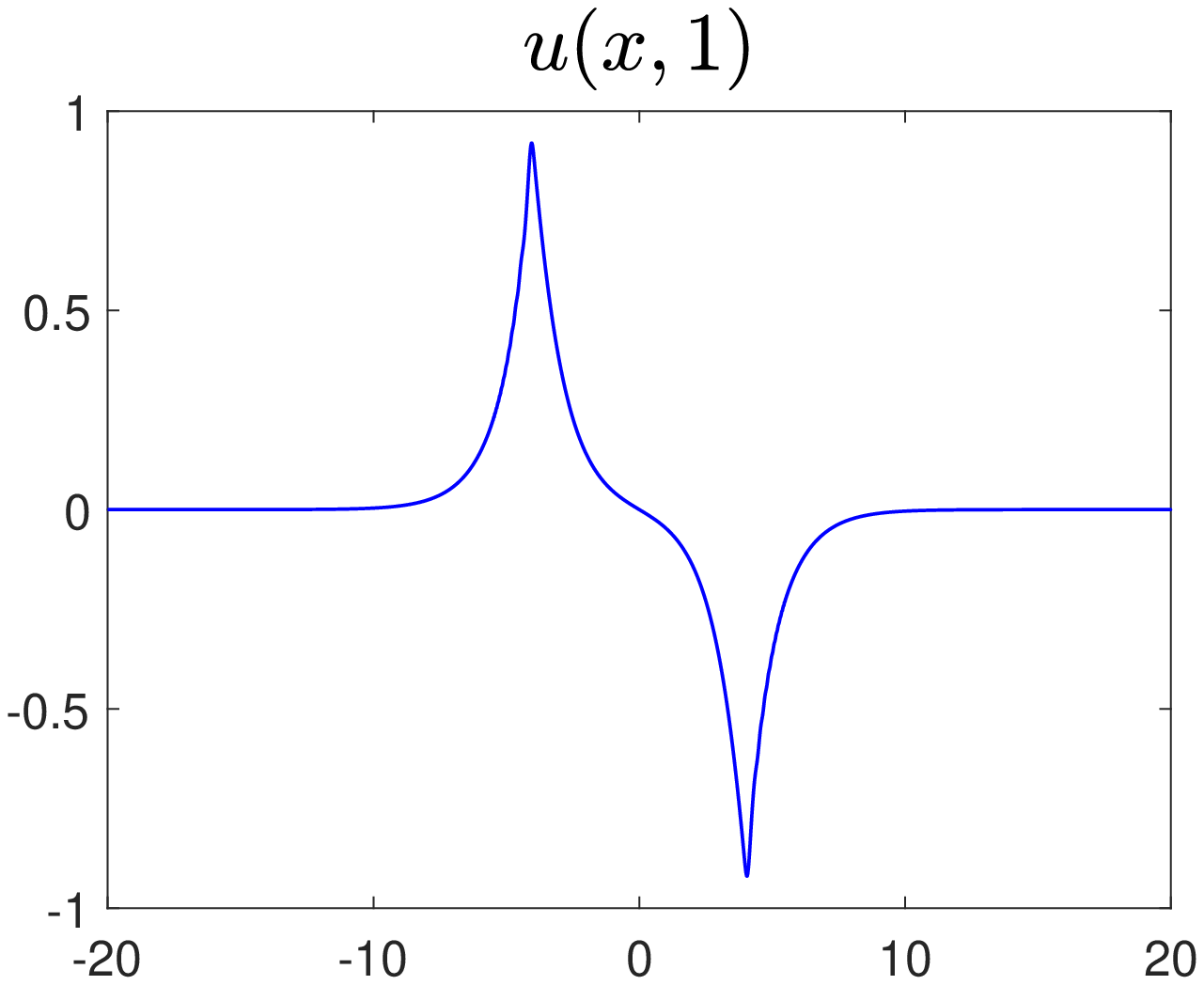}}
\hspace{-15pt}
\subfigure[Case C, $t=3$]{ \centering
\includegraphics[width=0.26\textwidth]{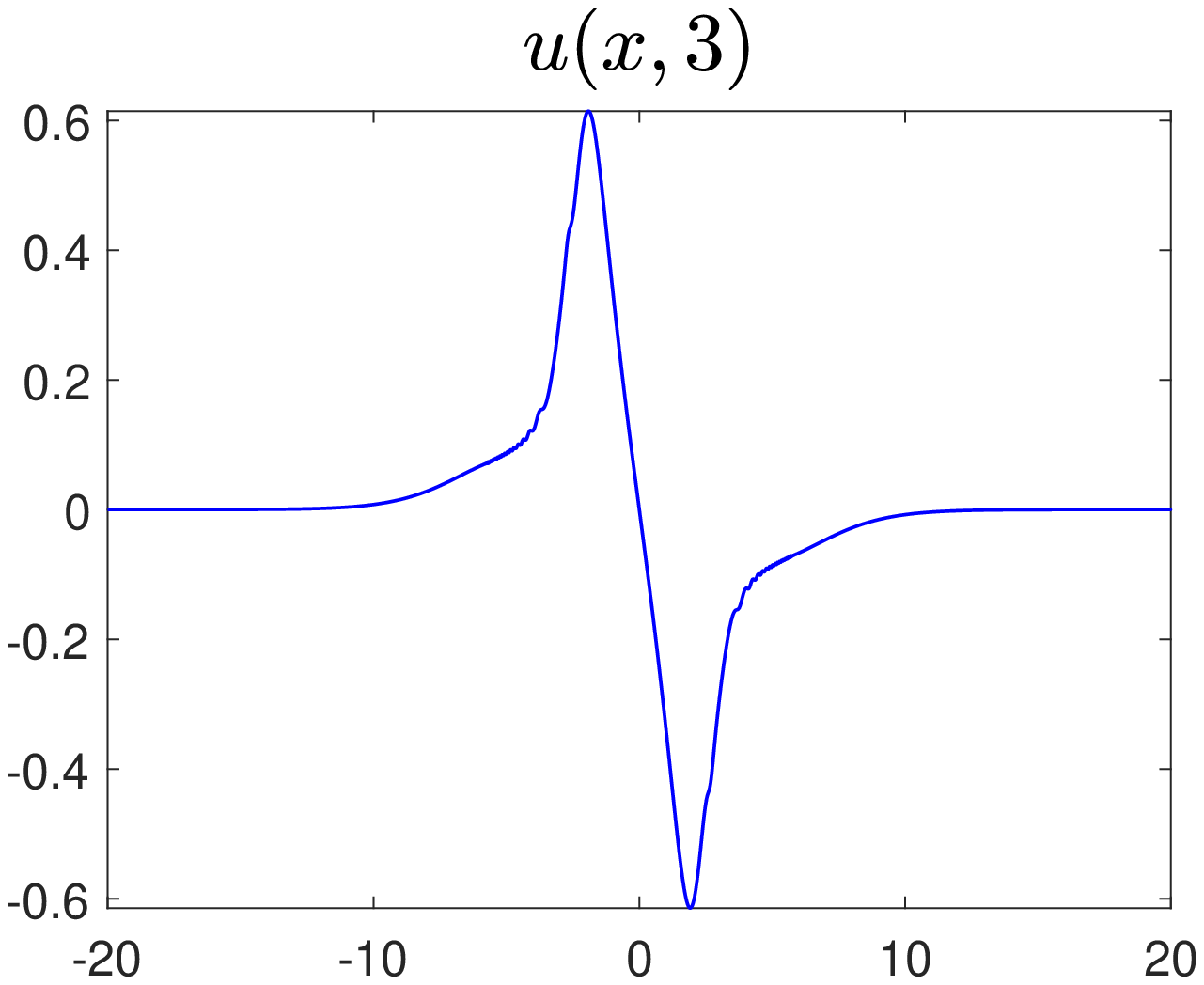}}
\hspace{-15pt}
\subfigure[Case C, $t=6$]{ \centering
\includegraphics[width=0.26\textwidth]{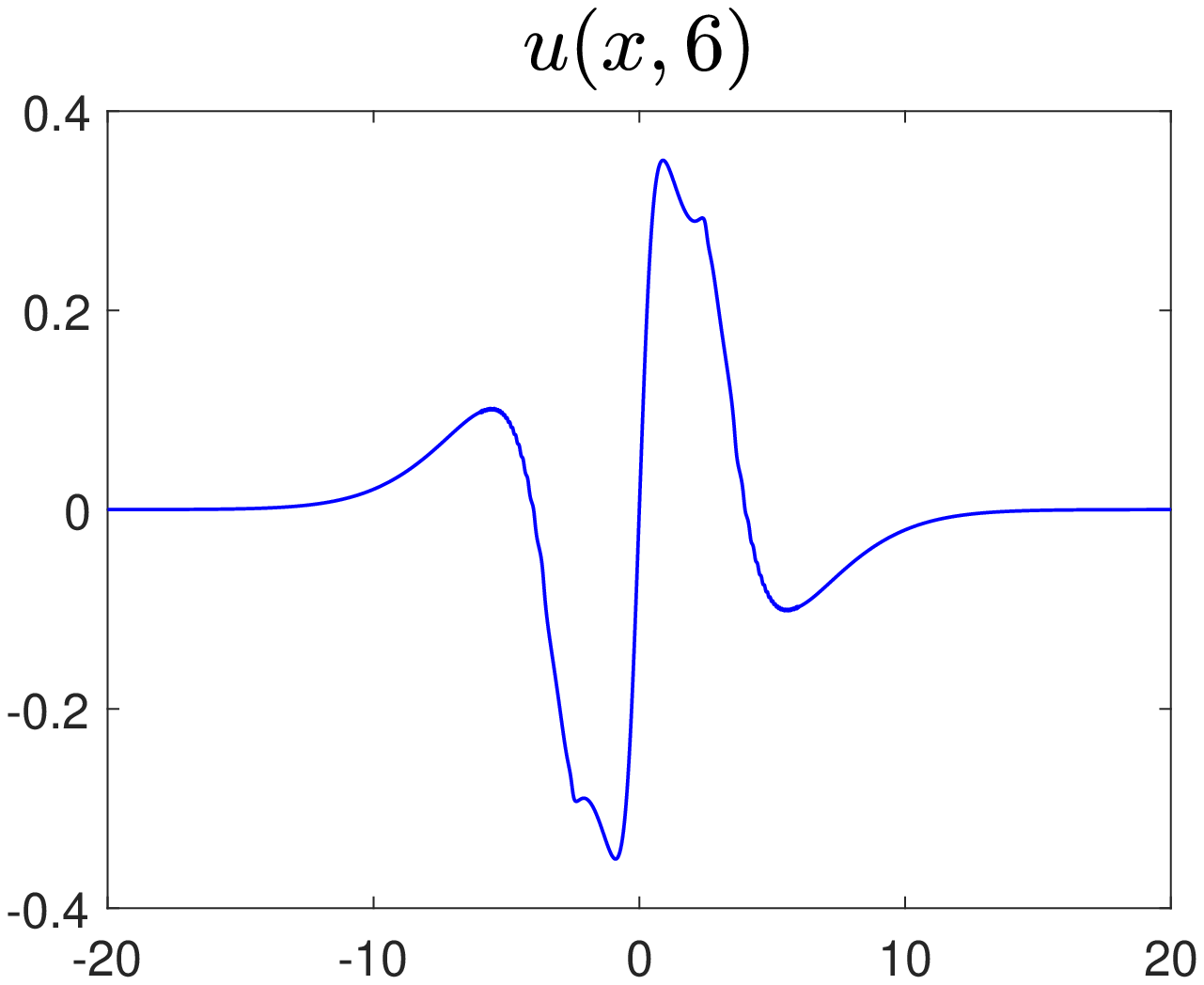}}
\hspace{-15pt}
\subfigure[Case C, $t=8$]{ \centering
\includegraphics[width=0.26\textwidth]{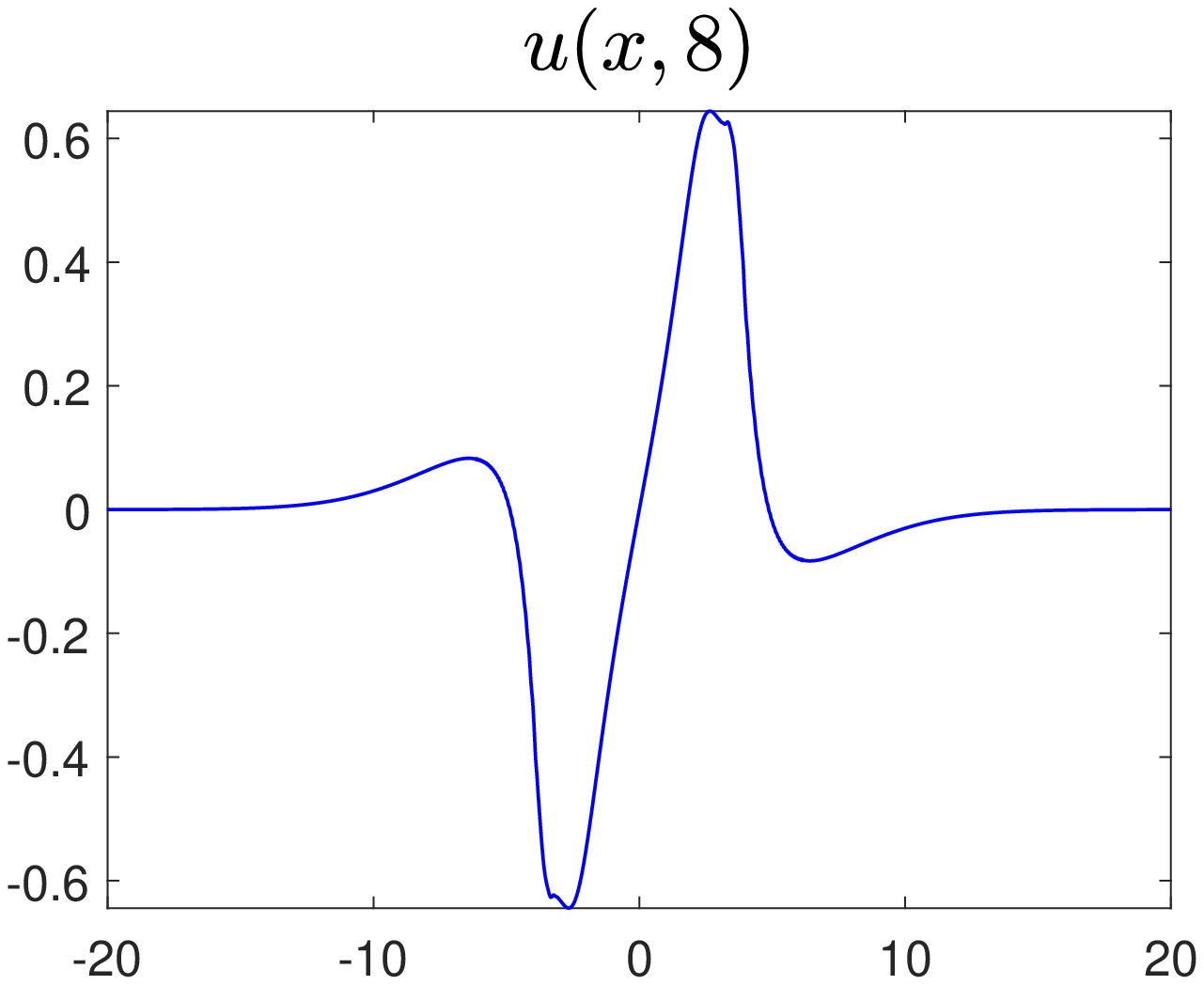}}
\\
\subfigure[Case D, $t=1$]{ \centering
\includegraphics[width=0.26\textwidth]{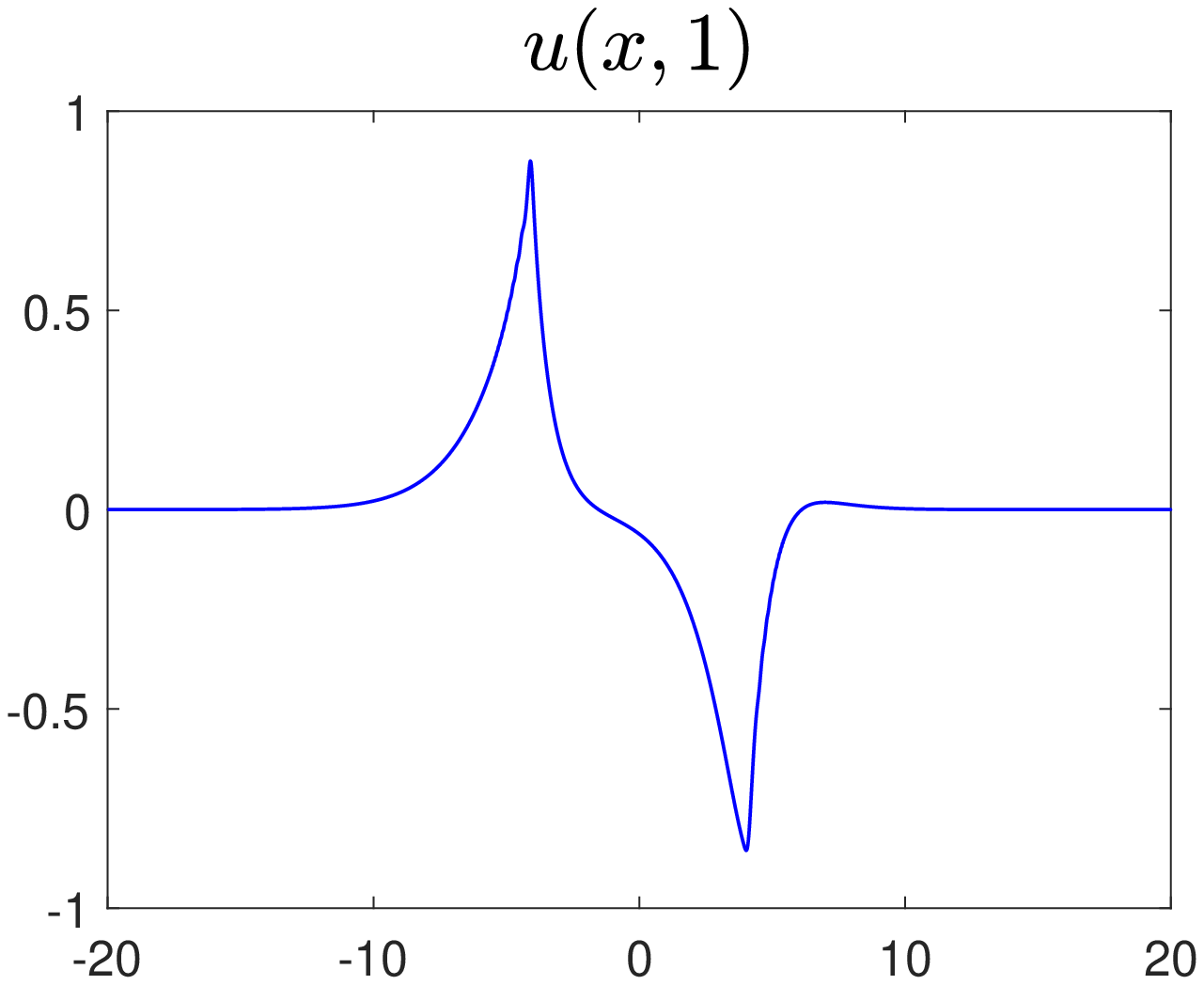}}
\hspace{-15pt}
\subfigure[Case D, $t=3$]{ \centering
\includegraphics[width=0.26\textwidth]{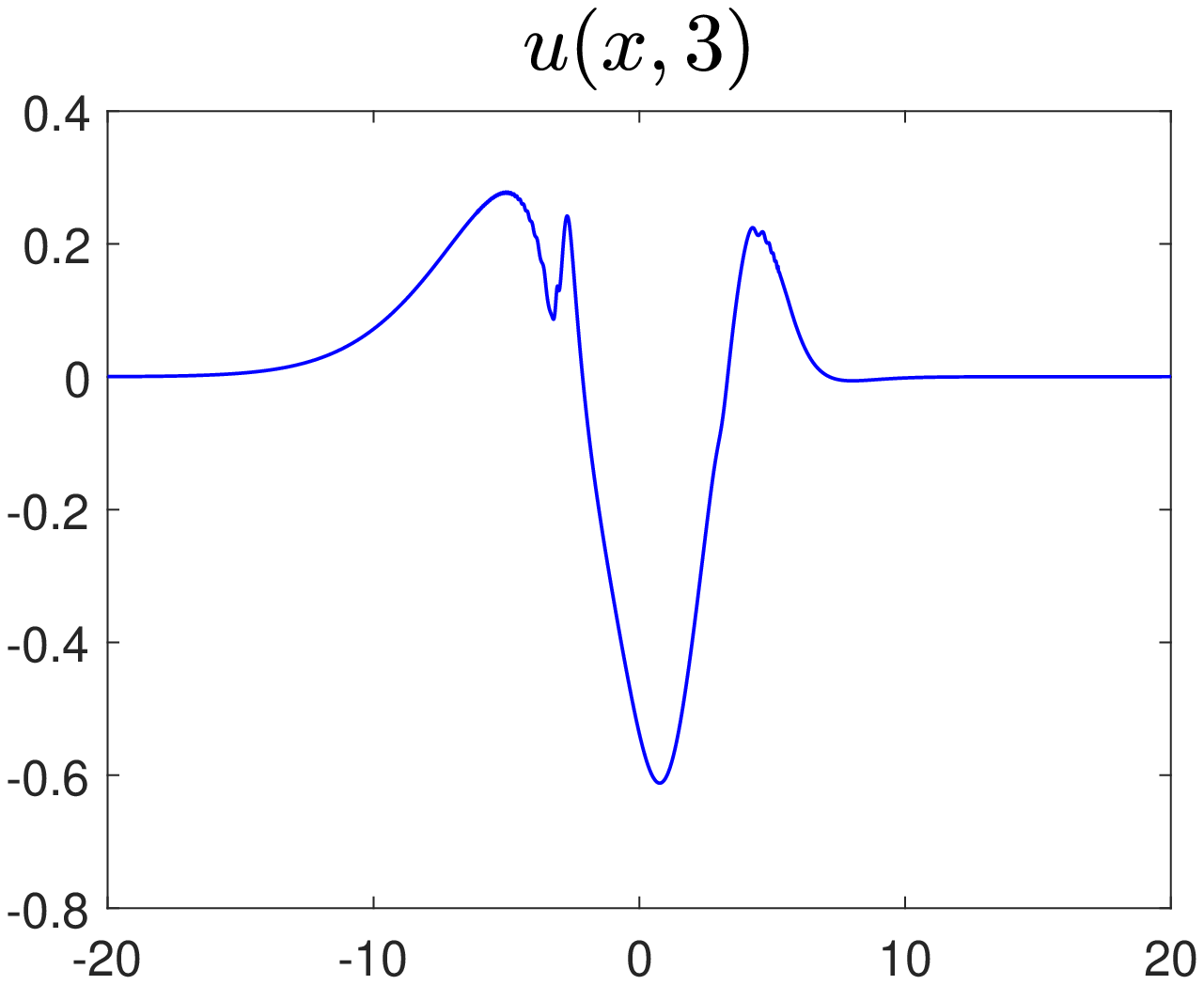}}
\hspace{-15pt}
\subfigure[Case D, $t=6$]{ \centering
\includegraphics[width=0.26\textwidth]{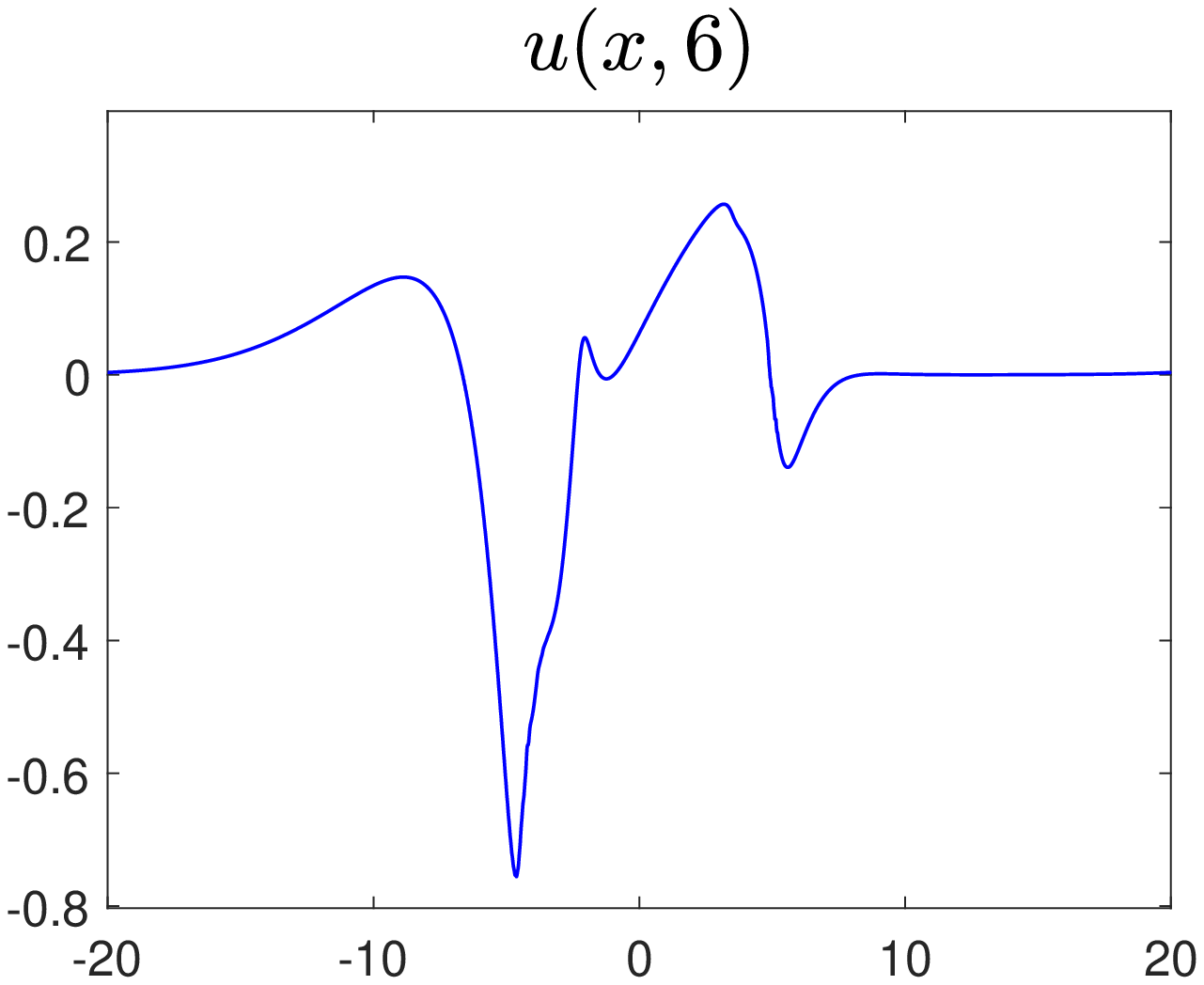}}
\hspace{-15pt}
\subfigure[Case D, $t=8$]{ \centering
\includegraphics[width=0.26\textwidth]{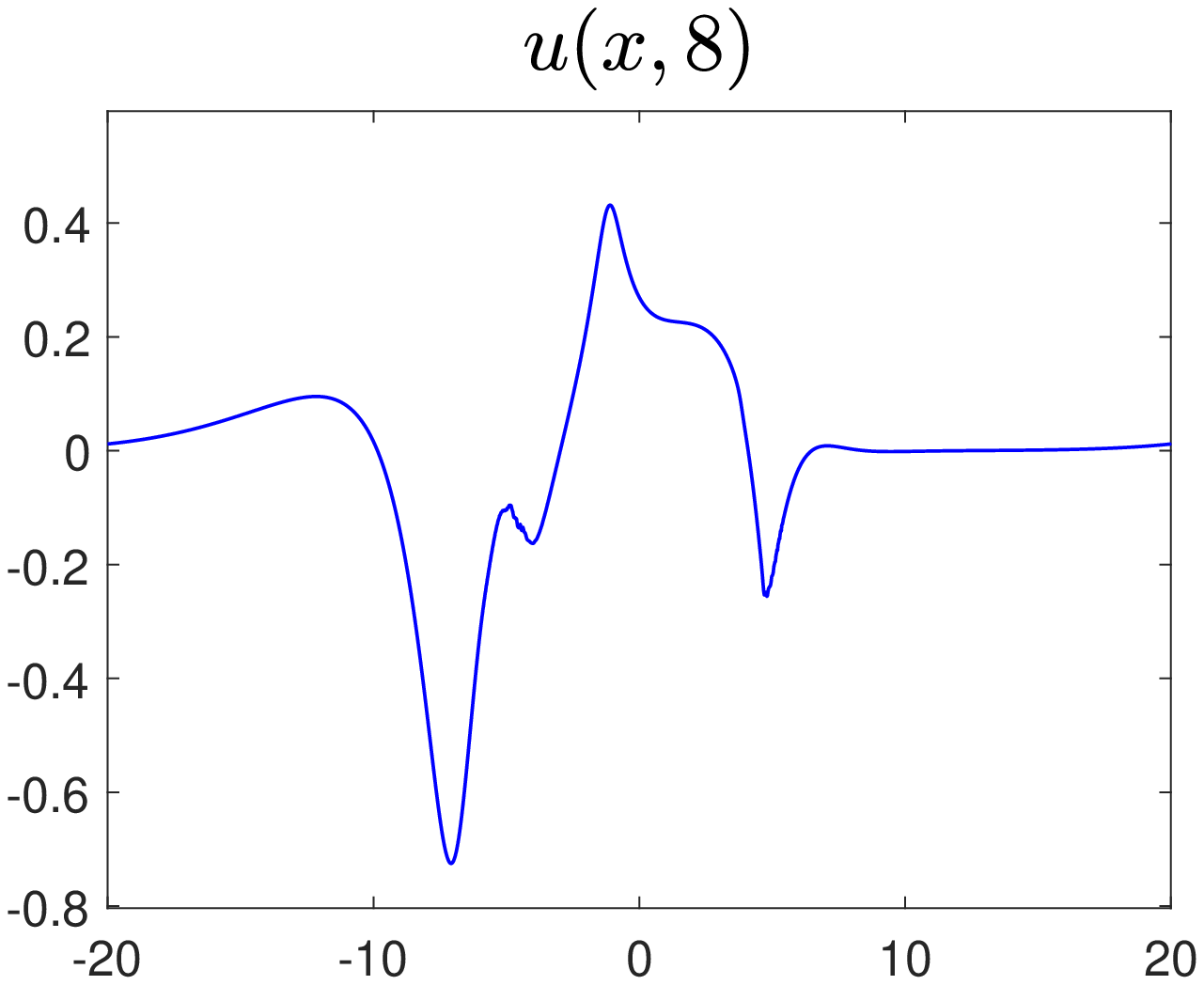}}
\\
\subfigure[Case E, $t=1$]{ \centering
\includegraphics[width=0.26\textwidth]{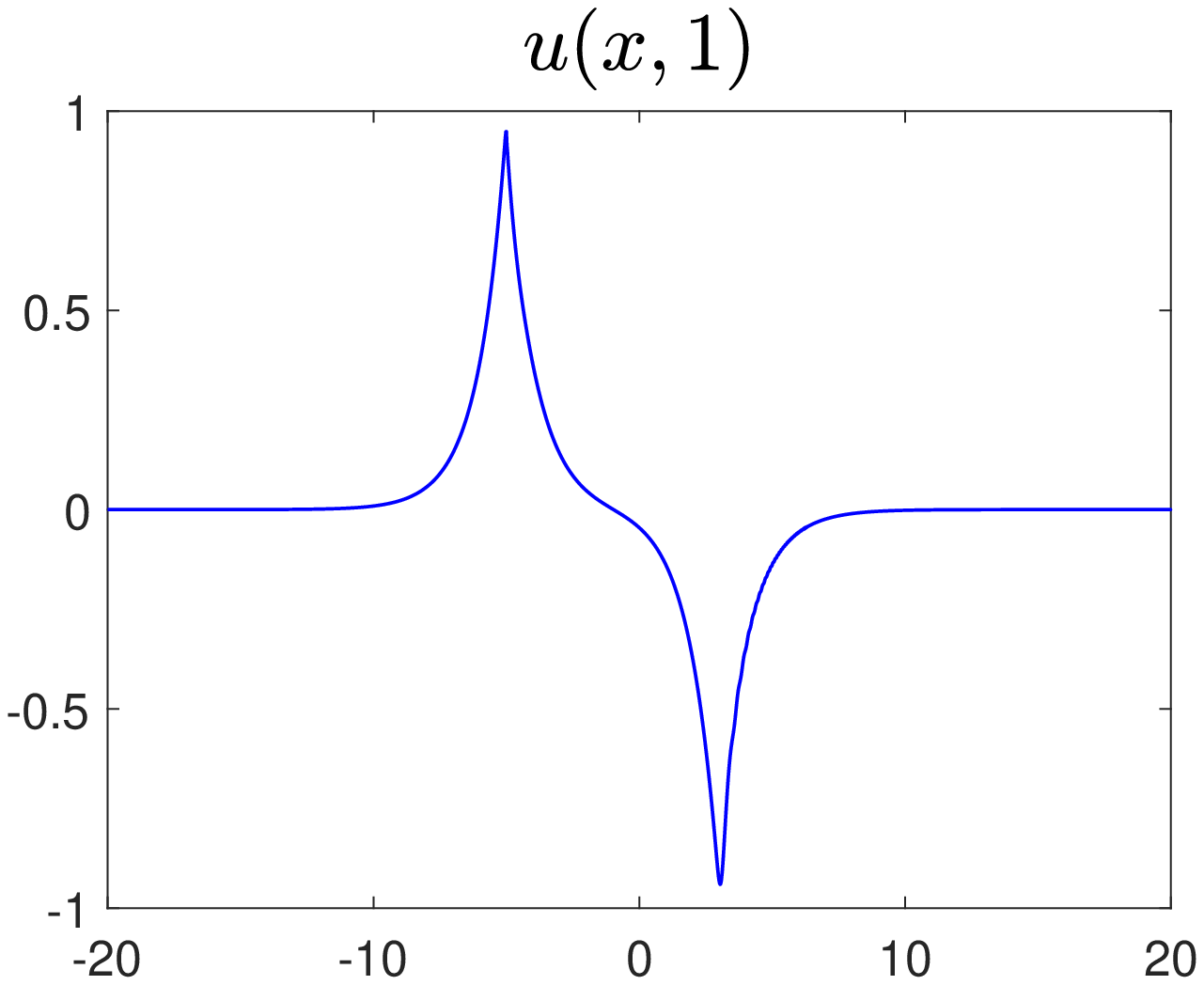}}
\hspace{-15pt}
\subfigure[Case E, $t=3$]{ \centering
\includegraphics[width=0.26\textwidth]{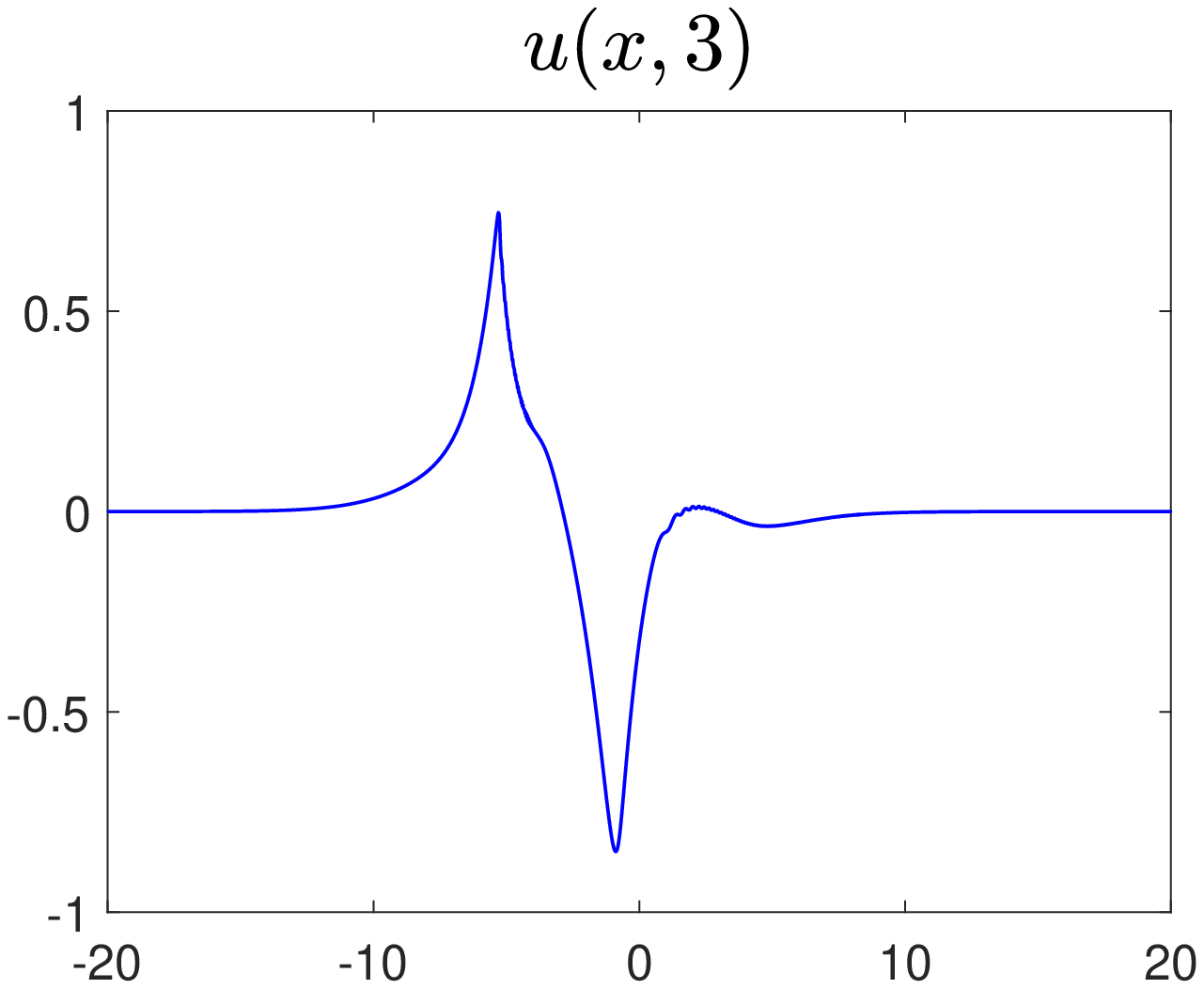}}
\hspace{-15pt}
\subfigure[Case E, $t=6$]{ \centering
\includegraphics[width=0.26\textwidth]{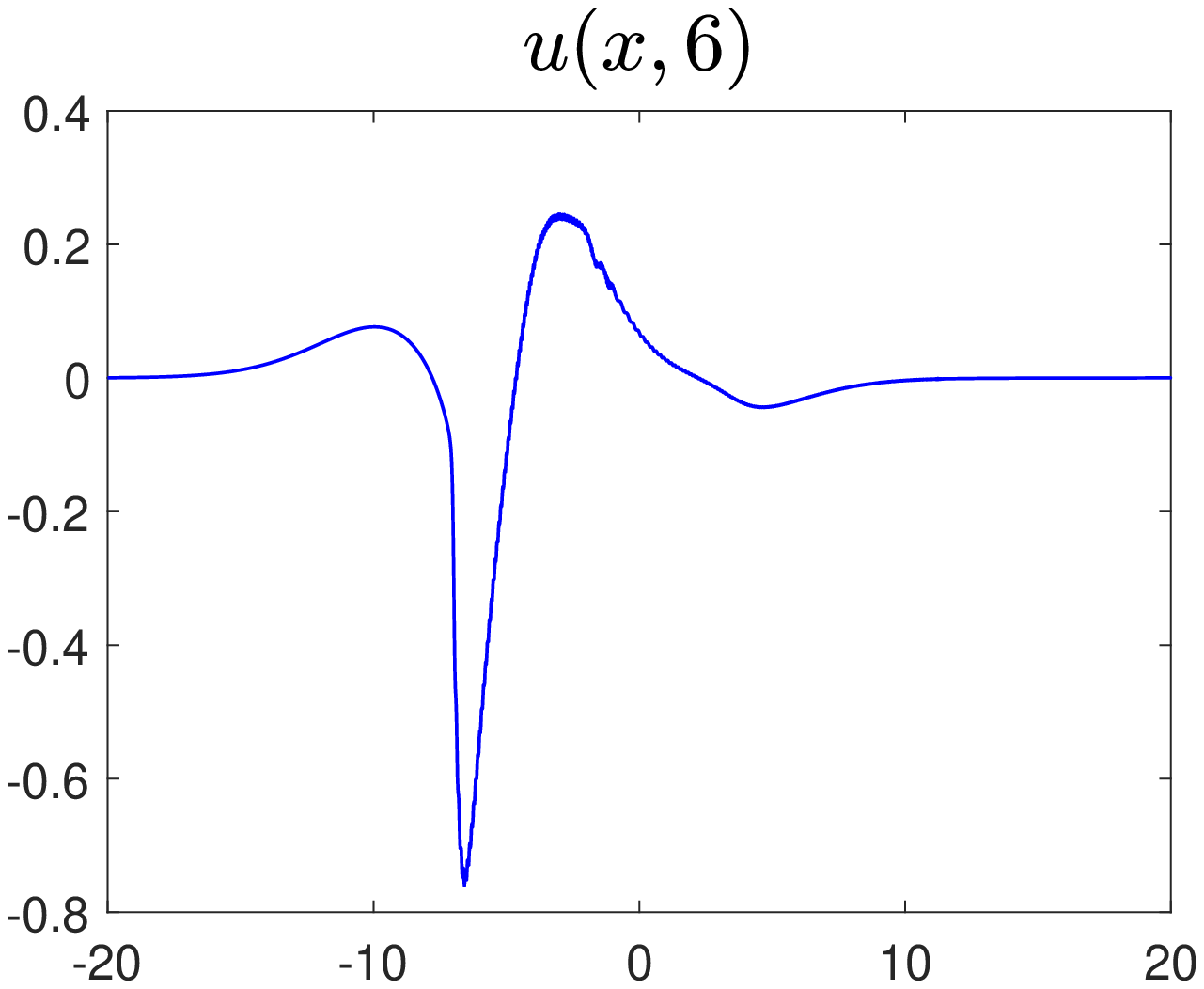}}
\hspace{-15pt}
\subfigure[Case E, $t=8$]{ \centering
\includegraphics[width=0.26\textwidth]{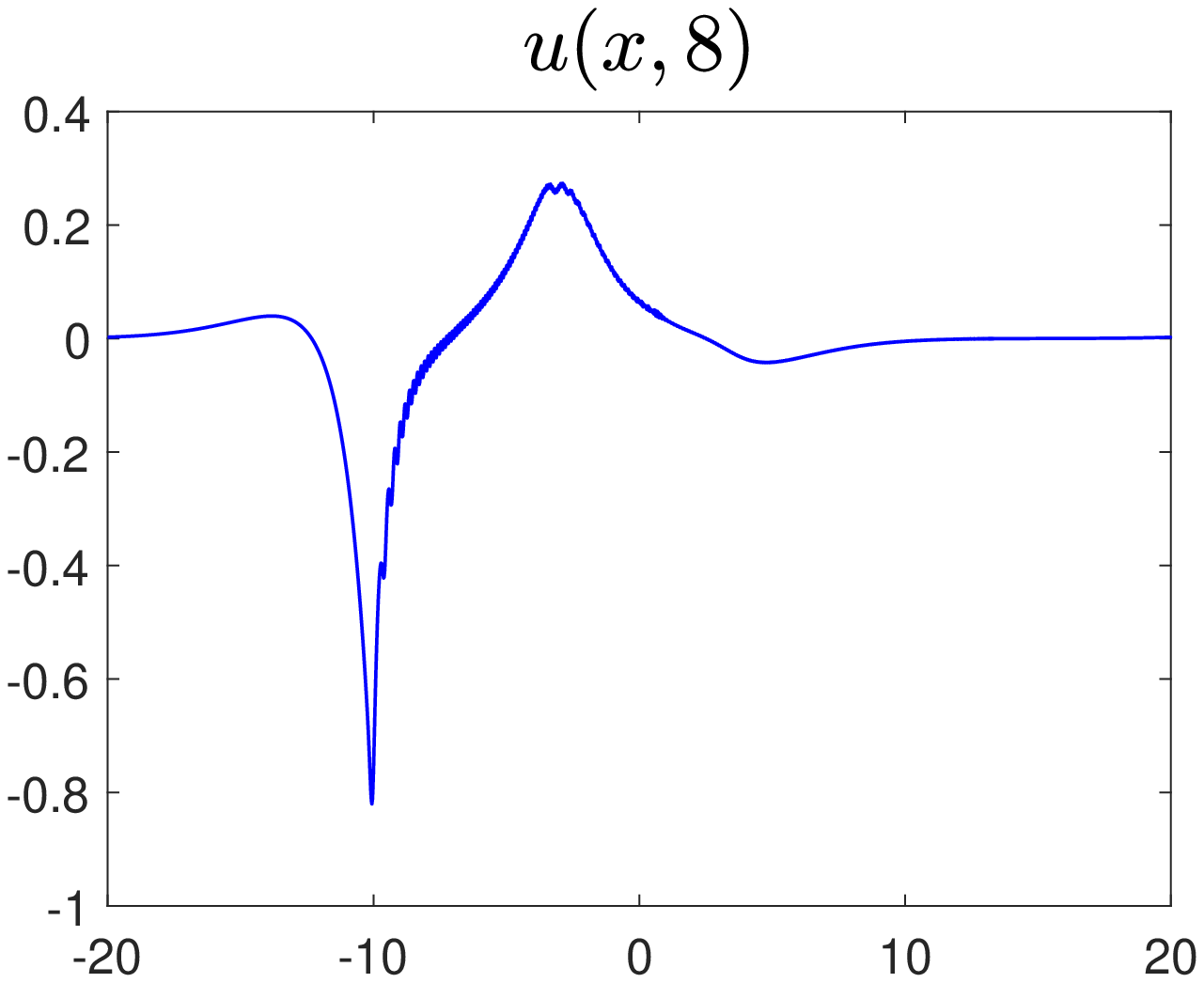}}
\caption{Velocities of the waves at different instants of times calculated by the difference scheme \eqref{equa3.7} for five different Cases;
the spatial grid stepsize is fixed as $h = 0.02$.} \label{fig:4}
\end{figure}

\begin{figure}[htbp]
\subfigtopskip=2pt
\subfigbottomskip=2pt
\subfigcapskip=-3pt
\subfigure[Case A, $t=1$]{ \centering
\includegraphics[width=0.26\textwidth]{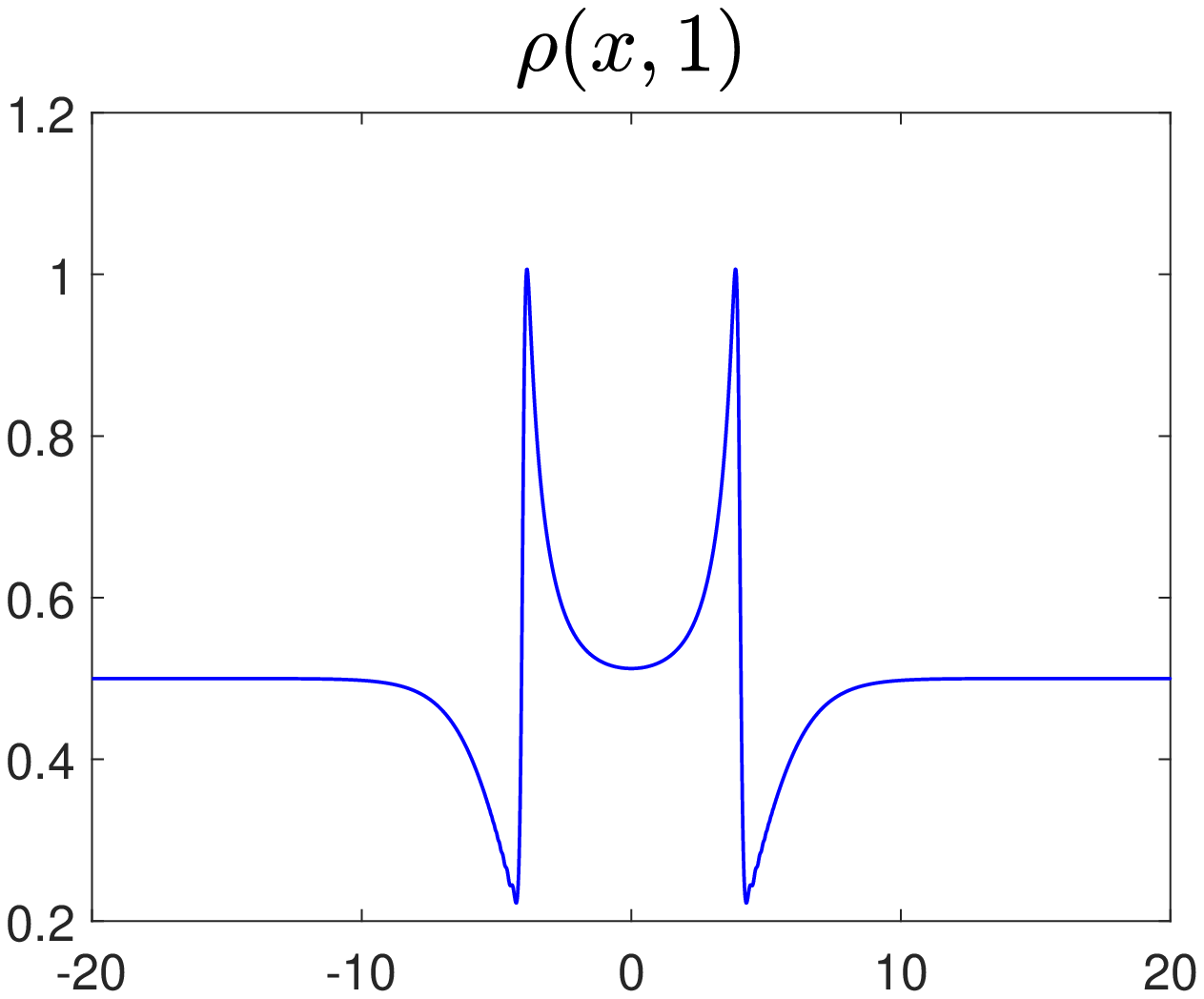}}
\hspace{-15pt}
\subfigure[Case A, $t=3$]{ \centering
\includegraphics[width=0.26\textwidth]{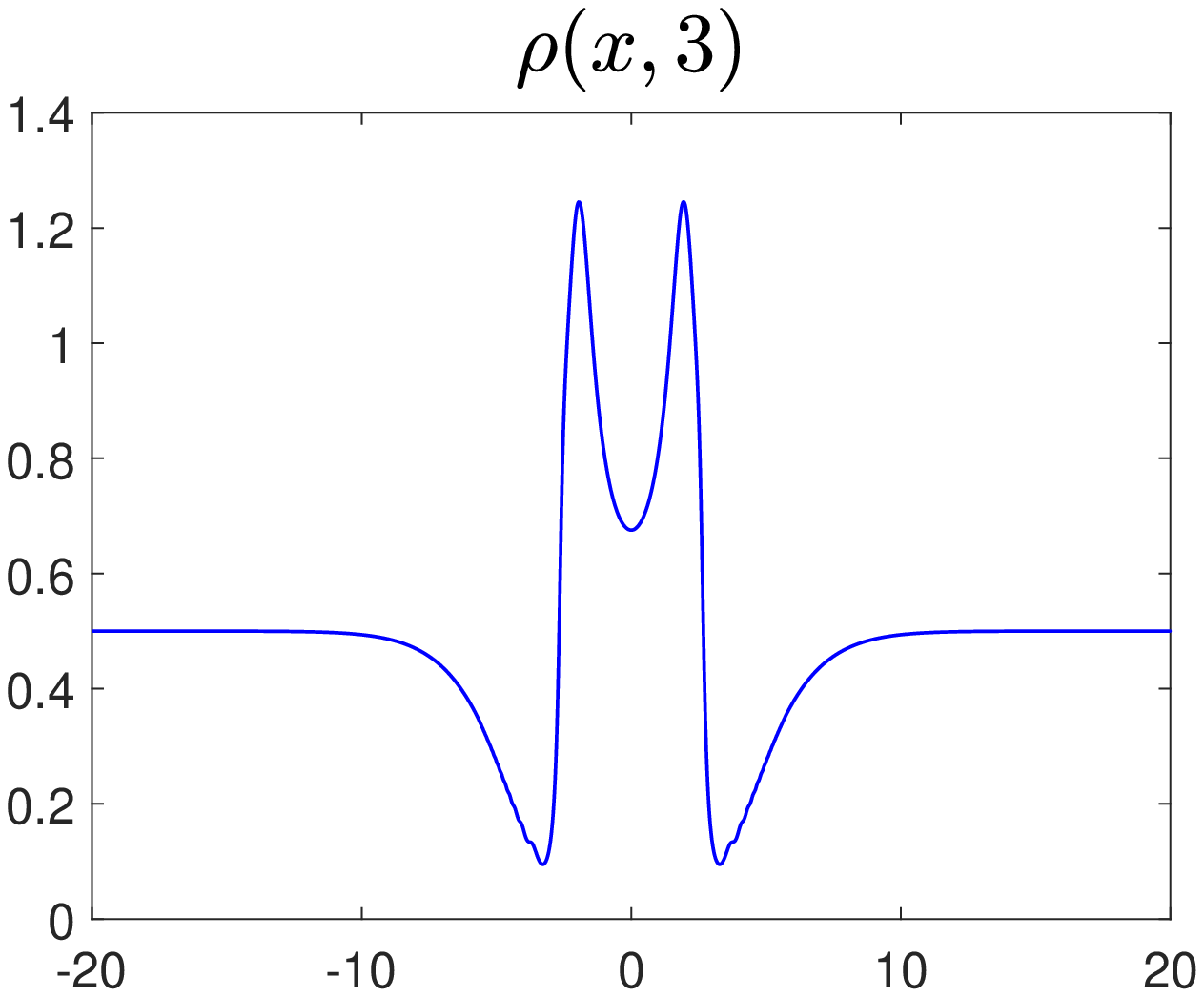}}
\hspace{-15pt}
\subfigure[Case A, $t=6$]{ \centering
\includegraphics[width=0.26\textwidth]{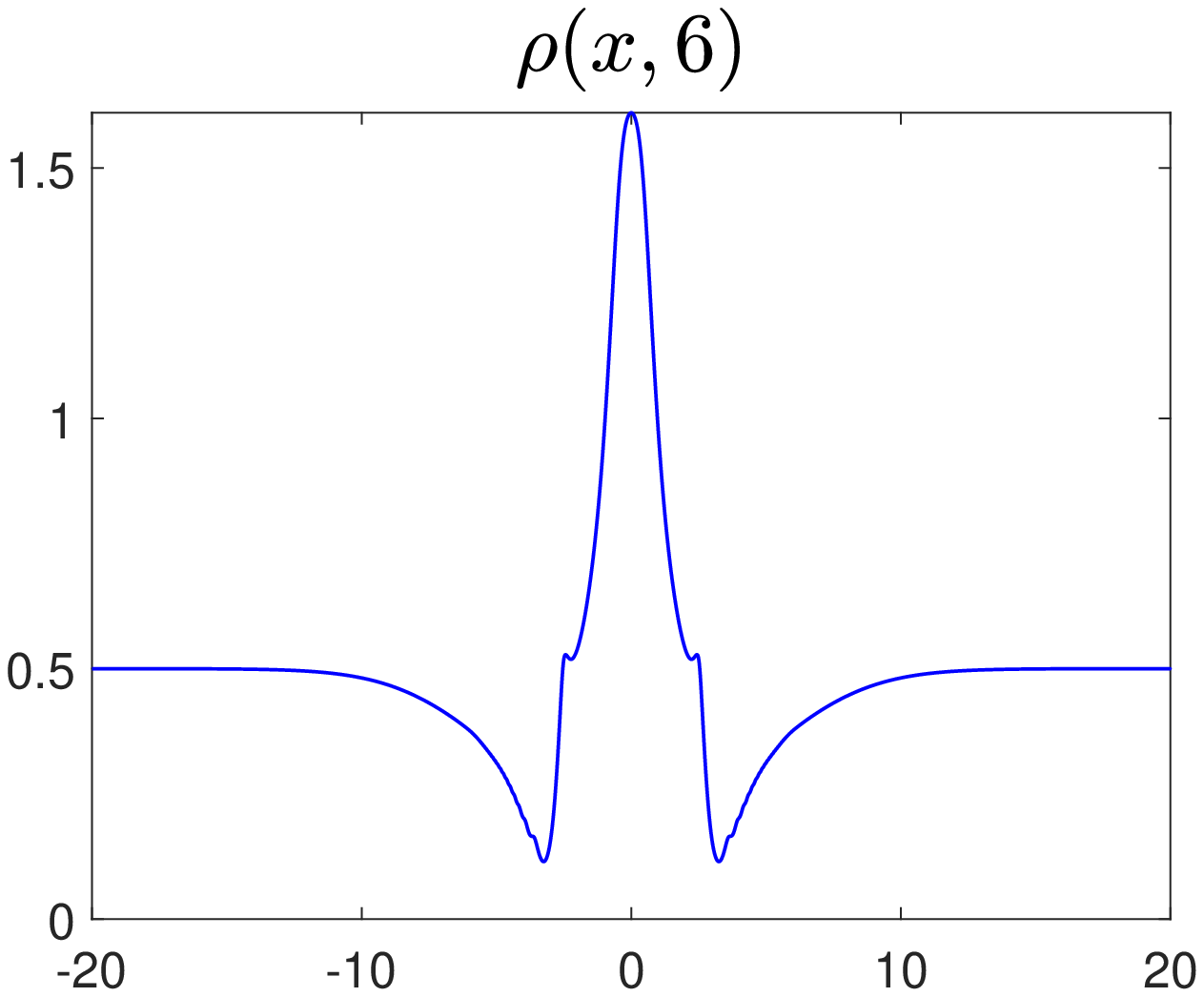}}
\hspace{-15pt}
\subfigure[Case A, $t=8$]{ \centering
\includegraphics[width=0.26\textwidth]{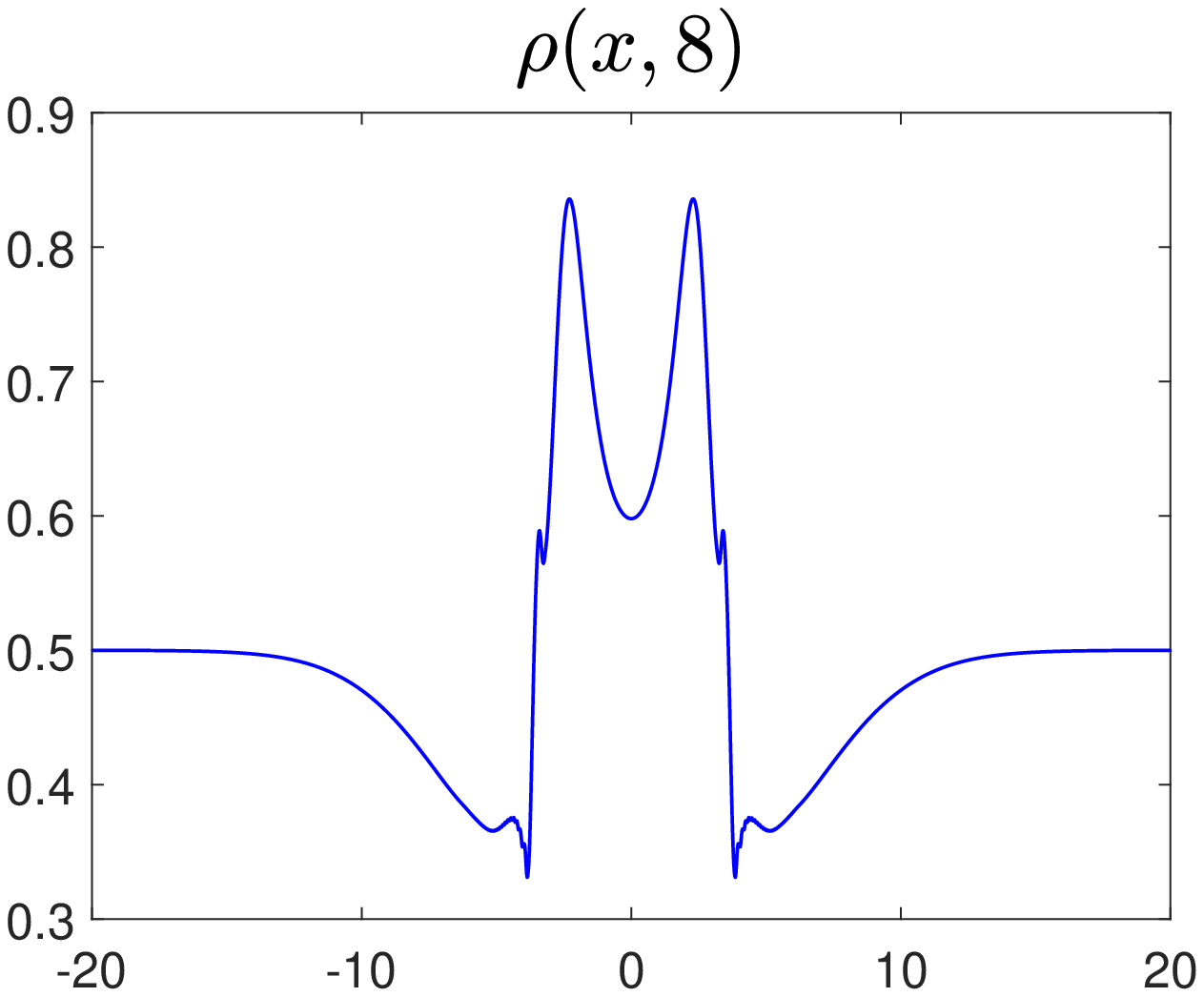}}
\\
\subfigure[Case B, $t=1$]{ \centering
\includegraphics[width=0.26\textwidth]{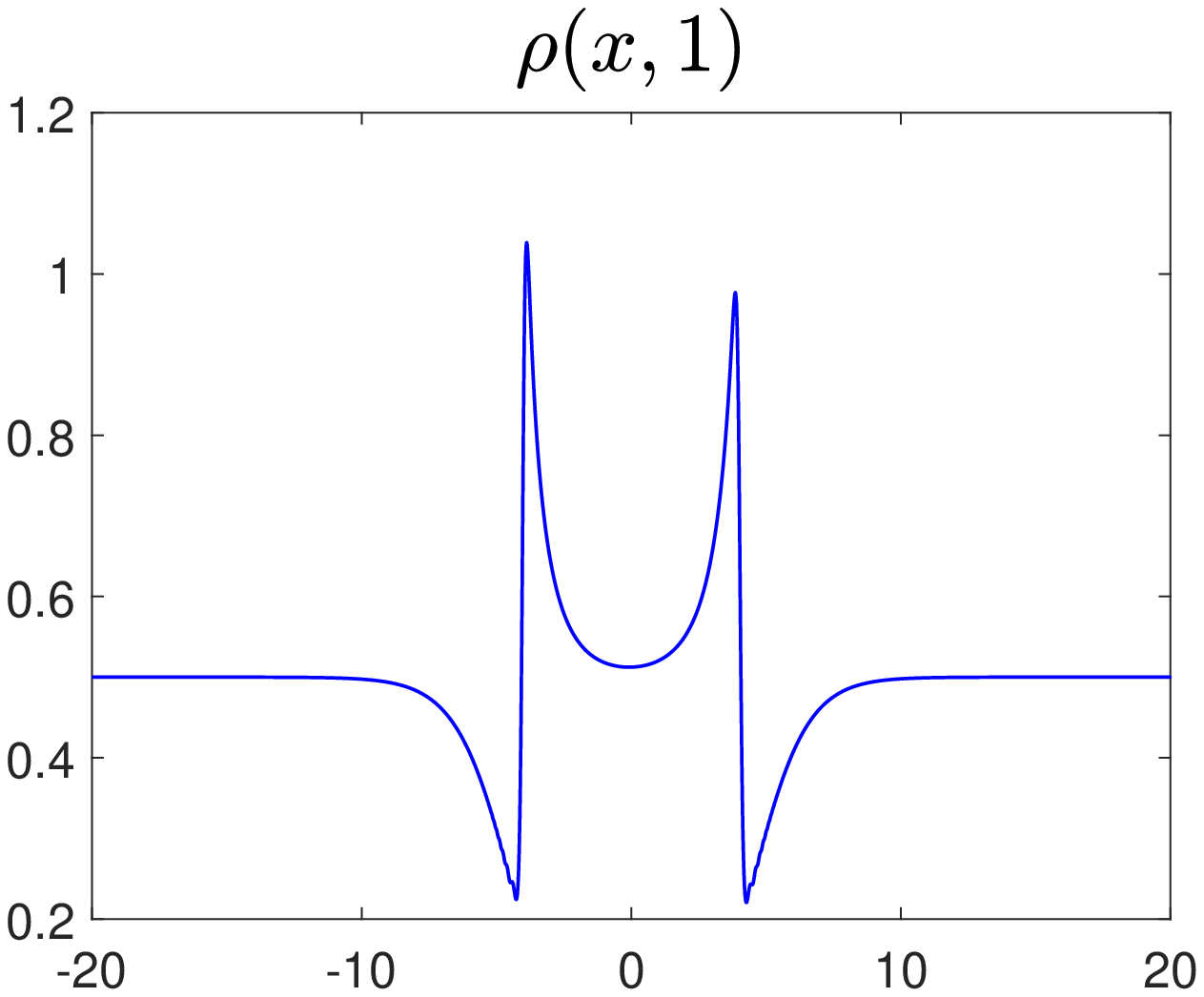}}
\hspace{-15pt}
\subfigure[Case B, $t=3$]{ \centering
\includegraphics[width=0.26\textwidth]{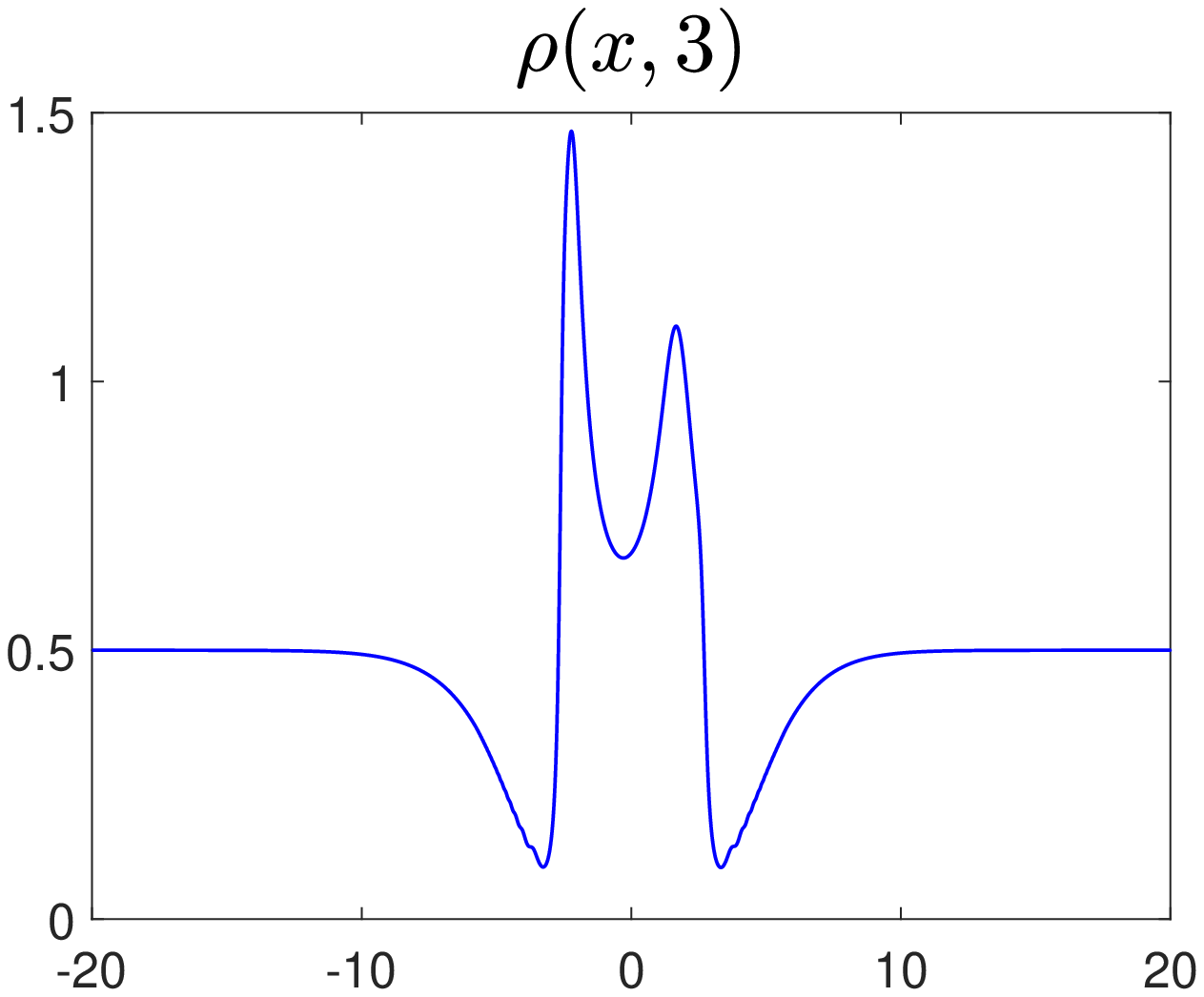}}
\hspace{-15pt}
\subfigure[Case B, $t=6$]{ \centering
\includegraphics[width=0.26\textwidth]{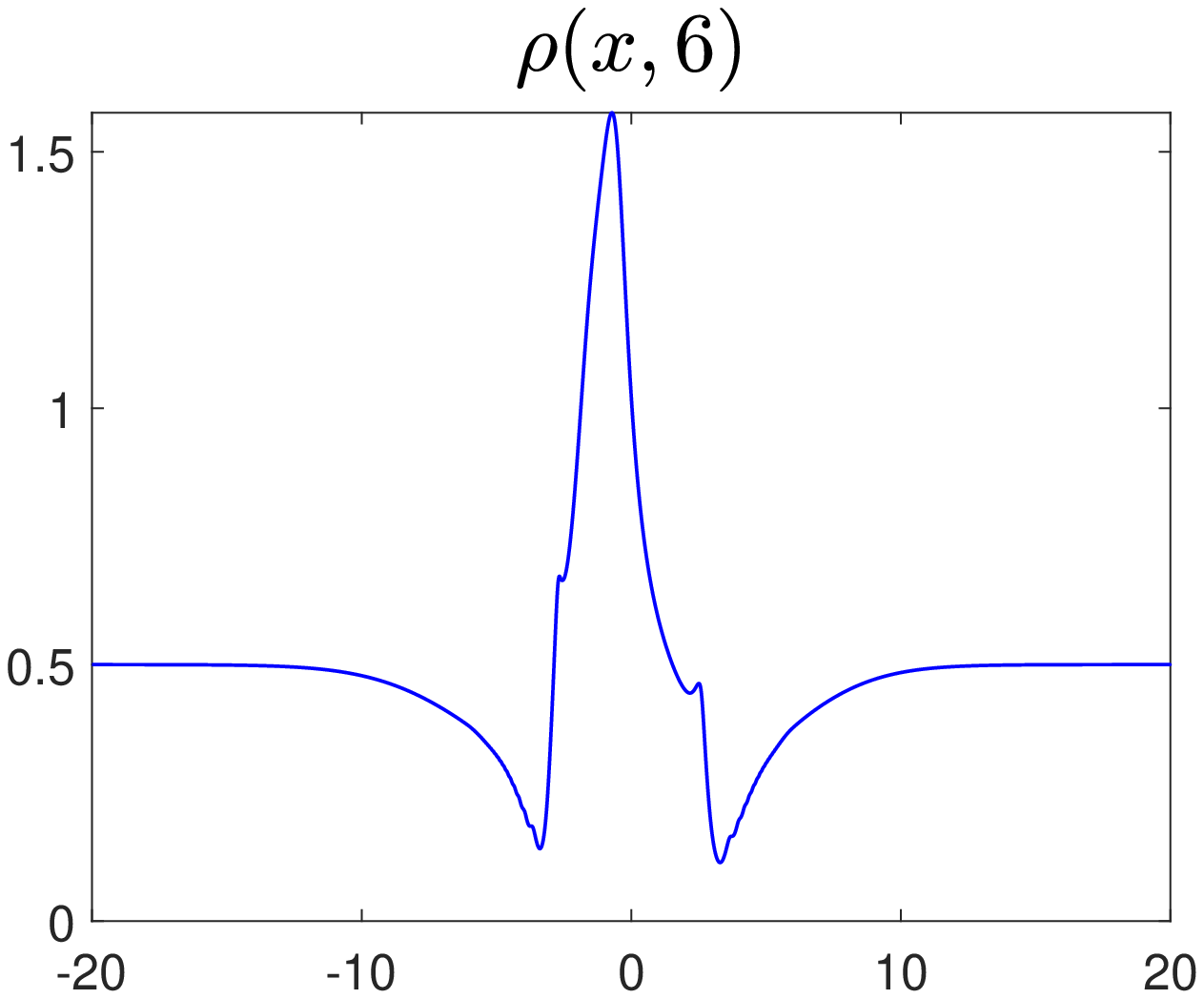}}
\hspace{-15pt}
\subfigure[Case B, $t=8$]{ \centering
\includegraphics[width=0.26\textwidth]{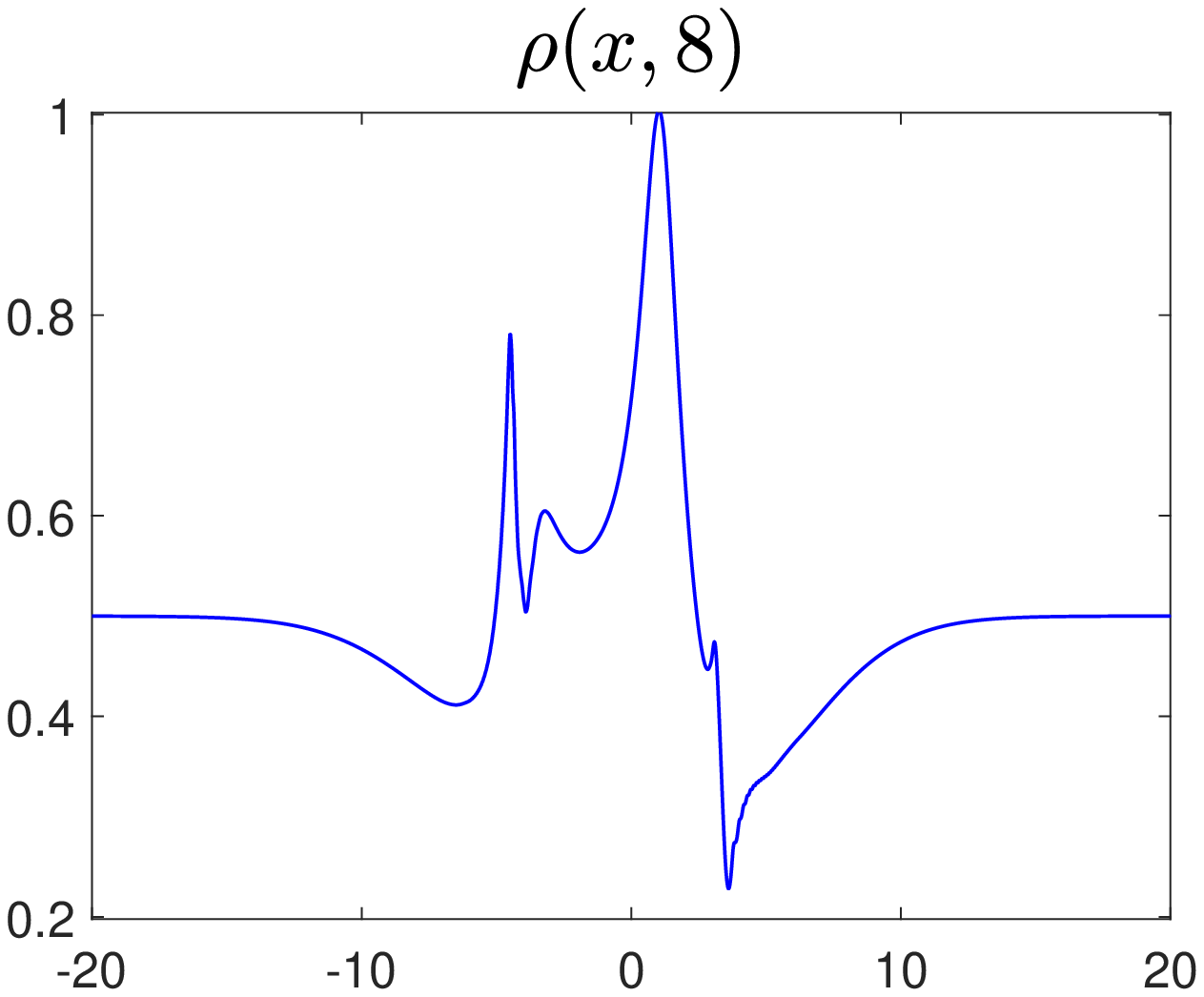}}
\\
\subfigure[Case C, $t=1$]{ \centering
\includegraphics[width=0.26\textwidth]{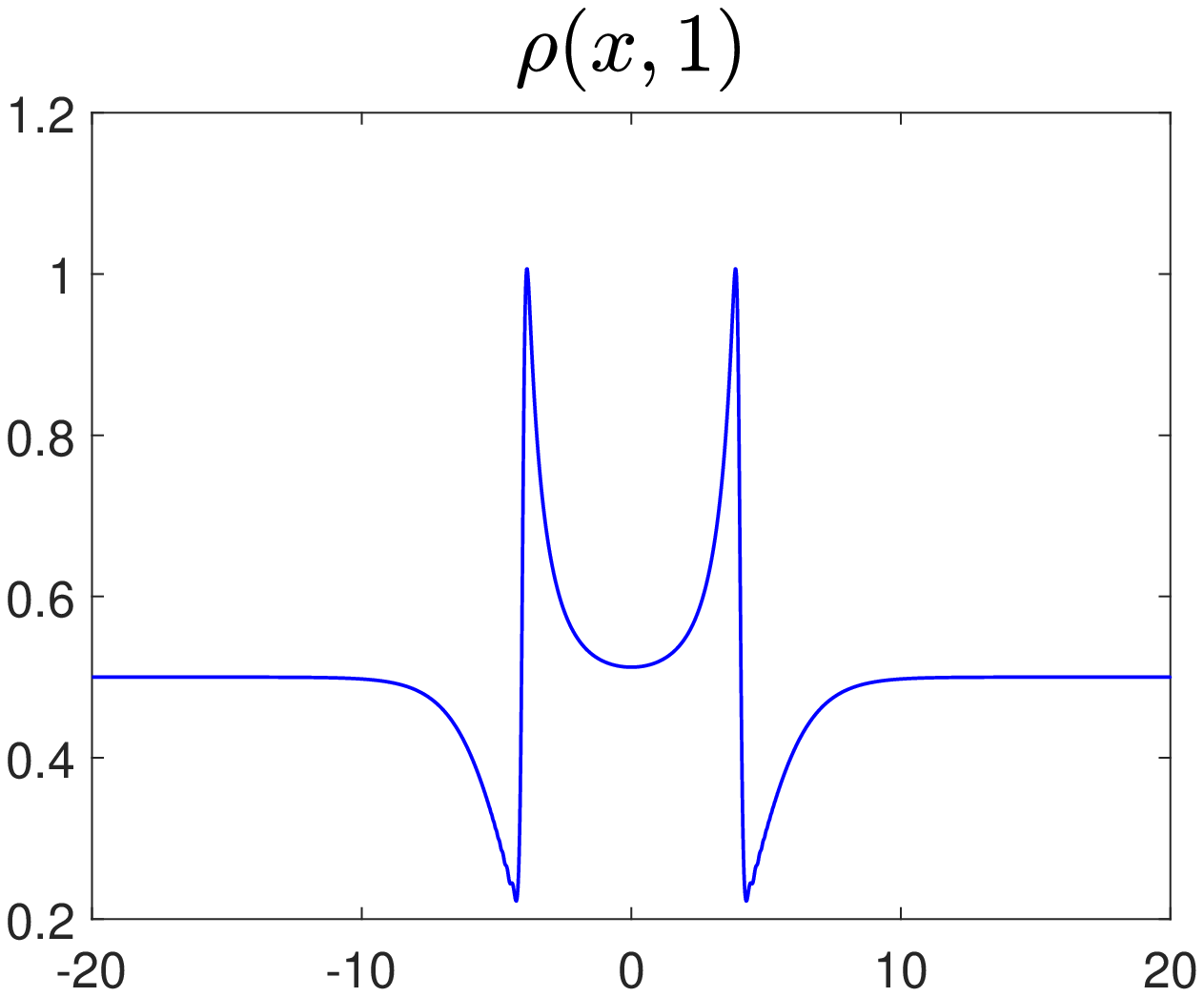}}
\hspace{-15pt}
\subfigure[Case C, $t=3$]{ \centering
\includegraphics[width=0.26\textwidth]{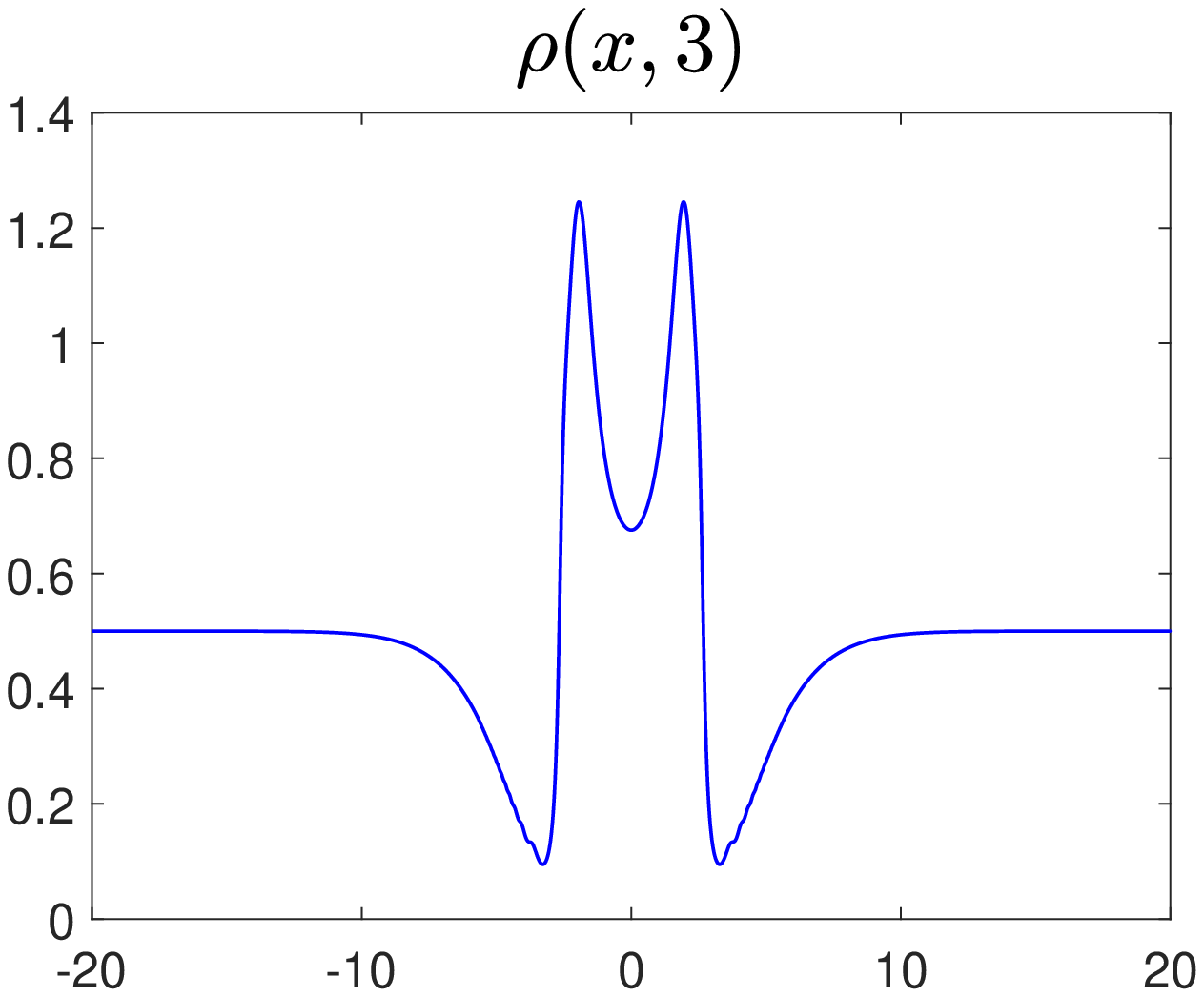}}
\hspace{-15pt}
\subfigure[Case C, $t=6$]{ \centering
\includegraphics[width=0.26\textwidth]{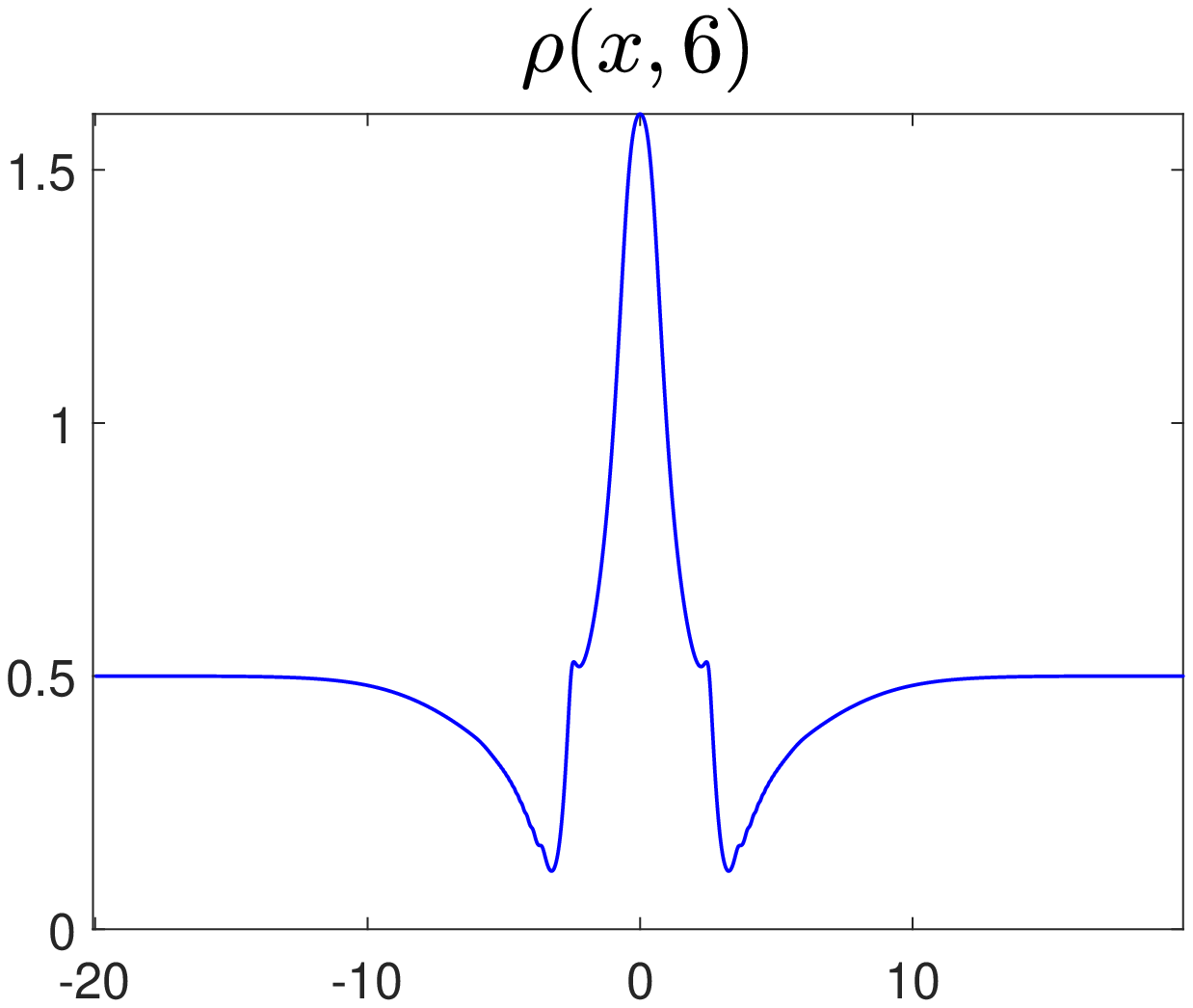}}
\hspace{-15pt}
\subfigure[Case C, $t=8$]{ \centering
\includegraphics[width=0.26\textwidth]{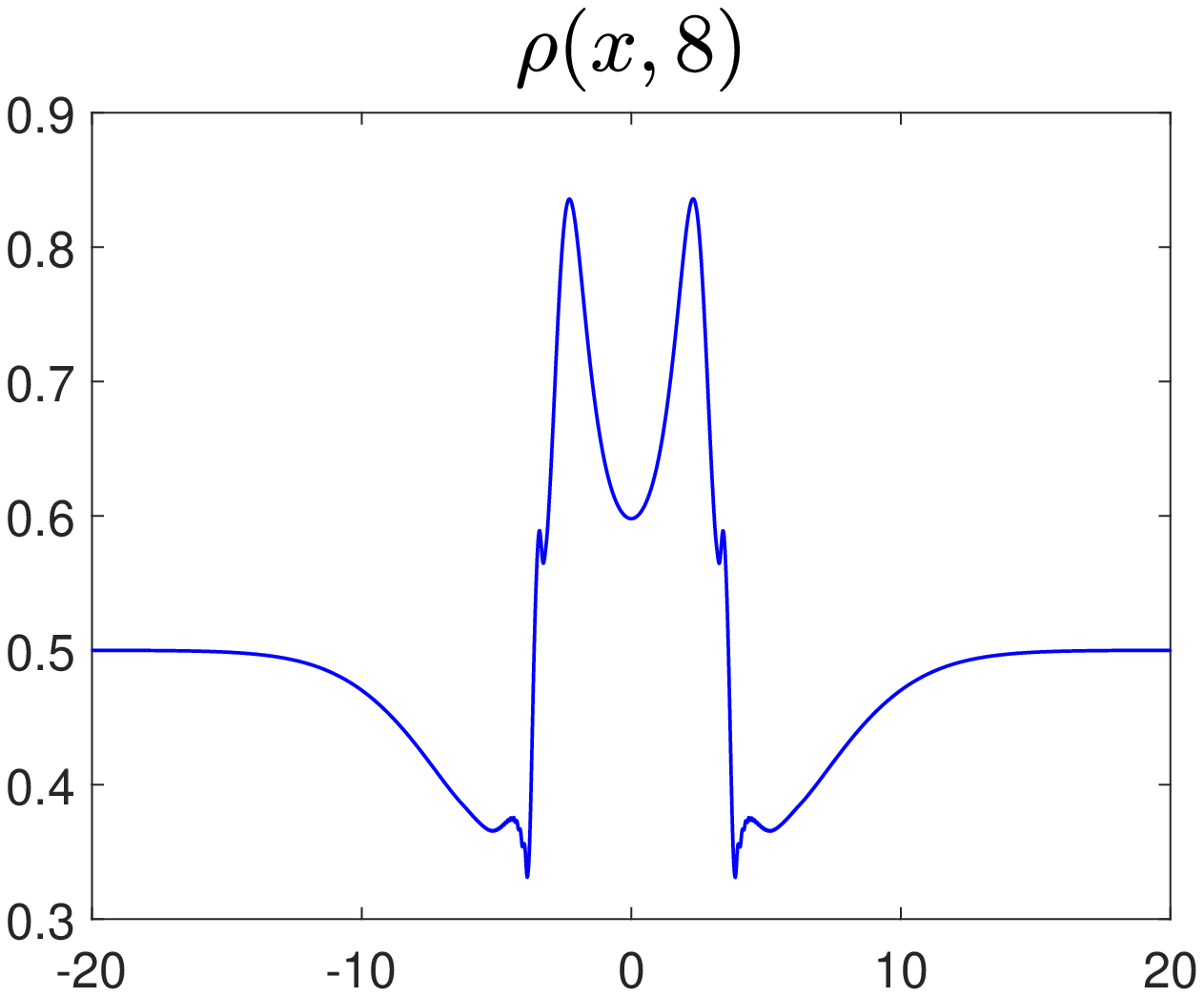}}
\\
\subfigure[Case D, $t=1$]{ \centering
\includegraphics[width=0.26\textwidth]{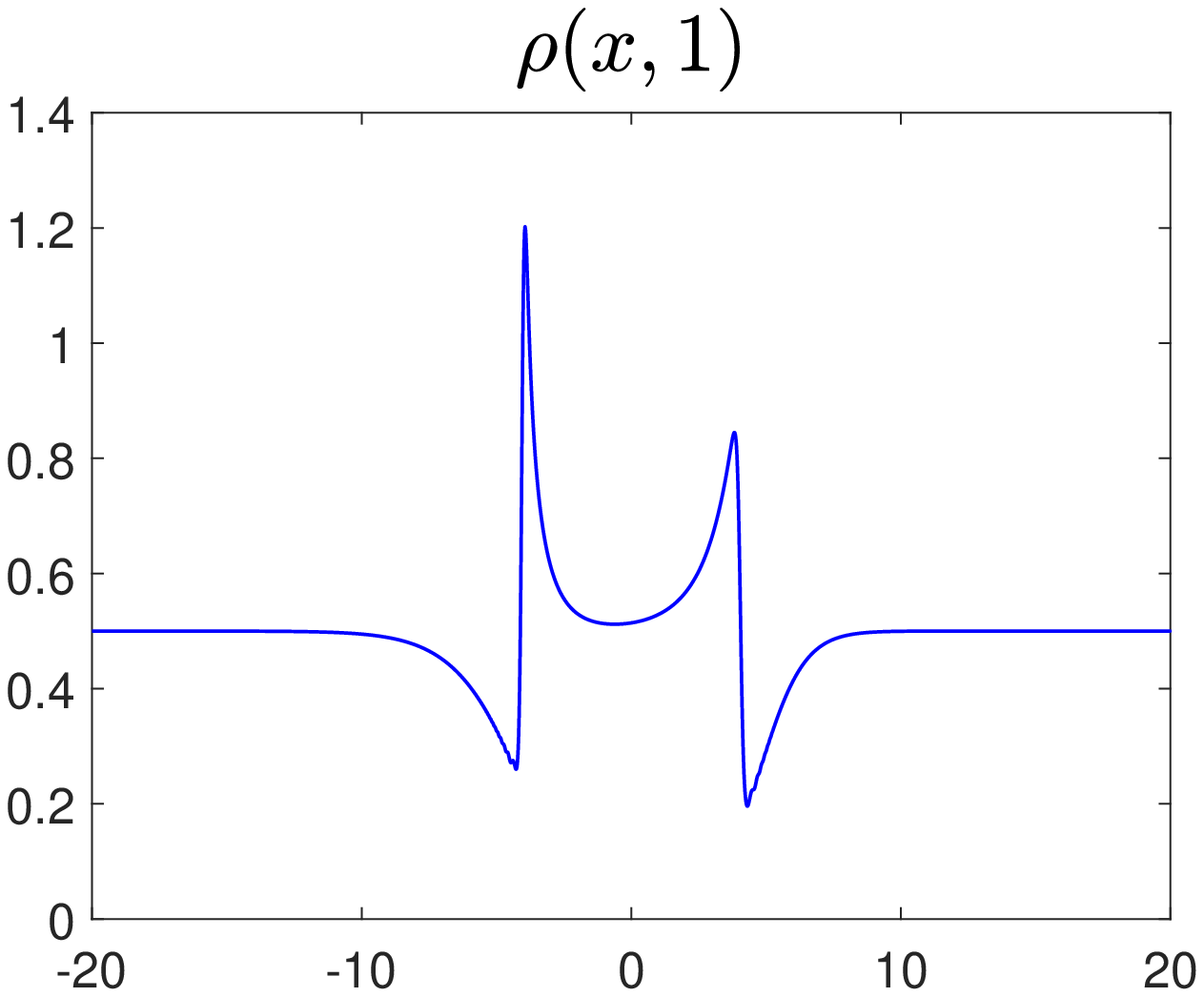}}
\hspace{-15pt}
\subfigure[Case D, $t=3$]{ \centering
\includegraphics[width=0.26\textwidth]{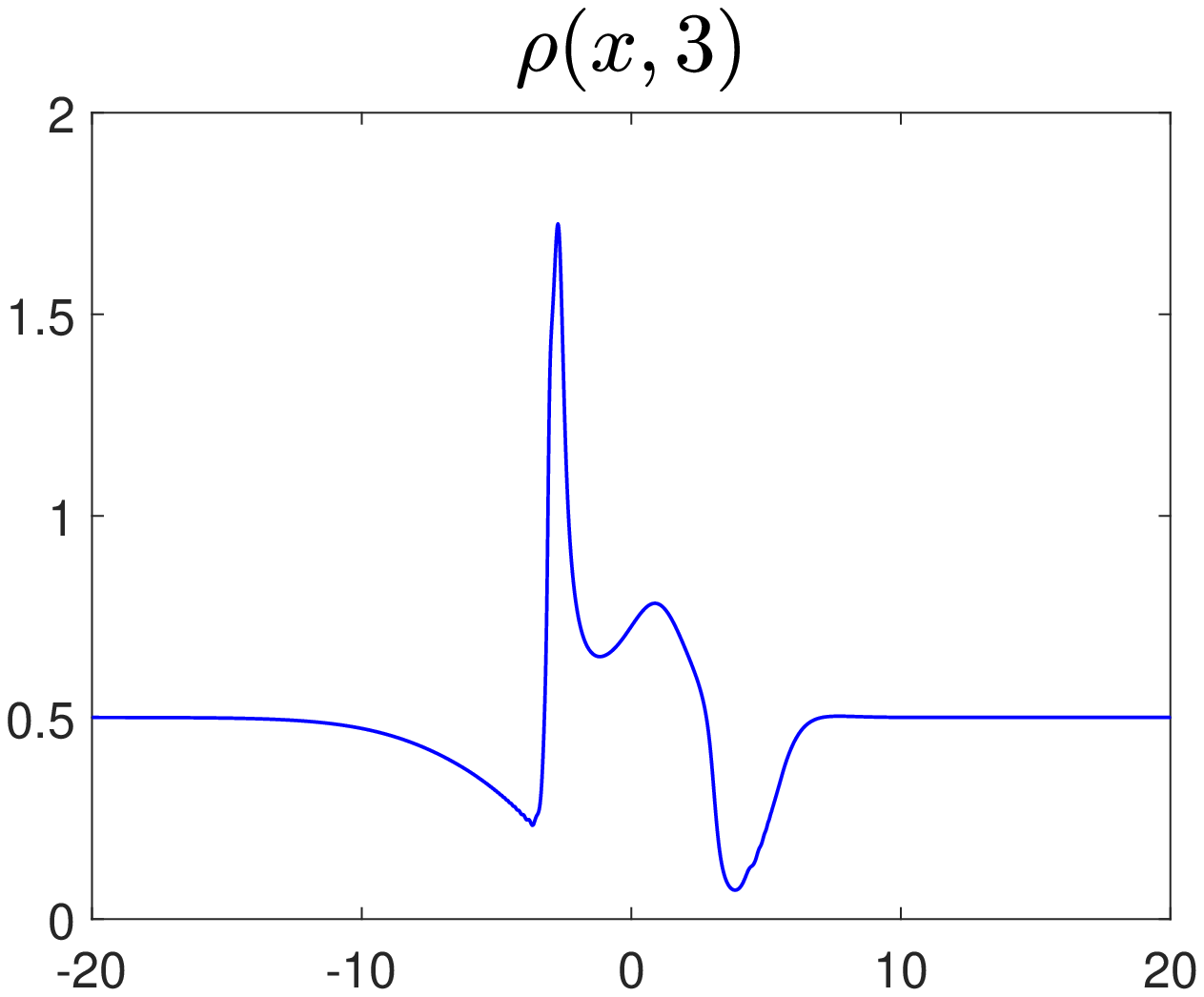}}
\hspace{-15pt}
\subfigure[Case D, $t=6$]{ \centering
\includegraphics[width=0.26\textwidth]{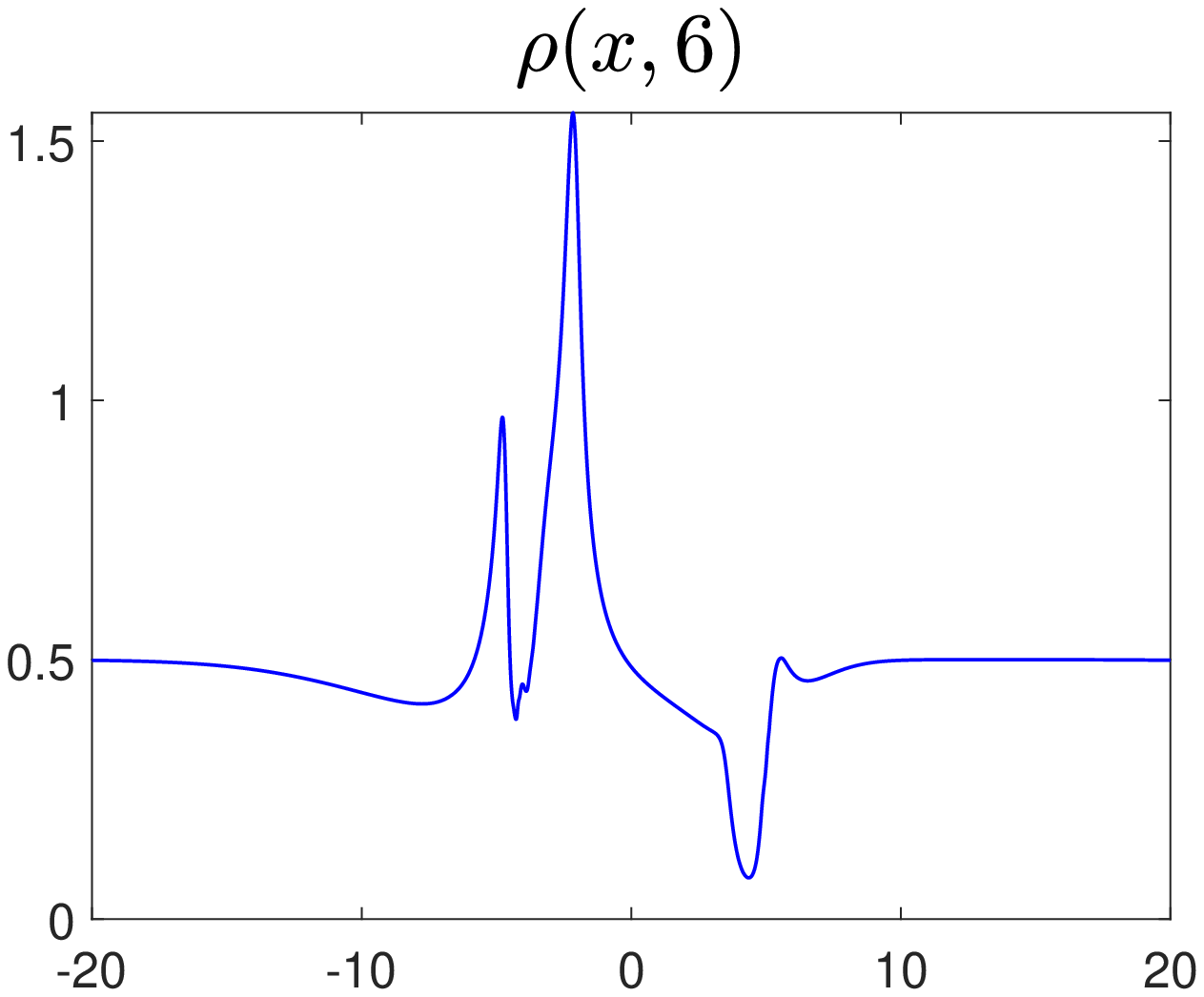}}
\hspace{-15pt}
\subfigure[Case D, $t=8$]{ \centering
\includegraphics[width=0.26\textwidth]{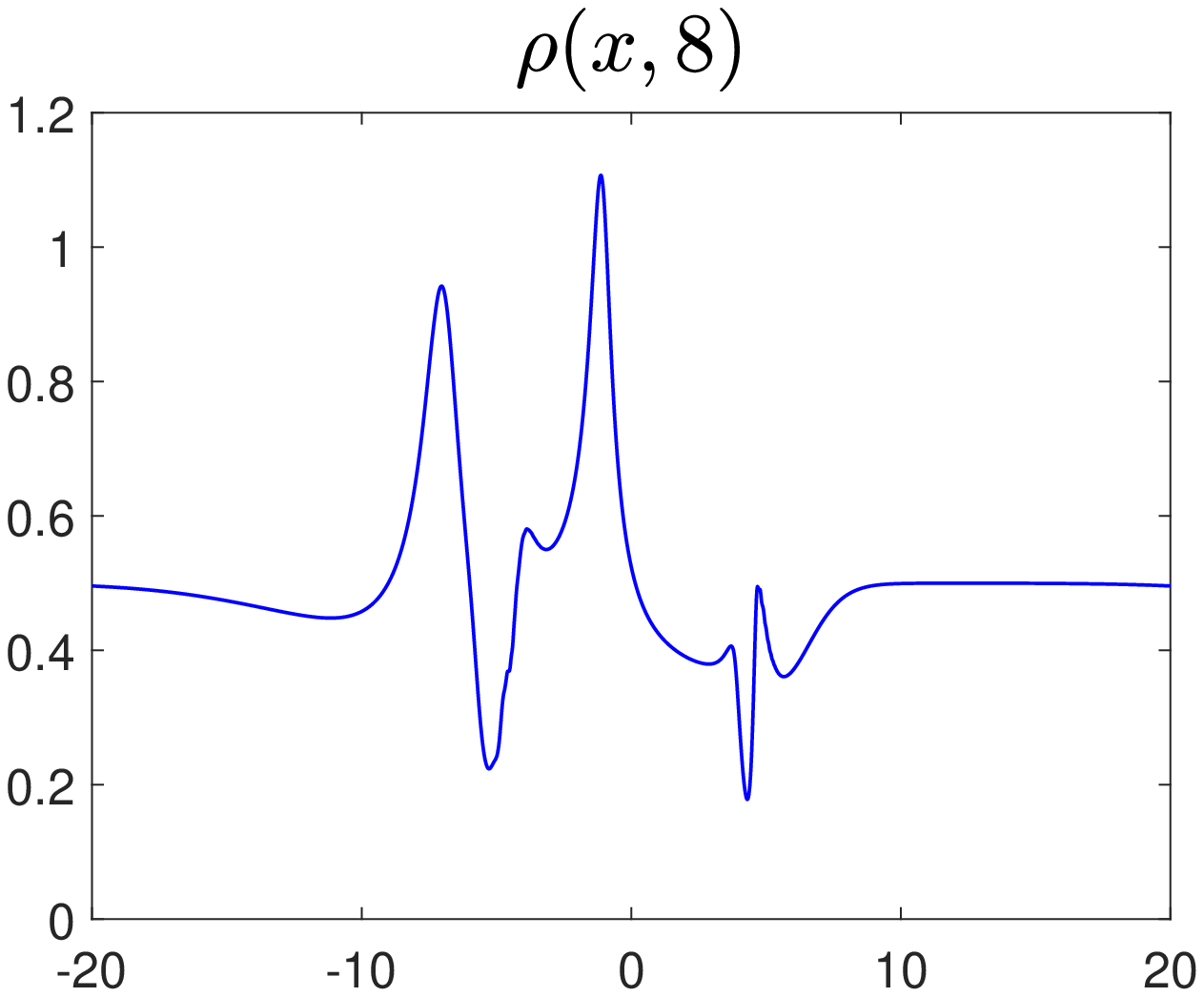}}
\\
\subfigure[Case E, $t=1$]{ \centering
\includegraphics[width=0.26\textwidth]{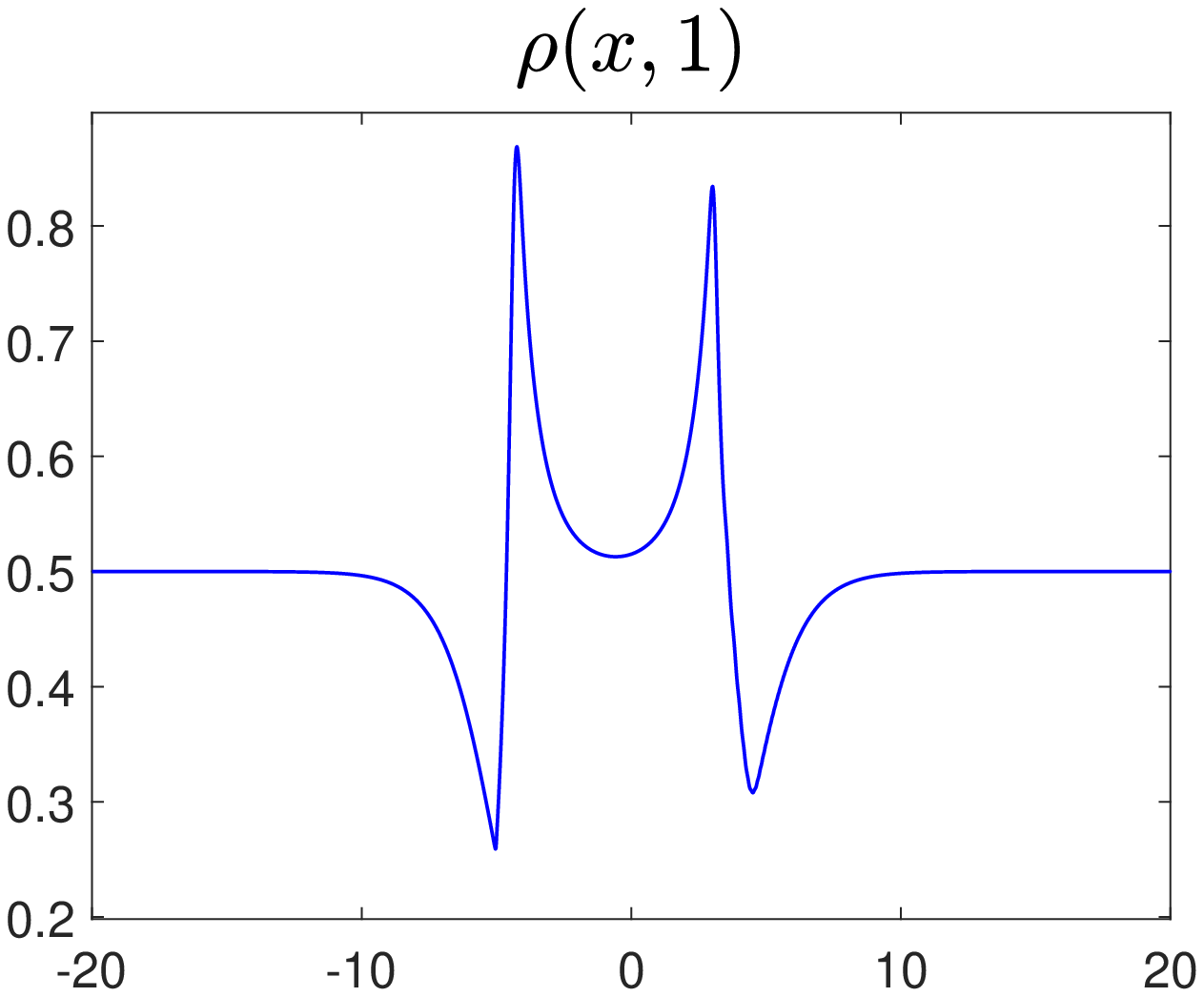}}
\hspace{-15pt}
\subfigure[Case E, $t=3$]{ \centering
\includegraphics[width=0.26\textwidth]{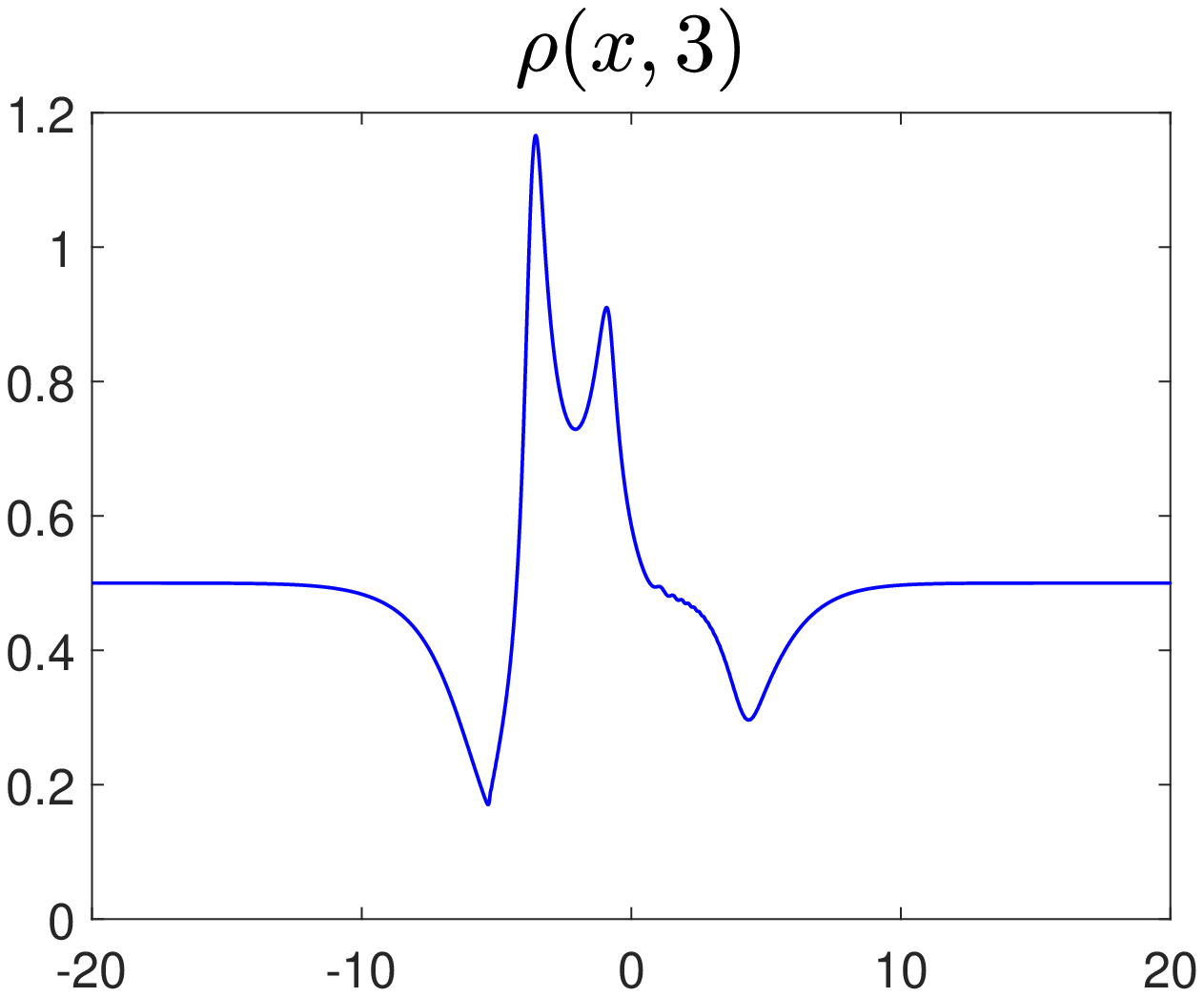}}
\hspace{-15pt}
\subfigure[Case E, $t=6$]{ \centering
\includegraphics[width=0.26\textwidth]{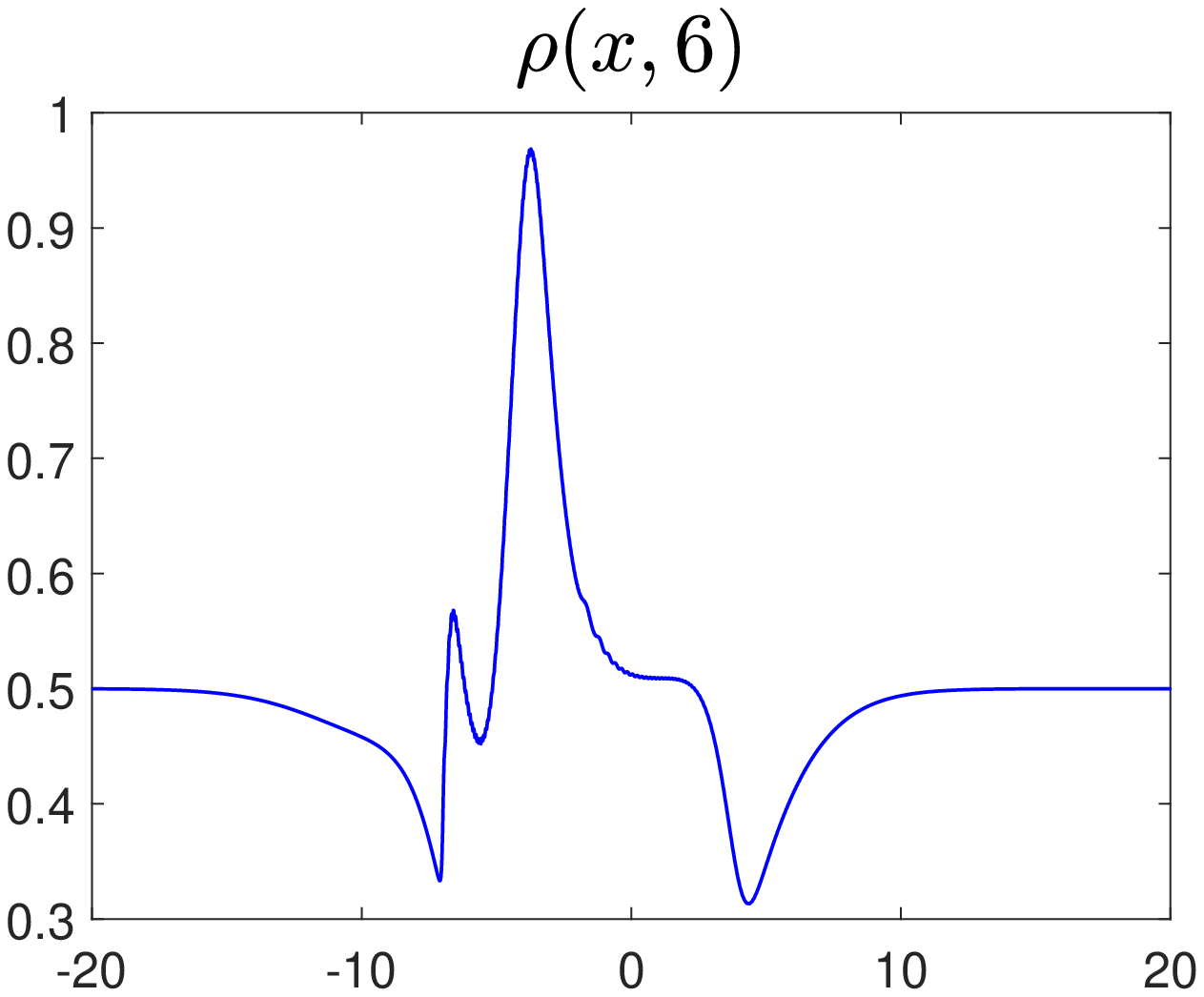}}
\hspace{-15pt}
\subfigure[Case E, $t=8$]{ \centering
\includegraphics[width=0.26\textwidth]{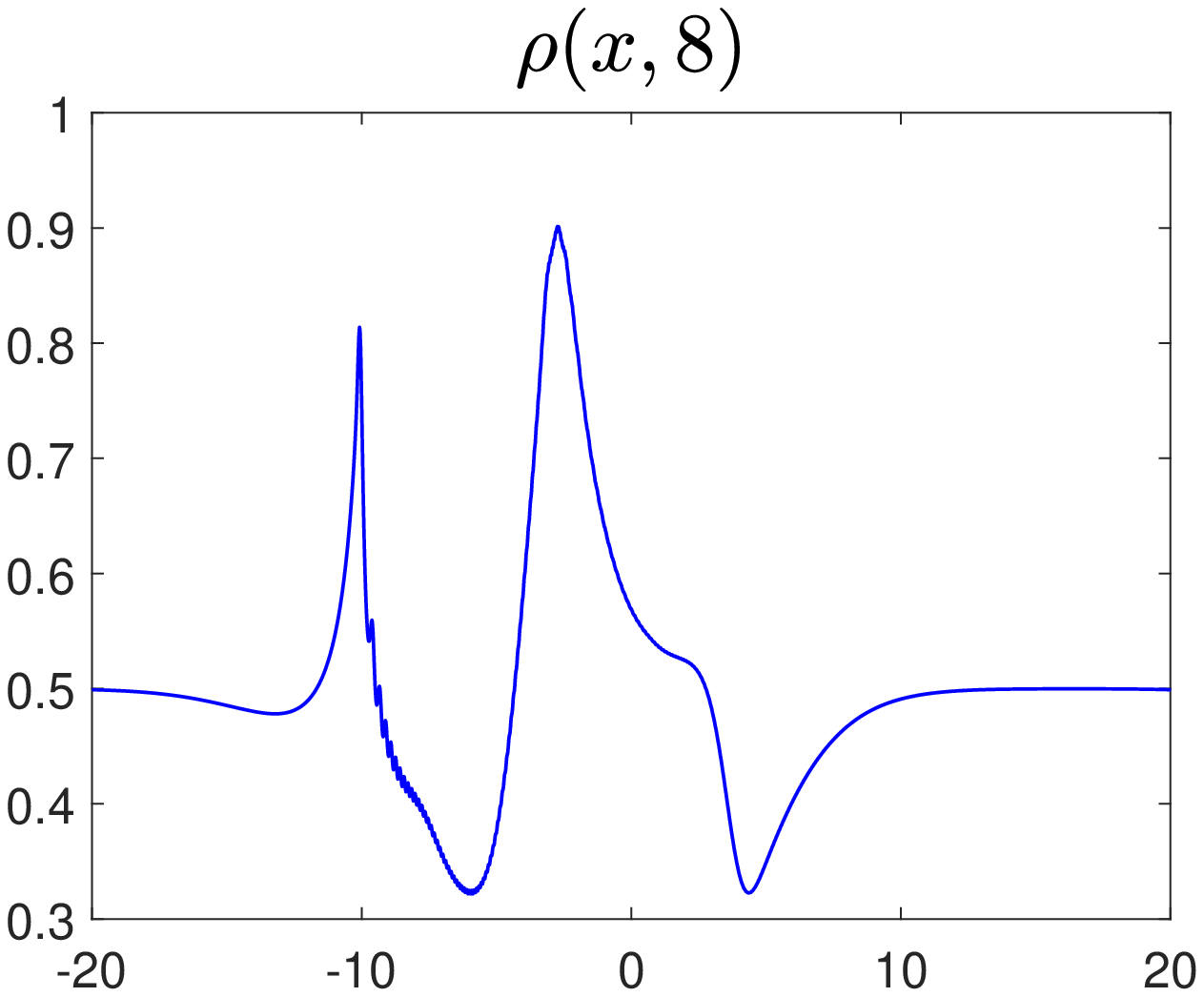}}
\caption{The altitudes of wave propagation at different instance of times calculated by the difference scheme \eqref{equa3.7} in five different Cases;
the spatial grid stepsize is fixed as $h = 0.02$} \label{fig:5}
\end{figure}

\begin{figure}[htbp]
\subfigtopskip=2pt
\subfigbottomskip=2pt
\subfigcapskip=-3pt
\subfigure[Case A-$u$]{ \centering
\includegraphics[width=0.26\textwidth]{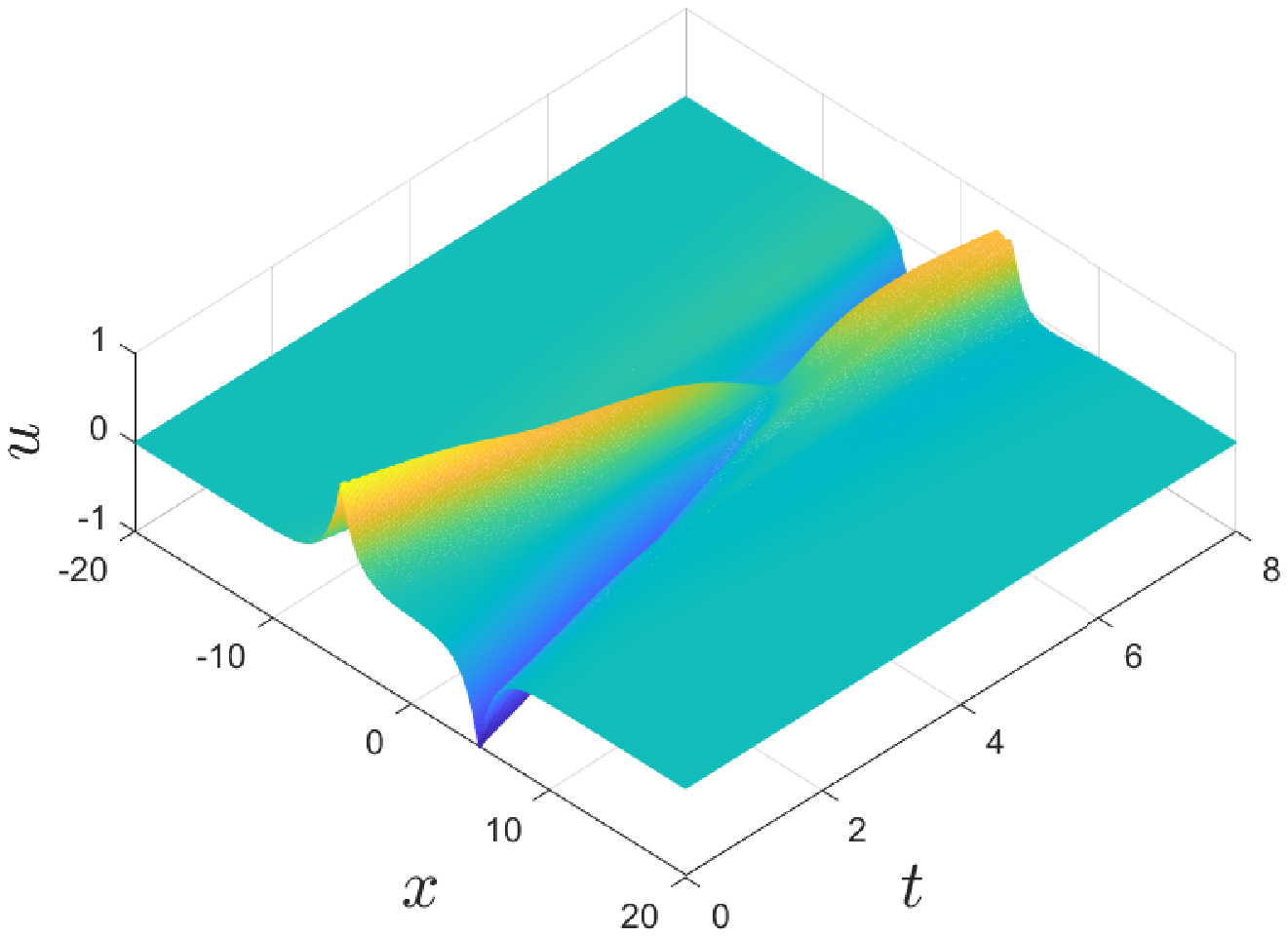}}
\hspace{-15pt}
\subfigure[Case A-$\rho$]{ \centering
\includegraphics[width=0.26\textwidth]{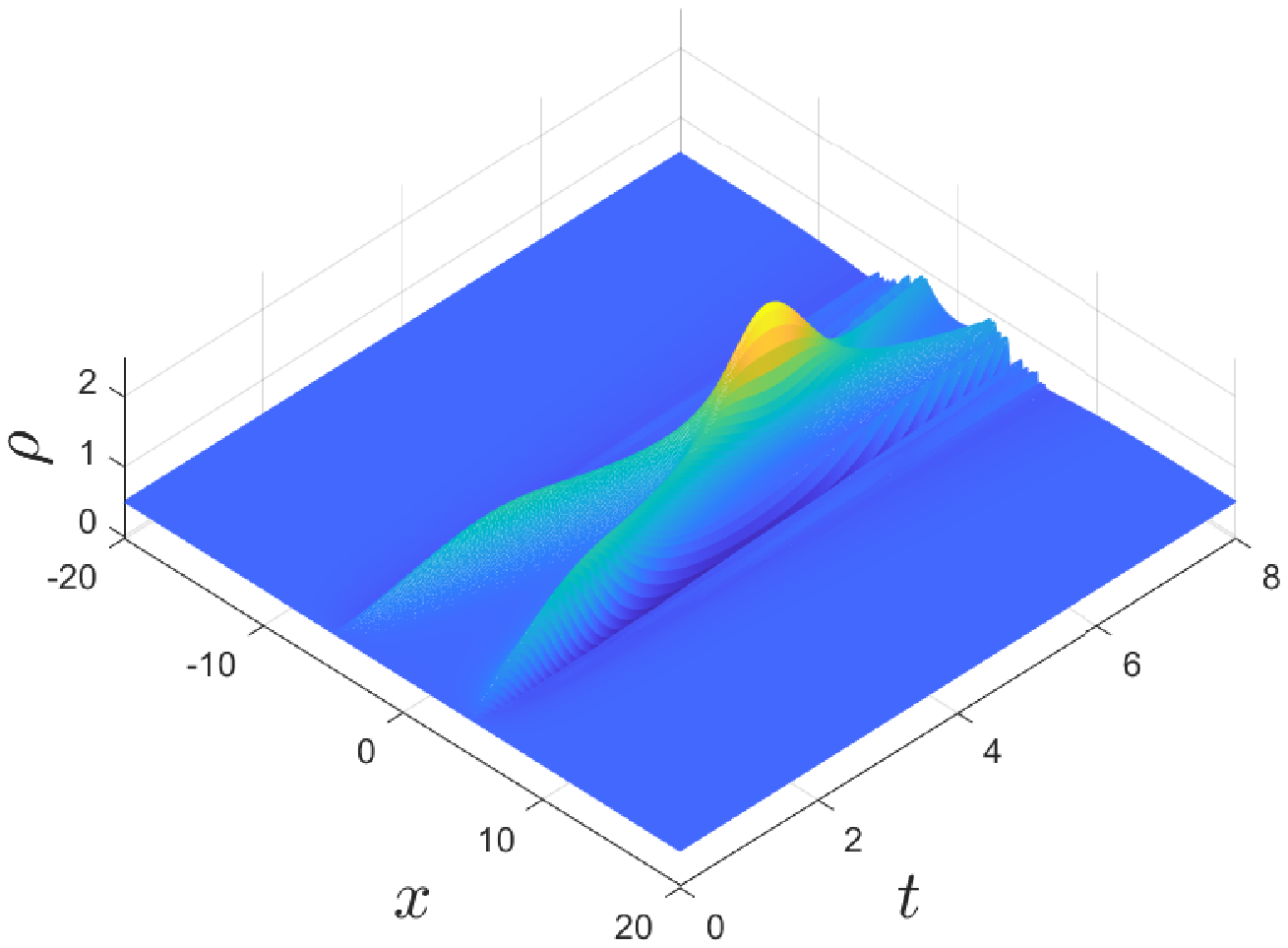}}
\hspace{-15pt}
\subfigure[Case A-Invariants]{ \centering
\includegraphics[width=0.26\textwidth]{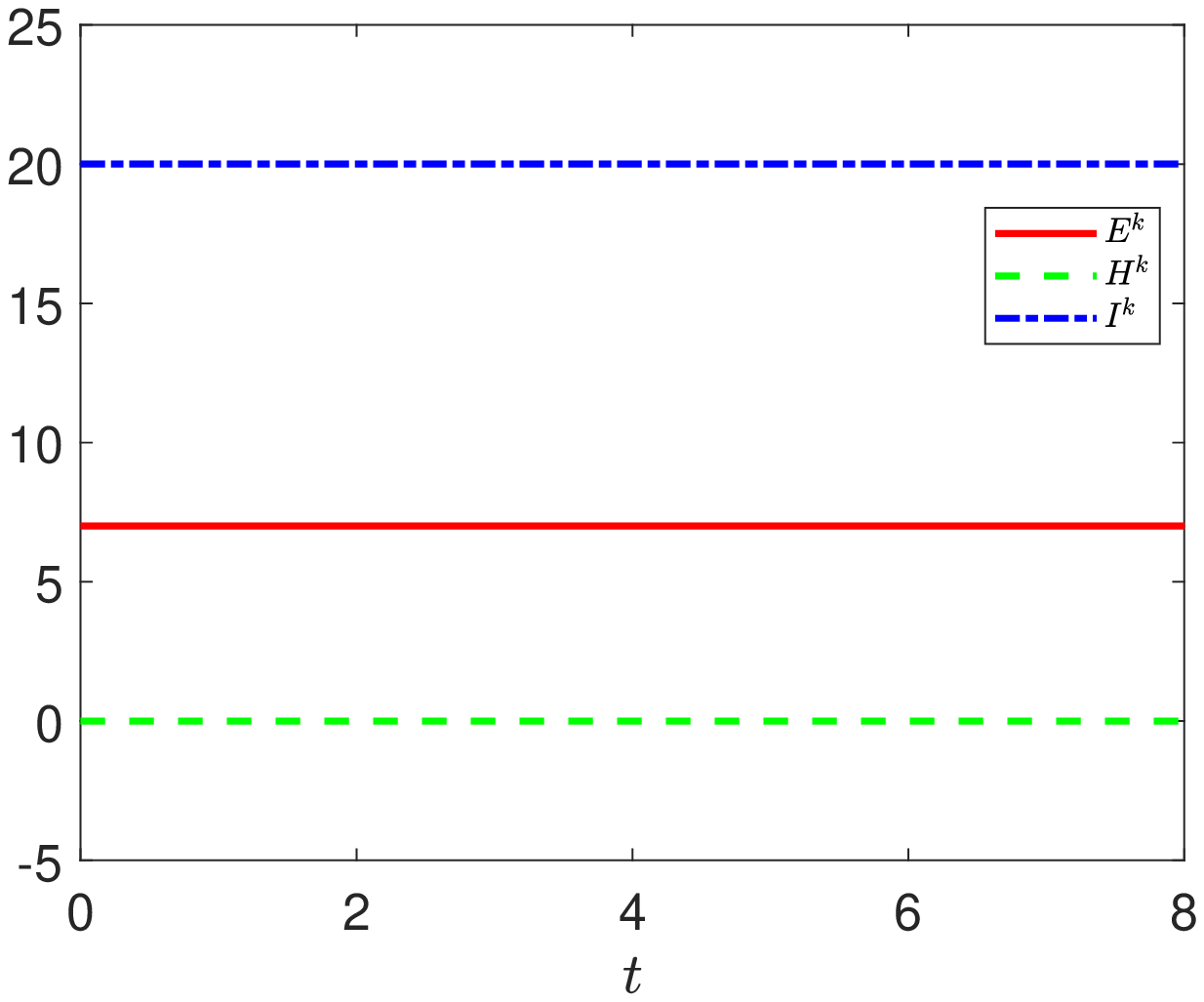}}
\hspace{-15pt}
\subfigure[Errors]{ \centering
\includegraphics[width=0.26\textwidth]{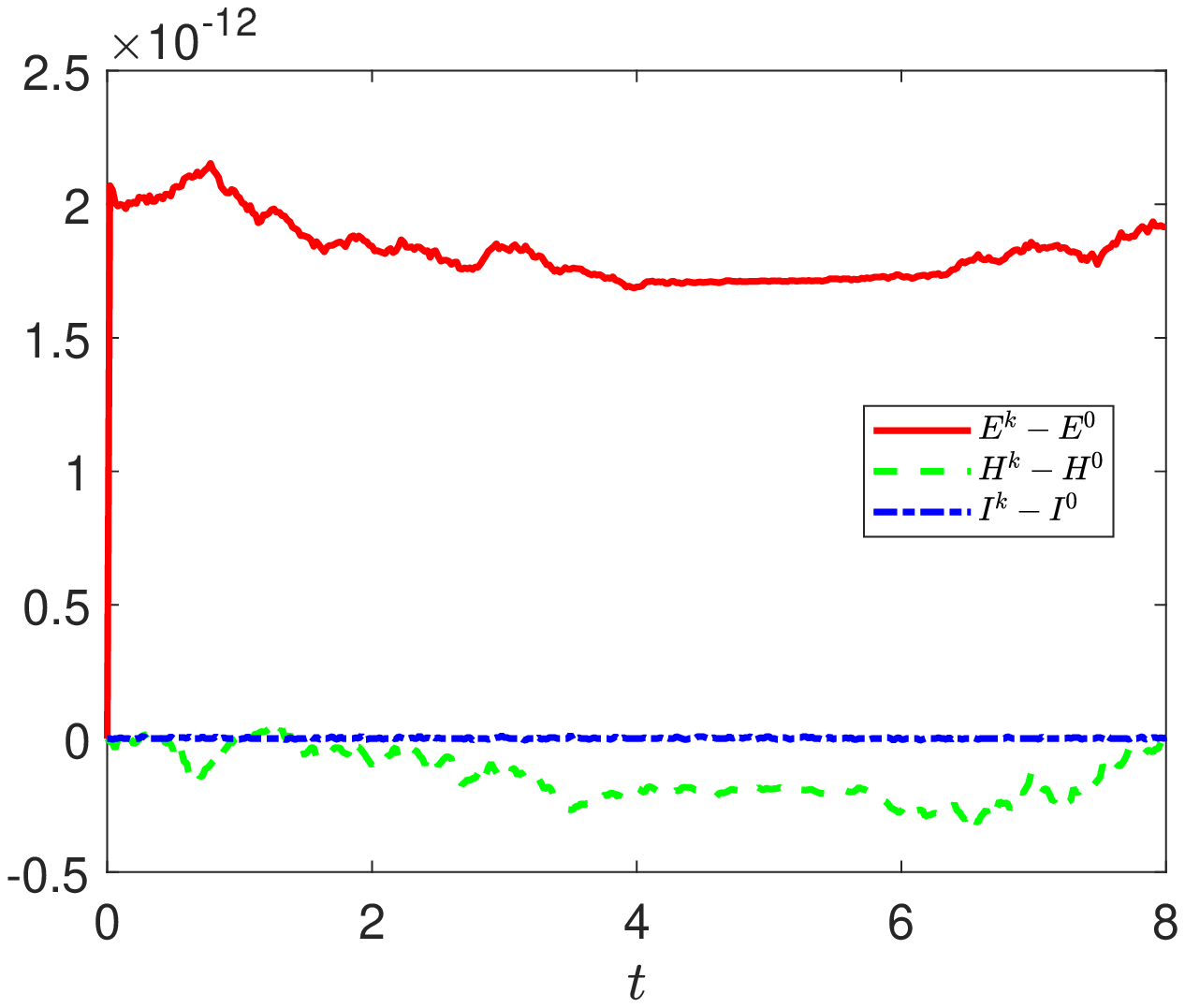}}
\\
\subfigure[Case B-$u$]{ \centering
\includegraphics[width=0.26\textwidth]{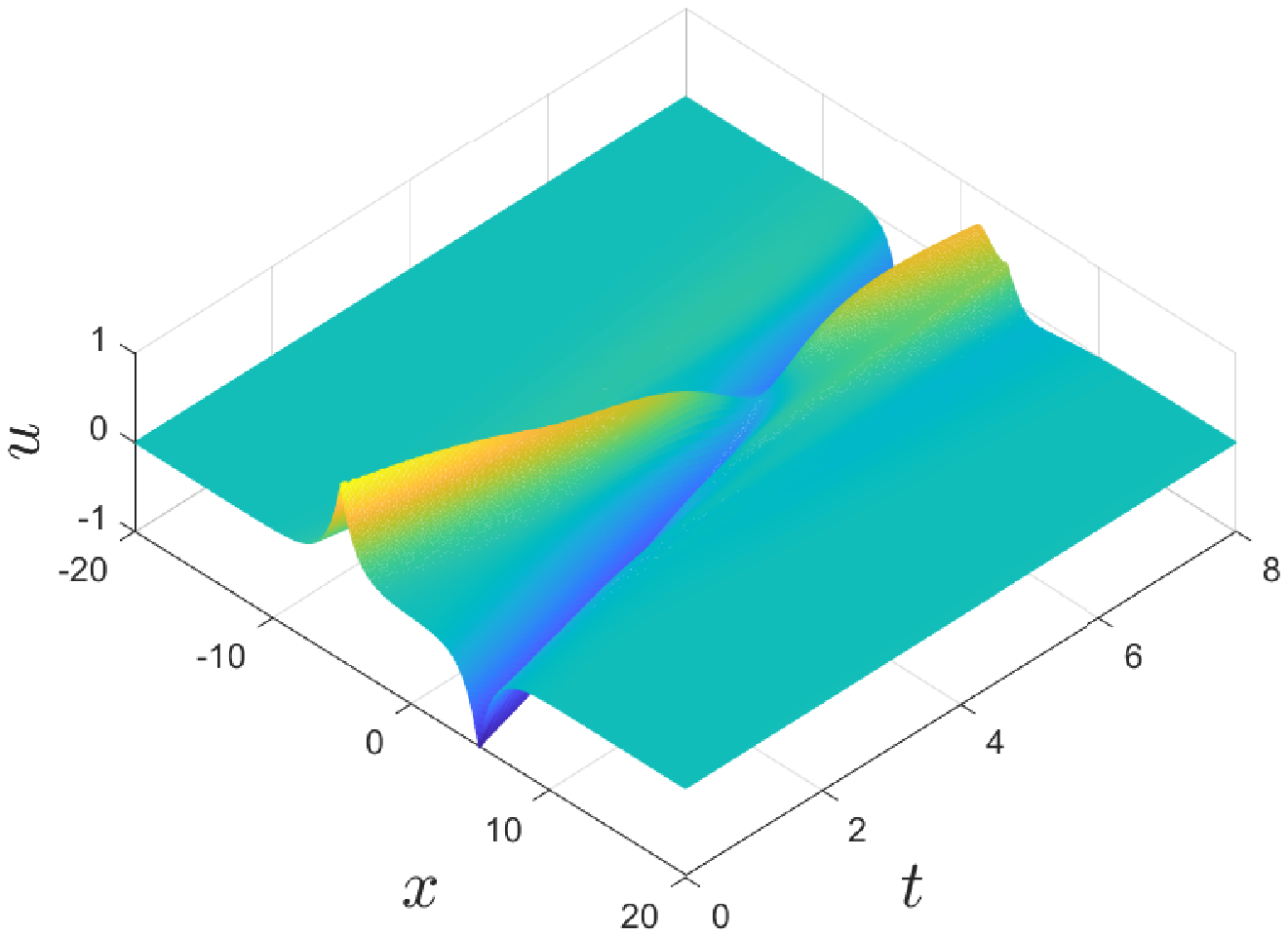}}
\hspace{-15pt}
\subfigure[Case B-$\rho$]{ \centering
\includegraphics[width=0.26\textwidth]{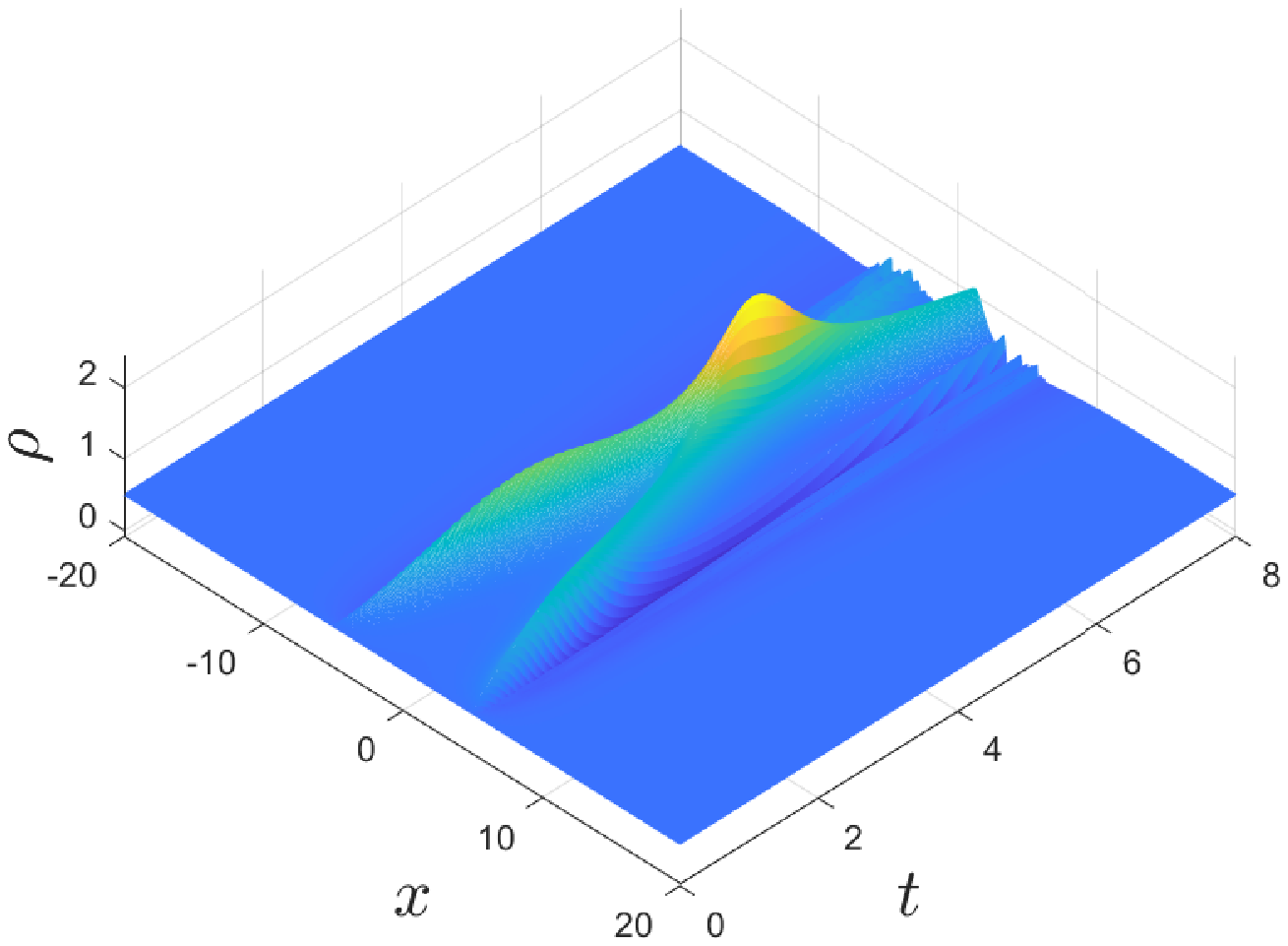}}
\hspace{-15pt}
\subfigure[Case B-Invariants]{ \centering
\includegraphics[width=0.26\textwidth]{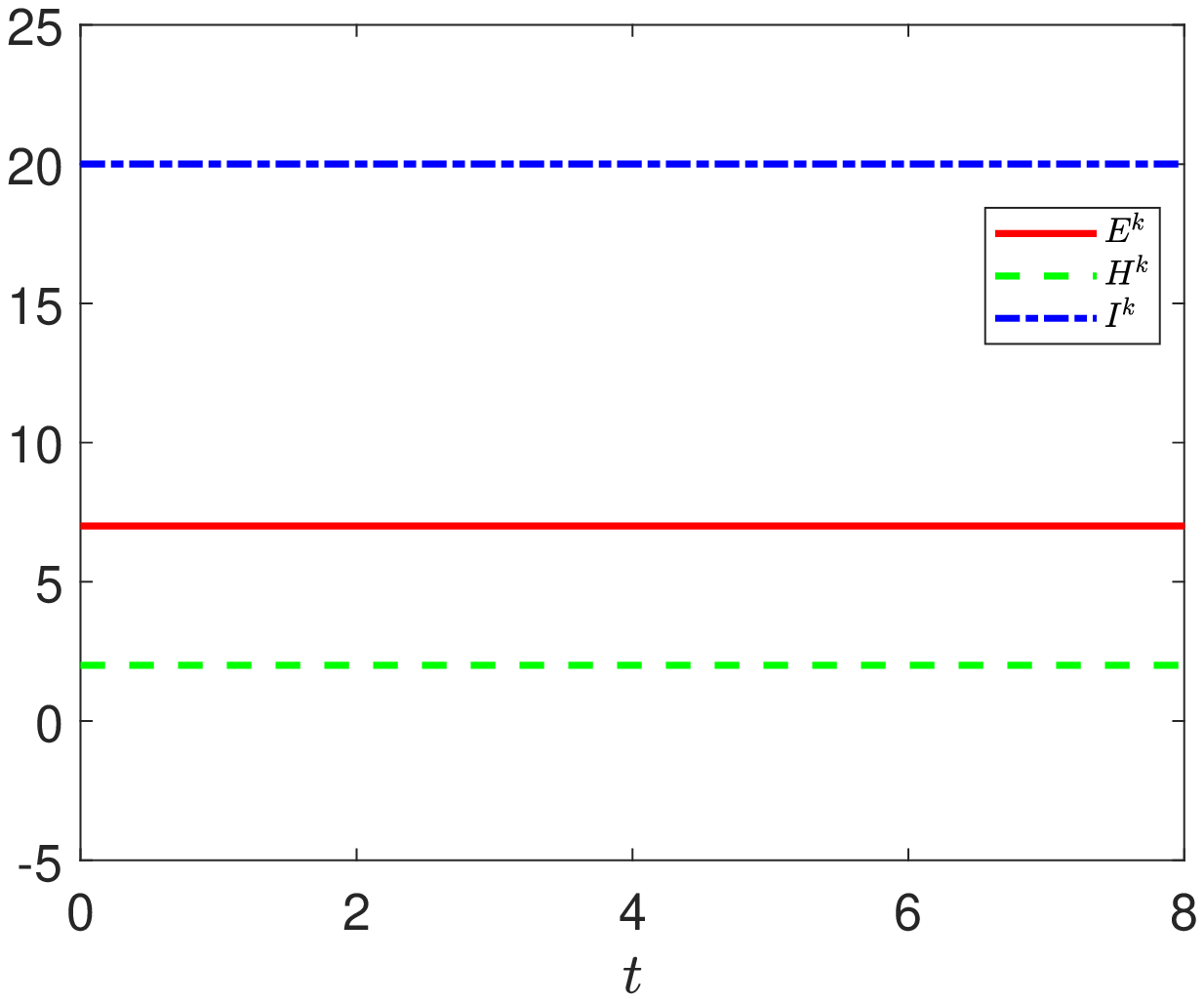}}
\hspace{-15pt}
\subfigure[Errors]{ \centering
\includegraphics[width=0.26\textwidth]{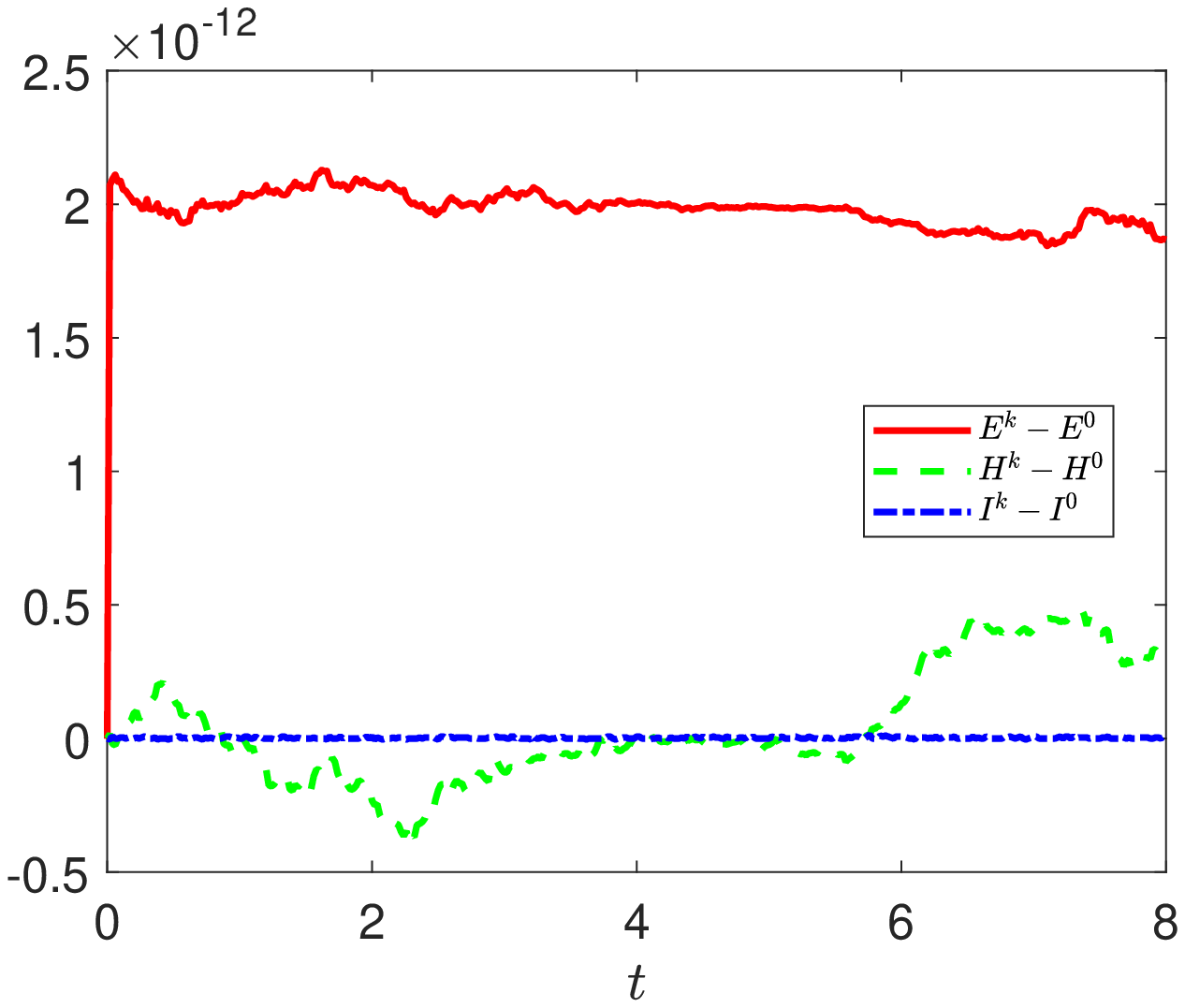}}
\\
\subfigure[Case C-$u$]{ \centering
\includegraphics[width=0.26\textwidth]{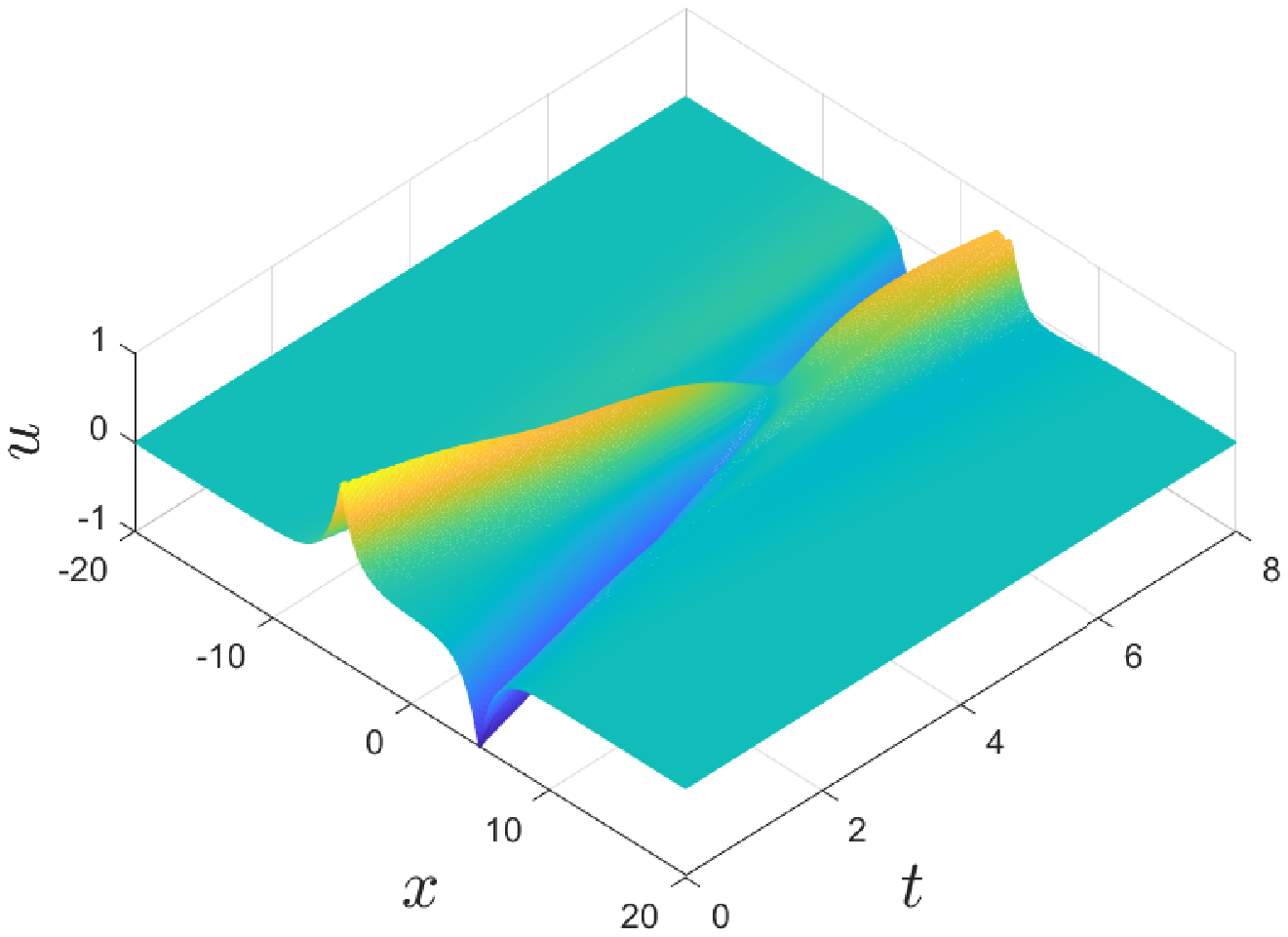}}
\hspace{-15pt}
\subfigure[Case C-$\rho$]{ \centering
\includegraphics[width=0.26\textwidth]{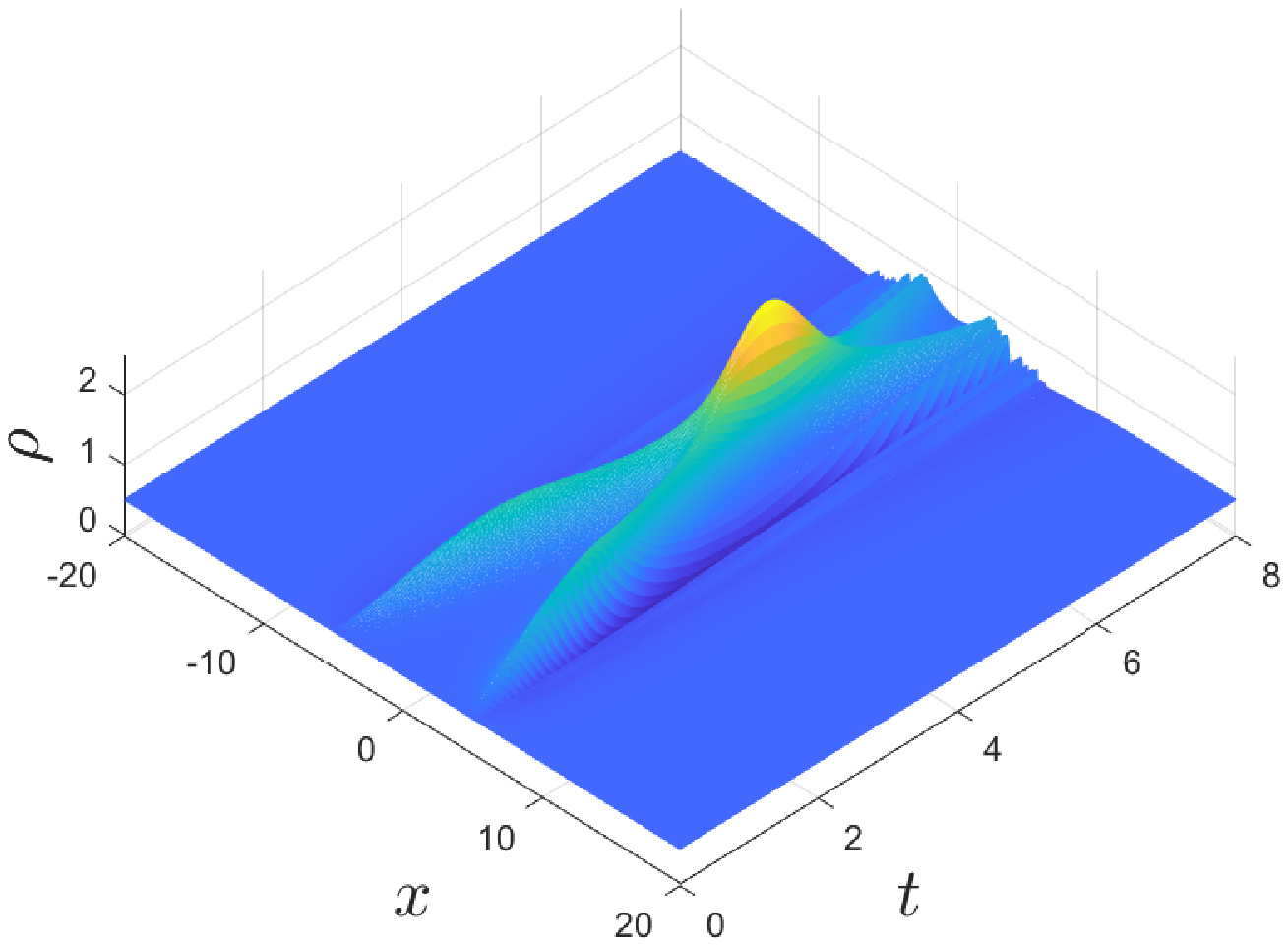}}
\hspace{-15pt}
\subfigure[Case C-Invariants]{ \centering
\includegraphics[width=0.26\textwidth]{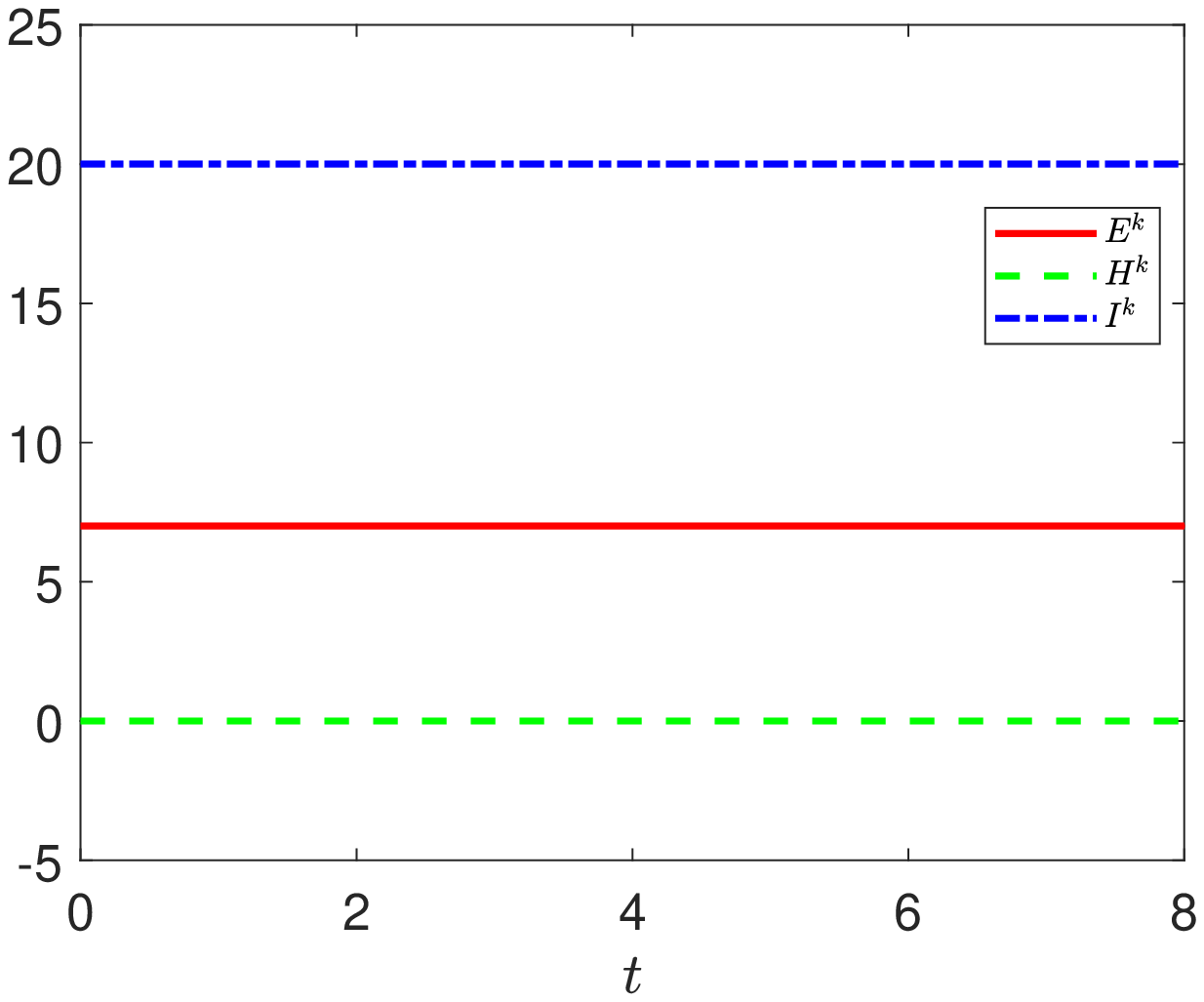}}
\hspace{-15pt}
\subfigure[Errors]{ \centering
\includegraphics[width=0.26\textwidth]{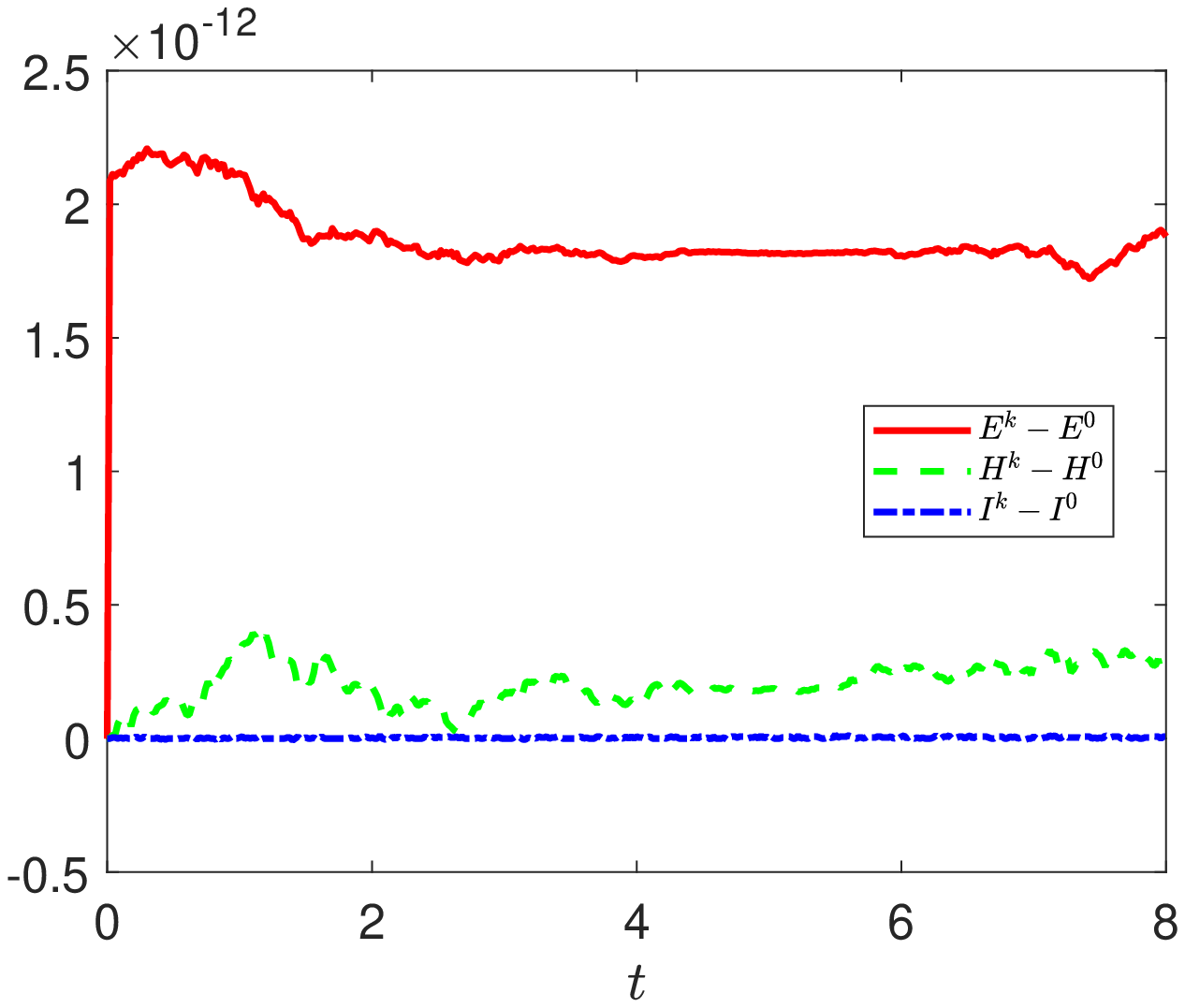}}
\\
\subfigure[Case D-$u$]{ \centering
\includegraphics[width=0.26\textwidth]{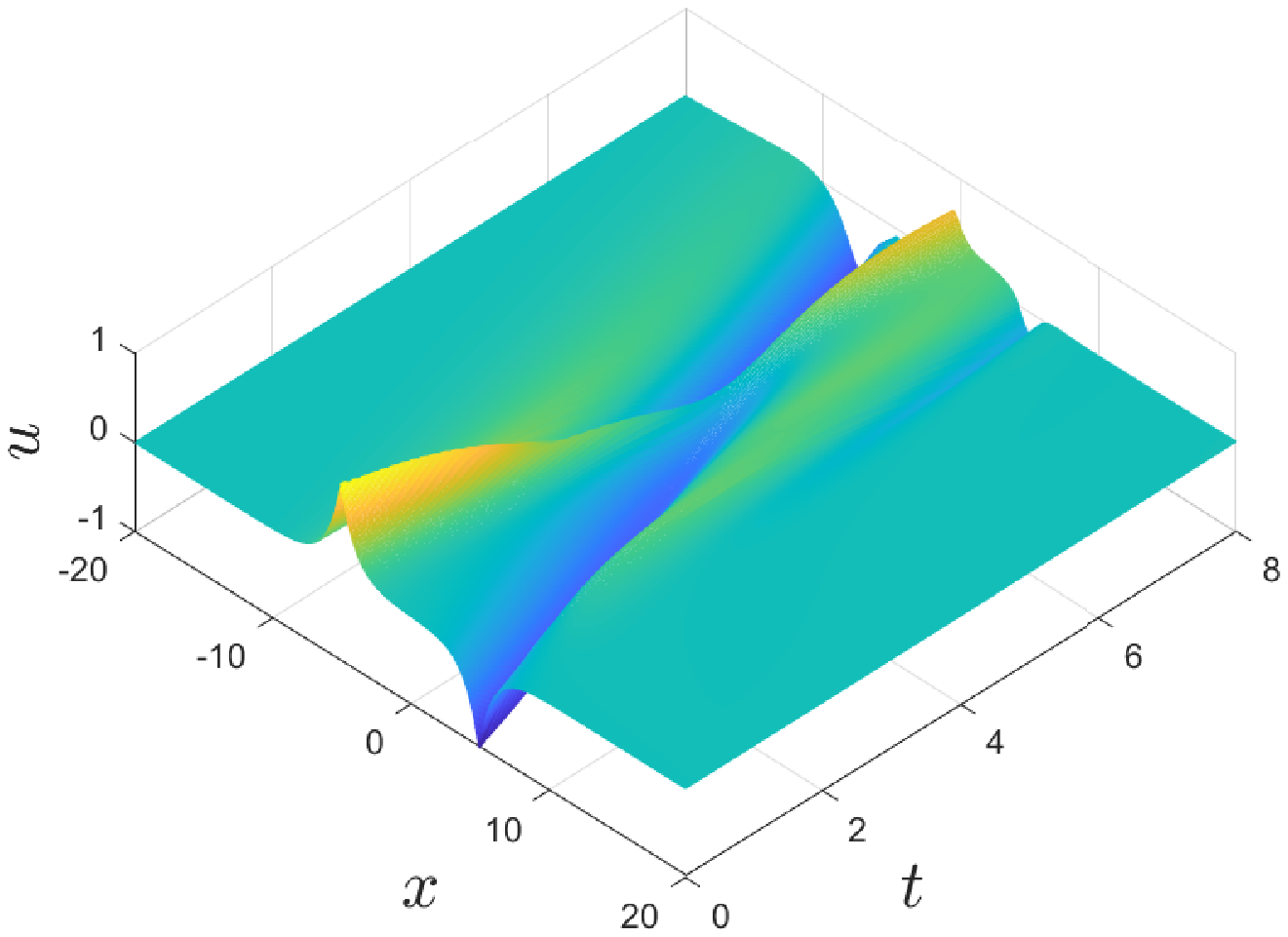}}
\hspace{-15pt}
\subfigure[Case D-$\rho$]{ \centering
\includegraphics[width=0.26\textwidth]{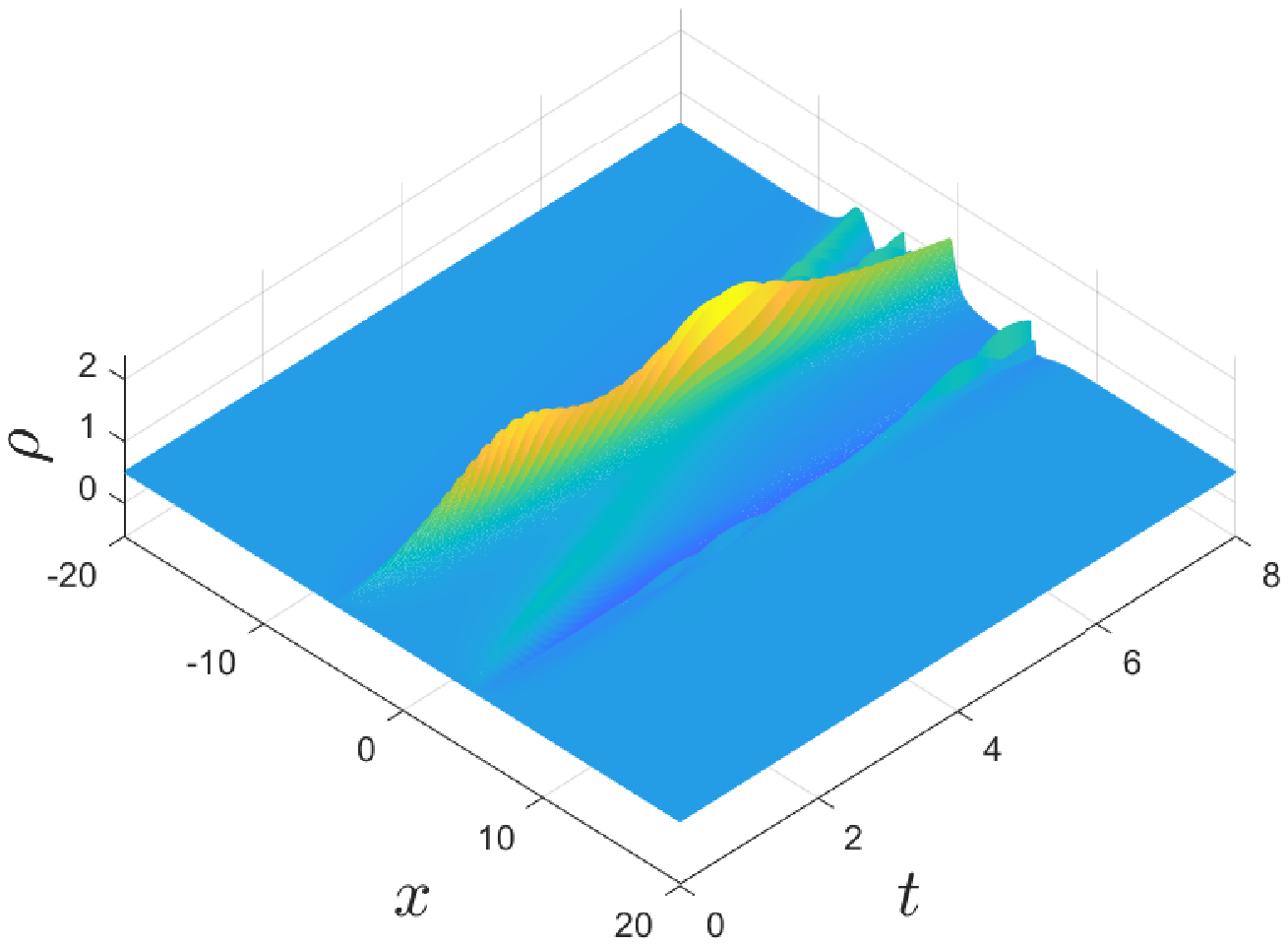}}
\hspace{-15pt}
\subfigure[Case D-Invariants]{ \centering
\includegraphics[width=0.26\textwidth]{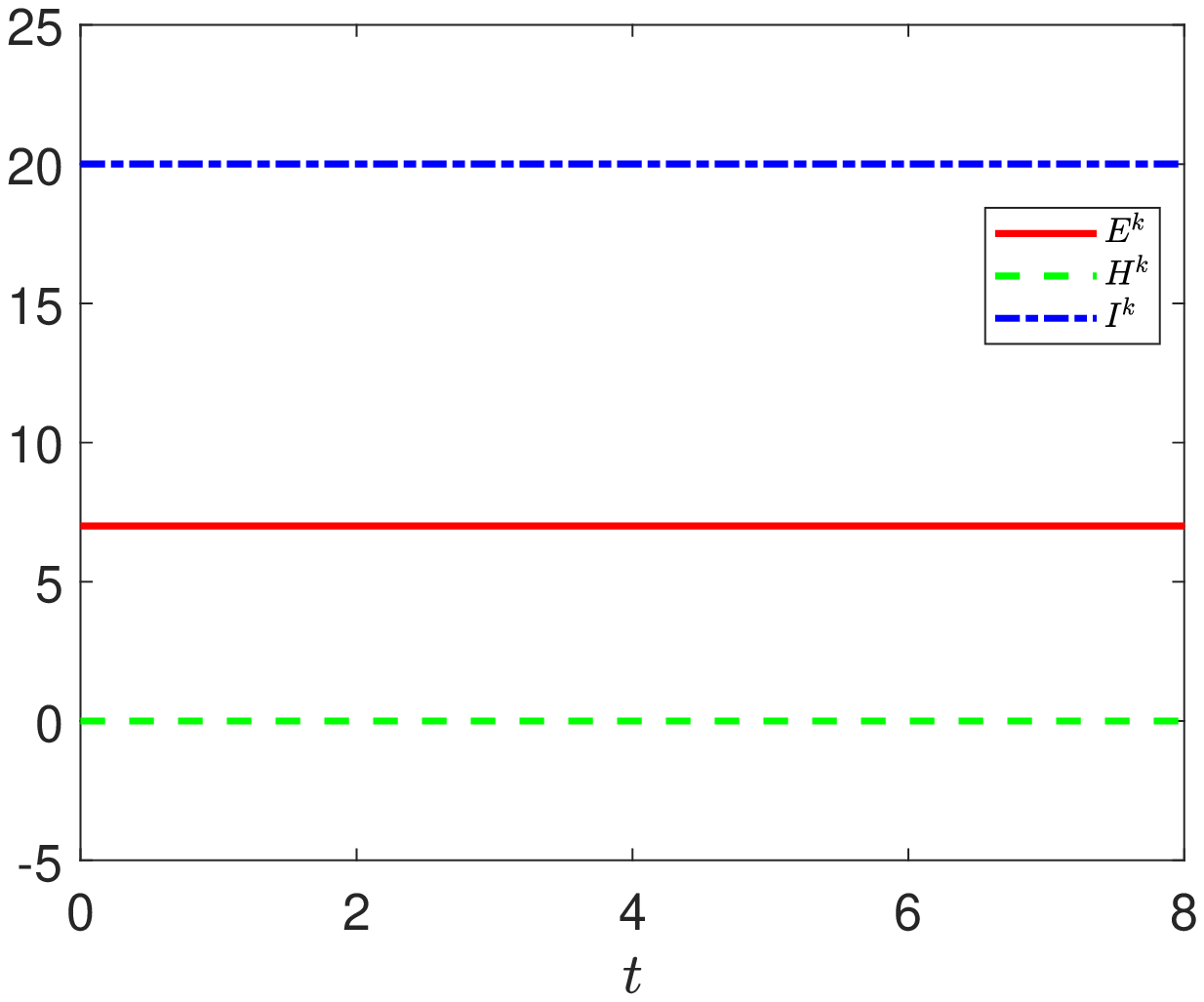}}
\hspace{-15pt}
\subfigure[Errors]{ \centering
\includegraphics[width=0.26\textwidth]{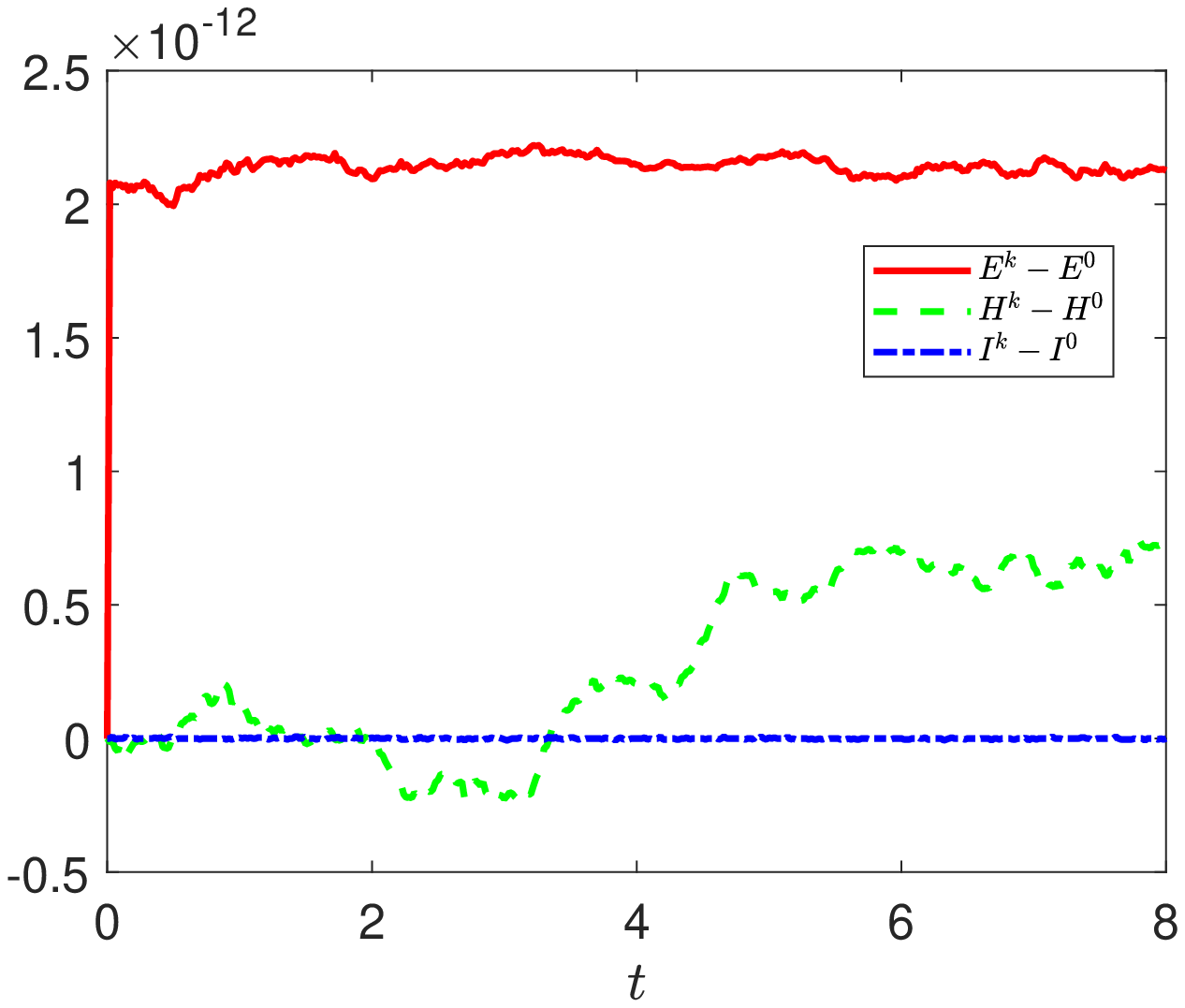}}
\\
\subfigure[Case E-$u$]{ \centering
\includegraphics[width=0.26\textwidth]{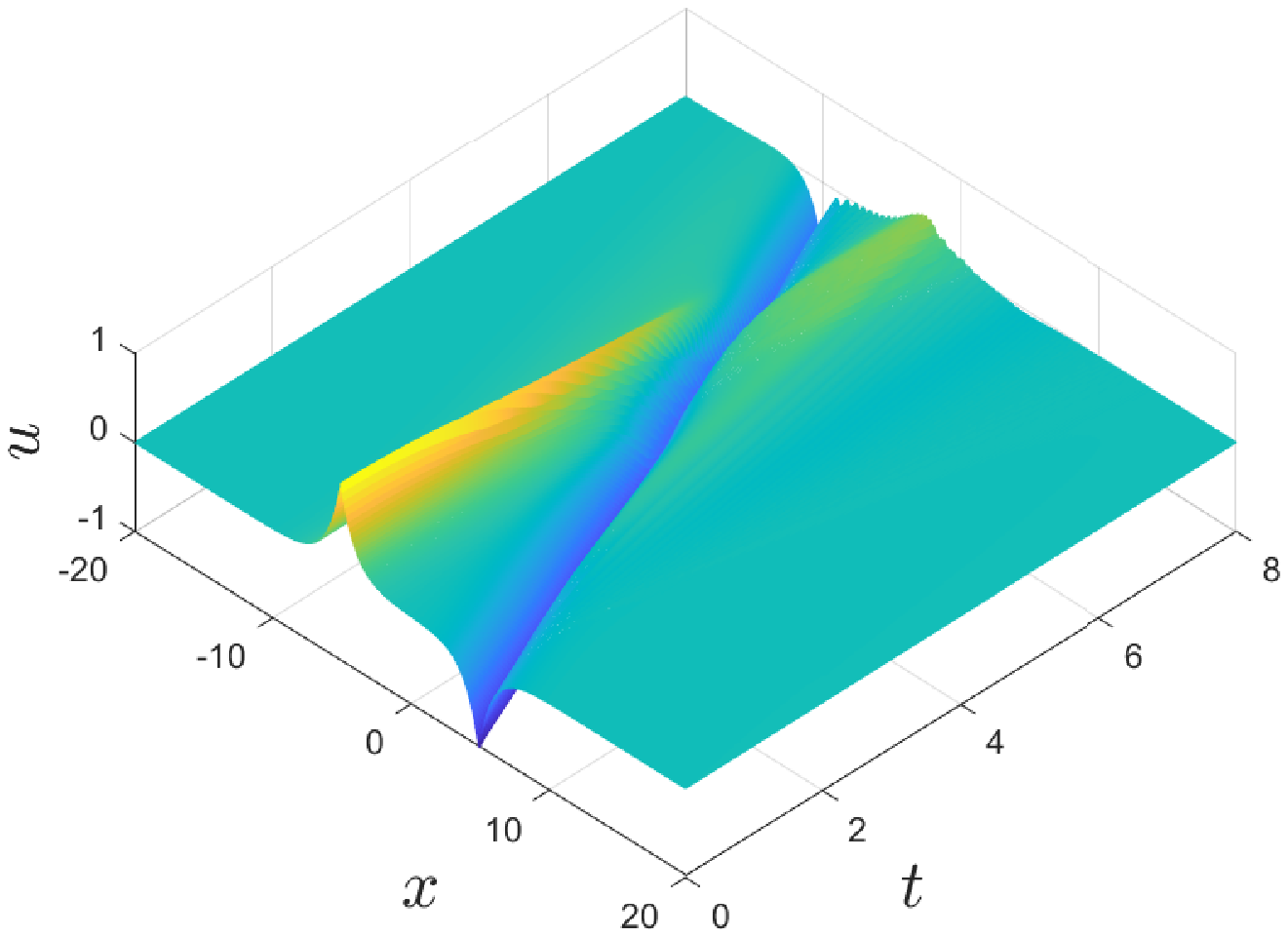}}
\hspace{-15pt}
\subfigure[Case E-$\rho$]{ \centering
\includegraphics[width=0.26\textwidth]{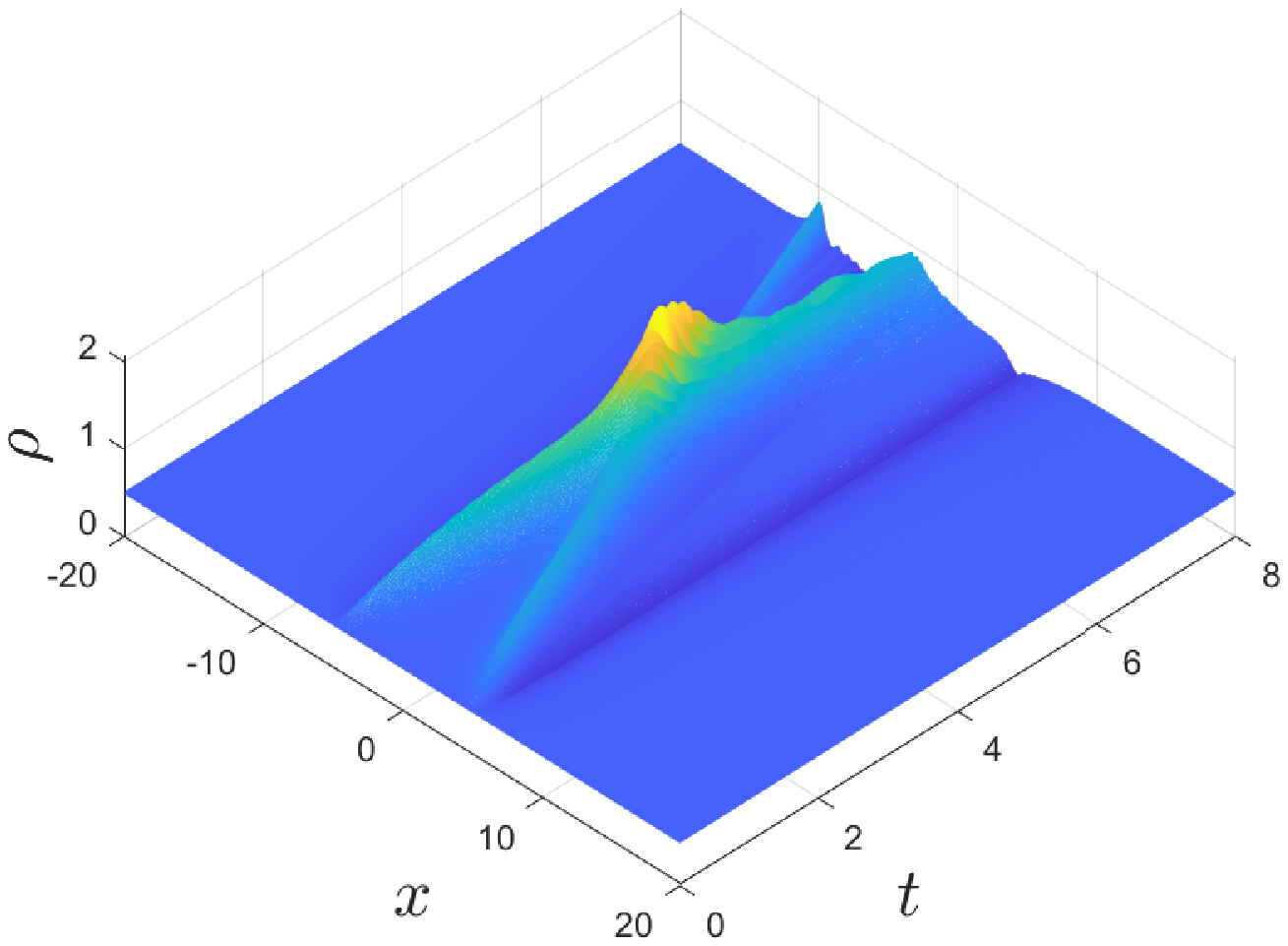}}
\hspace{-15pt}
\subfigure[Case E-Invariants]{ \centering
\includegraphics[width=0.26\textwidth]{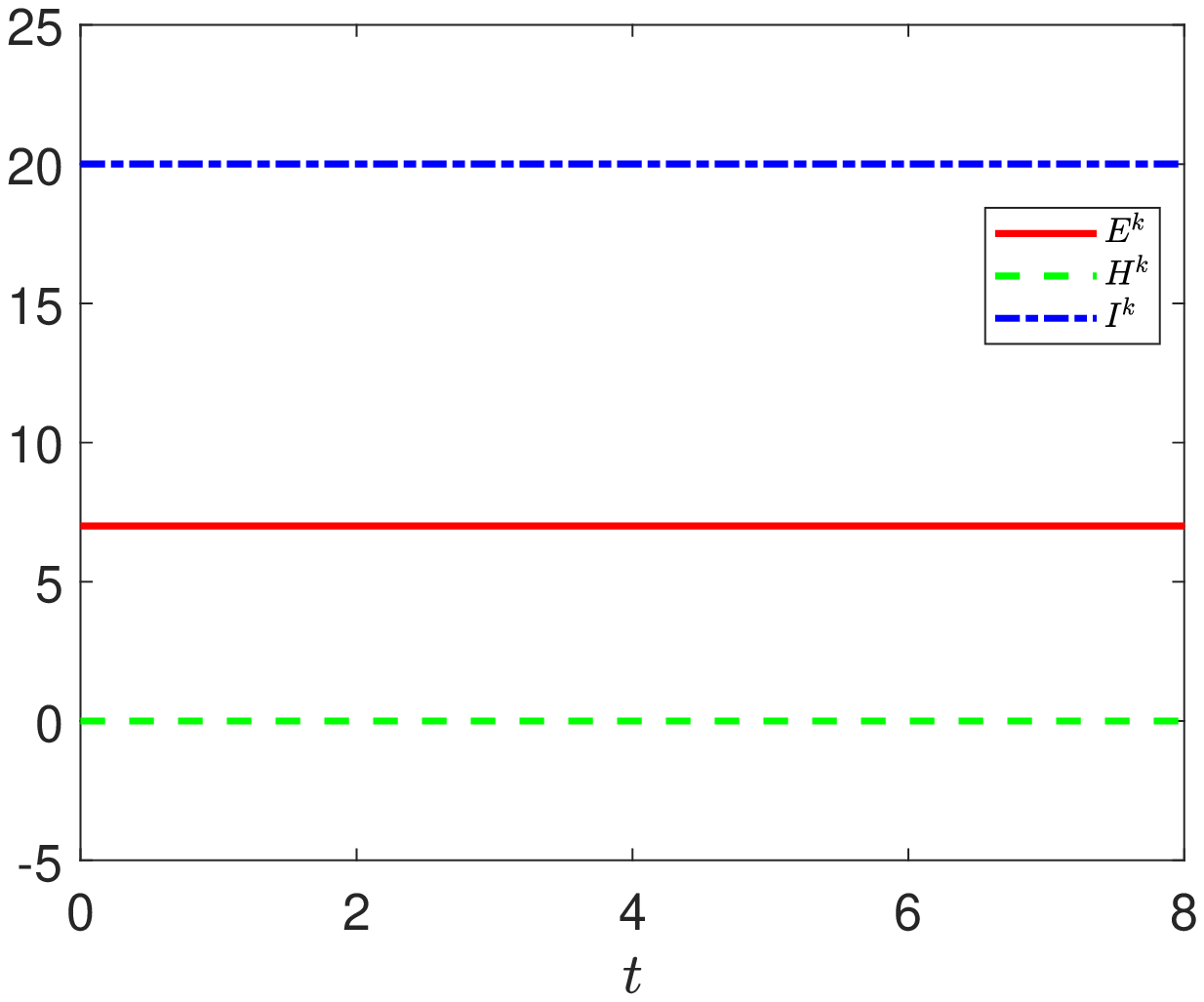}}
\hspace{-15pt}
\subfigure[Errors]{ \centering
\includegraphics[width=0.26\textwidth]{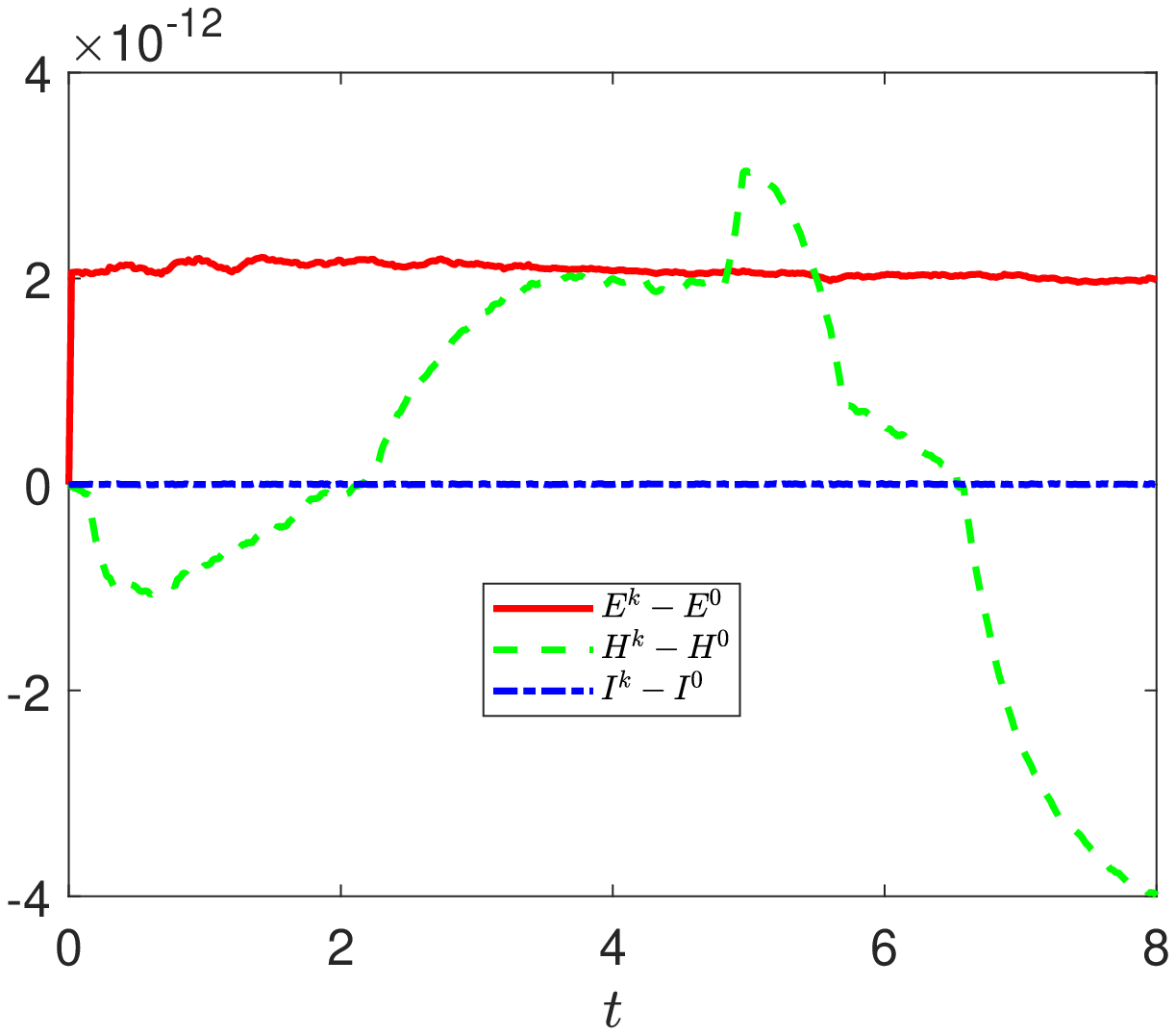}}

\caption{The evolution surfaces of the velocity $u$ (Column one) and the altitude $\rho$ (Column two), and numerical invariants (Column three) and corresponding numerical error curves of the invariants (Column four) for five Cases with the stepsizes $h=1/10$ and $\tau = 1/50$.} \label{fig:6}
\end{figure}

  Figure \ref{fig:4} and Figure \ref{fig:5} for the benchmark problem in {\rm \bf Case A} depict the evolution of the velocity and altitude, respectively, at $t=1,\,3,\,6,\,8$ for $\Omega=0$, which recover the elastic collision, the case studied in the literature, see also \cite{CKL2020,CMR2014,LP2016,YFS2018}.
The second row in Figure \ref{fig:4} and Figure \ref{fig:5} shows the numerical evolution when the rotation parameter $\Omega=0.2$, respectively with the same spatial interval $[-20,20]$ for {\rm \bf Case B}. Compared the results of {\rm \bf Case A} with those of {\rm \bf Case B}, we see that the nonzero rotation parameter has an important effect on the evolution of the solution, i.e., the solution is no longer symmetric.
Next, we decrease the rotation parameter into the practical value $\Omega=73\times 10^{-6}$ in {\rm \bf Case C}, which is a very tiny value.
We observe that the solution is very similar to that of {\rm \bf Case A} (the zero rotation parameter) in such a short time horizon. Moreover, let the linear underlying shear flow $\kappa=1$ in {\rm \bf Case D}, we find that the solution also loses the symmetry compared with the result in  {\rm \bf Case C}. Analogously, we change the dimensionless parameter $\mu$ into unit one in {\rm \bf Case E}, the collision of two waves is also no longer elastic. Furthermore, all the invariants are still preserved very well under the above five cases, see e.g., Figure \ref{fig:6}. In addition, the evolution graphs for the velocity $u$ and the altitude $\rho$ are clearly displayed in the first and second columns in Figure \ref{fig:6}. In a word, these numerical results indicate that if any of the parameters $\Omega$, $\kappa$, and $\mu$ is nonzero, the solution will be asymmetrical, which could be qualitatively verified from the perspective of theoretical analysis.

\section{Conclusions and outlooks}\label{section6}

To summarize, we propose and analyze a nonlinear difference scheme for the R2CH system based on a framework of the bilinear operator, and obtain several new numerical results such as unconditional convergence and invariant-preserving properties including energy, momentum, and mass.
These ensure that the numerical scheme provides an accurate long-time evolution of solitary waves both in smooth and nonsmooth initial values.
Regrettably, the present paper also leaves some loose ends, which are addressed as follows.
\begin{itemize}
  \item  When $\Omega\neq 0$, the convergence result  is valid only for small initial data ($c_{\rm max}<\frac{1}{2\Omega}-\kappa$) due to technical reasons. Considering the practical case $\Omega \approx 73\cdot 10^{-6}$, this is a mild restriction. Nevertheless, it would be desirable to remove this restriction. Indeed, we performed some numerical tests with $c_{\rm max}\geqslant\frac{1}{2\Omega}-\kappa$, and our numerical scheme is still working. On the other hand, the numerical theory does break down for very large $c_{\rm max}$.
  \item From long-time numerical simulation (see e.g., \textbf{Cases E--F} in Example \ref{exam1}), we observe clearly that even if the initial values of the R2CH system are smooth, the solutions may evolve into rough or cuspidal in finite time. This phenomenon is worthy of further study.
  \item Our preliminarily numerical tests in Example \ref{exam2} capture the evolution of the R2CH system with a nonsmooth initial velocity based on the difference scheme \eqref{equa3.7}, the theoretical analysis is necessary to cover this case.
  \item It is worth applying the framework of the bilinear operator to solve and analyze other types of the shallow water wave problems.
\end{itemize}

\section*{Conflict of interest}
The authors of this paper have no conflict of interest to declare.
\section*{Data availability}
Data will be made available on reasonable request.

\small{
\begin{acknowledgements}
The authors would like to thank Prof. Zhi-zhong Sun for most helpful discussions and suggestions.
Part of the work was finished during Qifeng's visit in \'{E}cole Polytechnique F\'{e}d\'{e}rale de Lausanne,
and he would like to thank Prof. Jan S. Hesthaven for his hospitality in 2021-2022.

\end{acknowledgements}}


\end{document}